\newcommand{\circnum}[1]{\text{\textcircled{\raisebox{-0.2ex}{\small{#1}}}}}
\newcolumntype{L}{D{.}{.}{2,5}}
\newtheorem{theorem}{Theorem}[section]
\newtheorem{lemma}[theorem]{Lemma}
\newtheorem{proposition}[theorem]{Proposition}
\newtheorem{assumption}{Assumption}
\newtheorem{example}{Example}
\newtheorem{remark}{Remark}
\def\mds{\medskip}
\def\Rb{{\mathbb R}}
\def\Pc{{\mathcal P}}
\def\Nc{{\mathcal N}}
\long\def\symbolfootnote[#1]#2{\begingroup%
\def\thefootnote{\fnsymbol{footnote}}\footnotetext[#1]{#2}\footnotemark[#1]\endgroup}
\newcounter{Fig}[figure]
\newcounter{Tab}[table]
   \samepage\vspace{0.2cm}
\newcommand\tr{\mathrm{tr}}
\newcommand\prox{\mathrm{prox}}
\newcommand\acc{\mathrm{acc}}
\DeclareMathOperator*{\argmin}{\arg\min}
\def \Eb{{\mathbb E}}
\def \Gb{{\mathbb G}}
\def \Lb{{\mathbb L}}
\def \Pb{{\mathbb P}}
\def \Rb{{\mathbb R}}
\def \Bc{{\mathcal B}}
\def \Uc{{\mathcal U}}
\def \Nc{{\mathcal N}}
\def \Pc{{\mathcal P}}
\def \Tc{{\mathcal T}}
\def \Xc{{\mathcal X}}
\DeclareMathOperator*{\Sym}{\mathrm{Sym}}
\newcommand{\bqa}{\begin{eqnarray*}}
\newcommand{\eqa}{\end{eqnarray*}}
\newcommand{\bqan}{\begin{eqnarray}}
\newcommand{\eqan}{\end{eqnarray}}
\newcommand{\bqt}{\begin{quote}}
\newcommand{\eqt}{\end{quote}}
\newcommand{\bt}{\begin{tabbing}}
\newcommand{\et}{\end{tabbing}}
\newcommand{\bit}{\begin{itemize}}
\newcommand{\eit}{\end{itemize}}
\newcommand{\ben}{\begin{enumerate}}
\newcommand{\een}{\end{enumerate}}
\newcommand{\beq}{\begin{equation}}
\newcommand{\eeq}{\end{equation}}
\newcommand{\beqw}{\begin{equation*}}
\newcommand{\eeqw}{\end{equation*}}
\newcommand{\eps}{\epsilon}
\newcommand{\revisionmode}{0}
  \newcommand{\rev}[1]{{\color{red} #1}}
  \newcommand{\rev}[1]{#1}
\def\mds{\medskip}
\def\1{{\mathbf 1}}
\def\0{{\mathbf 0}}
\begin{document}

\title{\Large \bf \rev{Change Point Detection} in \rev{Precision Matrices} with D-trace Loss}

\author{Ying Lin \and Benjamin Poignard \and Ting Kei Pong \and Akiko Takeda}

\author{Ying Lin\footnote{Department of Data and Systems Engineering, The University of Hong Kong, People's Republic of China. E-mail address: ylin95@hku.hk}, Benjamin Poignard\footnote{Faculty of Science and Technology, Keio University, Japan; Jointly affiliated at RIKEN and OIST (MLDS unit). E-mail address: bpoignard@math.keio.ac.jp}, Ting Kei Pong\footnote{Department of Applied Mathematics, The Hong Kong Polytechnic University, People's Republic of China. E-mail address: tk.pong@polyu.edu.hk}, Akiko Takeda\footnote{Graduate School of Information Science and Technology, The University of Tokyo, Japan; Center for Advanced Intelligence Project, RIKEN, Japan. E-mail address: takeda@mist.i.u-tokyo.ac.jp}}

\maketitle

\begin{abstract}
We consider the problem of estimating a time-varying sparse precision matrix, which is assumed to evolve in a piecewise constant manner. Building upon the Group Fused LASSO and LASSO penalty functions, we estimate both the \rev{precision matrix} and the change points. We propose an alternative estimator to the commonly employed Gaussian likelihood loss, namely the D-trace loss. We provide the conditions for the consistency of the estimated change points and of the sparse estimators in each block.
  We show that the solutions to the corresponding estimation problem exist when some conditions relating to the tuning parameters of the penalty functions are satisfied. Unfortunately, these conditions are not verifiable in general, posing challenges for tuning the parameters in practice.
  To address this issue, we introduce a modified regularizer and develop a revised problem that always admits  solutions: these solutions can be used for detecting possible unsolvability of the original problem or obtaining a solution of the original problem otherwise.
  An alternating direction method of multipliers (ADMM) is then proposed to solve the revised problem.
  The relevance of the method is illustrated through \rev{numerical experiments}.

\noindent\textbf{Key words}: Alternating direction method of multipliers, Change-points, Dynamic network, Precision matrix, Sparsity.

\medskip
\noindent

\end{abstract}

\section{Introduction}

Much attention has been devoted to the detection of multiple change points in the underlying data generating process of some realized sample. The model parameters are typically assumed to be constant in each regime but can experience ``jumps'' over time. The extraction of these \rev{change points} is usually performed through the application of some filtering techniques. A popular method is the fused LASSO which screens the existence of \rev{change points in the parameters} over the sample of observations. In particular, \cite{Harchaoui2010} developed a framework to recover multiple change points for one-dimensional piecewise constant signals. \cite{Chan2014} extended this procedure to grouped parameters with the Group Fused LASSO in the context of autoregressive time series models. \cite{Qian2016} applied the Group Fused LASSO to linear regression models. 

This paper considers the detection of \rev{change points in the precision matrix characterizing piecewise-constant, time-evolving dependence in multivariate time series. Many dependent data sets exhibit heterogeneity due to nonstationarity, a situation in which the data-generating process evolves over time and experiences change points. This phenomenon commonly arises in economic data in the presence of regime switches, for example, following a financial crisis. Biological data, such as gene expression measurements, may also exhibit time-evolving dependence related to life cycles or treatment effects. The problem consists in detecting these changes in the dependence structure.} Some works have been devoted to change point detection for precision matrix. \cite{Zhou2010} assumed that the covariance matrix evolves smoothly over time. \rev{Unlike} \cite{Kolar2012}, which focused on the detection of multiple \rev{change points} at a node level, \cite{Roy2017} considered single change point detection which impacts the global \rev{dependence} structure. Our aim is to detect multiple change points that affect the whole \rev{precision matrix}. \rev{From this perspective, our work is related to \cite{Hallac2017} and \cite{Gibberd2017}, who introduced the Group Fused Graphical LASSO (GFGL) for change point detection in time-varying graphical models}. The latter filtering technique is a mixture of the Group Fused LASSO of \cite{Bleakley2011} and the LASSO applied to the Gaussian likelihood, \rev{and aims to detect changes in the underlying graphical network encoded in the precision matrix. Interestingly, the graphical LASSO also works in the context of non-Gaussian data.} We propose an alternative estimator to the penalized Gaussian likelihood, which builds upon the D-trace loss of \cite{Zhang2014}. The formulation of the latter is much simpler than the Gaussian likelihood, thus allowing for a more direct theoretical analysis and implementation. 

The D-trace loss and its extensions have been applied to diverse problems: while \cite{Yuan2017} considered network change analysis between two samples of vector-valued observations \rev{through the estimation of the precision matrix difference of the two samples}, \cite{Ji2021} adapted the latter framework to differential network analysis for fMRI data; in the context of compositional data, \cite{Yuan2019} and \cite{Zhang2024} considered modified versions of the D-trace loss to estimate the high-dimensional compositional precision matrix under sparsity constraint; using the matching of the coefficients of SVAR processes and the entries of the precision matrix of the observed sample, \cite{poignard2023} considered the sparse estimation of the latter based on the D-trace loss. In addition to the detection of \rev{change points} in the \rev{precision matrix}, we allow for sparse dependence in the entries of the precision matrix in each regime. This motivates the use of the LASSO penalization function applied to the coefficients of the piecewise constant precision matrix.

The corresponding estimation problem formulated in \eqref{stat_crit}, Section \ref{framework}, with the D-trace loss, includes two tuning parameters. In practice, the {\em optimal} parameters are unknown a priori and are expected to be selected upon solving the estimation problems with a series of tuning parameters.
However, we show that the problem with general tuning parameters may be unbounded from below (and hence, does not have solutions). Consequently, to identify the optimal parameters, one may encounter estimation problems that are unbounded from below, and the unboundedness can be numerically difficult to detect.
To address this issue, we introduce a new regularizer, thereby constructing a revised problem that consistently has solutions regardless of the choice of the tuning parameters.
In addition to the existence of a solution, this revised problem also enjoys several desirable properties.
On the one hand, if the solutions to the revised problem possess some easy-to-detect patterns, then the original problem may not have solutions, and we can further \textit{update} the tuning parameters towards obtaining reliable estimators.
On the other hand, if the patterns are not detected, then the solutions to the revised problem also solve the original problem.
We adapt the celebrated alternating direction method of multipliers (ADMM) to solve the revised problem, with its convergence guaranteed by \cite{FPST2013}.
ADMM is a widely used algorithm for solving convex optimization problems with separable objectives and linear coupling constraints.
Its classical version was first introduced by \cite{GM1975} and \cite{GM1976}, with the convergence established in \cite{FG1983}.
Then \cite{EB92} showed that ADMM is equivalent to the proximal point algorithm applied to a certain maximal monotone operator.
This insight led to the development of the first proximal ADMM by \cite{E1994}.
Building upon Eckstein's work, \cite{HLHY2002} extended the proximal ADMM to include a broader class of proximal terms.
This approach was further advanced by \cite{FPST2013}, which generalized the method to allow the use of semi-proximal terms, enhancing the algorithm's applicability.
More recently, considerable efforts have been made to further accelerate variants of ADMM: see, e.g.,
the accelerated linearized ADMM in \cite{OCLP2015}, \cite{X2017}, \cite{LL2019};
the accelerated proximal ADMM based on Halpern iteration in \cite{ZYS2022}, \cite{YZLS2024}.

Our contributions can be summarized as follows: we propose a new estimator for \rev{change points} detection in the precision matrix, the Group Fused D-trace LASSO (GFDtL); we derive the conditions for which all the \rev{change points} and the precision matrices can be consistently estimated when the estimated \rev{change points} coincide with the true number of \rev{change points}; we provide a modified regularizer to ensure the existence of solutions to the revised problem, and show that these solutions either exhibit some easily verifiable patterns indicating the \textit{possible} unsolvability of the original problem so that we can further update the tuning parameters towards obtaining reliable estimators, or solve the original problem if the patterns are not detected; an ADMM is adapted for implementation with convergence guarantees. The relevance of our proposed change points estimator compared with standard estimators, such as GFGL, is illustrated through numerical experiments.

The rest of the paper is organized as follows.
Section \ref{framework} details the framework and estimation procedure.
Section \ref{asymptotic_properties} contains the asymptotic properties.
Section \ref{sec:opt} is devoted to the optimization aspect of the estimation procedure.
Section \ref{sec:implementation} provides the implementation details of the estimator.
Section \ref{sec:tuning-para} discusses different methods to select the two tuning parameters.
Section \ref{sec:simulations} provide experimental results based on simulations.
Section \ref{sec:sen-ana} provides a sensitivity analysis, while Section \ref{sec:time} details the computational complexity.
Section \ref{sec:real_data} provides experimental results based on real data.
All the proofs of the main text, the details of the ADMM algorithm, and an illustrative example are deferred to the Appendices.

\textbf{\textit{Notation:}} Throughout this paper, we use $ V_k $ and $ A_{kl} $ to denote the $ k $-th element of a vector $ V \in \Rb^d $ and the $ (k, l) $-th element of a matrix $ A \in \Rb^{m \times n} $, respectively.
We write $A^\top$ (resp. $V^\top$) to denote the transpose of the matrix $A$ (resp. the vector $V$).
For any square matrix $ A \in \Rb^{n \times n} $, we write $ \Sym(A) \coloneqq (A + A^{\top}) / 2 $ to denote the symmetrization of $ A $.
The $ p $-th order identity matrix is denoted by $ I_p $.
We denote by $\textbf{0}_{k \times l} \in \Rb^{k \times l}$ (resp. $\mathbf{1}_{k \times l} \in \Rb^{k \times l}$) the $k \times l$ zero matrix (resp. $ k \times l $ matrix of ones).
We write $\text{vec}(A)$ to denote the vectorization operator that stacks the columns of $A$ on top of one another into a vector.
For two matrices $A$ and $B$, $A \otimes B$ is the Kronecker product.
The set of symmetric matrices is denoted by $ \mathcal{S}^n $.
A symmetric matrix $ S \in \mathcal{S}^n $ is said to be positive semi-definite (resp. positive definite) and written as $ S \succeq 0 $ (resp. $ S \succ 0 $) if all its eigenvalues are non-negative (resp. positive).
The expression $ A \succeq B $ (resp. $ A \succ B $) for $ A $, $ B \in \mathcal{S}^n $ means $ A - B \succeq 0 $ (resp. $ A - B \succ 0 $).
We use $ \mathcal{S}_+^n $ and $ \mathcal{S}_{++}^n $ to denote the sets of positive semi-definite matrices and positive definite matrices, respectively.
For a positive semi-definite matrix $ S $, $ S^{\frac{1}{2}} $ is the unique positive semi-definite matrix such that $ S = S^{\frac{1}{2}} S^{\frac{1}{2}} $.
The $\ell_p$ norm for $V \in \Rb^{d}$ is denoted by $\|V\|_p = \big(\sum^{d}_{k=1} |V_k|^p \big)^{1/p}$, $p \geq 1$.
The Frobenius norm and the off-diagonal $ \ell_1 $ (semi)norm of a matrix $ A \in \Rb^{m \times n} $ are denoted by $ \| A \|_F = \sqrt{\sum_{k=1}^m \sum_{l=1}^n | A_{kl} |^2 } $ and $ \| A \|_{1, \mathrm{off}} = \sum_{k\neq l} | A_{kl} | $, respectively.
The spectral norm, i.e., the maximum singular value of a matrix $ A $ is written as $ \| A \|_s $. For a symmetric matrix A, we use $\lambda_{\max}(A)$ (resp. $\lambda_{\min}(A)$) to denote its largest (resp. smallest) eigenvalue.
The maximum absolute value of all entries of a matrix $ A $ is denoted by $ \| A \|_{\max} $.
We use $ \mathcal{S}_{\mathrm{off}}^n $ to denote the set of $ n \times n $ symmetric matrices whose diagonal elements are $ 0 $.
By $ \{Y_{t, \mathrm{off}}\}_{t=1}^T $ we denote a sequence of symmetric matrices whose diagonal elements are $ 0 $, i.e., $ Y_{t, \mathrm{off}} \in \mathcal{S}_{\mathrm{off}}^n $.
For a sequence of symmetric matrices $ \{\Theta_t\}_{t=1}^T $, $ \Theta_{uv, t} $ refers to the $ (u, v) $-th element of $ \Theta_t $, and $ \Theta_{t, \mathrm{off}} $ is the copy of $ \Theta_t $ with diagonal elements set to $ 0 $; in particular, $ \| \Theta_t \|_{1, \mathrm{off}} = \| \Theta_{t, \mathrm{off}} \|_1 $.
Given $ \epsilon \geq 0 $, we use $ \mathrm{Proj}_{\cdot \succeq \epsilon I_p} $ to denote the projection onto $ \{S \,:\, S \succeq \epsilon I_p\} $.
The proximal operator of a function $ f $ at $ x $ is defined as $\prox_f(x) = \argmin_y \{ f(y) + \frac{1}{2} \| y - x \|^2 \}$; for more details about the proximal operator, we refer the interested readers to Sections 12.4, 14.1, 14.2 in \cite{BC2011}.

\section{Framework}\label{framework}

For a sequence of a $p$-dimensional random vector $(X_t)$ observed at $t=1,\ldots,T$, we consider the estimation of the underlying \rev{precision matrix}. The latter is assumed to evolve over time and the task is to recover the \rev{change points}. More formally, we denote by $\{\Bc_j\}_{1 \leq j \leq m}$ a disjoint partitioning of the set $\{1,\ldots,T\}$ such that $\Bc_j \cap \Bc_{j'}=\emptyset, j \neq j'$, $\cup_j \Bc_j=\{1,\ldots,T\}$ and $\Bc_j = \{T_{j-1},T_{j-1}+1,\ldots,T_j-1\}$. The partition of the \rev{change points} is denoted by $\Tc_m =\{T_1<T_2<\ldots<T_m\}$ with the convention $T_0=1, T_{m+1}=T+1$. Then, we assume $\Eb[X_t]=0$ and Var$(X_t) = \Sigma_j$ for $t\in \Bc_j$, such that the observations indexed by elements in $\Bc_j$ are $p$-dimensional realizations of a centered random variable with variance-covariance $\Sigma_j$. We denote by $\Omega_j = \Sigma^{-1}_j$ the precision matrix with entries $\Omega_{uv,j}, 1 \leq u,v\leq p$. In practice, we consider the sequence $\{\Theta_1,\ldots,\Theta_T\}$ such that the total number of distinct matrices in the set is $m+1$ and $\Theta_t = \Omega_j, \; t \in \Bc_j, \; j = 1,\ldots,m+1$.
We are interested in estimating the unknown true number $m^\ast$ of unknown \rev{change points}, the true partition $\Tc^\ast_{m^\ast}=\{T^\ast_1<T^\ast_2<\ldots<T^\ast_{m^\ast}\}$ and the true unknown precision matrices $\Omega^\ast_j$. As a consequence, the true data generating process is assumed to be \rev{$\Eb[X_t] = 0$, $\text{Var}(X_t) = \Sigma^\ast_j$, $\Theta^\ast_t = \Omega^\ast_j= \Sigma^{\ast-1}_j$ when $t =T^\ast_{j-1},T^\ast_{j-1}+1,\ldots,T^\ast_{j}-1$,}
and $1 \leq j \leq m^\ast+1$, $T^\ast_0=1, T^\ast_{m^\ast+1}=T+1$ with blocks $\Bc^\ast_j = \{T^\ast_{j-1},\ldots,T^\ast_j-1\}$. While $m^\ast$ and the \rev{change points} are unknown, $m^\ast$ is much smaller than $T$ and, assuming the underlying \rev{precision matrix} may exhibit some sparse structures, we consider the sparse estimation of $\Theta_t$'s and the estimation of $\Tc_{m}$ via a mixture of LASSO and Group Fused LASSO, which we will refer as the Group Fused D-trace LASSO (GFDtL), defined as
\begin{equation}\label{stat_crit}
\{\widehat{\Theta}_t\}^T_{t=1} = \underset{\Theta_t\succeq 0, 1 \leq t \leq T}{\arg\;\min} \Big\{\Lb(\{\Theta_t\}^T_{t=1},\mathcal{X}_T) + \lambda_1\overset{T}{\underset{t=1}{\sum}}\|\Theta_t\|_{1,\text{off}}+\lambda_2\overset{T}{\underset{t=2}{\sum}} \|\Theta_t-\Theta_{t-1}\|_F\Big\},
\end{equation}
where $\mathcal{X}_T=(X_1,\ldots,X_T)$, $\lambda_1,\lambda_2$ are the tuning parameters, $\|\Theta_t\|_{1,\text{off}}=\sum_{k\neq l}|\Theta_{kl,t}|$ and the D-trace loss of \cite{Zhang2014} is defined as
\begin{equation*}
\Lb(\{\Theta_t\}^T_{t=1},\mathcal{X}_T) = \frac{1}{T}\overset{T}{\underset{t=1}{\sum}}\Big[\text{tr}(\frac{1}{2}\Theta^2_t X_tX^\top_t) - \text{tr}(\Theta_t)\Big].
\end{equation*}
\rev{For a solution $\widehat{\Theta}_t,t=1,\ldots,T$ of (\ref{stat_crit}), there is a block partition $\{\widehat{\mathcal{B}}_1,\ldots,\widehat{\mathcal{B}}_{\widehat{m}+1}\}$ of $\{1,\ldots,T\}$ such that $\widehat{\Theta}_t = \widehat{\Theta}_{s}$ for any $t,s \in \widehat{\mathcal{B}}_j=[\widehat{T}_{j-1},\widehat{T}_j-1]$ and $\widehat{\Theta}_{\widehat{T}_j} \neq \widehat{\Theta}_{\widehat{T}_{j}-1}$, $j=1,\ldots,\widehat{m}$, where $\widehat{T}_0=1, \widehat{T}_{\widehat{m}+1}=T+1$. So $\widehat{m}$ and $\widehat{\Tc}_{\widehat{m}}\coloneqq \{\widehat{T}_1<\widehat{T}_2<\ldots<\widehat{T}_{\widehat{m}}\}$ are the estimated number of change points and the estimated set of change points. We define $\widehat{\Omega}_{j}=\widehat{\Theta}_{\widehat{T}_{j-1}}$ as the estimator of $\Omega_j$ for $j=1,\ldots,\widehat{m}+1$.}
\rev{In Lemma~\ref{optimality_cond}, we provide the optimality conditions satisfied by GFDtL: we show that if $t=\widehat{T}_j$ with $j\in \{1,\ldots,\widehat{m}\}$, that is, $t$ corresponds to one of the estimated change points, then $\widehat{\Theta}_t-\widehat{\Theta}_{t-1} \neq \mathbf{0}_{p \times p}$, and we obtain the corresponding optimality conditions. Furthermore, sparsity in the estimated precision matrix for a given block is controlled by $\lambda_1$, whereas $\lambda_2$ affects the smoothing and guarantees that the solution is piecewise constant, as highlighted by Lemma~H.3. }

\section{Asymptotic Properties}\label{asymptotic_properties}

First, we define some notations and present the assumptions. Define $\mathcal{I}^\ast_j=T^\ast_j-T^\ast_{j-1}$, $\mathcal{I}_{\min}=\underset{1 \leq j \leq m^\ast+1}{\min}|\mathcal{I}^\ast_j|, \; \eta_{\min}=\underset{1 \leq j \leq m^\ast}{\min}\|\Omega^\ast_{j+1}-\Omega^\ast_j\|_F, \; \eta_{\max}=\underset{1 \leq j \leq m^\ast}{\max}\|\Omega^\ast_{j+1}-\Omega^\ast_j\|_F,  \;s^\ast_{\max} = \underset{1\leq j \leq m^\ast+1}{\max}\|\Omega^\ast_j\|_F$. \rev{Let $\mathcal{F}^0_{-\infty},\mathcal{F}^{\infty}_T$ denote the filtrations generated by $\{(X_t): - \infty \leq t \leq 0\}$ and $\{(X_t): T \leq t \leq \infty\}$. Define the mixing coefficient $\alpha(T) = \sup_{A \in \mathcal{F}^0_{-\infty}, B\in \mathcal{F}^{\infty}_T } |\rev{\Pb(A \cap B)-\Pb(A)\,\Pb(B)}|$.}

\begin{assumption}\label{assumption_dgp}
\begin{itemize}
    \item[(i)] \rev{$\exists 0 < \rho < 1$ such that for all $t \in \mathbb{Z}^+$, $\alpha(t) \leq c_{\alpha}\rho^{t}$, with $c_{\alpha}>0$.}
    \item[(ii)] $\exists \gamma, b>0$ such that $\forall s>0$, $\forall 1 \leq k,l \leq p$, $\sup_{t\geq 1}\,\Pb(|X_{k,t}X_{l,t}|>s)\leq \exp(1-(s/b)^\gamma)$.
    \end{itemize}
\end{assumption}

\begin{assumption}\label{assumption_regularity}
$\exists \underline{\mu}, \overline{\mu}$:
    $0 < \underline{\mu} \leq \underset{1 \leq j \leq m^\ast+1}{\min}\lambda_{\min}\big(\Sigma^\ast_j\big) \;\; \rev{\text{and}} \;\; \underset{1 \leq j \leq m^\ast+1}{\max}\lambda_{\rev{\max}}\big(\Sigma^\ast_j\big)\leq \overline{\mu}<\infty$.
\end{assumption}

\begin{assumption}\label{assumption_rates}
Let $(\delta_T)$ be a non-increasing positive sequence converging to zero.
\begin{itemize}
    \item[(i)] $T \delta_T \geq c_v \log(pT)^{(2+\gamma)/\gamma}$ for some $c_v >0$.
    \item[(ii)] $m^\ast=O(\log(T))$ and $\mathcal{I}_{\min}/(T\delta_T)\rightarrow \infty$ as $T\rightarrow \infty$.
    \item[(iii)] $\eta_{\max}=O(1)$, $p\sqrt{\log(pT)/T\delta_T} \rightarrow 0$ and $(\sqrt{T\delta_T}\eta_{\min})^{-1}p\, s^\ast_{\max}\sqrt{\log(pT)}\rightarrow 0$.
    \item[(iv)] $\lambda_2/(\eta_{\min}\delta_T) \rightarrow 0$ and $\lambda_1Tp/\eta_{\min} \rightarrow 0$ as $T \rightarrow \infty$.
\end{itemize}
\end{assumption}

Assumption \ref{assumption_dgp}-(i) relates to the \rev{strong mixing property} of $(X_t)$. 
Assumption \ref{assumption_dgp}-(ii) is a tail condition and will allow us to apply exponential inequalities for dependent processes. Assumption \ref{assumption_regularity} ensures the identification of the model: it is similar to Assumption A.1 of \cite{Kolar2012} or Assumption A.2 of \cite{Qian2016}.
Assumption \ref{assumption_rates} provides conditions on $\delta_T$, $m^\ast$, $\mathcal{I}_{\min}$, $\eta_{\min}$ and the tuning parameters $\lambda_1,\lambda_2$. Condition (i) concerns the convergence rate of $\delta_T$ to $0$. In condition (ii), the sample size in each regime may diverge with rate $T\delta_T$, but at a slower rate than $T$, and the number of \rev{change points} $m^\ast$ may diverge slowly: this is similar to Assumption A.3-(i) of \cite{Qian2016} or Assumption H3 of \cite{Chan2014}. It also sets the slowest rate at which $\delta_T$ may shrink to zero: $\delta_T=o(\mathcal{I}_{\min}/T)$. Conditions (iii) and (iv) specify the fastest rate at which $\delta_T$ may shrink to zero, which is $\delta_T \gg \max(\lambda_2/\eta_{\min},p^2(s^\ast_{\max})^2\log(pT)/(T\eta_{\min}^2))$. It is worth emphasizing that conditions (ii)-(iii) imply $p^2(s^\ast_{\max})^2\log(pT) =o(\mathcal{I}_{\min}\eta^2_{\min})$ and conditions (ii) and (iv) imply that $\lambda_2T = o(\eta_{\min}\mathcal{I}_{\min})$. Finally, the effect of the LASSO shrinkage through $\lambda_1$ does not relate to $\delta_T$: this is because the Group Fused LASSO penalty only allows to detect change points.
The consistency of $\widehat{T}_j, \widehat{\Omega}_j$, given $\widehat{m}=m^\ast$, is provided in the next Theorem.

\begin{theorem}\label{consistency}
Suppose Assumptions \ref{assumption_dgp}-\ref{assumption_rates} are satisfied. Under $\widehat{m}=m^\ast$, then:
\begin{itemize}
    \item[(i)] $\Pb\Big(\underset{1 \leq j \leq m^\ast}{\max}|\widehat{T}_j-T^\ast_j|\leq T \delta_T\Big) \rightarrow 1$ as $T \rightarrow \infty$.
    \item[(ii)] $\|\widehat{\Omega}_j-\Omega^\ast_j\|_F = O_p(\frac{\lambda_2T}{\mathcal{I}^\ast_j}+\lambda_1Tp(1+\frac{T\delta_T}{\mathcal{I}^\ast_j})+\frac{T\delta_T}{\mathcal{I}^\ast_j} + s^\ast_{\max}\,p\sqrt{\frac{\log(pT)}{\mathcal{I}^\ast_j}})$, for $j = 1,\ldots,m^\ast+1$.
\end{itemize}
\end{theorem}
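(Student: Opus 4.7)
The plan is to establish (i) first by a contradiction argument comparing objective values, and then to derive (ii) from the first-order optimality conditions of \eqref{stat_crit} on each estimated block, using (i) to guarantee that every estimated block is close to its true counterpart. The common probabilistic input is a uniform deviation bound for sample covariances over sub-intervals: under Assumption~\ref{assumption_dgp}, $X_{k,t}X_{l,t}$ is sub-exponential with index $\gamma$ and $(X_t)$ is $\alpha$-mixing with geometric rate, so a Bernstein-type inequality for $\alpha$-mixing sub-exponential sequences (e.g., Merlev\`ede--Peligrad--Rio) combined with a union bound over the $O(T^2)$ sub-intervals $I\subseteq\{1,\ldots,T\}$ yields
\begin{equation*}
\Bigl\|\tfrac{1}{|I|}\sum_{t\in I}\bigl(X_tX_t^\top-\Eb[X_tX_t^\top]\bigr)\Bigr\|_{\max} \;=\; O_p\!\bigl(\sqrt{\log(pT)/|I|}\bigr),
\end{equation*}
uniformly over $I$ with $|I|\gg\log(pT)^{(2+\gamma)/\gamma}$, which is ensured by Assumption~\ref{assumption_rates}-(i). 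This is the main stochastic tool throughout.

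For (i), I would argue by contradiction. Suppose on a non-negligible event that $\max_j|\widehat{T}_j-T^\ast_j|>T\delta_T$. Because $\widehat{m}=m^\ast$, I bijectively pair estimated and true breaks and isolate an offending pair; I then construct a competitor trajectory $\{\widetilde\Theta_t\}$ that agrees with $\{\widehat\Theta_t\}$ outside a neighborhood of the offending break, shifts this break back to its true location, and replaces the affected block-values by the corresponding block-wise D-trace minimizer. A second-order expansion of $\Lb$ about the piecewise-constant truth $\{\Theta^\ast_t\}$, combined with the eigenvalue lower bound of Assumption~\ref{assumption_regularity} and the jump magnitude $\eta_{\min}$, yields a deterministic loss reduction growing with the displacement $|\widehat{T}_j-T^\ast_j|$, while the induced stochastic remainder is controlled by the concentration bound above and the change in the $\ell_1$ and fused penalties is bounded by $O(\lambda_1 Tp+\lambda_2)$ using $\|Z\|_F\leq p$ and $\|V\|_F\leq 1$ for the respective subgradients. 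Assumption~\ref{assumption_rates}-(ii)-(iv) forces the combined stochastic and penalty perturbations to be negligible relative to the deterministic gain once the displacement exceeds $T\delta_T$, producing a strictly smaller objective value at $\{\widetilde\Theta_t\}$ with probability tending to one and contradicting the optimality of $\{\widehat\Theta_t\}$.

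For (ii), on the event from (i) each estimated block $\widehat{\Bc}_j$ overlaps $\Bc^\ast_j$ on all but $O(T\delta_T)$ indices, so $|\widehat{\Bc}_j|=\mathcal{I}^\ast_j(1+o(1))$. Writing the KKT condition of \eqref{stat_crit} with respect to $\widehat{\Omega}_j$ and using $\Omega^\ast_j\Sigma^\ast_j=I_p$ gives
\begin{equation*}
\Sym\!\bigl(\widehat{\Omega}_j\widehat\Sigma_j-\Omega^\ast_j\Sigma^\ast_j\bigr)+\frac{T}{|\widehat{\Bc}_j|}\bigl(\lambda_1 Z_j+\lambda_2(V_j-V_{j-1})\bigr)=0,
\end{equation*}
where $\widehat\Sigma_j:=|\widehat{\Bc}_j|^{-1}\sum_{t\in\widehat{\Bc}_j}X_tX_t^\top$, $Z_j\in\partial\|\cdot\|_{1,\mathrm{off}}(\widehat{\Omega}_j)$ and $V_j$, $V_{j-1}$ are Frobenius subgradients of the fused penalty with $\|V_j\|_F\leq 1$. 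Decomposing $\widehat{\Omega}_j\widehat\Sigma_j-\Omega^\ast_j\Sigma^\ast_j=(\widehat{\Omega}_j-\Omega^\ast_j)\Sigma^\ast_j+\Omega^\ast_j(\widehat\Sigma_j-\Sigma^\ast_j)+(\widehat{\Omega}_j-\Omega^\ast_j)(\widehat\Sigma_j-\Sigma^\ast_j)$, splitting $\widehat\Sigma_j-\Sigma^\ast_j$ into a stochastic part (Frobenius-bounded by $p\sqrt{\log(pT)/\mathcal{I}^\ast_j}$ via the first step) and a contamination part arising from the $O(T\delta_T)$ mismatched indices (bounded by $O(T\delta_T/\mathcal{I}^\ast_j)$ under Assumption~\ref{assumption_regularity}), and inverting the Lyapunov-type operator $\Sym(\cdot\,\Sigma^\ast_j)$ using $\lambda_{\min}(\Sigma^\ast_j)\geq\underline{\mu}$, yields the four stated rates: $\lambda_2 T/\mathcal{I}^\ast_j$ from the fused subgradient, $\lambda_1 Tp(1+T\delta_T/\mathcal{I}^\ast_j)$ from the LASSO subgradient (the correction factor coming from the $T/|\widehat{\Bc}_j|$ ratio), $T\delta_T/\mathcal{I}^\ast_j$ from the contamination, and $s^\ast_{\max}\,p\,\sqrt{\log(pT)/\mathcal{I}^\ast_j}$ from the stochastic deviation, with $s^\ast_{\max}$ inherited from the Frobenius norm of $\Omega^\ast_j$ in the cross-term $\Omega^\ast_j(\widehat\Sigma_j-\Sigma^\ast_j)$.

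The main obstacle is the contradiction argument in Step~2: the global bijective pairing between $\{\widehat{T}_j\}$ and $\{T^\ast_j\}$ enforced by $\widehat{m}=m^\ast$ must be handled so that the competitor $\{\widetilde\Theta_t\}$ modifies only a bounded number of blocks, and the deterministic gain must simultaneously dominate the stochastic fluctuation \emph{and} the aggregate penalty change, which is where the interplay between $\eta_{\min}$, $\delta_T$, $\lambda_1$ and $\lambda_2$ encoded in Assumption~\ref{assumption_rates} becomes essential. A secondary subtlety in Step~3 is converting entrywise concentration on $\widehat\Sigma_j$ into a Frobenius bound on $\widehat{\Omega}_j-\Omega^\ast_j$, which introduces the factor $p$ and motivates the joint rate condition in Assumption~\ref{assumption_rates}-(iii).
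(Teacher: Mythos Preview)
Your route to (i) is not the paper's and, as written, has a gap. The paper never compares objective values at a competitor trajectory; it works entirely with the partial-sum optimality conditions of Lemma~\ref{optimality_cond}. Evaluating that lemma at $t=\widehat T_j$ and at $t=T^\ast_j$ and subtracting yields
\[
2\lambda_2+\lambda_1\sqrt{p(p-1)}\,(T^\ast_j-\widehat T_j)\ \ge\ R_{Tj,1}-R_{Tj,2}-R_{Tj,3},
\]
with $R_{Tj,1}\asymp\gamma^{\min}_{1,T,j}\|\Omega^\ast_{j+1}-\Omega^\ast_j\|_F$ the signal term, $R_{Tj,2}$ an estimator cross term and $R_{Tj,3}$ the stochastic term. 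The probability of $A^+_{T,j}=\{T^\ast_j-\widehat T_j\ge T\delta_T\}$ is then bounded separately on $C_T=\{\max_j|\widehat T_j-T^\ast_j|<\mathcal{I}_{\min}/2\}$ and on its complement (the latter via a further case split $D^{(l)}_T,D^{(m)}_T,D^{(r)}_T$), and the cross term $R_{Tj,2}\asymp\|\widehat\Omega_{j+1}-\Omega^\ast_{j+1}\|_F$ is itself controlled by re-applying the \emph{same} KKT-differencing trick at another pair of indices such as $t=T^\ast_j$ and $t=(T^\ast_j+T^\ast_{j+1})/2$. No preliminary consistency of the block estimates and no competitor are ever needed.

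The weak point in your sketch is the competitor $\{\widetilde\Theta_t\}$. If its block values are the unpenalised block-wise D-trace minimisers, then $\|\widetilde\Omega_j\|_{1,\mathrm{off}}$ and the fused jumps $\|\widetilde\Omega_{j+1}-\widetilde\Omega_j\|_F$ are uncontrolled, so the penalty change is not $O(\lambda_1Tp+\lambda_2)$ as you claim: the bounds $\|Z\|_F\le p$ and $\|V\|_F\le 1$ are subgradient norms, not penalty \emph{values}. If instead you keep the estimator's block values and only move the break, the D-trace loss gain on the displaced interval scales with $\|\widehat\Omega_{j+1}-\Omega^\ast_j\|_F^2$, not with $\eta_{\min}^2$; lower-bounding the former is precisely what the paper extracts from the KKT conditions (cf.\ the derivation of \eqref{bound_param_1}), so your argument as stated is circular on this point. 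For (ii) your plan essentially coincides with the paper's (which obtains the block relation by differencing Lemma~\ref{optimality_cond} at $t=\widehat T_l$ and $t=\widehat T_{l+1}$), except that in the correct block KKT the LASSO subgradient enters with coefficient $\lambda_1T$, not $\lambda_1T/|\widehat{\Bc}_j|$; the factor $(1+T\delta_T/\mathcal{I}^\ast_j)$ in the rate actually arises because $\widehat T_{l+1}-\widehat T_l=\mathcal{I}^\ast_{l+1}+O_p(T\delta_T)$ appears in the numerator against $\widehat T_{l+1}-T^\ast_l\asymp\mathcal{I}^\ast_{l+1}$ in the denominator after the differencing.
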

\begin{remark}
  Result (i) implies $\max_{1 \leq j \leq m^\ast}T^{-1}|\widehat{T}_j-T^\ast_j|=O_p(\delta_T)$. Since $\delta_T=o(1)$, this means $T^{-1}|\widehat{T}_j-T^\ast_j|=o_p(1)$. Here, $\delta_T$ is a key quantity to control for the rate at which $\widehat{T}_j/T$ converges to $T^\ast_j/T$. Note that $\delta_T \gg \max(\frac{\lambda_2}{\eta_{\min}},p^2(s^\ast_{\max})^2\log(pT)/(T\eta_{\min}^2))$, implies that the fastest convergence rate for the \rev{change point} ratio estimator depends on the regularization parameter $\lambda_2$ and $p^2(s^\ast_{\max})^2\log(pT)/(T\eta_{\min}^2)$. Result (ii) relates to the consistency of the precision matrix in each regime.
\end{remark}

The true number of \rev{change points} $m^\ast$ is unknown \rev{in practice}. Following the common practice in the change point literature, we assume that $m^\ast$ is bounded by a known conservative upper bound $m_{\max}$ \rev{with $m_{\max} \leq C\log(T)$, $C>0$ large enough}. Define $h(A,B)\coloneqq \sup_{b \in B}\inf_{a\in A}|a-b|$ for any two sets $A, B$. The next result shows that all true \rev{change points} in $\Tc^\ast_{m^\ast}$ can be consistently estimated by some points in $\widehat{\Tc}_{\widehat{m}}\coloneqq \{\widehat{T}_1<\widehat{T}_2<\ldots<\widehat{T}_{\widehat{m}}\}$.
\begin{theorem}\label{date_recov}
Suppose Assumptions \ref{assumption_dgp}-\ref{assumption_rates} are satisfied. If $m^\ast \leq \widehat{m} \leq m_{\max}$, then
$\Pb(h(\widehat{\Tc}_{\widehat{m}},\Tc^\ast_{m^\ast})\leq T\delta_T) \rightarrow 1 \; \text{as} \; T \rightarrow \infty$.
\end{theorem}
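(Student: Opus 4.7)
The plan is to argue by contradiction: assume that with probability bounded away from zero there exists a true change point $T^\ast_{j_0} \in \Tc^\ast_{m^\ast}$ with $\min_{1 \le k \le \widehat{m}}|\widehat{T}_k - T^\ast_{j_0}| > T\delta_T$. By Assumption \ref{assumption_rates}(ii) we have $T\delta_T = o(\mathcal{I}_{\min})$, so the symmetric interval $\Nc \coloneqq \{T^\ast_{j_0}-T\delta_T,\ldots,T^\ast_{j_0}+T\delta_T-1\}$ lies entirely inside $\Bc^\ast_{j_0}\cup \Bc^\ast_{j_0+1}$ and contains no estimated break. Consequently the sequence $\widehat{\Theta}_t$ is constant on $\Nc$, equal to some matrix $\widehat{\Omega}$, while the true precision matrix jumps at $T^\ast_{j_0}$ with $\|\Omega^\ast_{j_0+1}-\Omega^\ast_{j_0}\|_F \ge \eta_{\min}$.

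The first step is to construct an admissible competitor $\widetilde{\Theta}$ that coincides with $\widehat{\Theta}$ outside $\Nc$, and on $\Nc$ inserts a single extra break at $T^\ast_{j_0}$, using precision matrices $\widehat{\Omega}^-$ and $\widehat{\Omega}^+$ obtained from the local D-trace minimizers on the left and right halves of $\Nc$ respectively. The idea is that, on each half, $\widetilde{\Theta}$ can exploit the correct population precision while $\widehat{\Omega}$ is necessarily a compromise; since the D-trace loss $\Theta \mapsto \tfrac{1}{2}\mathrm{tr}(\Theta^2 S)-\mathrm{tr}(\Theta)$ has Hessian $I_p \otimes S \succeq \underline{\mu}^{-1}I$ at the population level (by Assumption \ref{assumption_regularity}), one obtains
\[
\Lb(\widehat{\Theta},\Xc_T)-\Lb(\widetilde{\Theta},\Xc_T) \ \ge\ \frac{c\, T\delta_T}{T}\,\eta_{\min}^2 - R_T,
\]
where $c>0$ depends only on $\underline{\mu},\overline{\mu}$, and $R_T$ is a stochastic remainder coming from the fluctuation of $\frac{1}{|\Jc|}\sum_{t\in\Jc}X_tX_t^\top$ around $\Sigma^\ast_{j_0}$ and $\Sigma^\ast_{j_0+1}$ on the two halves.

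The second step controls the penalty balance. Inserting one extra break inflates the Group Fused LASSO penalty by at most $\lambda_2 \|\widehat{\Omega}^+-\widehat{\Omega}^-\|_F = O(\lambda_2(\eta_{\min}+s^\ast_{\max}))$, and the off-diagonal $\ell_1$ penalty changes by at most $O(\lambda_1 T p\, s^\ast_{\max})$. Invoking Assumption \ref{assumption_rates}(iv) — namely $\lambda_2/(\eta_{\min}\delta_T)\to 0$ and $\lambda_1 Tp/\eta_{\min}\to 0$ — together with the definition of $\delta_T$ in Assumption \ref{assumption_rates}(iii), the right-hand side of the inequality above strictly exceeds the penalty increase, contradicting the optimality of $\widehat{\Theta}$ in \eqref{stat_crit}. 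A union bound over the (at most $m_{\max}$) true breaks $T^\ast_{j}$ and over $\widehat{m}\in\{m^\ast,\ldots,m_{\max}\}$ yields the conclusion.

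The hard part will be the concentration control of $R_T$: one needs to show that, uniformly over all candidate windows of length $T\delta_T$, the empirical D-trace gain concentrates around its expectation at a rate dominated by $\delta_T\eta_{\min}^2$. This requires combining a Bernstein-type inequality for $\alpha$-mixing processes with the subexponential tail of $X_{k,t}X_{l,t}$ from Assumption \ref{assumption_dgp}(ii), together with the growth condition $T\delta_T \gtrsim \log(pT)^{(2+\gamma)/\gamma}$ from Assumption \ref{assumption_rates}(i); this is exactly the type of uniform concentration argument used in the proof of Theorem \ref{consistency}(i), which can therefore be reused here with minor modifications to accommodate the window $\Nc$ of width $2T\delta_T$.
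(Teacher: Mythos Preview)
Your route is genuinely different from the paper's. The paper does not construct a competitor or compare objective values; instead it works entirely with the first–order optimality conditions of Lemma~\ref{optimality_cond}. For each ``missed'' true break $T^\ast_k$ it evaluates the subgradient identity at $t=T^\ast_k$ and at the nearest estimated breaks $\widehat{T}_l,\widehat{T}_{l+1}$ (respectively at $T^\ast_{k\pm1}$ in the various configurations $L^{(1)}_{m,k,3},\ldots,L^{(4)}_{m,k,3}$), subtracts, and obtains inequalities of the form $2\lambda_2+\lambda_1\sqrt{p(p-1)}(\cdot)\ \ge\ \|\cdot\|_2$, which are then shown to force $\|\Omega^\ast_{k+1}-\Omega^\ast_k\|_F$ to be small, contradicting $\eta_{\min}>0$. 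This KKT route avoids ever having to control $\|\widehat{\Omega}\|_F$ or to build a feasible perturbation; it only uses linear bounds on subgradients and the eigenvalue sandwich from Lemma~\ref{emp_var_cov_bound}. Your energy/competitor argument is more elementary in spirit, but several of the bookkeeping steps in your sketch do not close as written.

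The main gap is the penalty--versus--gain balance. First, replacing $\widehat{\Omega}$ by $\widehat{\Omega}^{\pm}$ on $\Nc$ creates \emph{three} jumps (two at the boundaries of $\Nc$ and one at $T^\ast_{j_0}$), not one; the boundary contributions $\lambda_2\|\widehat{\Omega}-\widehat{\Omega}^{\pm}\|_F$ cannot be absorbed into your stated bound $\lambda_2\|\widehat{\Omega}^+-\widehat{\Omega}^-\|_F$. Second, with your crude bounds $O(\lambda_2(\eta_{\min}+s^\ast_{\max}))$ and $O(\lambda_1Tp\,s^\ast_{\max})$ the comparison against the gain $c\,\delta_T\eta^2_{\min}$ fails: Assumption~\ref{assumption_rates}(iv) only gives $\lambda_2=o(\delta_T\eta_{\min})$ and $\lambda_1Tp=o(\eta_{\min})$, so you would need $s^\ast_{\max}=O(\eta_{\min})$, which is nowhere assumed (and typically false). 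The fix is to carry $a\coloneqq\|\widehat{\Omega}-\widehat{\Omega}^-\|_F$ and $b\coloneqq\|\widehat{\Omega}-\widehat{\Omega}^+\|_F$ through both sides: the loss gain is $\gtrsim \delta_T\underline{\mu}(a^2+b^2)$ by strong convexity, while the total penalty increase is $\lesssim (\lambda_2+\lambda_1T\delta_Tp)(a+b)$; since $a+b\ge \|\widehat{\Omega}^+-\widehat{\Omega}^-\|_F\gtrsim\eta_{\min}$ (after concentration) and $(a^2+b^2)/(a+b)\ge(a+b)/2$, Assumption~\ref{assumption_rates}(iv) then suffices. Two smaller points: the D-trace Hessian is $\tfrac12(S\otimes I_p+I_p\otimes S)\succeq \underline{\mu}I$ (not $I_p\otimes S\succeq\underline{\mu}^{-1}I$), and the ``local D-trace minimizers'' need not lie in $\{\Theta\succeq 0\}$, so you should either project them or use the true $\Omega^\ast_{j_0},\Omega^\ast_{j_0+1}$ as the competitor values and absorb the resulting first–order remainder into $R_T$.
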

The proof of Theorem \ref{date_recov} is done by contradiction and follows similar arguments as in the proof of Theorem \ref{consistency}. It relies on the optimality conditions from Lemma~\ref{optimality_cond}. Theorem \ref{date_recov} ensures that even if the number of blocks is overestimated, there will be an estimated change point close to each unknown true change point.

\section{Optimization}\label{sec:opt}
\rev{
  The results of Section \ref{asymptotic_properties} establish the consistency of the GFDtL estimator defined by criterion \eqref{stat_crit} under suitable assumptions.
  In this section, we focus on the computation of this estimator.
  As we will show in Section \ref{subsec:dual-existence}, the problem associated with~\eqref{stat_crit} (see \eqref{eq:ori-D-tr-opt-prob} below) may be unbounded from below for general tuning parameters, posing numerical challenges for estimation.
  To overcome this, we introduce in Section \ref{subsec:revised-prob} a revised problem with a modified regularizer.
  This revised problem guarantees solution existence for all tuning parameters and, via Proposition \ref{prop:rel-ori-new-D-tr}, serves as a bridge: its optimal solutions can either recover the optimal solutions to the original problem~\eqref{stat_crit} (thereby inheriting the theoretical guarantees of Section \ref{asymptotic_properties}) or signal the need to adjust tuning parameters by checking condition~\eqref{eq:prop-nosol-ori-new}.
  We then adapt the ADMM to solve this revised problem in Section \ref{subsec:ADMM}.
  In practice, once the precision matrices $\{\widehat{\Theta}_t\}_{t=1}^T$ are estimated, the \rev{change points} are determined numerically by thresholding: a \rev{change point} is detected at time $t$ if $\|\widehat{\Theta}_t-\widehat{\Theta}_{t-1}\|_F\geq \epsilon_{\text{tol}}$ for some threshold $\epsilon_{\text{tol}}>0$: the implementation details and a sensitivity analysis with respect to $\epsilon_{\text{tol}}$ are provided in Sections \ref{sec:implementation} and \ref{sec:sen-ana}.
}

Specifically, given $ \mathcal{X}_T $, $ \epsilon > 0 $, $ \lambda_1 > 0 $ and $ \lambda_2 > 0 $, we consider the following problem
\begin{equation}
  \label{eq:ori-D-tr-opt-prob}
  \min_{\substack{\Theta_t \succeq \epsilon I_p, \\[0.1cm] 1 \leq t \leq T}} \left\{ \sum_{t=1}^T\! \left[ \text{tr}(\frac{1}{2}\Theta_t^2X_tX_t^{\top}) \!-\! \text{tr}(\Theta_t) \right] \!+\! \lambda_1T\sum_{t=1}^T\! \| \Theta_t \|_{1, \text{off}} \!+\! \lambda_2T\sum_{t=1}^{T-1}\!\| \Theta_{t+1} - \Theta_t \|_F \right\},
\end{equation}
where we scale Problem \eqref{stat_crit} by a factor of $T$ for numerical stability.
One can also notice that in \eqref{eq:ori-D-tr-opt-prob} we use $ \Theta_t \succeq \epsilon I_p $ rather than $ \Theta \succ 0 $ as in \eqref{stat_crit}.
This choice is made for practical reasons, as setting $ \epsilon > 0 $ ensures non-singular solutions, and the set $\{S \, :\, S \succeq \epsilon I_p\} $ is \textit{closed} and convex and hence the projection onto it is well defined.

\subsection{Dual Problem and Existence of \rev{Optimal} Solutions} \label{subsec:dual-existence}
We first deduce the dual problem of \eqref{eq:ori-D-tr-opt-prob}, and show that \eqref{eq:ori-D-tr-opt-prob} may be unsolvable.

\begin{proposition}\label{prop:dual-prob-ori-D-tr}
	\begin{enumerate}[(i)]
      \item\label{item:dual-prob-ori-D-tr-prob}
		The dual problem of \eqref{eq:ori-D-tr-opt-prob} is
		\begin{equation}\displayindent0pt\displaywidth\textwidth
			\everymath{\displaystyle}
			\begin{split}
				\max_{\mathbf{Y}} \quad & \left\{ \sum_{t=1}^T-\frac{1}{2} \tr(W_t^{\top}W_t) + \epsilon \sum_{t=1}^T \tr \left( Z_t - Z_{t - 1} - I_p + (X_tX_t^{\top})^{\frac{1}{2}}W_t - Y_{t, \mathrm{off}} \right) \right\} \\
				\text{s.t.} \quad       & Z_0 \!=\! Z_T \!=\! \mathbf{0}_{p \times p}; Z_t \!-\! Z_{t \!-\! 1} \!-\! I_p \!+\! \Sym\!\left((X_tX_t^{\top})^{\frac{1}{2}}W_t\right) \!-\! Y_{t, \mathrm{off}} \!\succeq\! 0 \,\,\,\, \forall t \!=\! 1, \!\dots\! , T;                                                          \\
				                        & \| Z_t \|_F \leq \lambda_2T, \quad | Y_{uv, t} | \leq \lambda_1T \,\,\,\, \forall t = 1, \dots , T, u, v = 1, \dots , p \text{ with } u \neq v,
			\end{split}
			\raisetag{30pt}\label{eq:dual-prob-ori-D-tr}
		\end{equation}
            where \( \mathbf{Y} = \left\{ \{W_t\}_{t=1}^T, \{Y_{t, \mathrm{off}}\}_{t=1}^T, \{Z_t\}_{t=1}^{T-1} \right\} \) is the dual variable with $ W_t \in \Rb^{p\times p} $, $ Y_{t, \mathrm{off}} \in \mathcal{S}_{\mathrm{off}}^p $, $ Z_t \in \mathcal{S}^p $ for all $ t $; $ \Sym $ is the symmetrization operator. Moreover, the optimal values of \eqref{eq:ori-D-tr-opt-prob} and \eqref{eq:dual-prob-ori-D-tr} are the same.

      \item\label{item:strict-fea-ori-dual} If \( \sum_{t=1}^T\! X_tX_t ^{\top}\! \succ\! 0 \), then there exists $ \overline{\lambda}_2 \!>\! 0 $ such that for any $ \lambda_1 \!>\! 0 $ and any $ \lambda_2 \!\geq\! \overline{\lambda}_2 $, the dual problem \eqref{eq:dual-prob-ori-D-tr} has a Slater point\footnote{A Slater point of \eqref{eq:dual-prob-ori-D-tr} is a feasible point that satisfies all the inequality and positive semi-definite constraints strictly, i.e., satisfies all the ``$\leq$" and ``$\succeq$" as ``$<$" and ``$\succ$", respectively.} and the primal problem \eqref{eq:ori-D-tr-opt-prob} has \rev{optimal} solutions.
	\end{enumerate}
\end{proposition}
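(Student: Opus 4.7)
The plan is to derive part~(i) by a standard Lagrangian duality computation and to establish part~(ii) by explicitly constructing a Slater point of the dual problem, from which the existence of a primal solution follows via the conic-duality principle that dual strict feasibility implies primal attainment.

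For part~(i), I would introduce dual variables via the max representations $\lambda_1 T\|\Theta_t\|_{1,\mathrm{off}} = \max_{|Y_{uv,t}|\leq \lambda_1 T,\,u\neq v}(-\langle Y_{t,\mathrm{off}},\Theta_t\rangle)$, $\lambda_2 T\|\Theta_{t+1}-\Theta_t\|_F = \max_{\|Z_t\|_F\leq \lambda_2 T}\langle Z_t,\Theta_{t+1}-\Theta_t\rangle$, and $\tfrac{1}{2}\tr(\Theta_t^2 X_tX_t^\top) = \max_{W_t}\{\tr(\Theta_t(X_tX_t^\top)^{1/2}W_t)-\tfrac{1}{2}\tr(W_t^\top W_t)\}$, augmented by a PSD multiplier $S_t\succeq 0$ for $\Theta_t\succeq\epsilon I_p$. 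Exchanging the inner max with the outer min by Fenchel--Rockafellar or Sion arguments (using compactness of the $Y, Z$ domains and coercivity in $W_t$) reduces the problem to an affine inner minimization over $\Theta_t\in\mathcal{S}^p$; telescoping $\sum_{t=1}^{T-1}\langle Z_t,\Theta_{t+1}-\Theta_t\rangle$ under the convention $Z_0=Z_T=\mathbf{0}_{p\times p}$ and requiring the $\Theta_t$-coefficient to vanish recovers exactly the semidefinite constraint $Z_t-Z_{t-1}-I_p+\Sym((X_tX_t^\top)^{1/2}W_t)-Y_{t,\mathrm{off}}\succeq 0$ (with $S_t$ absorbed into its left-hand side). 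Substituting back produces the dual objective in \eqref{eq:dual-prob-ori-D-tr}, and equality of optimal values (strong duality) is a direct by-product.

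For part~(ii), the key step is an explicit Slater construction. Summing the semidefinite constraints over $t=1,\dots,T$ and using $Z_0=Z_T=0$ telescopes to the aggregate condition $\sum_{t=1}^T\Sym((X_tX_t^\top)^{1/2}W_t)-\sum_{t=1}^T Y_{t,\mathrm{off}}\succeq TI_p$. Setting $Y_{t,\mathrm{off}}=0$ satisfies the box constraints strictly (since $\lambda_1>0$), and the hypothesis $\sum_t X_tX_t^\top\succ 0$ lets me choose $W_t:=(X_tX_t^\top)^{1/2}M$ with $M:=c\bigl(\sum_{s=1}^T X_sX_s^\top\bigr)^{-1}$ for any fixed $c>T$, yielding $\sum_{t=1}^T\Sym((X_tX_t^\top)^{1/2}W_t)=cI_p$ and an aggregate slack of $(c-T)I_p$. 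I would then distribute this slack uniformly in time by setting $Z_t:=\sum_{s=1}^t\bigl[I_p-\Sym(X_sX_s^\top M)\bigr]+t\cdot\tfrac{c-T}{T}I_p$, which gives $Z_0=0$, $Z_T=0$, and makes each per-$t$ PSD inequality strict with slack $\tfrac{c-T}{T}I_p\succ 0$. Each $\|Z_t\|_F$ is bounded by a constant $R$ depending only on $\mathcal{X}_T$ and $c$, so choosing $\overline{\lambda}_2$ slightly larger than $R/T$ ensures $\|Z_t\|_F<\lambda_2 T$ for all $\lambda_2\geq \overline{\lambda}_2$. Existence of a primal solution then follows from the standard conic-duality result that dual strict feasibility implies primal attainment.

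The main obstacle is the simultaneous strict fulfillment of all three families of dual constraints: the semidefinite inequalities couple consecutive $Z_t$'s with a per-step deficit of $I_p$, which together with the boundary conditions $Z_0=Z_T=0$ demands a net surplus of at least $TI_p$ from the $\Sym((X_tX_t^\top)^{1/2}W_t)$ terms. The hypothesis $\sum_t X_tX_t^\top\succ 0$ is precisely what permits this through the global inverse $\bigl(\sum_t X_tX_t^\top\bigr)^{-1}$; the resulting $Z_t$'s carry Frobenius norms controlled by $\mathcal{X}_T$, and it is this magnitude that forces the lower threshold $\overline{\lambda}_2$ on $\lambda_2$ so that the Frobenius-ball constraints remain strict.
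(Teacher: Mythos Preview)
Your approach is correct and essentially matches the paper's: part~(i) is standard Lagrangian duality (the paper organizes it via auxiliary-variable splitting rather than conjugate max-representations, but the computation is the same), and part~(ii) is an explicit Slater-point construction followed by conic duality. Your Slater point differs from the paper's in detail---the paper takes $\overline{W}_t = c(X_tX_t^\top)^{1/2}$ for $c$ large enough that $c\sum_t X_tX_t^\top - TI_p \succ 0$, then defines $\overline{Z}_t$ recursively with a small uniform slack $\gamma I_p$, avoiding the explicit inverse $(\sum_s X_sX_s^\top)^{-1}$---but both constructions are equally valid and lead to the same $\overline{\lambda}_2$-type bound. One caution for part~(i): your appeal to Sion is not quite justified, since neither the primal domain $\{\Theta_t\succeq\epsilon I_p\}$ nor the full dual domain (once $S_t\succeq 0$ is included) is compact; the paper secures equality of optimal values by invoking primal strict feasibility (any $\Theta_t\succ\epsilon I_p$ satisfies the equality constraints in the split reformulation) together with Rockafellar's Theorem~31.1, which is the cleaner way to close this step.
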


A simple example to illustrate the possible unsolvability of \eqref{eq:ori-D-tr-opt-prob} is included in Appendix~\ref{sec:opt-examples}.
We also note that the assumption $ \sum_{t=1}^T X_t X_t^{\top} \succ 0 $ in Proposition \ref{prop:dual-prob-ori-D-tr}-(\ref{item:strict-fea-ori-dual}) is reasonable because it can be viewed as a sample-based version of Assumption \ref{assumption_regularity}.

\begin{remark}
    It is worth pointing out that the nonexistence of \rev{optimal} solution does not contradict our findings in Section~\ref{asymptotic_properties} because those results only indicate that when there is a ground truth, under suitable assumptions, the ground truth can be (approximately) recovered from \rev{an optimal} solution of problem~\eqref{stat_crit} with suitably chosen $T$, $\lambda_1$ and $\lambda_2$; in particular, it did not imply solution existence of problem~\eqref{stat_crit} for general $T$, $\lambda_1$ and $\lambda_2$.
\end{remark}

To ensure the existence of \rev{an optimal} solution, although we can obtain some lower bound $ \overline{\lambda}_2 $ of $ \lambda_2 $ (as detailed in the proof of Proposition \ref{prop:dual-prob-ori-D-tr}-(\ref{item:strict-fea-ori-dual})) to ensure the solution existence for \eqref{eq:ori-D-tr-opt-prob}, this $ \overline{\lambda}_2 $ may not be tight.
This can imply practical issues because the optimal $ \lambda^*_2 $ can be strictly smaller than $ \overline{\lambda}_2 $. Then we will need to work with problem~\eqref{eq:ori-D-tr-opt-prob} with some $\lambda_2 < \overline{\lambda}_2$ to locate such $\lambda^*_2$. However, since $\lambda_2 < \overline{\lambda}_2$, \eqref{eq:ori-D-tr-opt-prob} may be unbounded from below and certifying such a scenario is a challenging problem. This motivates us to modify problem~\eqref{eq:ori-D-tr-opt-prob} to obtain a new model which has \rev{optimal} solutions for {\em all choices} of $\lambda_1$ and $\lambda_2$, and (under some mild condition) returns \rev{an optimal} solution of problem~\eqref{eq:ori-D-tr-opt-prob} when the latter problem is solvable.

To this end, we notice that the unsolvability of problem~\eqref{eq:ori-D-tr-opt-prob} when $\lambda_2$ is small may be related to the fact that {\em the relations between different groups} induced by the Group Fused LASSO regularizer is not strong enough to leverage the condition \( \sum_{t=1}^T X_tX_t ^{\top} \succ 0 \) to ensure dual strict feasibility (and hence the solvability of \eqref{eq:ori-D-tr-opt-prob}).
Hence, this motivates the introduction of a (modified) new regularizer to replace the Group Fused LASSO regularizer in problem~\eqref{eq:ori-D-tr-opt-prob}: intuitively, this regularizer should be similar to Group Fused LASSO regularizer when $\|\Theta_{t+1} - \Theta_t\|_F$ is small for inducing the (same) desired \rev{change points}, but penalize more on large $\|\Theta_{t+1} - \Theta_t\|_F$ to ensure the solution existence of the new model.

\subsection{A Revised Problem with a Modified Regularizer}\label{subsec:revised-prob}
Let $ \lambda_3 \geq 0.5 $ and let
\begin{equation}
	\label{eq:def-R}
	\mathcal{R}(x; \lambda_3) \coloneqq
	\begin{cases}
		| x |                         & \text{if } | x | \leq \lambda_3, \\[-.3cm]
		x^2 - \lambda_3^2 + \lambda_3 & \text{otherwise}.
	\end{cases}
\end{equation}
Here, $ \lambda_3 \geq 0.5 $ is necessary and sufficient to ensure the convexity of $ \mathcal{R} $.
The function $ \mathcal{R} $ employs the absolute value in a small region near 0 (determined by $ \lambda_3 $) and switches to a quadratic function outside this region.
In this way, it reduces to the classical $ \ell_1 $ penalty when $ x $ is near 0, while imposing a more substantial penalty as $ x $ goes away from 0.

Replacing $\|\Theta_{t+1} - \Theta_t\|_F$ by $ \mathcal{R}(\|\Theta_{t+1} - \Theta_t\|_F;\lambda_3) $ in problem \eqref{eq:ori-D-tr-opt-prob}, we obtain the following revised optimization problem:
\begin{equation}
	\label{eq:D-tr-opt-prob}
  \hspace{-0.2cm}\min_{\substack{\Theta_t \succeq \epsilon I_p, \\[0.1cm] 1 \leq t \leq T}}\! \left\{ \sum_{t=1}^T\! \left[ \text{tr}(\frac{1}{2}\Theta_t^2X_tX_t^{\top}) \!-\! \text{tr}(\Theta_t) \right] \!\!+\!\! \lambda_1T\!\sum_{t=1}^T\! \| \Theta_t \|_{1, \text{off}} \!+\! \lambda_2T\!\sum_{t=1}^{T-1}\!\mathcal{R}(\| \Theta_{t+1} \!-\! \Theta_t \|_F;\! \lambda_3) \right\}.
\end{equation}

The next proposition shows the dual problem and the existence of \rev{optimal} solutions to \eqref{eq:D-tr-opt-prob}.
\begin{proposition}
  \label{prop:dual-prob-D-tr}
    \begin{enumerate}[(i)]
        \item\label{item:dual-prob-D-tr-prob}
        Let
        $$
            \mathcal{G}(x; \lambda_3) = \min \left\{ -\left( x - \lambda_2T \right)_+ \lambda_3, \lambda_2T \left( \left( \lambda_3 - \frac{x}{2\lambda_2T} \right)_+ ^2 - \frac{x^2}{4\lambda_2^2T^2} - \lambda_3^2 + \lambda_3 \right) \right\},
        $$
        where $ (\cdot)_+ = \max \{\cdot , 0\} $.
        Then the dual problem of \eqref{eq:D-tr-opt-prob} is
        \begin{equation}\displayindent0pt\displaywidth\textwidth
            \everymath{\displaystyle}
            \raisetag{30pt}\label{eq:dual-prob-D-tr}
            \begin{split}
              \max_{\mathbf{Y}} \;\;& \Bigg\{\! \sum_{t=1}^T\!-\frac{1}{2} \!\tr(W_t^{\top}W_t) \!+\! \epsilon\! \sum_{t=1}^T \!\tr\!\left( Z_t \!-\! Z_{t \!-\! 1} \!-\! I_p \!+\! (X_tX_t^{\top})^{\frac{1}{2}}W_t \!-\! Y_{t, \mathrm{off}} \!\right) \!+\! \sum_{t=1}^{T-1}\! \mathcal{G}(\| Z_t \|_F;\! \lambda_3) \!\Bigg\},                                                                                        \\
                \text{s.t.} \;\;       & Z_0 \!=\! Z_T \!=\! \mathbf{0}_{p \times p}; Z_t \!-\! Z_{t \!-\! 1} \!-\! I_p \!+\! \Sym\!\left((X_tX_t^{\top})^{\frac{1}{2}}W_t\right) \!-\! Y_{t, \mathrm{off}} \!\succeq\! 0 \,\, \forall t \!=\! 1, \!\dots\! , T;                                                                 \\
                                        & | Y_{uv, t} | \leq \lambda_1T \,\,\,\, \forall t = 1, \dots, T, u, v = 1, \dots , p \text{ with } u \neq v,
            \end{split}
        \end{equation}
        where \( \mathbf{Y} = \left\{ \{W_t\}_{t=1}^T, \{Y_{t, \mathrm{off}}\}_{t=1}^T, \{Z_t\}_{t=1}^{T-1} \right\} \) is the dual variable with $ W_t \in \Rb^{p\times p} $, $ Y_{t, \mathrm{off}} \in \mathcal{S}_{\mathrm{off}}^p $, $ Z_t \in \mathcal{S}^p $ for all $ t $; $ \Sym $ is the symmetrization operator. Moreover, \eqref{eq:D-tr-opt-prob} and \eqref{eq:dual-prob-D-tr} have the same optimal values.

        \item\label{item:strict-fea-dual} If \( \sum_{t=1}^T X_tX_t ^{\top} \succ 0 \), then the dual problem \eqref{eq:dual-prob-D-tr} has a Slater point and the primal problem \eqref{eq:D-tr-opt-prob} has \rev{optimal} solutions.
    \end{enumerate}
\end{proposition}

The relationship between \eqref{eq:ori-D-tr-opt-prob} and \eqref{eq:D-tr-opt-prob} is summarized as follows.
\begin{proposition}
  \label{prop:rel-ori-new-D-tr}
  Given $ \mathcal{X}_T $ and $ \epsilon > 0 $, the following statements hold:
  \begin{enumerate}[(i)]
    \item\label{item:equivalence-opt-probs} For any positive $ \lambda_1$ and $\lambda_2$ such that \eqref{eq:ori-D-tr-opt-prob} has \rev{optimal} solutions, there exists $ \overline{\lambda}_3 \geq 0.5 $ such that for any \rev{$\lambda_3\ge \overline{\lambda}_3$}, any \rev{optimal} solution of \eqref{eq:D-tr-opt-prob} also solves \eqref{eq:ori-D-tr-opt-prob}.
    \item\label{item:properties-solution} Fix any positive $ \lambda_1$ and $\lambda_2 $ such that \eqref{eq:ori-D-tr-opt-prob} does not have \rev{optimal} solutions. Then for any $ \lambda_3 \geq 0.5 $, any \rev{optimal} solution $ \{\Theta_t^{*}\}_{t=1}^T $ to \eqref{eq:D-tr-opt-prob}, satisfies
          \begin{equation}
            \label{eq:prop-nosol-ori-new}
            \max_{t = 1, \dots, T-1} \| \Theta_{t+1}^{*} - \Theta_t^{*} \|_F \ge \lambda_3.
          \end{equation}
  \end{enumerate}
\end{proposition}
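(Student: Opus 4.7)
The plan hinges on a simple pointwise comparison between the two objectives. Let $f_1$ and $f_2$ denote the objectives of \eqref{eq:ori-D-tr-opt-prob} and \eqref{eq:D-tr-opt-prob} respectively. The starting observation, drawn directly from \eqref{eq:def-R}, is that $\mathcal{R}(x;\lambda_3)\ge |x|$ for every $x$, with equality if and only if $|x|\le\lambda_3$; indeed, for $|x|>\lambda_3$ one has $\mathcal{R}(x;\lambda_3)-|x|=(|x|-\lambda_3)(|x|+\lambda_3-1)>0$ since $\lambda_3\ge 0.5$. Summing over $t$, this gives $f_2\ge f_1$ pointwise, with $f_2(\{\Theta_t\})=f_1(\{\Theta_t\})$ whenever $\max_{1\le t\le T-1}\|\Theta_{t+1}-\Theta_t\|_F\le\lambda_3$.

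For part (\ref{item:equivalence-opt-probs}), I would pick any solution $\{\Theta_t^\circ\}$ of \eqref{eq:ori-D-tr-opt-prob} and set $\overline{\lambda}_3\coloneqq\max\bigl\{0.5,\,\max_t\|\Theta_{t+1}^\circ-\Theta_t^\circ\|_F\bigr\}$. For any $\lambda_3\ge\overline{\lambda}_3$ and any solution $\{\Theta_t^*\}$ of \eqref{eq:D-tr-opt-prob}, the chain
$$
f_1(\{\Theta_t^*\})\le f_2(\{\Theta_t^*\})\le f_2(\{\Theta_t^\circ\})=f_1(\{\Theta_t^\circ\})=\min f_1
$$
forces $\{\Theta_t^*\}$ to also be a minimizer of $f_1$, which is exactly (\ref{item:equivalence-opt-probs}).

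For part (\ref{item:properties-solution}), I would argue by contradiction: suppose $\max_t\|\Theta_{t+1}^*-\Theta_t^*\|_F<\lambda_3$. By continuity, this strict inequality persists on an open neighborhood $U$ of $\{\Theta_t^*\}$ in the feasible set, and $f_2\equiv f_1$ on $U$. Since $\{\Theta_t^*\}$ globally minimizes $f_2$, it is a local minimizer of $f_1$ on $U$. The final ingredient is the convexity of $f_1$: the D-trace term can be written as $\tr(\Theta_t^2X_tX_t^{\top})=\|\Theta_t(X_tX_t^{\top})^{1/2}\|_F^2$, a convex quadratic in $\Theta_t$, while the remaining summands are either linear or norms, hence convex. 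Thus a local minimizer of $f_1$ is global, and so $\{\Theta_t^*\}$ solves \eqref{eq:ori-D-tr-opt-prob}, contradicting the hypothesis; hence \eqref{eq:prop-nosol-ori-new} holds.

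I do not anticipate a substantive obstacle here: once the equality region of $\mathcal{R}(\cdot;\lambda_3)$ is identified, each part reduces to a one-line objective comparison plus, for (\ref{item:properties-solution}), the standard fact that local minimizers of convex functions are global. The only subtle point worth flagging is that the hypothesis ``\eqref{eq:ori-D-tr-opt-prob} has no solution'' covers the unboundedness case as well; but convexity of $f_1$ handles both possibilities uniformly, since the existence of any local minimizer of $f_1$ simultaneously rules out an unattained finite infimum and an infimum of $-\infty$.
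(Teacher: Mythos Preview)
Your proof is correct and follows essentially the same approach as the paper: the same choice of $\overline{\lambda}_3$ and the same chain of inequalities for (\ref{item:equivalence-opt-probs}), and the same contrapositive for (\ref{item:properties-solution}) based on local agreement of the two objectives together with convexity. The only cosmetic difference is that in (\ref{item:properties-solution}) the paper phrases the convexity step as ``$\partial F=\partial G_{\lambda_3}$ at $\{\Theta_t^*\}$, hence $0\in\partial F$ there'', whereas you phrase it as ``local minimizer of a convex function is global''; these are equivalent formulations of the same observation.
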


\rev{
  \begin{remark}
    Condition \eqref{eq:prop-nosol-ori-new} provides a numerical certificate for the possible unsolvability of the original problem \eqref{eq:ori-D-tr-opt-prob}.
    Specifically, large jumps (i.e., $\|\Theta_{t+1} - \Theta_t\|_F \geq \lambda_3$) indicate that the original objective can be further decreased by allowing the precision matrices to diverge across time slots because the modified regularizer $\mathcal{R}(\cdot; \lambda_3)$ imposes a stronger quadratic penalty for jumps exceeding $\lambda_3$ compared to the linear penalty in the Group Fused LASSO.
    Therefore, if an optimal solution to the revised problem still exhibits such large jumps, the original linear Group Fused penalty may not be strong enough to keep the objective bounded.
    This behavior is consistent with the discussion at the end of Section \ref{subsec:dual-existence}.
  \end{remark}
}

Equipped with Proposition \ref{prop:rel-ori-new-D-tr}, we can derive the following practical way to search for a suitable $\lambda_2$ such that \eqref{eq:ori-D-tr-opt-prob} is solvable by solving (a sequence of) \eqref{eq:D-tr-opt-prob}.
Specifically, for any positive $ \lambda_1$ and $\lambda_2 $, we solve \eqref{eq:D-tr-opt-prob} with an appropriately large $ \lambda_3 $, and then check if the \rev{optimal} solution satisfies \eqref{eq:prop-nosol-ori-new}: if it does, we increase $ \lambda_2 $ further to pursue a reliable estimator; otherwise we obtain \rev{an optimal} solution to \eqref{eq:ori-D-tr-opt-prob}, and \rev{all results described in Section \ref{asymptotic_properties} hold}.

\subsection{An Alternating Direction Method of Multipliers}\label{subsec:ADMM}
In this subsection, we discuss how to adapt the alternating direction method of multipliers (ADMM) to solve \eqref{eq:D-tr-opt-prob}.
We are particularly solving the following equivalent problem:
\begin{equation*}
    \vspace{-.3cm}
	\begin{split}
		\min_{\mathbf{X}}\! & \left\{ \sum_{t=1}^T\! \left[ \text{tr}(\frac{1}{2}\Theta_t^2X_tX_t^{\top}) \!-\! \text{tr}(\Theta_t) + \delta_{\cdot \succeq \epsilon I_p}(V_t) \right] \!+\! \lambda_1T\!\sum_{t=1}^T\! \| \Upsilon_{t, \mathrm{off}} \|_{1, \text{off}} \!+\! \lambda_2T\!\sum_{t=1}^{T-1}\!\mathcal{R}(\| D_t \|_F;\! \lambda_3) \right\}, \\[0.1cm]
		\text{s.t.}         & \quad  V_t = \Theta_t, \Upsilon_{t, \mathrm{off}} = \Theta_{t, \mathrm{off}} \,\, \forall t = 1, \dots , T; \quad D_t = \Theta_{t+1} - \Theta_t \,\, \forall t = 1, \dots ,T-1,
	\end{split}
\vspace{.2cm}
\end{equation*}
where we denote $ \mathbf{X} = \left\{ \{\Theta_t\}_{t=1}^T, \{V_t\}_{t=1}^T, \{\Upsilon_{t, \mathrm{off}}\}_{t=1}^T, \{D_t\}_{t=1}^{T-1} \right\} $ for the sake of notional simplicity; $ \Theta_{t, \mathrm{off}} $ is the copy of $ \Theta_t $ with the diagonal elements set to 0, and $ \Upsilon_{t, \mathrm{off}} \in \mathcal{S}_{\mathrm{off}}^p $.

Given $ \beta > 0 $, the augmented Lagrangian function is
\begin{align*}
       & L(\{\Theta_t\}_{t=1}^T, \{V_t\}_{t=1}^T, \{\Upsilon_{t, \mathrm{off}}\}_{t=1}^T, \{D_t\}_{t=1}^{T-1}, \{A_t\}_{t=1}^T, \{Y_{t, \mathrm{off}}\}_{t=1}^T, \{Z_t\}_{t=1}^{T-1})                                                                                    \\
  \coloneqq  & \sum_{t=1}^T \left[ \text{tr}(\frac{1}{2}\Theta_t^2X_tX_t^{\top}) - \text{tr}(\Theta_t) + \delta_{\cdot \succeq \epsilon I_p}(V_t) \right] + \lambda_1T\sum_{t=1}^T \| \Upsilon_{t, \mathrm{off}} \|_{1, \text{off}}           \\
      & + \lambda_2T\sum_{t=1}^{T-1}\mathcal{R}(\| D_t \|_F; \lambda_3) + \sum_{t=1}^T \left[ - \left\langle A_t, \Theta_t - V_t  \right\rangle  + \frac{\beta}{2} \| \Theta_t - V_t \|_F^2 \right]                               \\
      &  + \sum_{t=1}^T \left[ - \left\langle Y_{t, \mathrm{off}}, \Theta_{t, \mathrm{off}} - \Upsilon_{t, \mathrm{off}} \right\rangle + \frac{\beta}{2} \| \Theta_{t, \mathrm{off}} - \Upsilon_{t, \mathrm{off}} \|_F^2 \right] \\
      &  + \sum_{t=1}^{T-1} \left[ - \left\langle Z_t, \Theta_{t+1} - \Theta_t - D_t \right\rangle + \frac{\beta}{2} \| \Theta_{t+1} - \Theta_t - D_t \|_F^2 \right],
\end{align*}
where \( \{A_t\}_{t=1}^T \), \( \{Y_{t, \mathrm{off}}\}_{t=1}^T \), \( \{Z_t\}_{t=1}^{T-1} \) are dual variables.
For simplicity, we let \( Z_0^k = Z_T^k = D_0^k = D_T^k = \mathbf{0}_{p\times p} \) for all iteration \( k \).
In iteration $ k+1 $, our ADMM consists of three update steps:
\begin{enumerate}
	\item $ \{\Theta_t\}_{t=1}^T $ update: it is equivalent to solving the following linear system
	      \begin{equation}
		      \label{eq:linear-sys-Theta}
		      \begin{aligned}
			      \vspace{-0.2cm}\frac{1}{2}(X_1X_1^{\top}\Theta_1 \!+\! \Theta_1X_1X_1^{\top}) \!+\! 2\beta \Theta_1 \!+\! \beta \Theta_{1, \mathrm{off}} \!-\! \beta \Theta_2                            & \!=\! \Psi_1^k, \\
			      \!-\! \beta \Theta_1 \!+\! \frac{1}{2}(X_2X_2^{\top}\Theta_2 \!+\! \Theta_2X_2X_2^{\top}) \!+\! 3\beta \Theta_2 \!+\! \beta \Theta_{2, \mathrm{off}} \!-\! \beta \Theta_3 & \!=\! \Psi_2^k, \\[-0.3cm]
			      \vdots                                                                                                                                                                     &                 \\[-0.3cm]
			      \!-\! \beta \Theta_{T\!-\!1} \!+\! \frac{1}{2}(X_TX_T^{\top}\Theta_T \!+\! \Theta_TX_TX_T^{\top}) \!+\! 2\beta \Theta_T + \beta \Theta_{T, \mathrm{off}}                  & \!=\! \Psi_T^k,
		      \end{aligned}
	      \end{equation}
        where \( \Psi_t^k = I_p + A_t^k + Y_{t, \mathrm{off}}^k - Z_t^k + Z_{t-1}^k + \beta V_t^k + \beta \Upsilon_{t, \mathrm{off}}^k - \beta D_t^k + \beta D_{t-1}^k \) for all \( t \).

	\item $ \{V_t\}_{t=1}^T, \{\Upsilon_{t, \mathrm{off}}\}_{t=1}^T, \{D_t\}_{t=1}^{T-1} $ update: for all \( t \), let \( \Xi_t^k = \Theta_{t+1}^{k+1} - \Theta_t^{k+1} - \frac{Z_t^k}{\beta} \), then
        \begin{align}
           & \hspace{-2cm}\label{eq:V-update} V_t^{k+1} = \mathrm{Proj}_{\cdot \succeq \epsilon I_p}\left( \Theta_t^{k+1} - \frac{A_t^k}{\beta} \right);\quad \Upsilon_{t, \mathrm{off}}^{k+1}  = \left( \prox_{\frac{\lambda_1T}{\beta} | \cdot |} \left( \Theta_{uv, t}^{k+1} - \frac{Y_{uv, t}^k}{\beta} \right) \right)_{\substack{u, v = 1, \dots , p \\[0.1cm] \text{ with } u \neq v}}; \\[-0.5cm]
           & \label{eq:D-update}
          D_t^{k + 1} =
          \begin{cases}
            \underbrace{\min \left\{ \left( \| \Xi_t^k \|_F - \frac{\lambda_2T}{\beta} \right)_+, \lambda_3 \right\}}_{\imath_1} \frac{\Xi_t^k}{\| \Xi_t^k \|_F} & \text{ if } \mathrm{val}^{\diamond} \leq \mathrm{val}^{+}, \\[-0.2cm]
            \underbrace{\max \left\{ \frac{\beta \| \Xi_t^k \|_F}{\beta + 2\lambda_2T}, \lambda_3 \right\}}_{\imath_2} \frac{\Xi_t^k}{\| \Xi_t^k \|_F}        & \text{ if } \mathrm{val}^{\diamond} > \mathrm{val}^{+},    \\[-0.2cm]
          \end{cases}
        \end{align}
        where \( \mathrm{val}^{\diamond} = \frac{1}{2}\left( \imath_1 - \| \Xi_t^k \|_F \right)^2 + \frac{\lambda_2T}{\beta}\imath_1\), \( \mathrm{val}^{+} = \frac{1}{2}\left( \imath_2 - \| \Xi_t^k \|_F \right)^2 + \frac{\lambda_2T}{\beta} \left( \imath_2^2 - \lambda_3^2 + \lambda_3 \right) \).

	\item Dual update: for all $ t $,\footnote{The dual stepsize 1.61 in \eqref{eq:dual-update} can be more generally chosen from the interval $(0,(\sqrt{5}+1)/2)$.}
	      \begin{equation}
		      \label{eq:dual-update}
		      \begin{aligned}
			       & A_t ^{k+1} = A_t ^k - 1.61 \beta (\Theta_t^{k+1} - V_t^{k+1}), \quad Y_{t, \mathrm{off}} ^{k+1} = Y_{t, \mathrm{off}} ^k - 1.61 \beta (\Theta_{t, \mathrm{off}}^{k+1} - \Upsilon_{t, \mathrm{off}}^{k+1}), \\[-0.2cm]
			       & Z_t ^{k+1} = Z_t ^k - 1.61 \beta (\Theta_{t+1}^{k+1} - \Theta_t^{k+1} - D_t^{k+1}).
		      \end{aligned}
	      \end{equation}
\end{enumerate}

We next discuss how these update steps are derived.

\underline{$ \{\Theta_t\}_{t=1}^T $ update:}
The $ \{\Theta_t\}_{t=1}^T $ update minimizes $L$ over $\{\Theta_t\}_{t=1}^T$ with other variables fixed at their iteration $k$ values:
$$\displayindent0pt\displaywidth\textwidth
    \resizebox{\textwidth}{!}{$\displaystyle\{\Theta_t^{k+1}\}_{t=1}^T \!=\! \argmin_{\{\Theta_t\}_{t=1}^T} L\Big( \{\Theta_t\}_{t=1}^T,\!\{V_t^k\}_{t=1}^T,\!\{\Upsilon_{t, \mathrm{off}}^k\}_{t=1}^T,\!\{D_t^k\}_{t=1}^{T-1},\! \{A_t^k\}_{t=1}^T,\!\{Y_{t, \mathrm{off}}^k\}_{t=1}^T,\!\{Z_t^k\}_{t=1}^{T-1} \Big).$}
$$
Note that this update is well defined as $L$ is strongly convex in $\{\Theta_t\}_{t=1}^T$ (with other variables fixed).
Setting the derivative of the objective function with respect to $ \Theta_t $ to 0, we have
$$
	\left\{\begin{aligned}
		\mathbf{0}_{p \times p} = & \frac{1}{2}(X_1 X_1 ^{\top} \Theta_1 + \Theta_1 X_1 X_1 ^{\top}) \!-\! I_p \!-\! A_1^k \!-\! Y_{1, \mathrm{off}}^k \!+\! Z_1^k \!+\! \beta (\Theta_1 \!-\! V_1^k) \!+\! \beta ( \Theta_{1, \mathrm{off}} \!-\! \Upsilon_{1, \mathrm{off}}^k )             \\
		                          & \qquad \!-\! \beta ( \Theta_2 \!-\! \Theta_1 \!-\! D_1^k ),                                                                                                                                                                            \\
		\mathbf{0}_{p \times p} = & \frac{1}{2}(X_t X_t ^{\top} \Theta_t \!+\! \Theta_t X_t X_t ^{\top}) \!-\! I_p \!-\! A_t^k \!-\! Y_{t, \mathrm{off}}^k \!+\! Z_t^k \!-\! Z_{t\!-\!1}^k \!+\! \beta (\Theta_t \!-\! V_t^k) \!+\! \beta ( \Theta_{t, \mathrm{off}} \!-\! \Upsilon_{t, \mathrm{off}}^k ) \\
		                          & \qquad \!-\! \beta ( \Theta_{t \!+\! 1} \!-\! \Theta_t \!-\! D_t^k ) \!+\! \beta ( \Theta_t \!-\! \Theta_{t \!-\! 1} \!-\! D_{t \!-\! 1}^k ) \quad \forall\, t = 2, \dots ,T \!-\! 1,                                                                                    \\
		\mathbf{0}_{p \times p} = & \frac{1}{2}(X_T X_T ^{\top} \Theta_T \!+\! \Theta_T X_T X_T ^{\top}) \!-\! I_p \!-\! A_T^k \!-\! Y_{T, \mathrm{off}}^k \!-\! Z_{T\!-\!1}^k \!+\! \beta (\Theta_T \!-\! V_T^k) \!+\! \beta ( \Theta_{T, \mathrm{off}} \!-\! \Upsilon_{T, \mathrm{off}}^k )         \\
		                          & \qquad \!+\! \beta ( \Theta_T \!-\! \Theta_{T \!-\! 1} \!-\! D_{T \!-\! 1}^k ).
	\end{aligned}\right.
$$
By denoting
$$
	\Psi_t^k = I_p + A_t^k + Y_{t, \mathrm{off}}^k - Z_t^k + Z_{t-1}^k + \beta V_t^k + \beta \Upsilon_{t, \mathrm{off}}^k - \beta D_t^k + \beta D_{t-1}^k
$$
for all $ t $ and $ k $, we obtain the linear system \eqref{eq:linear-sys-Theta}.
This system does not have a closed form solution in general.
Here we use \textsf{pcg} in \textsc{Matlab} to solve it.
Specifically, in each iteration, we use the solution from the previous iteration as the initial point and solve the system only up to some tolerance that decreases with iterations.

\underline{$ \{V_t\}_{t=1}^T, \{\Upsilon_{t, \mathrm{off}}\}_{t=1}^T, \{D_t\}_{t=1}^{T-1} $ update:}
The update for these variables is given by
$$\hspace{-1cm}
	\begin{aligned}
		  & \quad \{V_t^{k+1}\}_{t=1}^T, \{\Upsilon_{t, \mathrm{off}}^{k+1}\}_{t=1}^T, \{D_t^{k+1}\}_{t=1}^{T-1}            \\[0.05cm]
		= & \argmin_{\substack{\{V_t\}_{t=1}^T, \{\Upsilon_{t, \mathrm{off}}\}_{t=1}^T,                                     \\[0.1cm] \{D_t\}_{t=1}^{T-1}}} \, L \Big(\{\Theta_t^{k+1}\}_{t=1}^T, \{V_t\}_{t=1}^T, \{\Upsilon_{t, \mathrm{off}}\}_{t=1}^T, \{D_t\}_{t=1}^{T-1}, \\[-0.8cm]
		  & \qquad \qquad \qquad \qquad  \{A_t^k\}_{t=1}^T, \{Y_{t, \mathrm{off}}^k\}_{t=1}^T, \{Z_t^k\}_{t=1}^{T-1} \Big).
	\end{aligned}
$$
Note that this update is well defined as $L$ is strongly convex in the variables $\{V_t\}_{t=1}^T$, $\{\Upsilon_{t, \mathrm{off}}\}_{t=1}^T$, $\{D_t\}_{t=1}^{T-1}$ (with other variables fixed).
It is notable that this subproblem is block separable, allowing us to solve it by addressing three further subproblems with respect to \( \{V_t\}_{t=1}^T \), \( \{\Upsilon_{t, \mathrm{off}}\}_{t=1}^T \), and \( \{D_t\}_{t=1}^{T-1} \), respectively.

For $ V_t $, one has
\begin{eqnarray*}
	\lefteqn{V_t^{k+1} = \argmin_{V_t \succeq \epsilon I_p}\left\{ \left\langle A_t^k, V_t \right\rangle + \frac{\beta}{2} \| \Theta_t^{k+1} - V_t \|_F^2 \right\}}\\
		          & & \:\:\: = \argmin_{V_t \succeq \epsilon I_p}\left\{ \left\| V_t - \left(\Theta_t^{k+1} - \frac{A_t^k}{\beta}\right) \right\|_F^2 \right\}     = \mathrm{Proj}_{\cdot \succeq \epsilon I_p}\left( \Theta_t^{k+1} - \frac{A_t^k}{\beta} \right),
\end{eqnarray*}
which yields the projection formula in \eqref{eq:V-update}.

For $ \Upsilon_{t, \mathrm{off}} $, it holds that
\begin{equation*}
	\begin{aligned}
		\Upsilon_{t, \mathrm{off}}^{k+1} & = \argmin_{\Upsilon_{t, \mathrm{off}}} \left\{ \lambda_1 T \| \Upsilon_{t, \mathrm{off}} \|_{1, \mathrm{off}} + \left\langle Y_{t, \mathrm{off}}^k, \Upsilon_{t, \mathrm{off}} \right\rangle + \frac{\beta}{2} \| \Theta_{t, \mathrm{off}}^{k+1} - \Upsilon_{t, \mathrm{off}} \|_F^2 \right\} \\
		                                 & = \argmin_{\Upsilon_{t, \mathrm{off}}} \left\{ \frac{\lambda_1 T}{\beta} \| \Upsilon_{t, \mathrm{off}} \|_{1, \mathrm{off}} + \left\| \Upsilon_{t, \mathrm{off}} - \left( \Theta_{t, \mathrm{off}}^{k+1} - \frac{Y_{t, \mathrm{off}}^k}{\beta} \right) \right\|_F^2 \right\}                  \\
                                  & = \prox_{\frac{\lambda_1T}{\beta} \| \cdot \|_{1, \mathrm{off}}} \left( \Theta_{t, \mathrm{off}} ^{k+1} - \frac{Y_{t, \mathrm{off}}^k}{\beta} \right) = \left( \prox_{\frac{\lambda_1T}{\beta} | \cdot |} \left( \Theta_{uv, t}^{k+1} - \frac{Y_{uv, t}^k}{\beta} \right) \right)_{\substack{u, v = 1, \dots , p \\[0.1cm] \text{ with } u \neq v}},
	\end{aligned}
\end{equation*}
which is the proximal operator of the $\ell_1$ norm applied element-wise to off-diagonal entries, as given in \eqref{eq:V-update}.

The update scheme for $ D_t $ is slightly more complicated.
For simplicity, we denote
$$
	\Xi_t^k = \Theta_{t+1}^{k+1} - \Theta_t^{k+1} - \frac{Z_t^k}{\beta} \;\;\;\;\; \forall t = 1, \dots , T-1, k = 1, 2, \dots.
$$
For each $ t $, we need to solve the following optimization problem:
$$
	D_t^{k + 1} = \argmin_{D_t} \frac{1}{2} \| D_t - \Xi_t^k \|_F^2 + \frac{\lambda_2T}{\beta} \mathcal{R}(\| D_t \|_F; \lambda_3).
$$
By the definition of $ \mathcal{R} $ in (4), it is equivalent to solving the following two problems
$$
\begin{aligned}
  D_t^{\diamond} & \coloneqq \argmin_{\| D_t \|_F \leq \lambda_3} \frac{1}{2} \| D_t - \Xi_t^k \|_F^2 + \frac{\lambda_2T}{\beta} \| D_t \|_F,                                         & \circnum{1} \\
  D_t^{+} & \coloneqq \argmin_{\| D_t \|_F \geq \lambda_3} \frac{1}{2} \| D_t - \Xi_t^k \|_F^2 + \frac{\lambda_2T}{\beta} \left( \| D_t \|_F^2 - \lambda_3^2 + \lambda_3 \right), & \circnum{2}
\end{aligned}
$$
and let $ D_t^{k+1} = D_t^{\diamond} $ if its objective value is smaller, otherwise $ D_t^{k+1} = D_t^{+} $.
We first note that if $ \Xi_t^k = \mathbf{0}_{p \times p} $, then the optimal value of \circnum{2} satisfies $\mathrm{val}^{+} \ge 0 = \mathrm{val}^{\diamond}$ (the optimal value of \circnum{1}) and we have $ D^{k+1}_t = D_t^{\diamond} = \mathbf{0}_{p \times p} $.
We next assume that $ \Xi_t^k \neq \mathbf{0}_{p \times  p} $.
For \circnum{1}, from the proximal operator of Frobenius norm, we have
\begin{equation*}
		D_t^{\diamond} = \underbrace{\min \left\{ \left( \| \Xi_t^k \|_F - \frac{\lambda_2T}{\beta} \right)_+, \lambda_3 \right\}}_{\imath_1} \frac{\Xi_t^k}{\| \Xi_t^k \|_F}, \;\; \mathrm{val}^{\diamond} = \frac{1}{2}\left( \imath_1 - \| \Xi_t^k \|_F \right)^2 + \frac{\lambda_2T}{\beta}\imath_1.
\end{equation*}
For \circnum{2}, by considering the derivative of its objective function, we get
\begin{equation*}
		D_t^{+} = \underbrace{\max \left\{ \frac{\beta \| \Xi_t^k \|_F}{\beta + 2\lambda_2T}, \lambda_3 \right\}}_{\imath_2} \frac{\Xi_t^k}{\| \Xi_t^k \|_F}, \;\;
		\mathrm{val}^{+} = \frac{1}{2}\left( \imath_2 - \| \Xi_t^k \|_F \right)^2 + \frac{\lambda_2T}{\beta} \left( \imath_2^2 - \lambda_3^2 + \lambda_3 \right).
\end{equation*}
The update scheme for $ D_t $ in \eqref{eq:D-update} is obtained upon combining these two cases.

Overall, the ADMM is summarized in Algorithm \ref{algo:ADMM}, whose convergence directly follows from \cite[Appendix B]{FPST2013}.

\begin{algorithm}[htbp]
	\caption{An ADMM for solving \eqref{eq:D-tr-opt-prob}}\label{algo:ADMM}
	\begin{algorithmic}[1]
		\Statex \hspace{-0.5cm}\textbf{input:} $ \mathcal{X}_T \!=\! (X_1,\! \!\dots\!,\! X_T) $; $ \lambda_1 \!>\! 0 $, $ \lambda_2 \!>\! 0 $, $ \epsilon \!>\! 0 $, $ \lambda_3 \!\geq\! 0.5 $; $ \beta \!> 0 \!$; $ \epsilon_{\mathsf{pcg}} \!\in\! (0,\! 1) $, $ \tau \!\in\! (0,\! 1) $.
    $ \mathbf{X}^0 = \{ \{\Theta_t^0\}_{t=1}^T, \{V_t^0\}_{t=1}^T, \{\Upsilon_{t, \mathrm{off}}^0\}_{t=1}^T, \{D_t^0\}_{t=1}^{T-1} \}$; $ \mathbf{Y}^0 = \{ \{A_t^0\}_{t=1}^T, \{Y_{t, \mathrm{off}}^0\}_{t=1}^T, \{Z_t^0\}_{t=1}^{T-1} \} $.
		\Statex \hspace{-0.5cm}\textbf{output:} $ \mathbf{\widehat{X}} \!=\! \{\! \{\widehat{\Theta}_t\}_{t=1}^T, \!\{\widehat{V}_t\}_{t=1}^T,\! \{\widehat{\Upsilon}_{t, \mathrm{off}}\}_{t=1}^T,\! \{\widehat{D}_t\}_{t=1}^{T-1} \!\}$; $ \mathbf{\widehat{Y}} \!=\! \{\! \{\widehat{A}_t\}_{t=1}^T,\! \{\widehat{Y}_{t, \mathrm{off}}\}_{t=1}^T,\! \{\widehat{Z}_t\}_{t=1}^{T-1}\! \} $.

		\State Set $ k \gets 0 $.
		\While{the termination criterion is not met}
		\State $ \{\Theta_t\}_{t=1}^T $ update: call \textsf{pcg} in \textsc{Matlab} to solve the linear system \eqref{eq:linear-sys-Theta} using $ \{\Theta_t^k\}_{t=1}^T $ as initial point up to tolerance $ \epsilon_{\mathsf{pcg}} $ to obtain $ \{\Theta_t^{k+1}\}_{t=1}^T $.
    \State If $ {\rm mod}(k + 1, 10) = 0 $, then update $ \epsilon_{\mathsf{pcg}} \gets \max \{ \tau \epsilon_{\mathsf{pcg}}, 10^{-12}\} $.
		\State Update $ \{V_t\}_{t=1}^T $, $ \{\Upsilon_{t, \mathrm{off}}\}_{t=1}^T $ and $ \{D_t\}_{t=1}^{T-1} $ according to \eqref{eq:V-update} and \eqref{eq:D-update}.
		\State Update dual variables according to \eqref{eq:dual-update}.
		\State Set $ k \gets k + 1 $.
		\EndWhile
        \State Return $ \widehat{\Theta}_t = \Theta_t^k $, $ \widehat{V}_t = V_t^k $, $ \widehat{\Upsilon}_{t, \mathrm{off}} = \Upsilon_{t, \mathrm{off}}^k $, $ \widehat{D}_t = D_t^k$, $\widehat{A}_t = A_t^k$, $\widehat{Y}_{t, \mathrm{off}} = Y_{t, \mathrm{off}}^k$, $\widehat{Z}_t = Z_t^k$ for all $ t $.
	\end{algorithmic}
\end{algorithm}

\section{Implementation Details}\label{sec:implementation}
We implement Algorithm~1 in \textsc{Matlab} R2023a.
The \textsc{Matlab} codes for the implementation of Algorithm~1 and the experiments are available at \url{https://github.com/linyopt/GFDtL}.
In this section, we provide some implementation details about the algorithm and the numerical experiments; interested readers can check the codes for more technical details that are not covered here.

First, \rev{we briefly detail the procedure} to obtain an estimator and to identify the corresponding \rev{change points} based on a given $ \epsilon > 0 $ and a sample $ \mathcal{X}_T $ with $ \sum_{t=1}^T X_t X_t^{\top} \succ 0 $.
Specifically, for any pair of tuning parameters $ (\lambda_1, \lambda_2) $, Proposition~7 suggests that we can apply Algorithm~1 to solve (5) with a sufficiently large $ \lambda_3 $ to either assert that (2) may not have solutions or obtain a solution to (2).
\rev{GFDtL criterion yields the estimated block partition $\{\widehat{\mathcal{B}}_1,\ldots,\widehat{\mathcal{B}}_{\widehat{m}+1}\}$ of $\{1,\ldots,T\}$, with $\widehat{m}$ the estimated number of change points, and where $\widehat{\Theta}_t$ is piece-wise constant. That is, if $t$ is a change point, $\|\widehat{\Theta}_t-\widehat{\Theta}_{t-1}\|_F>0$. In our implementation procedure, } with $ \{\widehat{\Theta}_t\}_{t=1}^T $ in hand, we identify the \rev{change points} by selecting the $t$'s with $ \| \widehat{\Theta}_{t+1} - \widehat{\Theta}_t \|_F \geq 10^{-6} $.
Since $(\lambda_1,\lambda_2)$ controls the model complexity and smoothing, they must be calibrated accordingly.
We search for the optimal tuning parameters over a user-specified grid based on some criteria, which will be discussed in Section~\ref{sec:tuning-para}.

\rev{In contrast, the Gaussian loss-based GFGL approach of \cite{Gibberd2017} applies the group-fused penalty only to off-diagonal entries of the precision matrix, thereby imposing piece-wise constancy on the off-diagonal structure rather than the full precision matrix.
Therefore, the GFGL estimator does not possess the piece-wise constancy property.
Their algorithm leverages an inherent group lasso subproblem for the off-diagonal jump parameters, with \rev{change points} identified through the grouping structure of nonzero jumps.}
By comparison, we identify \rev{change points} by thresholding successive differences of the full $\widehat{\Theta}_t$.

\subsection{Initialization and Termination Criterion}
Throughout this paper, we initialize the algorithm as follows: for all $ t $,
\[
  \begin{array}{lll}
    \Theta_t^0 = \left( \sum_{t=1}^T X_tX_t^{\top} \right)^{-1}, \:\: & V_t^0 = \Theta_t^0, \:\: & \Upsilon_{t, \mathrm{off}}^0 = \Theta_{t, \mathrm{off}}^0, \\
    D_t^0 = \Theta_{t+1}^0 - \Theta_t^0 = \mathbf{0}_{p \times p}, \:\: & A_t^0 = \mathbf{1}_{p \times p}, \:\: & Y_{t, \mathrm{off}}^0 = Z_t^0 = \mathbf{0}_{p \times p};
  \end{array}
\]
here the inverse is well-defined since we have $ \sum_{t=1}^T X_tX_t^{\top} \succ 0 $.

We next describe the termination criterion, which essentially consists of checking the constraint violations for the dual problem (6), and also the gap between the primal and dual objective values.
Recall that $ \mathbf{X} \!=\! \left\{\!\{\Theta_t\}_{t=1}^T,\!\{V_t\}_{t=1}^T,\!\{\Upsilon_{t, \mathrm{off}}\}_{t=1}^T,\!\{D_t\}_{t=1}^{T-1}\!\right\} $ is the primal variable with $ \Theta_t \!\in\! \mathcal{S}^p $, $ V_t \!\in\! \mathcal{S}^p $, $ \Upsilon_{t, \mathrm{off}} \!\in\! \mathcal{S}_{\mathrm{off}}^p $ and $ D_t \!\in\! \mathcal{S}^p $ for all $ t $; \( \mathbf{Y} \!=\! \left\{\! \{W_t\}_{t=1}^T,\!\{Y_{t, \mathrm{off}}\}_{t=1}^T,\!\{Z_t\}_{t=1}^{T-1}\!\right\} \) is the dual variable with $ W_t \!\in\! \Rb^{p\times p} $, $ Y_{t, \mathrm{off}} \!\in\! \mathcal{S}_{\mathrm{off}}^p $, $ Z_t \!\in\! \mathcal{S}^p $ for all $ t $; and let \( \zeta_t \!=\! Z_t \!-\! Z_{t-1} \!-\!I_p \!+\! \Sym\left((X_t X_t ^{\top})^{\frac{1}{2}}W_t\right) \!-\! Y_{t, \mathrm{off}} \) for all \( t \), where $ Z_0 = Z_T = \mathbf{0}_{p\times p} $.
We define the relative infeasibility for the positive semi-definite constraint as
\[
  \mathsf{dfeas}_1(\mathbf{Y}) \coloneqq \max_{t = 1, \dots , T} \left\{ \frac{| \min \{\lambda_{\min}(\zeta_t), 0\} |}{\| \zeta_t \|_F + 1} \right\}.
\]
For the bound constraint of $ \{Y_{t, \mathrm{off}}\}_{t=1}^T $, we similarly define the relative infeasibility as
\[
  \mathsf{dfeas}_2(\mathbf{Y}) \coloneqq \frac{\left(\max_{t, u \neq v}\{ | Y_{uv, t, \mathrm{off}} | \} - \lambda_1 \right)_+}{1 + \max_{t, u \neq v}\{ | Y_{uv, t, \mathrm{off}} | \}},
\]
where $ Y_{uv, t, \mathrm{off}} $ is the \( (u, v) \)-th element of \( Y_{t, \mathrm{off}} \).
The relative dual infeasibility is defined as
\[
 \mathsf{dfeas}(\mathbf{Y}) \coloneqq \max \{\mathsf{dfeas}_1(\mathbf{Y}), \mathsf{dfeas}_2(\mathbf{Y})\}.
\]
The relative duality gap is defined by
\[
 \mathsf{gap}(\mathbf{X}, \mathbf{Y}) \coloneqq \frac{| v_p(\mathbf{X}) - v_d(\mathbf{Y}) |}{1 + | v_p(\mathbf{X}) | + | v_d(\mathbf{Y}) |},
\]
where \( v_p(\mathbf{X}) \) and \( v_d(\mathbf{Y}) \) are the objective values of primal problem (cf. (5)) at $ \mathbf{X} $ and dual problem (cf. (6)) at $ \mathbf{Y} $, respectively.

Overall, throughout this paper, we terminate Algorithm~1\footnote{We note that Algorithm~1 does not involve $ \{W_t\}_{t=1}^T $ while the dual problem (6) does. Here, we set $ W^k_t = (X_t X_t^{\top})^{\frac{1}{2}} \Theta^k_t $ for all $ t $ and $ k $, which comes from the derivation of the dual problem (6); see \eqref{eq:Lagrangian-min-modified-I}.} when both the relative primal-dual gap and the relative dual infeasibility are sufficiently small:
\[
  \max \left\{ \mathsf{gap}(\mathbf{X}^k, \mathbf{Y}^k), \mathsf{dfeas}(\mathbf{Y}^k) \right\} \leq \epsilon_{\mathrm{tol}},
\]
or, it is detected that Problem (2) may not have solutions:
\begin{equation}
  \label{eq:stop-cri-2}
  \max_{t=1, \dots , T-1} \| \Theta_{t+1}^k - \Theta_t^k \|_F \geq \lambda_3,
\end{equation}
or, the relative successive changes of both primal and dual variables are sufficiently small:
\[
  \begin{aligned}
    \max \Big\{ & \frac{\| \{\Theta_t^{k+1} - \Theta_t^k\}_{t=1}^T \| }{1 + \| \{\Theta_t^{k+1}\}_{t=1}^T \| + \| \{\Theta_t^k\}_{t=1}^T \|}, \frac{\| \{A_t^{k+1} - A_t^k\}_{t=1}^T \| }{1 + \| \{A_t^{k+1}\}_{t=1}^T \| + \| \{A_t^k\}_{t=1}^T \|}, \\
                & \frac{\| \{Y_{t, \mathrm{off}}^{k+1} - Y_{t, \mathrm{off}}^k\}_{t=1}^T \| }{1 + \| \{Y_{t, \mathrm{off}}^{k+1}\}_{t=1}^T \| + \| \{Y_{t, \mathrm{off}}^k\}_{t=1}^T \|}, \frac{\| \{Z_t^{k+1} - Z_t^k\}_{t=1}^{T-1} \| }{1 + \| \{Z_t^{k+1}\}_{t=1}^{T-1} \| + \| \{Z_t^k\}_{t=1}^{T-1} \|} \Big\} \leq \frac{\epsilon_{\mathrm{tol}}}{10^3},
  \end{aligned}
\]
where \( \| \{\Theta_t\}_{t=1}^T \| \coloneqq ( \sum_{t=1}^T \sum_{u=1}^p \sum_{v=1}^p (\Theta_{uv, t})^2 )^{\frac{1}{2}} \).
We set $\epsilon_{\mathrm{tol}} = 10^{-3}$ for experiments on both synthetic and real datasets.

\subsection{Choices of Algorithm Parameters}
The parameter $ \epsilon $ specifies a lower bound of the eigenvalues of matrices in the resulting solution, which ensures the non-singularity of each matrix in the obtained solution if $\epsilon > 0$.
The choice of $ \epsilon $ depends on how ``non-singular'' the solution is expected to be.
In our experiments, we set $ \epsilon = 0.01 $.
In view of Proposition~7 and its proof, $ \lambda_3 $ should be large enough, with its lower bound related to $ (\lambda_1, \lambda_2) $ and the possible solution to the corresponding (2).
After some trial experiments, we set $ \lambda_3 = 10 $ for experiments on synthetic datasets and $ \lambda_3 = 50 $ for experiments on real datasets.
For the tolerance of \textsf{pcg}, we set $\epsilon_{\mathsf{pcg}} = 10^{-2}$ and $\tau = 0.9$.
The parameter $ \beta $ in ADMM has no effect on the results but only affects the convergence speed of the algorithm, so we did not fine-tune it; interested readers can refer to the codes for further details regarding these settings.

\section{Optimal Selection of the Tuning Parameters}\label{sec:tuning-para}

The Bayesian information criterion (BIC) is a common criterion to choose the optimal tuning parameters: see, e.g., \cite{Monti2014}.
However, it is known that there are some scenarios where BIC may fail to select the optimal tuning parameters: see, e.g., Section 4.2 and the Appendix in \cite{Gibberd2017}.
This motivates us to propose the following three methods for the selection of the optimal pair of tuning parameters:
\begin{itemize}
  \item[(a)] Method a: When the true underlying data generating process is known,  the optimal pair can be selected as the pair that minimizes or maximizes suitable performance measures, such as the Hausdorff distance, model recovery, or estimation error. Although this strategy can be employed when the true underlying structure is known only, it is informative about the relative performances of different competing procedures when in a position to minimize/maximize the performance metrics.
  \item[(b)] Method b: In the case of simulated data, we divide the simulated samples of observations into a training set and $B$ test sets, all of them sampled from the same data generating process: the training set is denoted by $\mathcal{X}^{\text{train}}_T$ and the test sets are denoted by $\mathcal{X}^{\text{test}}_{(1),T},\ldots,\mathcal{X}^{\text{test}}_{(B),T}$. Based on $\mathcal{X}^{\text{train}}_T$, we apply Algorithm~1 to solve (5) with an appropriately large $ \lambda_3 $ for different $(\lambda_1,\lambda_2)$ candidates and obtain $\{\widehat{\Theta}(\mathcal{X}^{\text{train}}_T)_{\lambda_1,\lambda_2}\}^T_{t=1}$.
        The optimal $(\lambda^\ast_1,\lambda^\ast_2)$ is selected as the pair which minimizes
        \begin{equation}
          \label{eq:def-lossval}
          \text{lossval}(\lambda_1, \lambda_2) \coloneqq \frac{1}{B}\sum^{B}_{j=1}\Lb_T(\{\widehat{\Theta}(\mathcal{X}^{\text{train}}_T)_{\lambda_1,\lambda_2}\}^T_{t=1},\mathcal{X}^{\text{test}}_{(j),T}).
        \end{equation}
        $B$ is user-specified. Throughout this paper, we set $B=10$.
  \item[(c)] Method c: When the true underlying data generating process is unknown, as in real data, following \cite{Gibberd2017}, we employ a BIC-type criterion given by
\begin{equation}
  \label{eq:def-BIC}
\text{BIC}(\lambda_1,\lambda_2) = \Lb(\{\widehat\Theta_t\}^T_{t=1},\mathcal{X}_T) + K \log(T),
\end{equation}
where $K$ represents the complexity, or degrees of freedom, and is defined as
\begin{equation*}
K = \text{card}\big(\mathbf{1}(\widehat\Theta_{uv,t}\neq \widehat\Theta_{uv,t-1}), \forall 2 \leq t\leq T, \forall u \neq v\big) + \text{card}\big(\mathbf{1}(\widehat\Theta_{uv,1} \neq 0), \forall u \neq v\big).
\end{equation*}
Then we select the optimal values $(\lambda^\ast_1,\lambda^\ast_2)$ according to the criterion
\begin{equation*}\label{bic_crit}
(\lambda^\ast_1,\lambda^\ast_2)=\underset{\lambda_1,\lambda_2}{\arg\,\min}\;\text{BIC}(\lambda_1,\lambda_2).
\end{equation*}
The definition of $K$ varies across the literature: see, e.g., the different definitions provided in \cite{Gibberd2017} and \cite{Monti2014}. In the former work, $K$ is the sum of the number of active edges at $t = 1$ and of the corresponding changes for $t = 1,\ldots,T$. In the latter work, $K$ is the number of non-zero coefficient blocks in $\widehat{\Theta}_{uv,t}, t=1,\ldots,T$ for $1 \leq u \neq v \leq p$.
Based on our preliminary experiments, we found that these two definitions do not result in significant differences in the selection of the optimal tuning parameters. Therefore, since we will compare our algorithm with the GFGL method of \cite{Gibberd2017}, we use their definition for consistency.
\end{itemize}

For these three methods, the minimization problem is solved w.r.t. $(\lambda_1,\lambda_2)$ over a user-specified grid.
Since Algorithm~1 is solving~(5), where the objective function is multiplied by \( T \), for simplicity, we search for the optimal tuning parameters over the grid of \( \lambda_1T \) and \( \lambda_2T \) instead of \( \lambda_1 \) and \( \lambda_2 \).

It is worth noting that from Proposition~4, point~(ii), within the user-specified grid there may be some pairs of $ (\lambda_1, \lambda_2) $ such that (2) does not have solutions, which can be detected by checking (7).
For these pairs of $ (\lambda_1, \lambda_2) $, we set $\text{lossval}(\lambda_1, \lambda_2) = +\infty$ and $\text{BIC}(\lambda_1, \lambda_2) = +\infty$, so that they will never be selected. A sensitivity analysis on a simulated dataset to evaluate and compare Method b and Method c for selecting the optimal pair of tuning parameters is provided in Section \ref{sec:sen-ana}.

\section{Synthetic Experiments}\label{sec:simulations}

In this section, we conduct some simulation experiments to assess the performances of the GFDtL procedure.

To assess the finite-sample relevance of the GFDtL procedure, we consider the following settings, where we denote by $m^\ast$ the true number of \rev{change points} and by $\Omega^\ast_j, j = 1,\ldots,m^\ast+1$ the true precision matrices:

\noindent\textbf{Setting (i):} For each true $\Omega^\ast_j, j =1,\ldots,m^\ast+1$, its structure is uniformly drawn from the set of graphs with vertex size $\text{card}(V_j)=p$, that is, the number of variables, and $\text{card}(E_j)=M_j$ edges, giving the graph $G(V_j,E_j) \sim$ Erd\"{os}-R\'enyi$(\mathcal{P},M_j)$ for the block $\Bc^\ast_j$. The zero entries are generating by matching the pattern of the adjacency matrix $E_j$ and the precision matrix $\Omega^\ast_j$, that is, $(u,v) \in E_j\Leftrightarrow \Omega^\ast_{uv,j}\neq 0$, providing the sparsity pattern in the off-diagonal coefficients of $\Omega^\ast_j$. The proportion of the zero entries is calibrated by the probability $\mathcal{P}$ of connecting nodes. The off-diagonal non-zero entries of $\Omega^\ast_j$ are drawn in $\Uc([-0.8,-0.05]\cup[0.05,0.8])$, where $\Uc([-a,-b]\cup[b,a])$ denotes the uniform distribution in $[-a,-b]\cup[b,a]$. The diagonal elements are drawn in $\Uc([0.5,1])$. To ensure that the resulting matrix is positive-definite, if $\Omega^\ast_j$ satisfies $\lambda_{\min}(\Omega^\ast_j)<0.01$, we apply $\Omega^\ast_j = \Omega^\ast_j + (\zeta+|\lambda_{\min}(\Omega^\ast_j)|)I_p$, where $\zeta$ is the first value in $\{0.005,0.01,0.015,\ldots\}$ such that $\lambda_{\min}(\Omega^\ast_j)>0.01$.

\noindent\textbf{Setting (ii):} For each true $\Omega^\ast_j, j = 1,\ldots,m^\ast+1$, its off-diagonal non-zero entries are generated in $\Uc([-1,1])$ and diagonal elements are drawn in $\Uc([1.1,1.5])$. To ensure that the resulting matrix is positive-definite, we apply the same final step as in \textbf{Setting (i)}.

\noindent\textbf{Setting (iii):} The precision matrix is generated following the same spirit as in Section 5 of \cite{cai2016}. We construct $\Sigma^\ast_j = \Omega^{\ast-1}_j= D^{1/2}\,C\,D^{1/2}$, $j =1,\ldots,m^\ast+1$, where $D = \text{diag}(U_1,\ldots,U_p)$ with $U_k \in \Uc([0.5,2]), 1 \leq k \leq p$, and where $D$ makes the diagonal
entries in $\Sigma^\ast_j$ and $\Theta^\ast_j$ different. We set $C = (c_{uv})_{1 \leq u,v \leq p}$ with $c_{uv}=a^{|v-u|}$. The coefficient $a$ equals $0.4$ with probability $0.5$, and equals $0.1$ otherwise. Then, we set $\Omega^{\ast}_{j,uv}=0$ when $|\Omega^{\ast}_{j,uv}|<0.05$ and each non-zero off-diagonal coefficient is multiplied by $1$ (resp. $-1$) with probability $0.5$ (resp. $0.5$). Finally, to ensure that the resulting matrix is positive-definite, we apply the same final step as in \textbf{Setting (i)}. This creates a banded structure.

For each of these settings, for $t=1,\ldots,T$, we draw $X_t \sim \Nc_{\Rb^p}(0,\Theta^{\ast -1}_t)$ the $p$-dimensional Gaussian distribution, with $\Theta^\ast_t = \Omega^\ast_j$ when $t \in \Bc^\ast_j$. We set $p=10$ and three cases relating to the \rev{change points} are considered: (a) no \rev{change point}; (b) a single \rev{change point}; (c) \rev{multiple change points}. In case (b), $m^\ast=1$ and in case (c), $m^\ast = 4$; the location of the \rev{change points}, i.e., $T^\ast_j,j=1,\ldots,m^\ast$, are randomly set, conditionally on $\mathcal{I}_{\min}$ being at least $\kappa T$, where $\kappa=1/(m^\ast+c)$. We set $c=8$ so that $\kappa = 0.11$ (resp. $\kappa=0.0833$) when $m^\ast=1$ (resp. $m^\ast=4$): the regimes may have different time lengths but satisfy a minimum time length condition. The latter relates to the issue of trimming: see the Introduction and Section 4 in \cite{Qian2016}, who mentioned that $\kappa \in [0.05,0.25]$ is a standard choice. We set the sample size $T=100$ in case (a) and $T=150$ in cases (b), (c). The ``sparsity degree'', that is the proportion of zero entries in the lower triangular part of $\Omega^\ast_j$, is set as $80\%$ and $30\%$ in settings (i) and (ii), which represents approximately $36$ and $14$ zero entries in each regime out of the $45$ lower triangular elements, respectively. Note that in setting (i), we allow the true number of zero entries to slightly vary between each regime around the corresponding sparsity degree, although $\Pc$ remains constant. In setting (ii), we keep the true number of zero entries constant across the regimes, i.e., $36$ and $14$ zero entries exactly. In setting (iii), the sparsity degree can slightly vary depending on the value of $a$.

For each setting, we draw one hundred batches of $T$ independent samples $\Xc_T$.
For each batch, we apply Algorithm \ref{algo:ADMM} to solve \eqref{eq:D-tr-opt-prob} with some large $ \lambda_3 $ over a grid specified as $0.1,0.2,0.3,\ldots,1$ for $\lambda_1T$ and $10,20, 30,\ldots,200$ for $\lambda_2T$, and select the optimal pair $ (\lambda^\ast_1, \lambda^\ast_2) $ using the selection methods described in Section~\ref{sec:tuning-para}, which will be denoted by Method a, Method b and Method c hereafter.
Specifically, Method a refers to the one that reveals the best performance of the estimator by minimizing or maximizing some performance measures, assuming we know the underground truth; Method b is the one minimizing the loss over some test sets; Method c \rev{selects the} tuning parameters by minimizing BIC.
As competing methods, we also solve the Gaussian loss-based GFGL of \cite{Gibberd2017}\footnote{The Matlab code for GFGL is available on the corresponding journal website as Supplemental.} by selecting the optimal tuning parameters using the same strategies, \rev{and apply the Threshold Block Fused Lasso (TBFL) of \cite{bai2024unified} with parameters set as the default parameters from their \texttt{R} package \texttt{LinearDetect}.\footnote{Archived at \url{https://cran.r-project.org/src/contrib/Archive/LinearDetect/}.} TBFL is a three-step procedure based on penalized multivariate regression, which can be employed as neighborhood
selection and change point analysis for the  Gaussian graphical model.}
\rev{Additionally, we compare GFDtL with three \rev{change point} detection-only algorithms: Binary Segmentation through Operator Norm (BSOP) and Wild Binary Segmentation through Independent Projection (WBSIP) of \cite{wang2021}, and Divide and Conquer Dynamic Programming (DCDP) of \cite{li2023dcdp}.
For BSOP and WBSIP, since explicit tuning parameter selection procedures are not provided in~\cite{wang2021}, we use the parameters following their \texttt{R} package \texttt{changepoints};\footnote{Available at \url{https://cran.r-project.org/web/packages/changepoints/refman/changepoints.html}.} for DCDP, we set $\zeta = 0$ (as the precision matrix setting corresponds to $R = 0$ in their framework) and select $\gamma$ via the cross-validation procedure based on testing datasets.}
For \rev{GFDtL, GFGL and TBFL}, we report the following \rev{five} metrics as performance measures:
\rev{\textbf{(i)} the number of \rev{change points} detected by the procedure denoted by $nb$;
\textbf{(ii)} the Hausdorff distance $d_h =100\times \max\big(h(\widehat{T}_{\widehat{m}},\Tc^\ast_{m^\ast}),h(\Tc^\ast_{m^\ast},\widehat{T}_{\widehat{m}})\big)/T$, which serves as a measure of the estimation accuracy of the \rev{change points};
\textbf{(iii)} the $\text{F}_1$ score defined as $\text{F}_1=2\text{TP}/(2\text{TP}+\text{FN}+\text{FP})$, where $\text{TP}$ is the number of correctly estimated non-zero coefficients, $\text{FN}$ is the number of incorrectly estimated zero entries and $\text{FP}$ is the number of incorrectly estimated non-zero entries; 
\textbf{(iv)} the accuracy defined as $ \acc = (\text{TP} + \text{TN}) / T $, where $ \text{TN} $ is the number of correctly estimated zero entries;
\textbf{(v)} the averaged mean squared error (MSE) $\sqrt{(p^2T)^{-1}\sum^{T}_{t=1}\|\widehat{\Theta}_t-\Theta^\ast_t\|^2_F}$.}
\rev{Since BSOP, WBSIP, and DCDP focus on detecting change points, we report the number of \rev{change points} ($nb$) and the Hausdorff distance ($d_h$) for these three methods.}
These metrics are averaged over the \rev{100} independent batches and are reported in \rev{Table \ref{Table}}. \rev{These experiments} have been run on an Intel(R) Xeon(R) Gold 6242R CPU@3.10GHz 3.09 GHz, 128 GB.

\begin{table}[htb]
	\centering
	\caption{Recovery of the \rev{change points}, model selection and precision accuracy based on 100 replications with respect to $(m^\ast,s^\ast,T)$. For $ d_h $ and MSE, smaller numbers are better; for $ \text{F}_1 $ and $ \acc $, larger numbers are better. Bold figures represent the best performing models.\label{Table}}
	\resizebox{\textwidth}{!}{%
		\rev{
            \begin{tabular}{cc|cccccc|cccccc|ccc|ccc|ccc}\hline\hline
                \multicolumn{23}{c}{\textbf{Setting (i)}}                                                                                                                                                                                                                                                                                                                                                                                                                                                                      \\\hline\hline
                                    &                         & \multicolumn{6}{c|}{$nb$} & \multicolumn{6}{c|}{$d_h$} & \multicolumn{3}{c|}{$\text{F}_1$} & \multicolumn{3}{c|}{$\acc$} & \multicolumn{3}{c}{MSE}                                                                                                                                                                                                                                                                                                             \\
                $(m^\ast,s^\ast,T)$ & $(\lambda_1,\lambda_2)$ & GFDtL                     & \multicolumn{1}{c}{GFGL}   & BSOP                              & WBSIP                       & DCDP                    & TBFL & GFDtL          & \multicolumn{1}{c}{GFGL}            & BSOP  & WBSIP & DCDP          & TBFL  & GFDtL           & GFGL                                 & TBFL   & GFDtL           & GFGL                                 & TBFL   & GFDtL           & GFGL                                 & TBFL   \\ \hline

                $(0,0.8,100)$       & Method a                & 0                         & \multicolumn{1}{c|}{0}     &                                   &                             &                         &      & 0              & \multicolumn{1}{c|}{0}              &       &       &               &       & \textbf{0.8472} & \multicolumn{1}{c|}{0.7174}          &        & \textbf{0.9133} & \multicolumn{1}{c|}{0.8084}          &        & \textbf{0.1707} & \multicolumn{1}{c|}{0.2266}          &        \\
                                    & Method b                & 0                         & \multicolumn{1}{c|}{3.68}  & 1.00                              & 2.70                        & 0.00                    & 1.14 & \textbf{0}     & \multicolumn{1}{c|}{46.01}          & 49.90 & 36.83 & \textbf{0.00} & 50.33 & \textbf{0.8072} & \multicolumn{1}{c|}{0.5377}          & 0.6769 & \textbf{0.8919} & \multicolumn{1}{c|}{0.5518}          & 0.7869 & \textbf{0.1662} & \multicolumn{1}{c|}{0.2089}          & 0.1799 \\
                                    & Method c                & 0.14                      & \multicolumn{1}{c|}{2.70}  &                                   &                             &                         &      & \textbf{4.11}  & \multicolumn{1}{c|}{42.22}          &       &       &               &       & \textbf{0.7525} & \multicolumn{1}{c|}{0.6915}          &        & \textbf{0.8425} & \multicolumn{1}{c|}{0.8060}          &        & \textbf{0.1335} & \multicolumn{1}{c|}{0.2331}          &        \\

                \hline

                $(0,0.3,100)$       & Method a                & 0                         & \multicolumn{1}{c|}{0}     &                                   &                             &                         &      & 0              & \multicolumn{1}{c|}{0}              &       &       &               &       & \textbf{0.8246} & \multicolumn{1}{c|}{0.7247}          &        & \textbf{0.8036} & \multicolumn{1}{c|}{0.6429}          &        & \textbf{0.2098} & \multicolumn{1}{c|}{0.4219}          &        \\
                                    & Method b                & 0                         & \multicolumn{1}{c|}{2.50}  & 1.00                              & 2.28                        & 0.00                    & 1.09 & \textbf{0}     & \multicolumn{1}{c|}{36.84}          & 49.94 & 31.17 & \textbf{0.00} & 49.66 & \textbf{0.7178} & \multicolumn{1}{c|}{0.7078}          & 0.6724 & \textbf{0.7251} & \multicolumn{1}{c|}{0.5811}          & 0.5979 & \textbf{0.3452} & \multicolumn{1}{c|}{0.4127}          & 0.3723 \\
                                    & Method c                & 0.07                      & \multicolumn{1}{c|}{5.98}  &                                   &                             &                         &      & \textbf{2.61}  & \multicolumn{1}{c|}{45.02}          &       &       &               &       & \textbf{0.7903} & \multicolumn{1}{c|}{0.6526}          &        & \textbf{0.7541} & \multicolumn{1}{c|}{0.6281}          &        & \textbf{0.1755} & \multicolumn{1}{c|}{0.4491}          &        \\
                \hline

                $(1,0.8,150)$       & Method a                & 1.01                      & \multicolumn{1}{c|}{1.27}  &                                   &                             &                         &      & \textbf{0.49}  & \multicolumn{1}{c|}{1.35}           &       &       &               &       & \textbf{0.7261} & \multicolumn{1}{c|}{0.6592}          &        & \textbf{0.8294} & \multicolumn{1}{c|}{0.7641}          &        & 0.2496          & \multicolumn{1}{c|}{\textbf{0.2382}} &        \\
                                    & Method b                & 2.18                      & \multicolumn{1}{c|}{23.06} & 1.00                              & 7.89                        & 0.99                    & 1.09 & \textbf{20.99} & \multicolumn{1}{c|}{69.96}          & 12.46 & 79.64 & \textbf{2.56} & 22.01 & \textbf{0.6815} & \multicolumn{1}{c|}{0.4712}          & 0.6616 & \textbf{0.7805} & \multicolumn{1}{c|}{0.4209}          & 0.7575 & \textbf{0.2049} & \multicolumn{1}{c|}{0.2092}          & \textbf{0.1812} \\
                                    & Method c                & 1.40                      & \multicolumn{1}{c|}{3.72}  &                                   &                             &                         &      & \textbf{9.51}  & \multicolumn{1}{c|}{21.94}          &       &       &               &       & 0.6480          & \multicolumn{1}{c|}{\textbf{0.6707}} &        & 0.7264          & \multicolumn{1}{c|}{\textbf{0.8016}} &        & \textbf{0.2029} & \multicolumn{1}{c|}{0.2427}          &        \\
                \hline

                $(1,0.3,150)$       & Method a                & 1.00                      & \multicolumn{1}{c|}{1.36}  &                                   &                             &                         &      & \textbf{0.07}  & \multicolumn{1}{c|}{0.43}           &       &       &               &       & \textbf{0.7441} & \multicolumn{1}{c|}{0.7323}          &        & \textbf{0.6882} & \multicolumn{1}{c|}{0.6347}          &        & \textbf{0.3558} & \multicolumn{1}{c|}{0.4237}          &        \\
                                    & Method b                & 2.79                      & \multicolumn{1}{c|}{20.07} & 1.00                              & 8.38                        & 1.00                    & 1.02 & \textbf{27.78} & \multicolumn{1}{c|}{66.72}          & 17.25 & 88.73 & \textbf{1.16} & 18.12 & \textbf{0.7163} & \multicolumn{1}{c|}{0.7140}          & 0.7152 & \textbf{0.6696} & \multicolumn{1}{c|}{0.5784}          & 0.6424 & \textbf{0.3774} & \multicolumn{1}{c|}{0.4068}          & \textbf{0.3231} \\
                                    & Method c                & 1.13                      & \multicolumn{1}{c|}{5.40}  &                                   &                             &                         &      & \textbf{9.73}  & \multicolumn{1}{c|}{23.40}          &       &       &               &       & \textbf{0.7223} & \multicolumn{1}{c|}{0.6670}          &        & \textbf{0.6583} & \multicolumn{1}{c|}{0.6343}          &        & \textbf{0.3335} & \multicolumn{1}{c|}{0.4505}          &        \\

                \hline

                $(4,0.8,150)$       & Method a                & 4.21                      & \multicolumn{1}{c|}{5.76}  &                                   &                             &                         &      & \textbf{2.27}  & \multicolumn{1}{c|}{2.71}           &       &       &               &       & \textbf{0.6389} & \multicolumn{1}{c|}{0.6088}          &        & \textbf{0.7409} & \multicolumn{1}{c|}{0.7093}          &        & 0.2906          & \multicolumn{1}{c|}{\textbf{0.2585}} &        \\
                                    & Method b                & 5.45                      & \multicolumn{1}{c|}{44.11} & 1.00                              & 7.59                        & 2.22                    & 1.22 & \textbf{11.58} & \multicolumn{1}{c|}{27.67}          & 67.97 & 28.43 & 32.34         & 57.88 & \textbf{0.6040} & \multicolumn{1}{c|}{0.4555}          & 0.5339 & \textbf{0.6924} & \multicolumn{1}{c|}{0.3810}          & 0.6440 & 0.2476          & \multicolumn{1}{c|}{\textbf{0.2159}} & 0.2734 \\
                                    & Method c                & 2.92                      & \multicolumn{1}{c|}{7.23}  &                                   &                             &                         &      & 29.31          & \multicolumn{1}{c|}{\textbf{18.01}} &       &       &               &       & 0.5792          & \multicolumn{1}{c|}{\textbf{0.6228}} &        & 0.6579          & \multicolumn{1}{c|}{\textbf{0.7488}} &        & 0.2853          & \multicolumn{1}{c|}{\textbf{0.2599}} &        \\

                \hline

                $(4,0.3,150)$       & Method a                & 4.11                      & \multicolumn{1}{c|}{5.11}  &                                   &                             &                         &      & 1.74           & \multicolumn{1}{c|}{\textbf{1.18}}  &       &       &               &       & 0.7021          & \multicolumn{1}{c|}{\textbf{0.7175}} &        & \textbf{0.6180} & \multicolumn{1}{c|}{0.5920}          &        & 0.4475          & \multicolumn{1}{c|}{\textbf{0.4379}} &        \\
                                    & Method b                & 6.30                      & \multicolumn{1}{c|}{36.81} & 1.00                              & 7.59                        & 2.56                    & 1.23 & \textbf{14.91} & \multicolumn{1}{c|}{27.65}          & 68.57 & 29.32 & 25.15         & 53.96 & 0.6989          & \multicolumn{1}{c|}{\textbf{0.7163}} & 0.6540 & \textbf{0.6209} & \multicolumn{1}{c|}{0.5759}          & 0.5677 & 0.4218          & \multicolumn{1}{c|}{\textbf{0.4114}} & 0.4504 \\
                                    & Method c                & 1.88                      & \multicolumn{1}{c|}{6.08}  &                                   &                             &                         &      & 41.04          & \multicolumn{1}{c|}{\textbf{11.50}} &       &       &               &       & \textbf{0.6616} & \multicolumn{1}{c|}{0.6615}          &        & 0.5700          & \multicolumn{1}{c|}{\textbf{0.6067}} &        & \textbf{0.4555} & \multicolumn{1}{c|}{0.4762}          &        \\

                \hline\hline
                \multicolumn{23}{c}{\textbf{Setting (ii)}}                                                                                                                                                                                                                                                                                                                                                                                                                                                                     \\\hline\hline
                                    &                         & \multicolumn{6}{c|}{$nb$} & \multicolumn{6}{c|}{$d_h$} & \multicolumn{3}{c|}{$\text{F}_1$} & \multicolumn{3}{c|}{$\acc$} & \multicolumn{3}{c}{MSE}                                                                                                                                                                                                                                                                                                             \\
                $(m^\ast,s^\ast,T)$ & $(\lambda_1,\lambda_2)$ & GFDtL                     & \multicolumn{1}{c}{GFGL}   & BSOP                              & WBSIP                       & DCDP                    & TBFL & GFDtL          & \multicolumn{1}{c}{GFGL}            & BSOP  & WBSIP & DCDP          & TBFL  & GFDtL           & GFGL                                 & TBFL   & GFDtL           & GFGL                                 & TBFL   & GFDtL           & GFGL                                 & TBFL   \\ \hline

                $(0,0.8,100)$       & Method a                & 0                         & \multicolumn{1}{c|}{0}     &                                   &                             &                         &      & 0              & \multicolumn{1}{c|}{0}              &       &       &               &       & \textbf{0.8364} & \multicolumn{1}{c|}{0.7131}          &        & \textbf{0.9051} & \multicolumn{1}{c|}{0.8109}          &        & \textbf{0.2087} & \multicolumn{1}{c|}{0.3582}          &        \\
                                    & Method b                & 0.01                      & \multicolumn{1}{c|}{2.76}  & 0.91                              & 1.79                        & 0.00                    & 1.08 & \textbf{0.36}  & \multicolumn{1}{c|}{40.04}          & 45.38 & 26.39 & \textbf{0.00} & 47.69 & \textbf{0.7940} & \multicolumn{1}{c|}{0.5841}          & 0.6735 & \textbf{0.8861} & \multicolumn{1}{c|}{0.6200}          & 0.7815 & \textbf{0.2239} & \multicolumn{1}{c|}{0.3417}          & 0.2351 \\
                                    & Method c                & 0.02                      & \multicolumn{1}{c|}{2.22}  &                                   &                             &                         &      & \textbf{0.80}  & \multicolumn{1}{c|}{44.01}          &       &       &               &       & \textbf{0.7576} & \multicolumn{1}{c|}{0.6655}          &        & \textbf{0.8532} & \multicolumn{1}{c|}{0.8048}          &        & \textbf{0.1560} & \multicolumn{1}{c|}{0.3654}          &        \\

                \hline

                $(0,0.3,100)$       & Method a                & 0                         & \multicolumn{1}{c|}{0}     &                                   &                             &                         &      & 0              & \multicolumn{1}{c|}{0}              &       &       &               &       & \textbf{0.8437} & \multicolumn{1}{c|}{0.8143}          &        & \textbf{0.7784} & \multicolumn{1}{c|}{0.7069}          &        & \textbf{0.2451} & \multicolumn{1}{c|}{0.6669}          &        \\
                                    & Method b                & 0                         & \multicolumn{1}{c|}{1.27}  & 1.00                              & 2.81                        & 0.00                    & 0.98 & \textbf{0}     & \multicolumn{1}{c|}{29.15}          & 50.02 & 38.90 & \textbf{0.00} & 45.34 & 0.6995          & \multicolumn{1}{c|}{\textbf{0.8155}} & 0.7559 & 0.6500          & \multicolumn{1}{c|}{\textbf{0.7027}} & 0.6507 & \textbf{0.5162} & \multicolumn{1}{c|}{0.6652}          & 0.5449 \\
                                    & Method c                & 0.01                      & \multicolumn{1}{c|}{3.26}  &                                   &                             &                         &      & \textbf{0.36}  & \multicolumn{1}{c|}{38.27}          &       &       &               &       & \textbf{0.8318} & \multicolumn{1}{c|}{0.6885}          &        & \textbf{0.7639} & \multicolumn{1}{c|}{0.6071}          &        & \textbf{0.2423} & \multicolumn{1}{c|}{0.6981}          &        \\

                \hline

                $(1,0.8,150)$       & Method a                & 1.02                      & \multicolumn{1}{c|}{1.29}  &                                   &                             &                         &      & \textbf{0.91}  & \multicolumn{1}{c|}{1.38}           &       &       &               &       & \textbf{0.7021} & \multicolumn{1}{c|}{0.6546}          &        & \textbf{0.7968} & \multicolumn{1}{c|}{0.7645}          &        & \textbf{0.2833} & \multicolumn{1}{c|}{0.3470}          &        \\
                                    & Method b                & 1.89                      & \multicolumn{1}{c|}{23.90} & 1.00                              & 6.21                        & 0.95                    & 1.09 & \textbf{15.73} & \multicolumn{1}{c|}{86.60}          & 13.94 & 68.95 & \textbf{4.75} & 19.68 & \textbf{0.6777} & \multicolumn{1}{c|}{0.5139}          & 0.6618 & \textbf{0.7769} & \multicolumn{1}{c|}{0.4931}          & 0.7726 & \textbf{0.2530} & \multicolumn{1}{c|}{0.3206}          & \textbf{0.2274} \\
                                    & Method c                & 1.01                      & \multicolumn{1}{c|}{3.12}  &                                   &                             &                         &      & \textbf{16.38} & \multicolumn{1}{c|}{21.06}          &       &       &               &       & 0.6264          & \multicolumn{1}{c|}{\textbf{0.6398}} &        & 0.7374          & \multicolumn{1}{c|}{\textbf{0.7998}} &        & \textbf{0.2552} & \multicolumn{1}{c|}{0.3506}          &        \\

                \hline

                $(1,0.3,150)$       & Method a                & 0.99                      & \multicolumn{1}{c|}{1.45}  &                                   &                             &                         &      & \textbf{0.30}  & \multicolumn{1}{c|}{0.80}           &       &       &               &       & \textbf{0.7846} & \multicolumn{1}{c|}{0.8271}          &        & \textbf{0.6909} & \multicolumn{1}{c|}{0.7186}          &        & \textbf{0.5085} & \multicolumn{1}{c|}{0.6511}          &        \\
                                    & Method b                & 3.51                      & \multicolumn{1}{c|}{20.56} & 1.00                              & 8.15                        & 1.00                    & 1.03 & \textbf{39.85} & \multicolumn{1}{c|}{91.12}          & 18.43 & 86.99 & \textbf{0.98} & 26.43 & 0.7465          & \multicolumn{1}{c|}{\textbf{0.8209}} & 0.7876 & 0.6560          & \multicolumn{1}{c|}{\textbf{0.7054}} & 0.6834 & \textbf{0.5470} & \multicolumn{1}{c|}{0.6431}          & \textbf{0.4682} \\
                                    & Method c                & 1.03                      & \multicolumn{1}{c|}{4.08}  &                                   &                             &                         &      & \textbf{14.26} & \multicolumn{1}{c|}{25.08}          &       &       &               &       & \textbf{0.7717} & \multicolumn{1}{c|}{0.6935}          &        & \textbf{0.6765} & \multicolumn{1}{c|}{0.6089}          &        & \textbf{0.4779} & \multicolumn{1}{c|}{0.6874}          &        \\

                \hline

                $(4,0.8,150)$       & Method a                & 4.31                      & \multicolumn{1}{c|}{6.15}  &                                   &                             &                         &      & 4.55           & \multicolumn{1}{c|}{\textbf{4.53}}  &       &       &               &       & \textbf{0.5985} & \multicolumn{1}{c|}{0.5901}          &        & 0.6861          & \multicolumn{1}{c|}{\textbf{0.6931}} &        & \textbf{0.3334} & \multicolumn{1}{c|}{0.3466}          &        \\
                                    & Method b                & 4.39                      & \multicolumn{1}{c|}{40.81} & 1.00                              & 6.14                        & 1.97                    & 1.12 & \textbf{16.09} & \multicolumn{1}{c|}{33.64}          & 66.73 & 31.10 & 39.68         & 59.18 & \textbf{0.5788} & \multicolumn{1}{c|}{0.4821}          & 0.5318 & \textbf{0.6624} & \multicolumn{1}{c|}{0.4297}          & 0.6372 & \textbf{0.3099} & \multicolumn{1}{c|}{0.3153}          & 0.3324 \\
                                    & Method c                & 1.93                      & \multicolumn{1}{c|}{5.45}  &                                   &                             &                         &      & 55.61          & \multicolumn{1}{c|}{\textbf{32.40}} &       &       &               &       & 0.5226          & \multicolumn{1}{c|}{\textbf{0.5971}} &        & 0.5968          & \multicolumn{1}{c|}{\textbf{0.7630}} &        & \textbf{0.3480} & \multicolumn{1}{c|}{0.3559}          &        \\

                \hline

                $(4,0.3,150)$       & Method a                & 4.10                      & \multicolumn{1}{c|}{5.13}  &                                   &                             &                         &      & 1.94           & \multicolumn{1}{c|}{\textbf{1.19}}  &       &       &               &       & 0.7772          & \multicolumn{1}{c|}{\textbf{0.8265}} &        & 0.6702          & \multicolumn{1}{c|}{\textbf{0.7150}} &        & \textbf{0.6536} & \multicolumn{1}{c|}{0.6727}          &        \\
                                    & Method b                & 6.88                      & \multicolumn{1}{c|}{33.92} & 1.00                              & 7.60                        & 2.82                    & 1.32 & \textbf{17.04} & \multicolumn{1}{c|}{34.21}          & 67.59 & 30.02 & 20.02         & 51.70 & 0.7691          & \multicolumn{1}{c|}{\textbf{0.8233}} & 0.7512 & 0.6655          & \multicolumn{1}{c|}{\textbf{0.7077}} & 0.6421 & \textbf{0.6031} & \multicolumn{1}{c|}{0.6516}          & 0.6353 \\
                                    & Method c                & 1.50                      & \multicolumn{1}{c|}{6.37}  &                                   &                             &                         &      & 50.40          & \multicolumn{1}{c|}{\textbf{12.09}} &       &       &               &       & \textbf{0.7507} & \multicolumn{1}{c|}{0.7154}          &        & \textbf{0.6546} & \multicolumn{1}{c|}{0.6171}          &        & \textbf{0.6527} & \multicolumn{1}{c|}{0.7177}          &        \\

                \hline\hline
                \multicolumn{23}{c}{\textbf{Setting (iii)}}                                                                                                                                                                                                                                                                                                                                                                                                                                                                    \\\hline\hline
                                    &                         & \multicolumn{6}{c|}{$nb$} & \multicolumn{6}{c|}{$d_h$} & \multicolumn{3}{c|}{$\text{F}_1$} & \multicolumn{3}{c|}{$\acc$} & \multicolumn{3}{c}{MSE}                                                                                                                                                                                                                                                                                                             \\
                $(m^\ast,T)$        & $(\lambda_1,\lambda_2)$ & GFDtL                     & \multicolumn{1}{c}{GFGL}   & BSOP                              & WBSIP                       & DCDP                    & TBFL & GFDtL          & \multicolumn{1}{c}{GFGL}            & BSOP  & WBSIP & DCDP          & TBFL  & GFDtL           & GFGL                                 & TBFL   & GFDtL           & GFGL                                 & TBFL   & GFDtL           & GFGL                                 & TBFL   \\ \hline

                $(0,100)$           & Method a                & 0                         & \multicolumn{1}{c|}{0}     &                                   &                             &                         &      & 0              & \multicolumn{1}{c|}{0}              &       &       &               &       & \textbf{0.8236} & \multicolumn{1}{c|}{0.7848}          &        & \textbf{0.8230} & \multicolumn{1}{c|}{0.7704}          &        & \textbf{0.1089} & \multicolumn{1}{c|}{0.2241}          &        \\
                                    & Method b                & 0.06                      & \multicolumn{1}{c|}{5.66}  & 0.99                              & 1.45                        & 0.00                    & 1.12 & \textbf{1.59}  & \multicolumn{1}{c|}{48.68}          & 49.42 & 26.95 & \textbf{0.00} & 46.51 & \textbf{0.7611} & \multicolumn{1}{c|}{0.7268}          & 0.7084 & \textbf{0.7941} & \multicolumn{1}{c|}{0.6448}          & 0.7340 & \textbf{0.1504} & \multicolumn{1}{c|}{0.2100}          & 0.1438 \\
                                    & Method c                & 0.03                      & \multicolumn{1}{c|}{7.38}  &                                   &                             &                         &      & \textbf{0.89}  & \multicolumn{1}{c|}{53.90}          &       &       &               &       & \textbf{0.5396} & \multicolumn{1}{c|}{0.5322}          &        & \textbf{0.6835} & \multicolumn{1}{c|}{0.6533}          &        & \textbf{0.1709} & \multicolumn{1}{c|}{0.2511}          &        \\

                \hline

                $(1,150)$           & Method a                & 1.00                      & \multicolumn{1}{c|}{1.12}  &                                   &                             &                         &      & \textbf{2.38}  & \multicolumn{1}{c|}{3.19}           &       &       &               &       & \textbf{0.7799} & \multicolumn{1}{c|}{0.7357}          &        & \textbf{0.7670} & \multicolumn{1}{c|}{0.7425}          &        & \textbf{0.1800} & \multicolumn{1}{c|}{0.2344}          &        \\
                                    & Method b                & 2.55                      & \multicolumn{1}{c|}{19.27} & 1.00                              & 5.28                        & 0.82                    & 1.05 & \textbf{27.82} & \multicolumn{1}{c|}{85.50}          & 18.13 & 69.86 & 15.66         & 22.09 & \textbf{0.7564} & \multicolumn{1}{c|}{0.7136}          & 0.7040 & \textbf{0.7600} & \multicolumn{1}{c|}{0.6084}          & 0.7313 & \textbf{0.1825} & \multicolumn{1}{c|}{0.2092}          & \textbf{0.1508} \\
                                    & Method c                & 0.44                      & \multicolumn{1}{c|}{7.51}  &                                   &                             &                         &      & 59.11          & \multicolumn{1}{c|}{\textbf{42.37}} &       &       &               &       & 0.3961          & \multicolumn{1}{c|}{\textbf{0.4830}} &        & 0.6028          & \multicolumn{1}{c|}{0.6325}          &        & \textbf{0.2317} & \multicolumn{1}{c|}{0.2554}          &        \\

                \hline

                $(4,150)$           & Method a                & 4.99                      & \multicolumn{1}{c|}{7.27}  &                                   &                             &                         &      & \textbf{8.25}  & \multicolumn{1}{c|}{9.67}           &       &       &               &       & \textbf{0.6950} & \multicolumn{1}{c|}{0.6816}          &        & 0.6709          & \multicolumn{1}{c|}{\textbf{0.6791}} &        & \textbf{0.2181} & \multicolumn{1}{c|}{0.2274}          &        \\
                                    & Method b                & 4.52                      & \multicolumn{1}{c|}{34.16} & 1.00                              & 4.72                        & 1.16                    & 1.19 & \textbf{24.39} & \multicolumn{1}{c|}{32.71}          & 62.19 & 34.25 & 65.79         & 55.52 & \textbf{0.7104} & \multicolumn{1}{c|}{0.6951}          & 0.6162 & \textbf{0.7029} & \multicolumn{1}{c|}{0.5862}          & 0.6906 & 0.2251          & \multicolumn{1}{c|}{\textbf{0.2095}} & 0.2145 \\
                                    & Method c                & 0.37                      & \multicolumn{1}{c|}{4.33}  &                                   &                             &                         &      & 107.96         & \multicolumn{1}{c|}{\textbf{66.84}} &       &       &               &       & 0.3538          & \multicolumn{1}{c|}{\textbf{0.4279}} &        & 0.5836          & \multicolumn{1}{c|}{\textbf{0.6102}} &        & 0.2589          & \multicolumn{1}{c|}{\textbf{0.2478}} &        \\

                \hline\hline
            \end{tabular}}
	}
\end{table}

\rev{
Compared to GFGL, GFDtL performs better in most settings and across most metrics for Methods a and b, with notably lower MSE indicating more accurate precision matrix estimation.  
TBFL tends to detect approximately one change point across most settings, and achieves moderate $\text{F}_1$ scores and accuracy, and slightly better MSE in some cases, but overall underperforms compared to GFDtL.
The \rev{change point} detection-only methods exhibit distinct behaviors.
BSOP consistently returns a single \rev{change point} across all settings, suggesting an overly conservative approach for multiple change points.
WBSIP tends to overestimate the number of \rev{change points}, e.g., returning around 4--7 when $m^\ast = 1, 4$.
DCDP performs well for $m^\ast \leq 1$ but tends to underestimate the number of \rev{change points} when $m^\ast = 4$.
Overall, BSOP, WBSIP, and TBFL yield poor Hausdorff distances across most scenarios, while DCDP achieves competitive localization accuracy when $m^\ast \leq 1$.
}

\rev{
  In the case of no \rev{change point}, i.e., $m^\ast=0$, $d_h=0$ means that the procedure returns no \rev{change point}.
  In this setting, our GFDtL procedure is comparable with DCDP and performs much better than GFGL, BSOP, WBSIP, and TBFL.
  Compared to GFGL, GFDtL achieves substantially lower MSE and Hausdorff distance, particularly when applying Method b, while GFGL tends to overestimate the number of \rev{change points}.
  BSOP and TBFL consistently detect one \rev{change point}, while WBSIP detects approximately 1--2 \rev{change points}.

  Finally, despite good MSE performance, the BIC-based results (Method c) favor the GFGL method, especially for $d_h$. This occurs because BIC tends to underestimate the number of \rev{change points} when applied to the GFDtL, i.e., it tends to select large $\lambda_2$. Indeed, when $m^\ast \leq 1$, Method c performs well; however, with multiple \rev{change points}, the number of \rev{change points} detected by BIC is much lower than the true number of \rev{change points}. This behavior is further detailed in Section~\ref{sec:sen-ana}.
}

\rev{
  Overall, these results demonstrate that GFDtL outperforms the competing methods in most settings, both in accurately detecting the true number of \rev{change points} and in precisely estimating the precision matrices.
}

\section{Sensitivity Analysis with Respect to the Tuning Parameters}\label{sec:sen-ana}

\subsection{Sensitivity Analysis with Respect to $\lambda_1$ and $\lambda_2$}

We propose a sensitivity analysis of Methods b and c provided in Section \ref{sec:tuning-para} with respect to the calibration of $(\lambda_1,\lambda_2)$. More precisely, we illustrate the ability of the proposed strategies to identify the optimal pair $(\lambda_1,\lambda_2)$ for \rev{change point detection} and sparse estimation.
The experiments are conducted on datasets simulated according to \textbf{Setting (i)} in Section~5 of the main text with $ T = 100 $, $ p = 10 $, the ``sparsity degree'' being $ 80\% $ and $ m^\ast = 0, 1, 3 $.
To better approximate the metric surfaces, we use a denser grid specified as $0.1,0.11,0.12,\ldots,1$ for $\lambda_1T$ and $10,11,12,\ldots,200$ for $\lambda_2T$.
The results are displayed in Figure \ref{fig:sen-ana}, with the three columns corresponding to $ m^\ast = 0, 1, 3 $, and the four rows showcasing the results for BICs (cf. (\ref{eq:def-BIC})), lossvals (cf. (\ref{eq:def-lossval})), Hausdorff distances, and F1 scores, respectively.
In each subfigure, lighter colors represent more favorable tuning parameters, indicating areas where the metric values are minimized or maximized as appropriate.
To facilitate visualization given the wide range of values for BICs and lossvals, we pre-processed these metrics before plotting: specifically, we subtracted the minimum value to ensure non-negativity, then applied the \textsf{log1p} function in \textsc{Matlab} (i.e., $ \log (1 + \cdot) $) to compress the scale of the values, making the patterns more discernible and enhancing the interpretability of the results.

\begin{figure}[p]
    \centering
    \caption{\rev{Sensitivity analysis of tuning parameters \( \lambda_1 \) and \( \lambda_2 \).}}
    \subfloat{\includegraphics[width=0.33\textwidth]{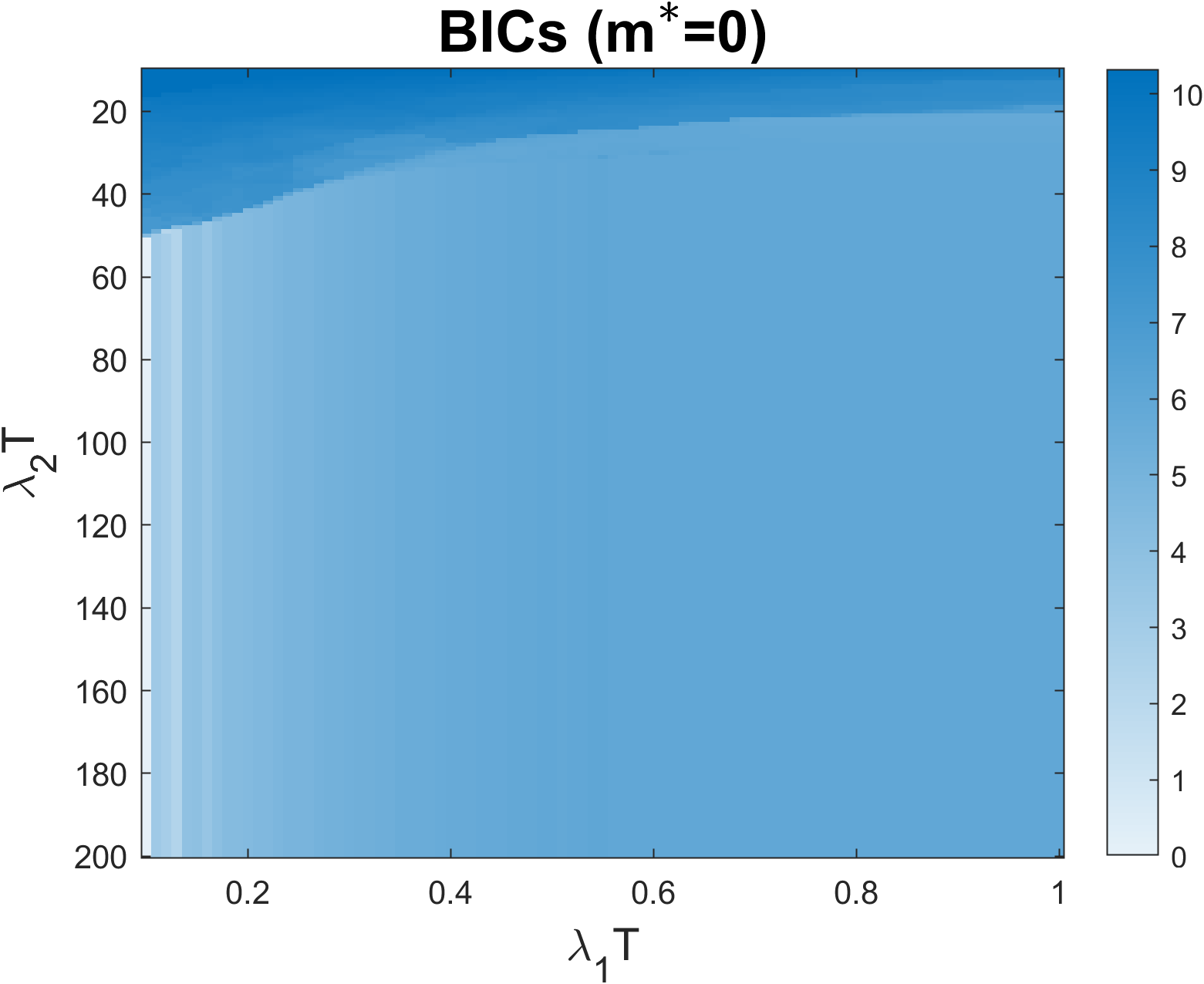}}\hfill
    \subfloat{\includegraphics[width=0.33\textwidth]{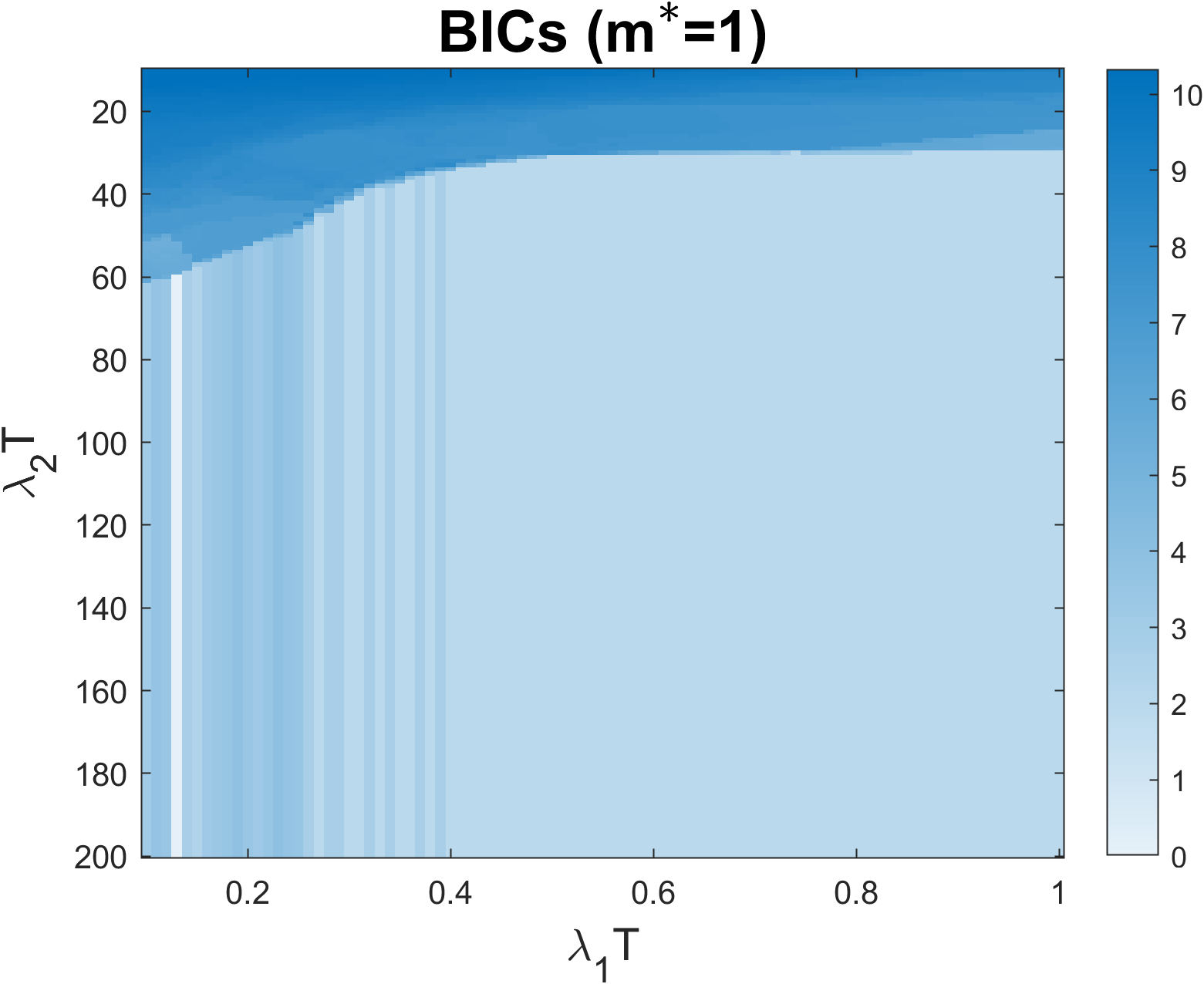}}\hfill
    \subfloat{\includegraphics[width=0.33\textwidth]{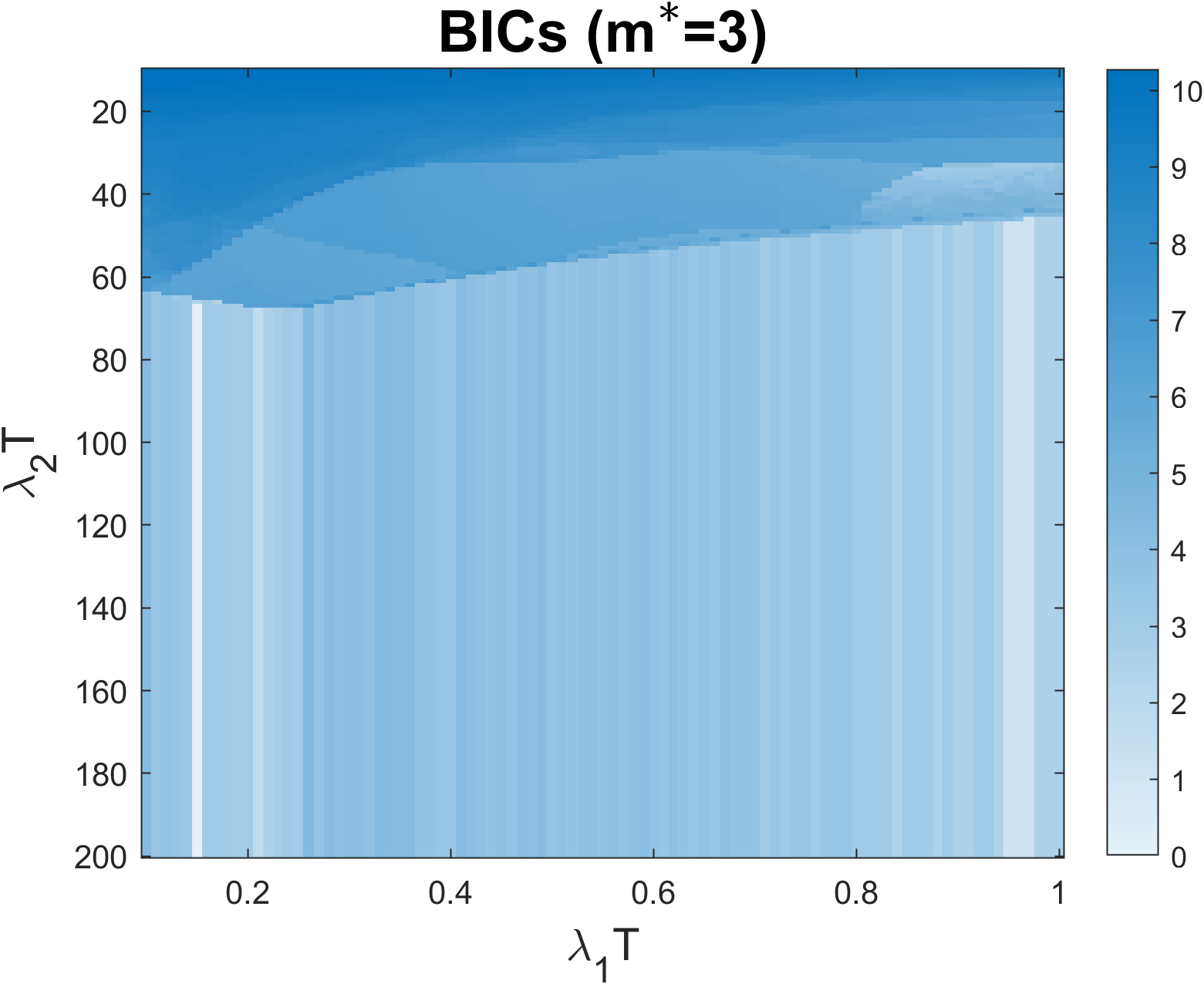}}\\

    \subfloat{\includegraphics[width=0.33\textwidth]{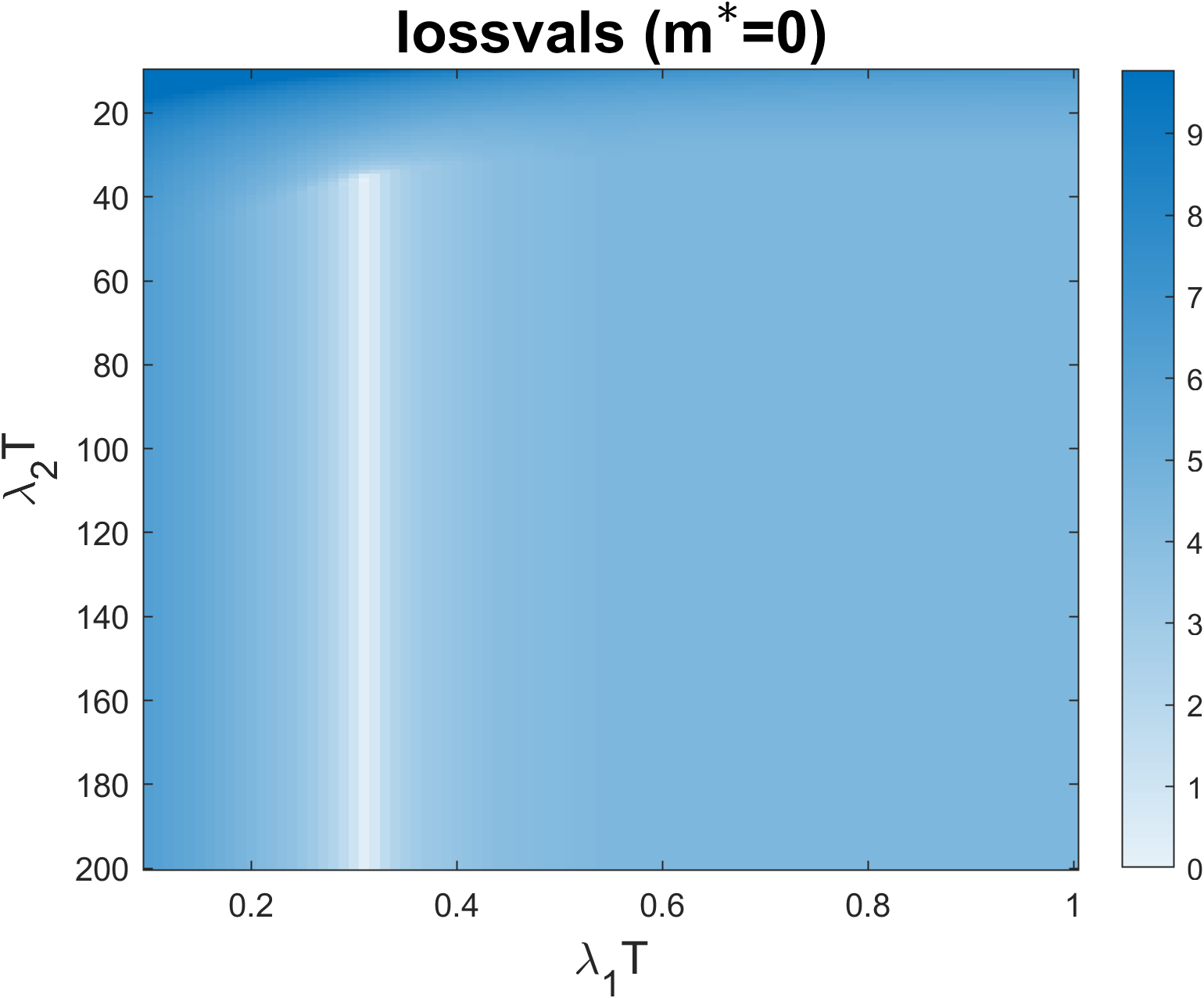}}\hfill
    \subfloat{\includegraphics[width=0.33\textwidth]{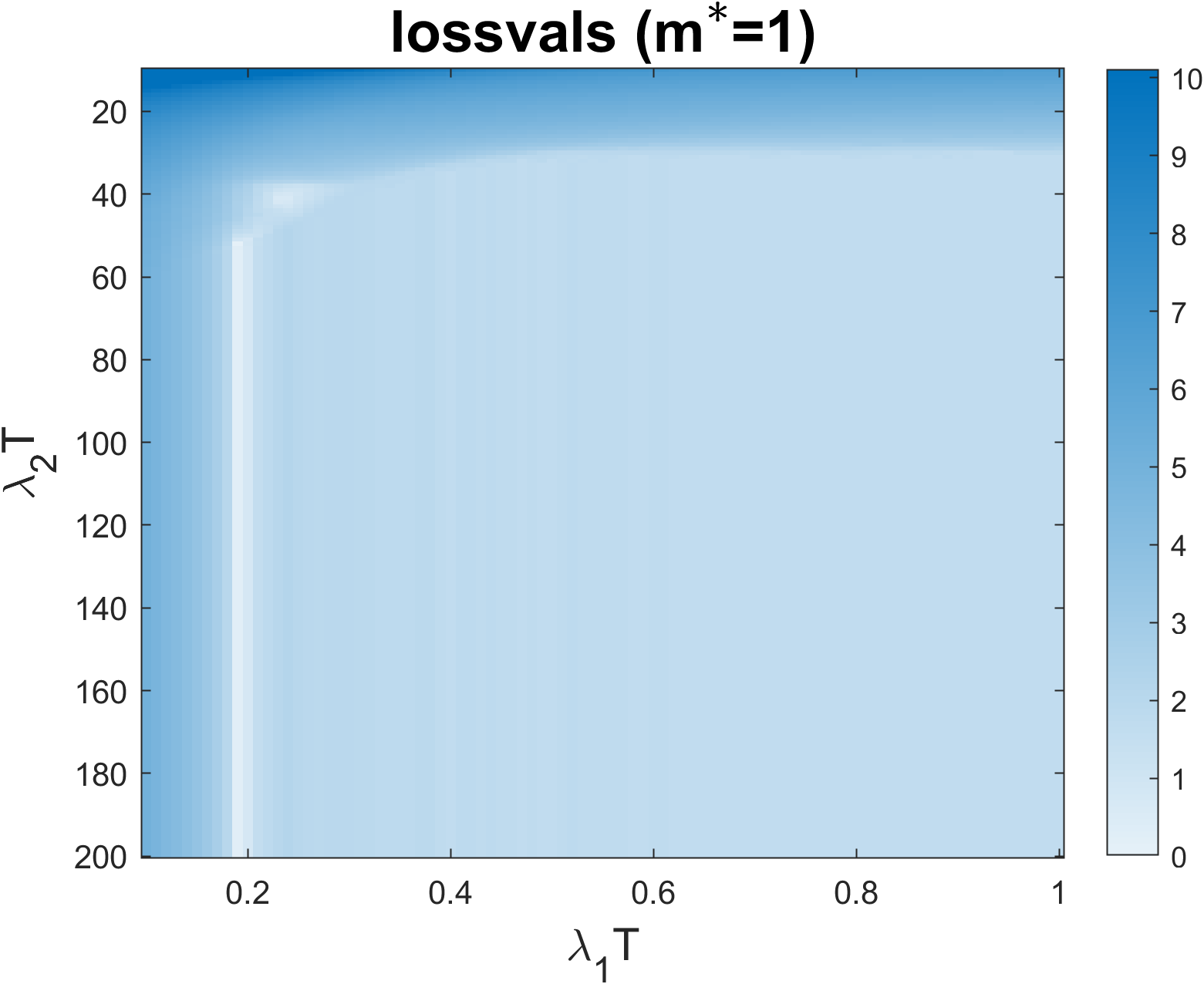}}\hfill
    \subfloat{\includegraphics[width=0.33\textwidth]{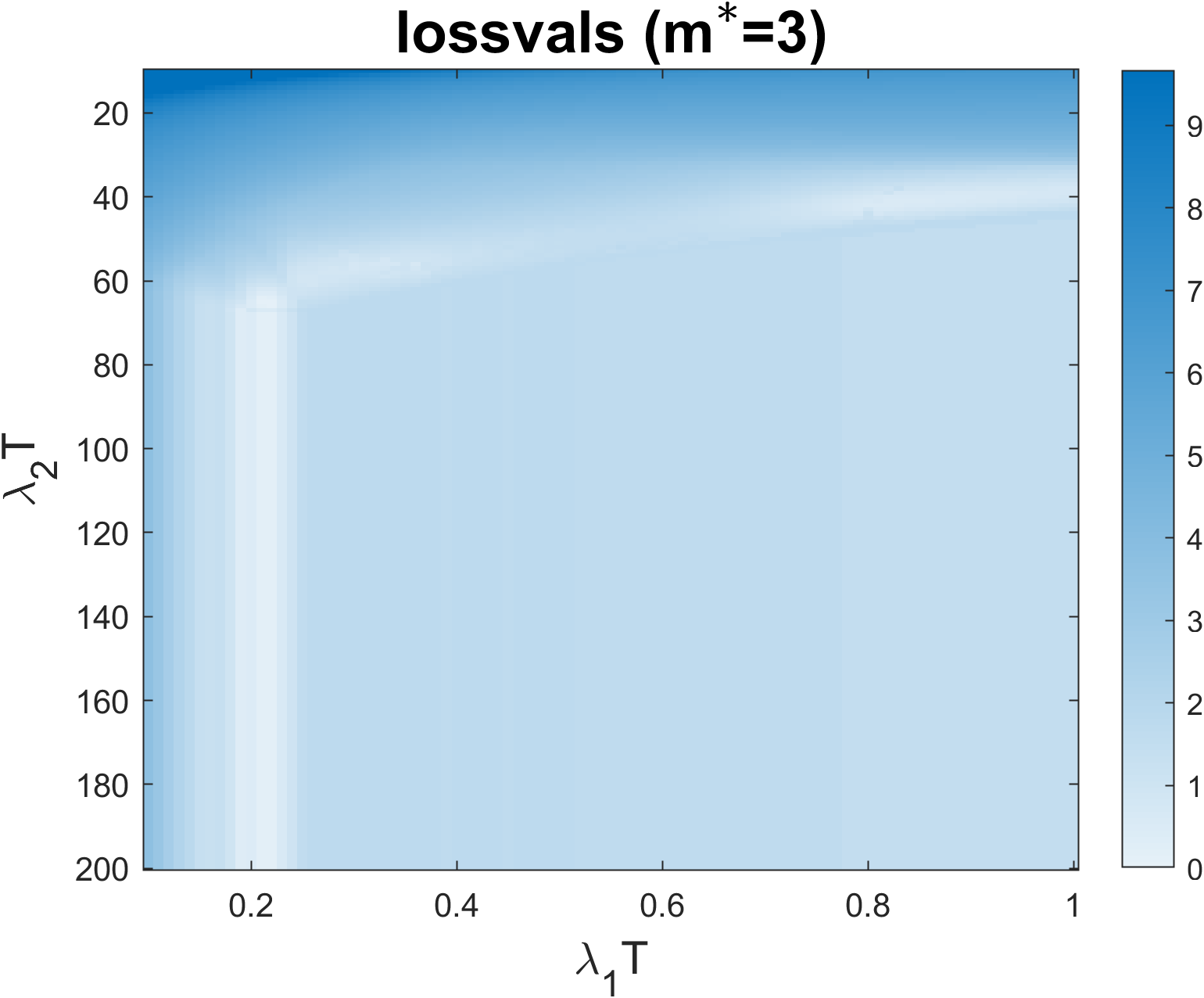}}\\

    \subfloat{\includegraphics[width=0.33\textwidth]{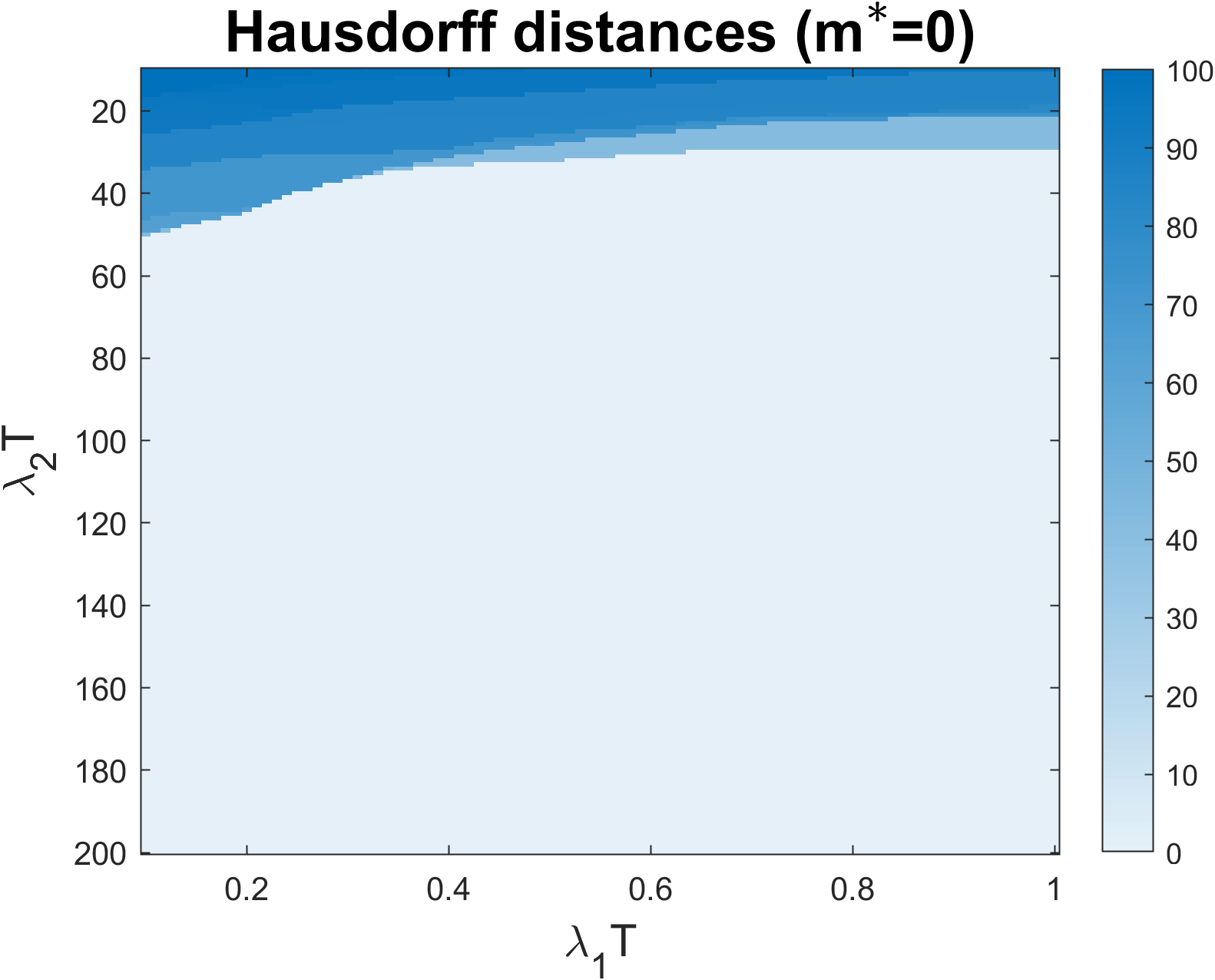}}\hfill
    \subfloat{\includegraphics[width=0.33\textwidth]{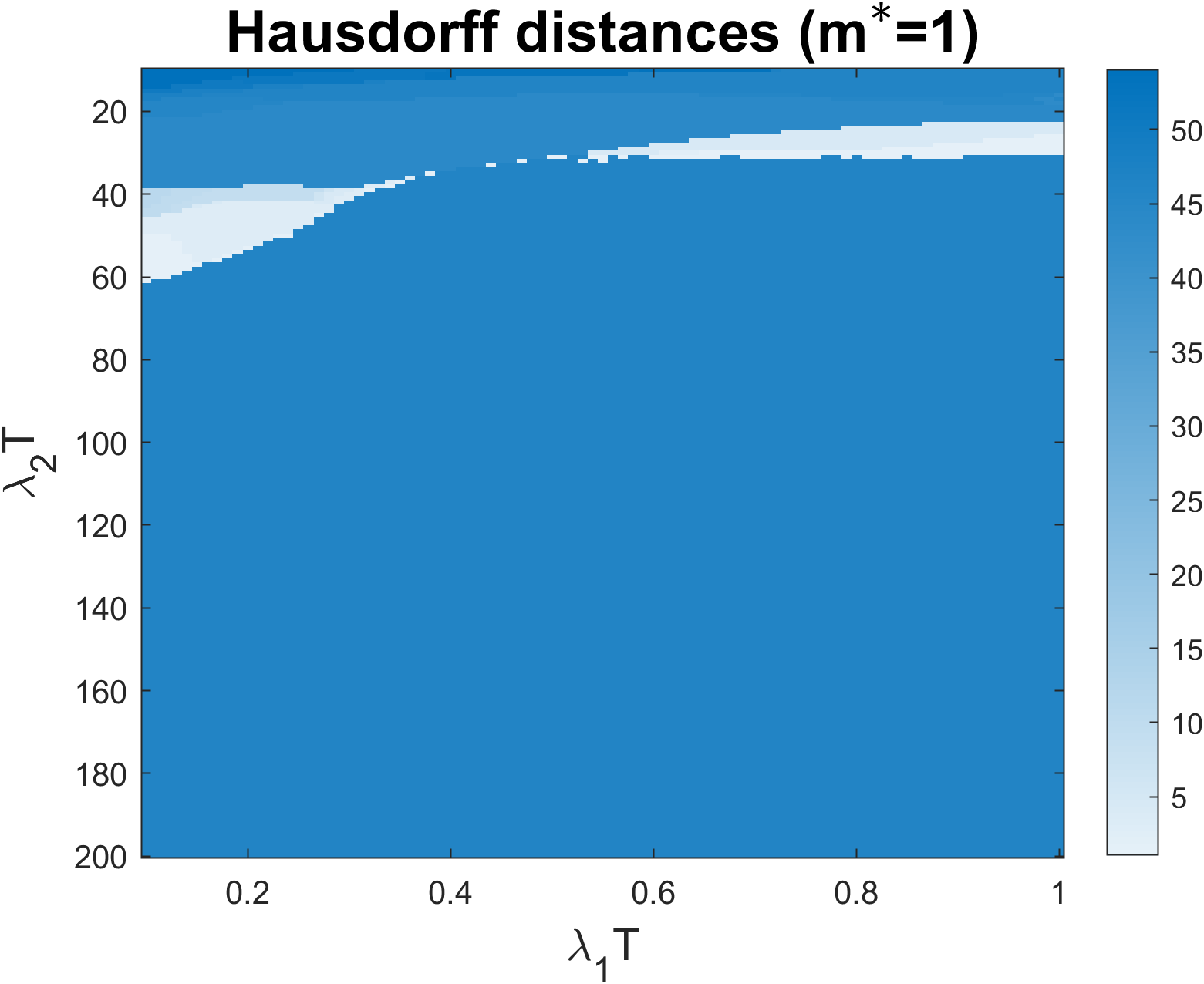}}\hfill
    \subfloat{\includegraphics[width=0.33\textwidth]{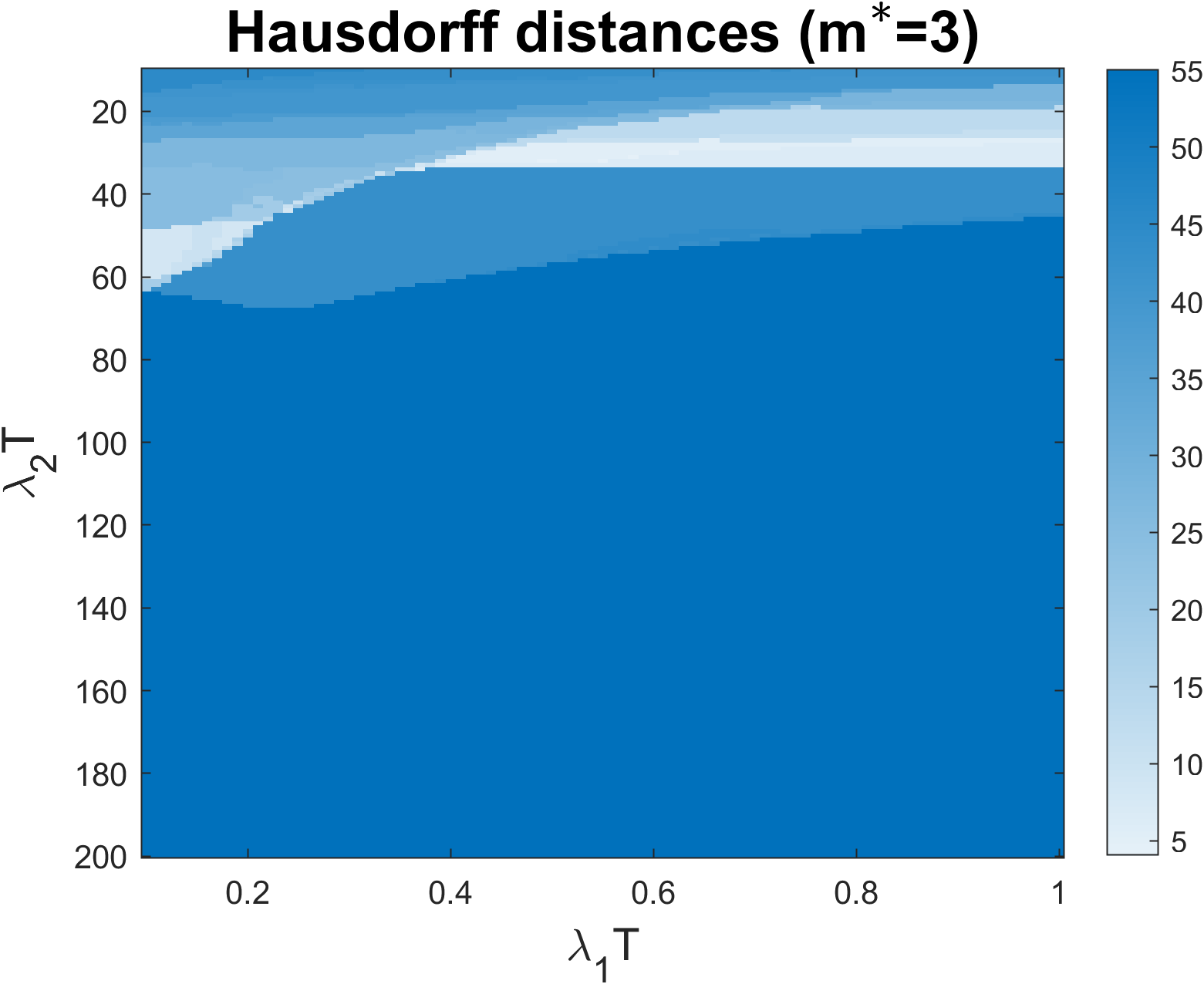}}\\

    \subfloat{\includegraphics[width=0.33\textwidth]{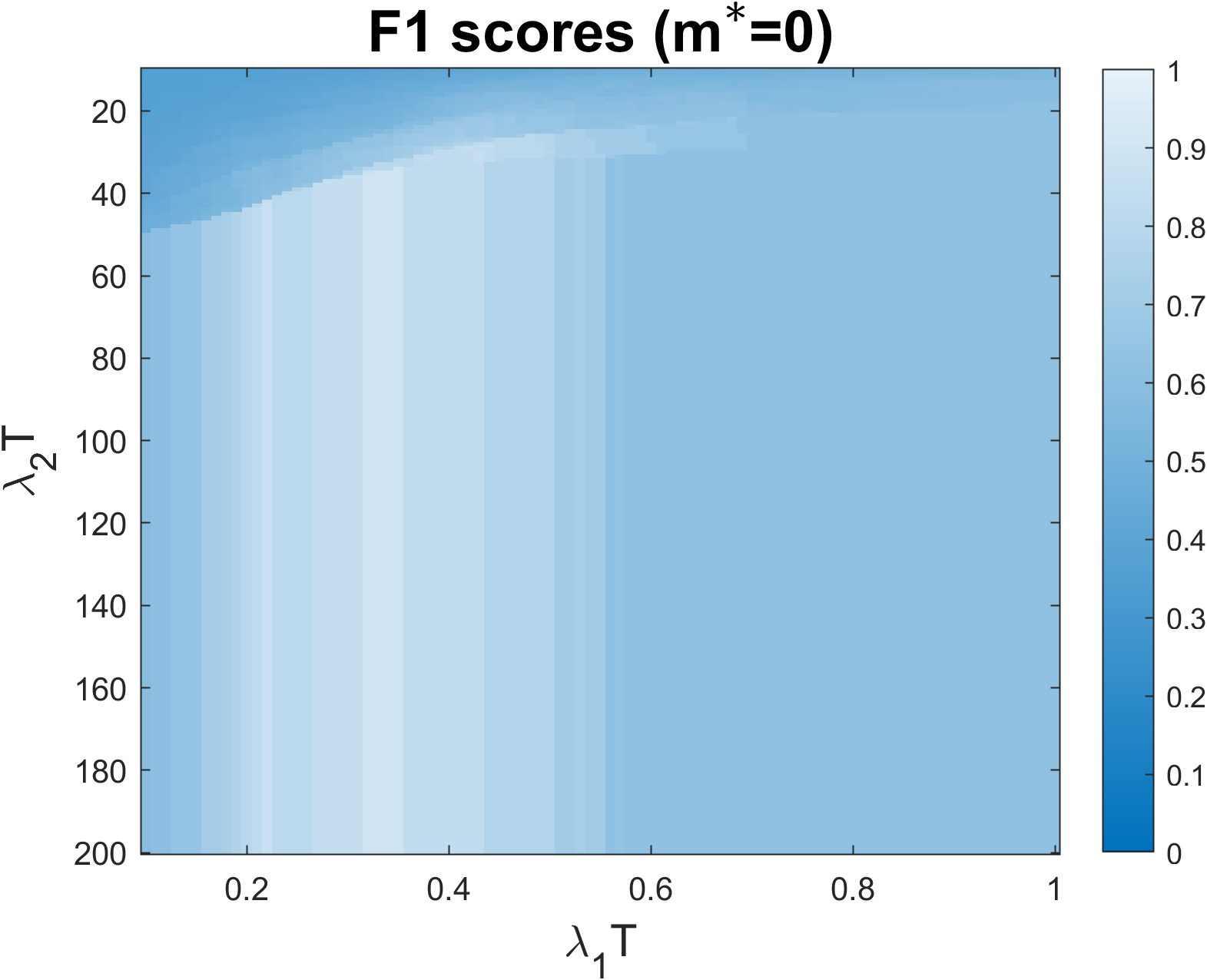}}\hfill
    \subfloat{\includegraphics[width=0.33\textwidth]{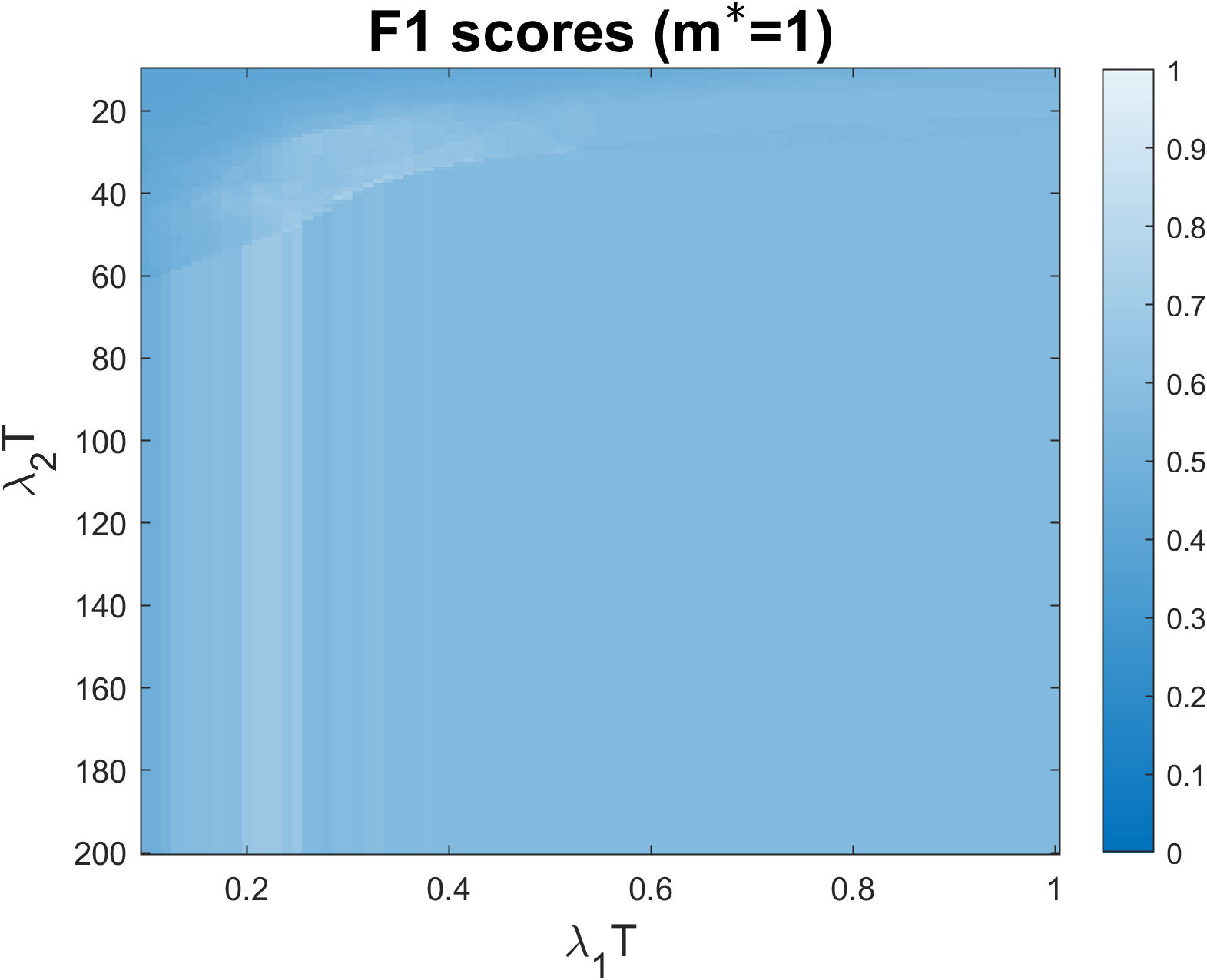}}\hfill
    \subfloat{\includegraphics[width=0.33\textwidth]{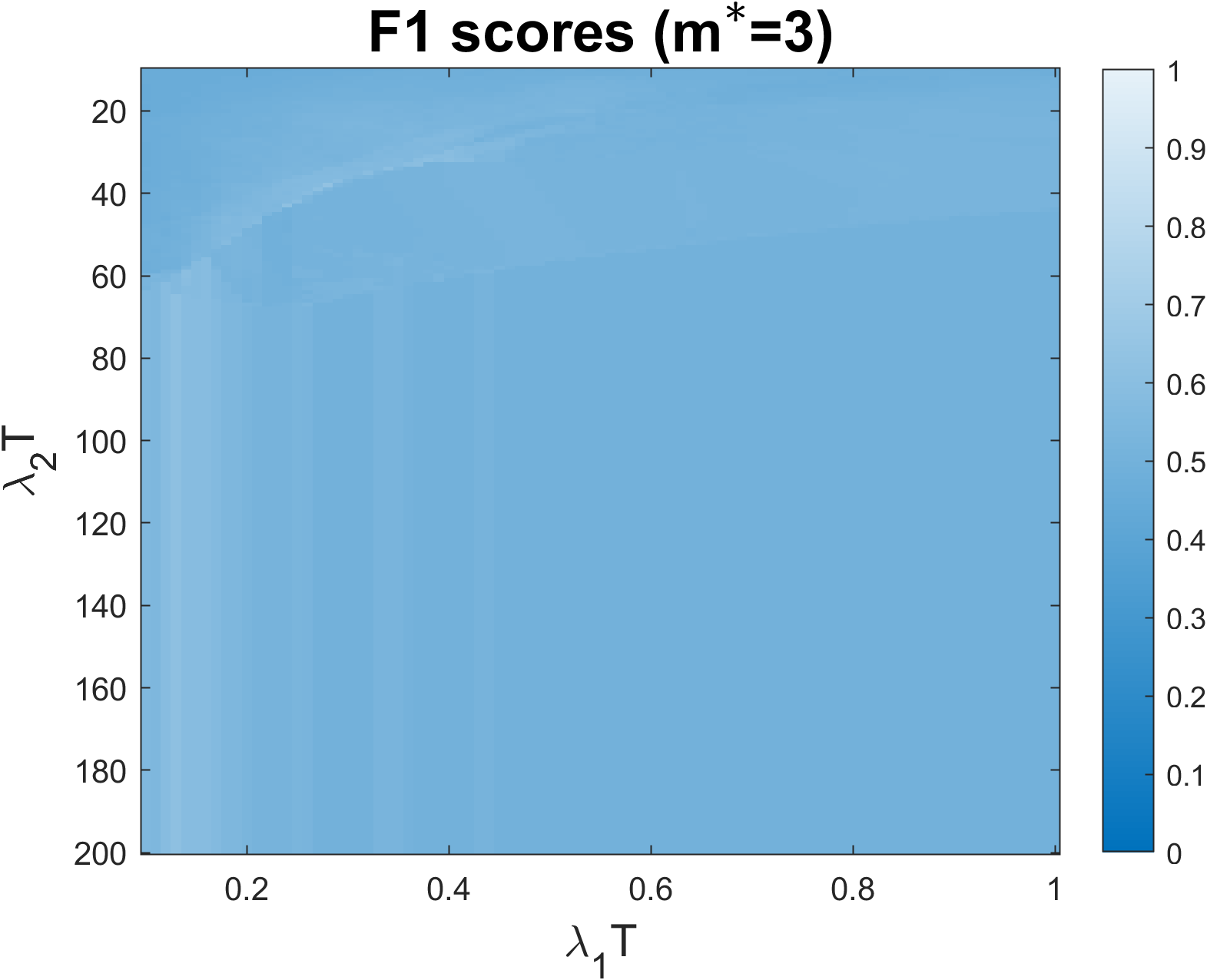}}
    \label{fig:sen-ana}
\end{figure}

The figures suggest consistent patterns across the surfaces of all four metrics.
Specifically, there are distinct boundaries splitting the metric surfaces into two primary regions: an upper and a lower region.
Each of these regions further subdivides into multiple subregions that exhibit similar characteristics across all four metrics.
The lower regions of the BIC, lossval, and F1 score surfaces are characterized by numerous vertical bars, indicating areas of potentially optimal parameter combinations.
In contrast, the Hausdorff distance surface displays a constant lower region.

An interesting observation is that the BIC-type criterion tends to favor smaller values of \( \lambda_1 \), whereas the lossval criterion leans towards slightly larger \( \lambda_1 \) values.
Furthermore, although both criteria struggle to identify the optimal Hausdorff distance (except when \( m^{\ast} = 0 \)), when comparing the tuning parameters selected by the BIC-type criterion to those by the lossval criterion, the lossval criterion exhibits a slight advantage; its optimal region (the white area in the subfigures) is larger, especially as $m^{\ast}$ increases.
For example, when \( m^{\ast} = 3 \), the white region extends to the upper right corner, which is preferable to the lower region.
This suggests that the lossval criterion may be more effective in identifying the optimal parameters.
These advantages of the lossval criterion over the BIC-type criterion are further corroborated by the results presented in the previous subsection.

The figures also shed light on the reasons behind the BIC-type criterion's poor performance in the context of GFDtL.
Specifically, the BIC-type criterion shows a preference for larger $ \lambda_2 $ values, which corresponds to fewer \rev{change points} in the estimation.
This preference can be directly attributed to the definition of BIC (cf. (\ref{eq:def-BIC})).
In particular, when there are \rev{change points}, $ K\log (T) $ is at least $ \log (T)p(p - 1) $, which typically dominates the loss value term in the BIC formula.
Consequently, the BIC-type criterion tends to favor estimators with fewer \rev{change points}, leading to suboptimal performance in detecting the true number of \rev{change points}, especially in scenarios with a higher number of actual \rev{change points}.

Furthermore, the F1 score surfaces provide additional insights into the model's performance across different parameter combinations.
The gradual transition from darker to lighter colors as $ \lambda_1 $ increases (for fixed $ \lambda_2 $) suggests that the model's ability to correctly identify true positives improves with larger $ \lambda_1 $ values, up to a certain point.
This observation aligns with the lossval criterion's preference for slightly larger $ \lambda_1 $ values compared to the BIC-type criterion.

\subsection{Sensitivity Analysis with Respect to $\lambda_3$}

\rev{We conduct a sensitivity analysis to evaluate the influence of the parameter $\lambda_3$ on the performance of the GFDtL algorithm.
Recall from Proposition~7 that, as a technical parameter to ensure the existence of optimal solution to the modified problem (5) and the equivalence between it and the original problem (2), $\lambda_3$ should be chosen sufficiently large.
The experiments are conducted on datasets simulated according to \textbf{Setting (ii)} in Section~5 of the main text.
We fix $\lambda_1T = 0.5$ and $\lambda_2T = 50$, and vary $\lambda_3$ from 1 to 101 with a step size of 10.
Four experimental configurations are considered: (i) $T = 100$, $p = 10$, $m^\ast = 1$; (ii) $T = 150$, $p = 10$, $m^\ast = 1$; (iii) $T = 100$, $p = 20$, $m^\ast = 1$; and (iv) $T = 150$, $p = 10$, $m^\ast = 3$.
For each configuration, we perform 5 independent experiments and report the mean and standard deviation of four performance metrics: Hausdorff distance (HD), F1 score, accuracy, and estimation error.}

\rev{The results are displayed in Figure \ref{fig:lamb3-sensitivity}.
For all four configurations, the performance metrics remain remarkably stable as $\lambda_3$ varies within the tested range, except for \( \lambda_3 = 1 \).
These observations indicate that once $\lambda_3$ is chosen to be sufficiently large (e.g., $\lambda_3 = 10$ in our synthetic experiments), its specific value does not significantly impact the algorithm's performance.
This robustness is consistent with the theoretical result in Proposition~7, which guarantees that when (2) has an optimal solution, then for $\lambda_3$ larger than a certain threshold, any optimal solution of (5) is optimal for (2).}

\rev{In conclusion, the choice of $\lambda_3$ does not have a substantial influence on the performance of the GFDtL procedure, provided it is set to a sufficiently large value.
This finding supports our recommendation in Section \ref{sec:implementation} to use $\lambda_3 = 10$ for synthetic datasets and $\lambda_3 = 50$ for real datasets.}

\begin{figure}[htbp]
    \centering
    \caption{\rev{Sensitivity analysis of $\lambda_3$: Performance metrics across different configurations.}}
    \subfloat{\includegraphics[width=0.24\textwidth]{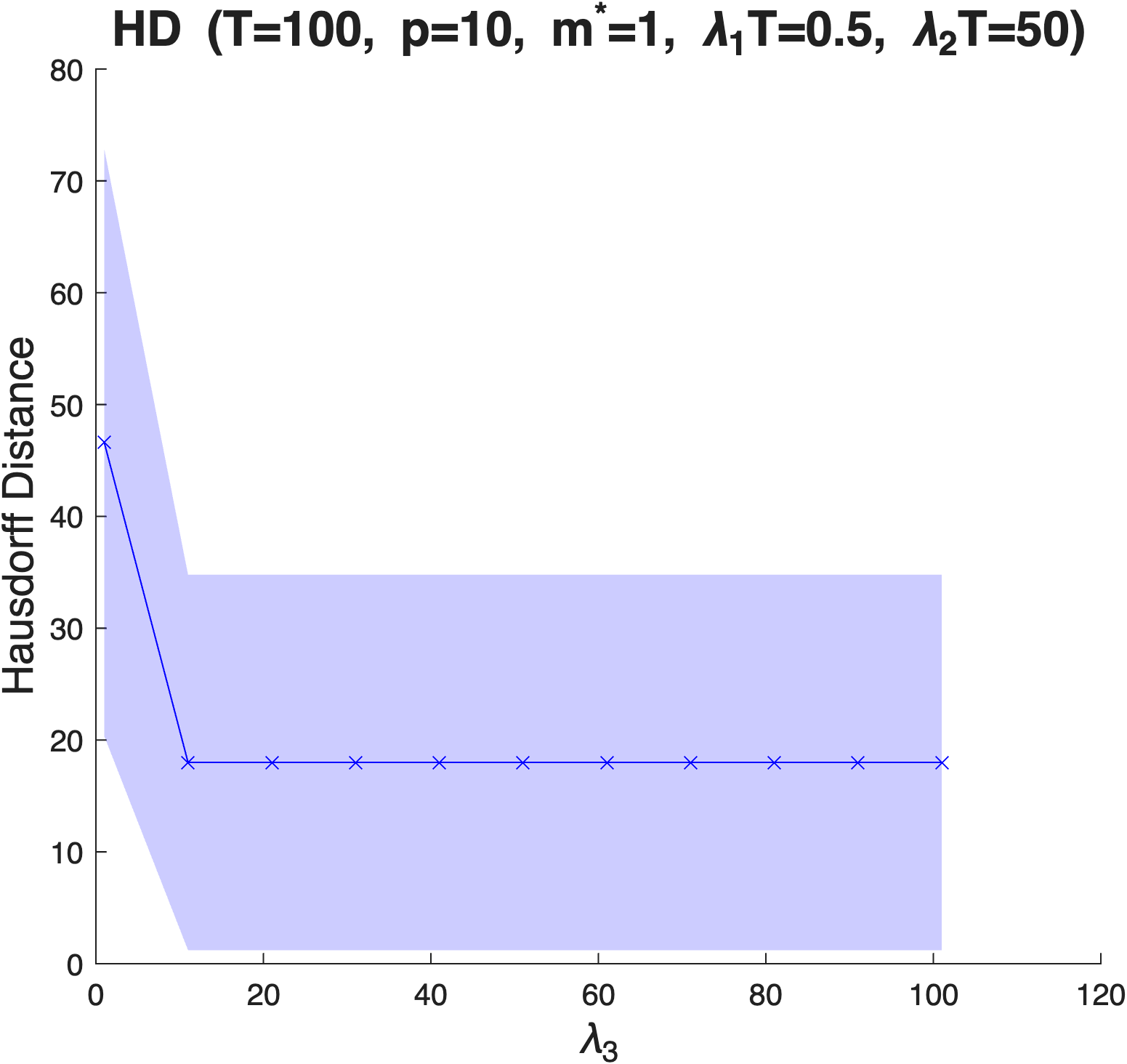}}
    \subfloat{\includegraphics[width=0.24\textwidth]{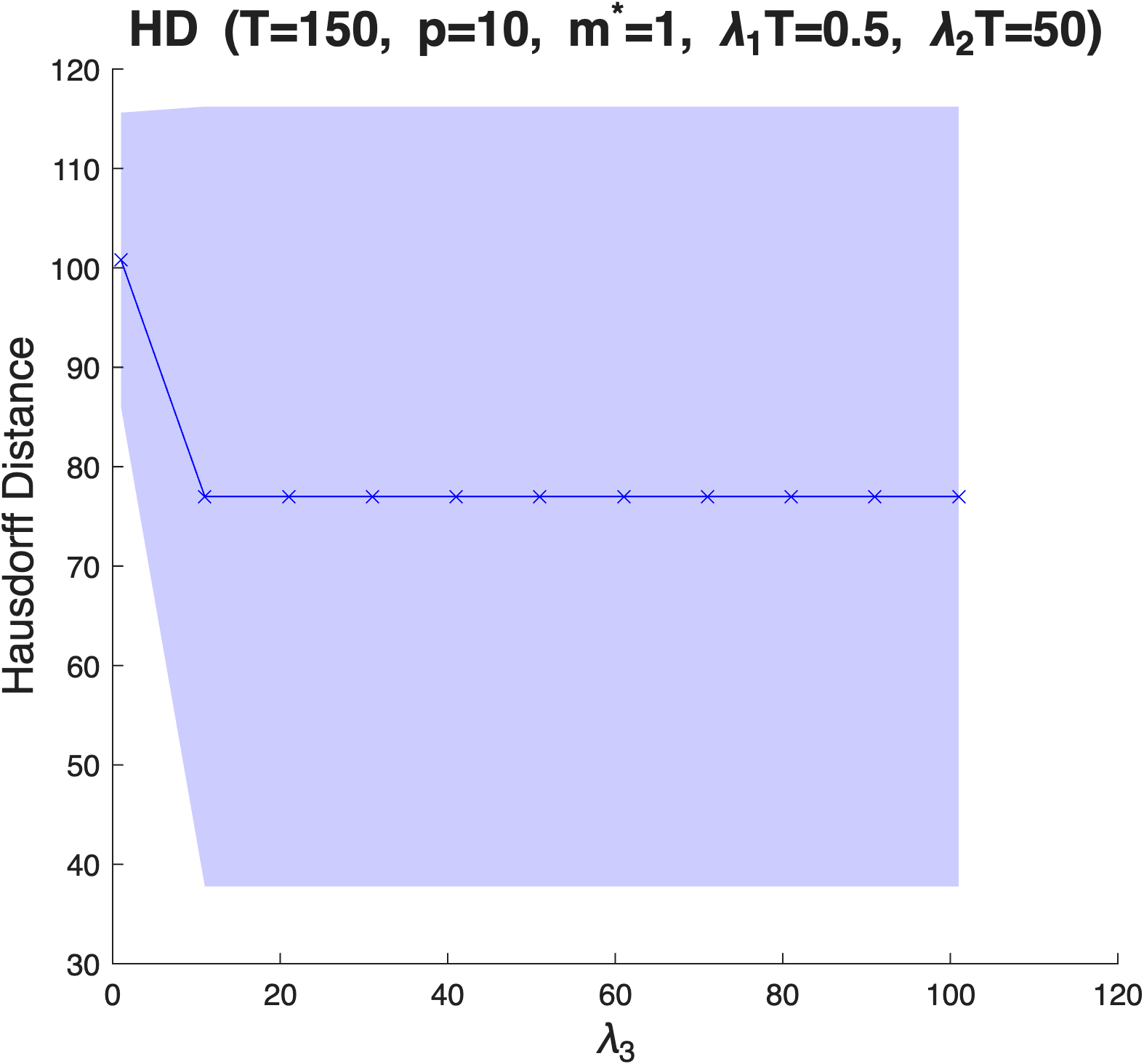}}
    \subfloat{\includegraphics[width=0.24\textwidth]{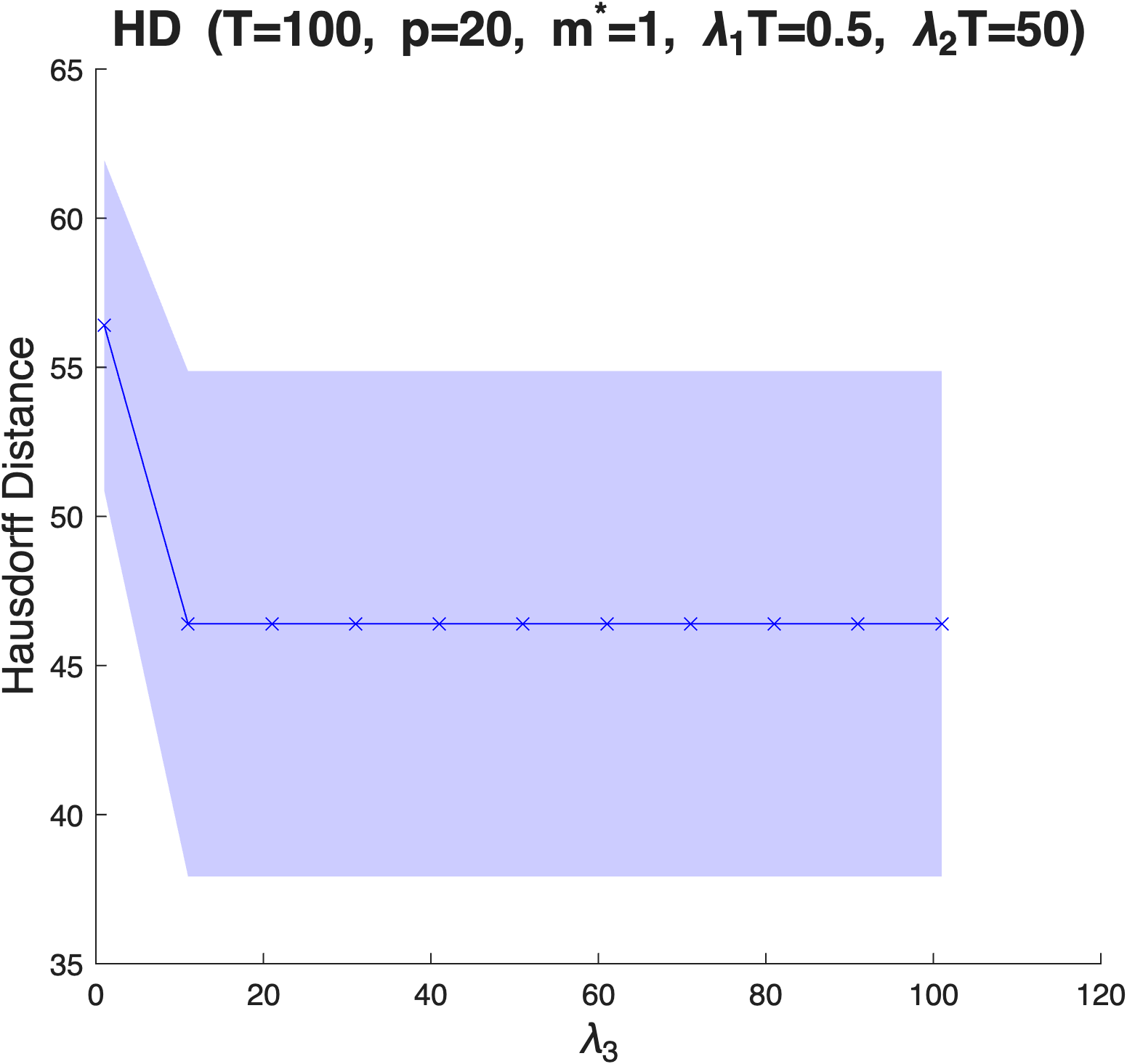}}
    \subfloat{\includegraphics[width=0.24\textwidth]{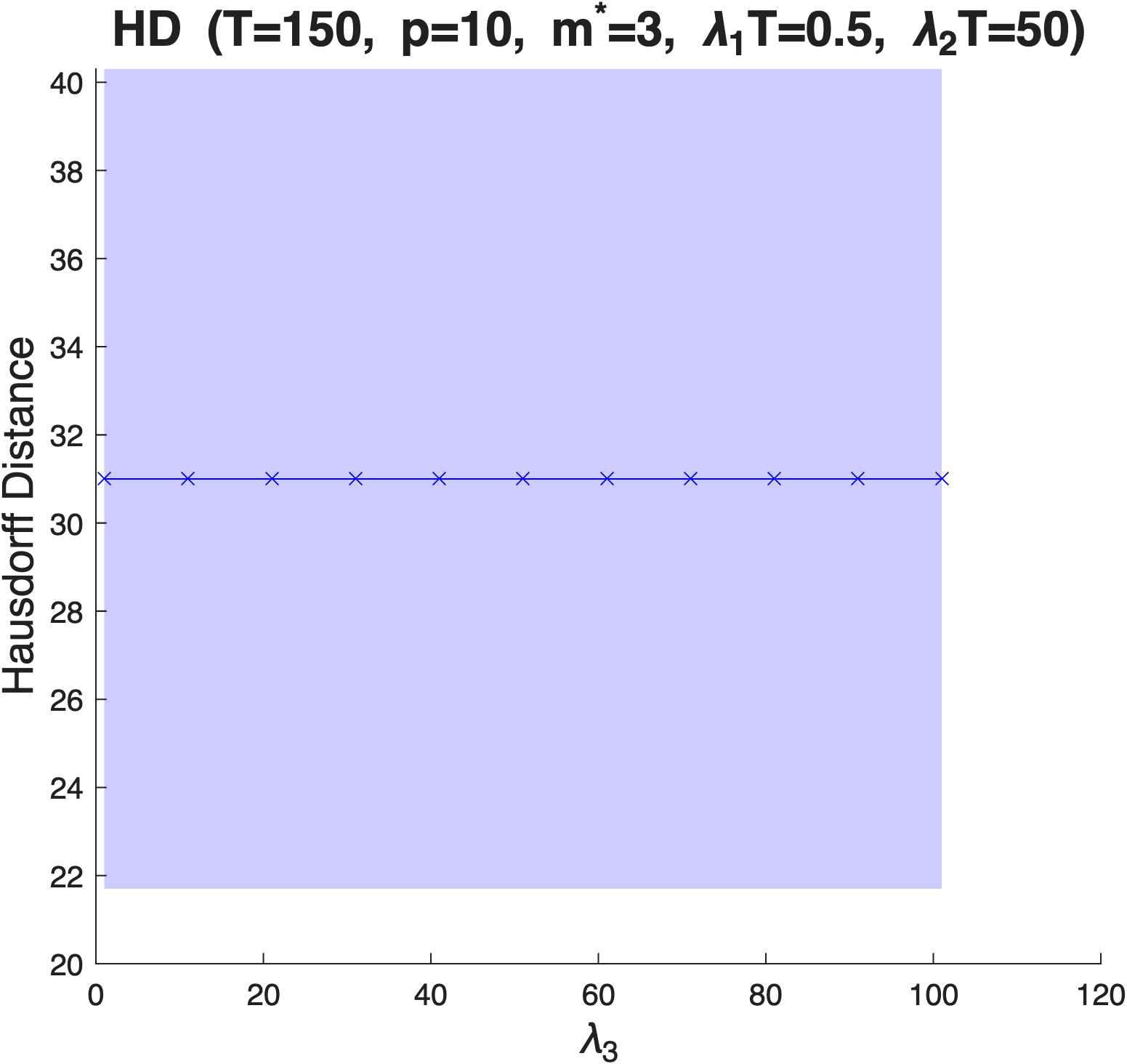}}\\
    \subfloat{\includegraphics[width=0.24\textwidth]{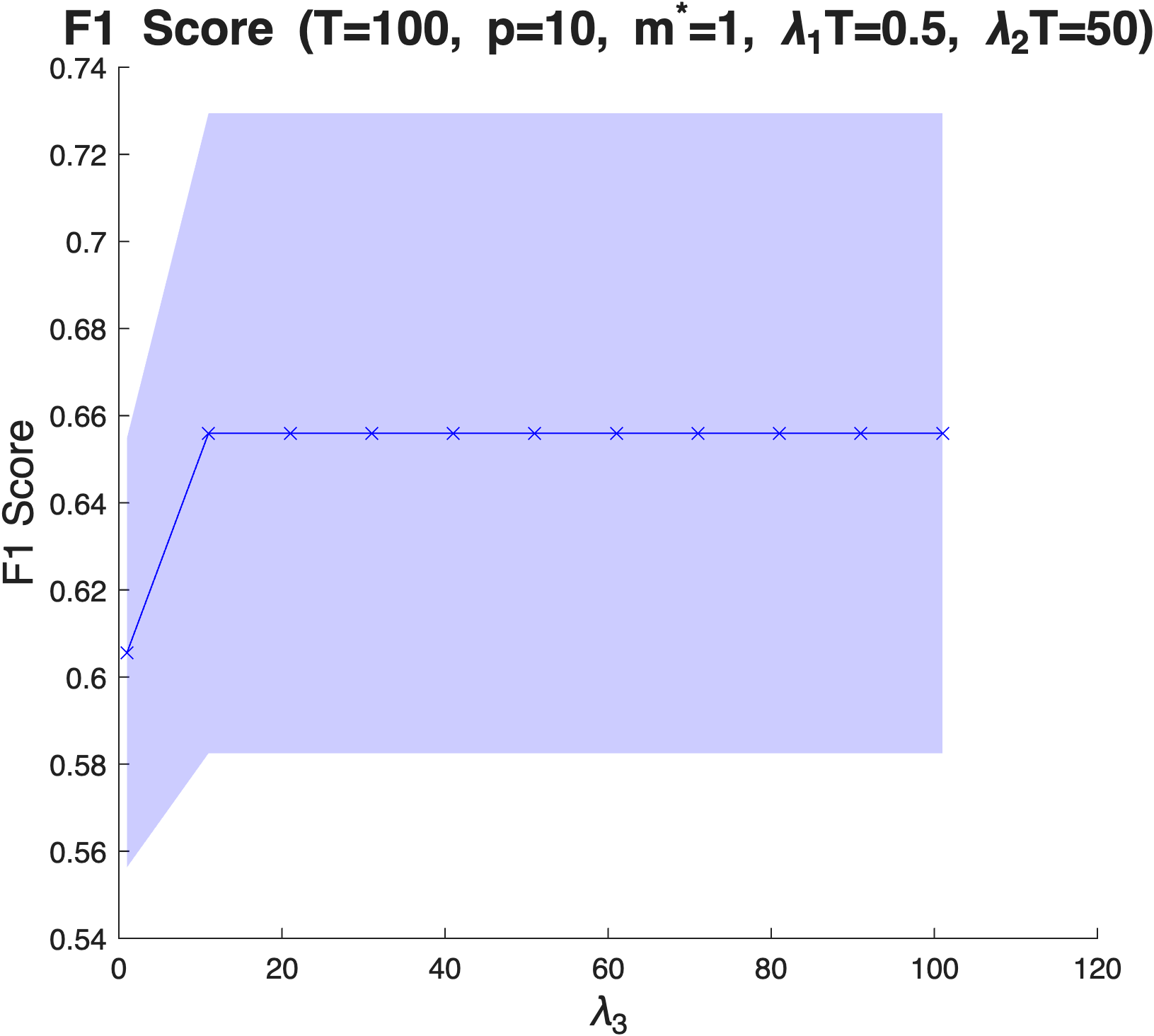}}
    \subfloat{\includegraphics[width=0.24\textwidth]{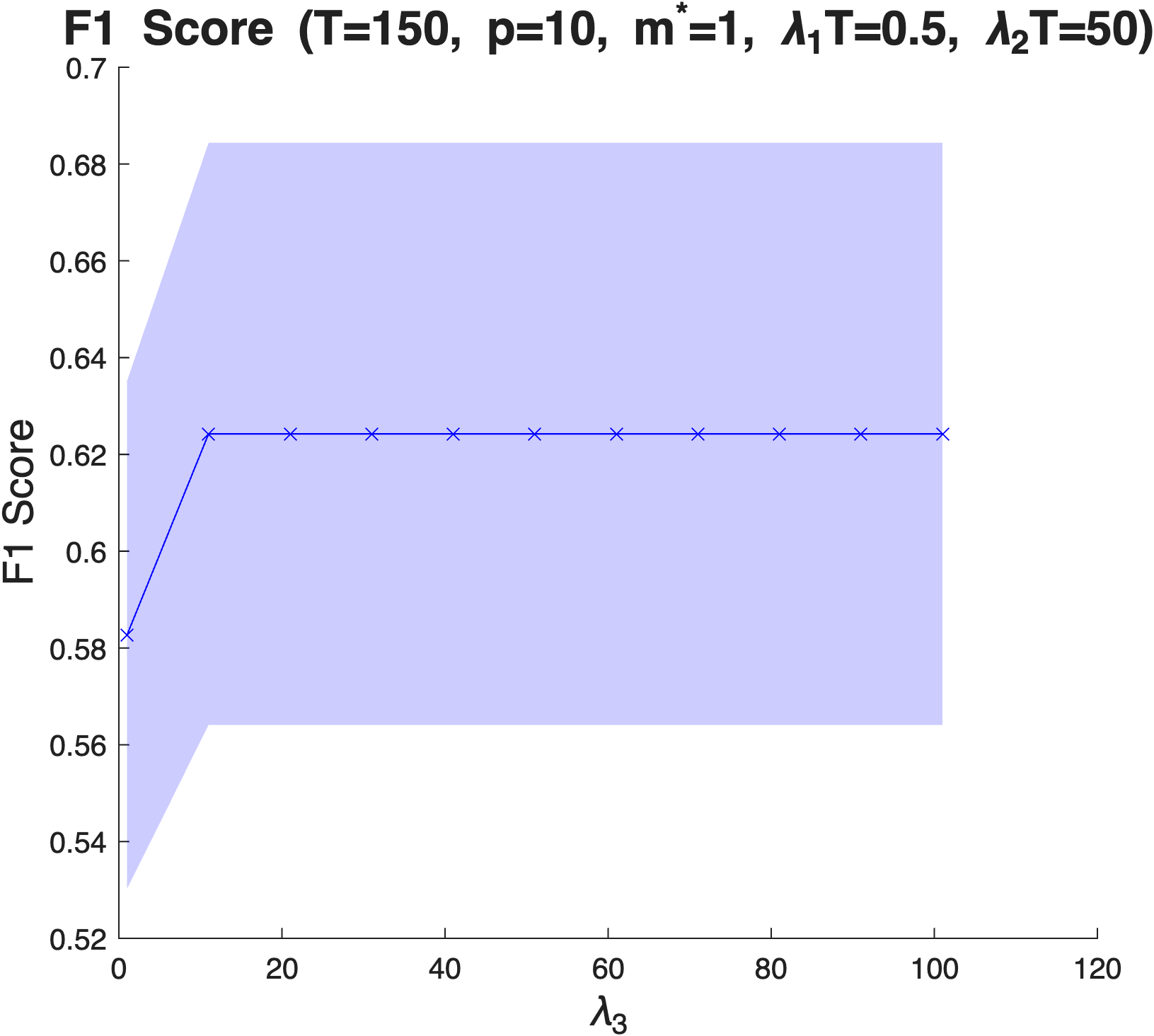}}
    \subfloat{\includegraphics[width=0.24\textwidth]{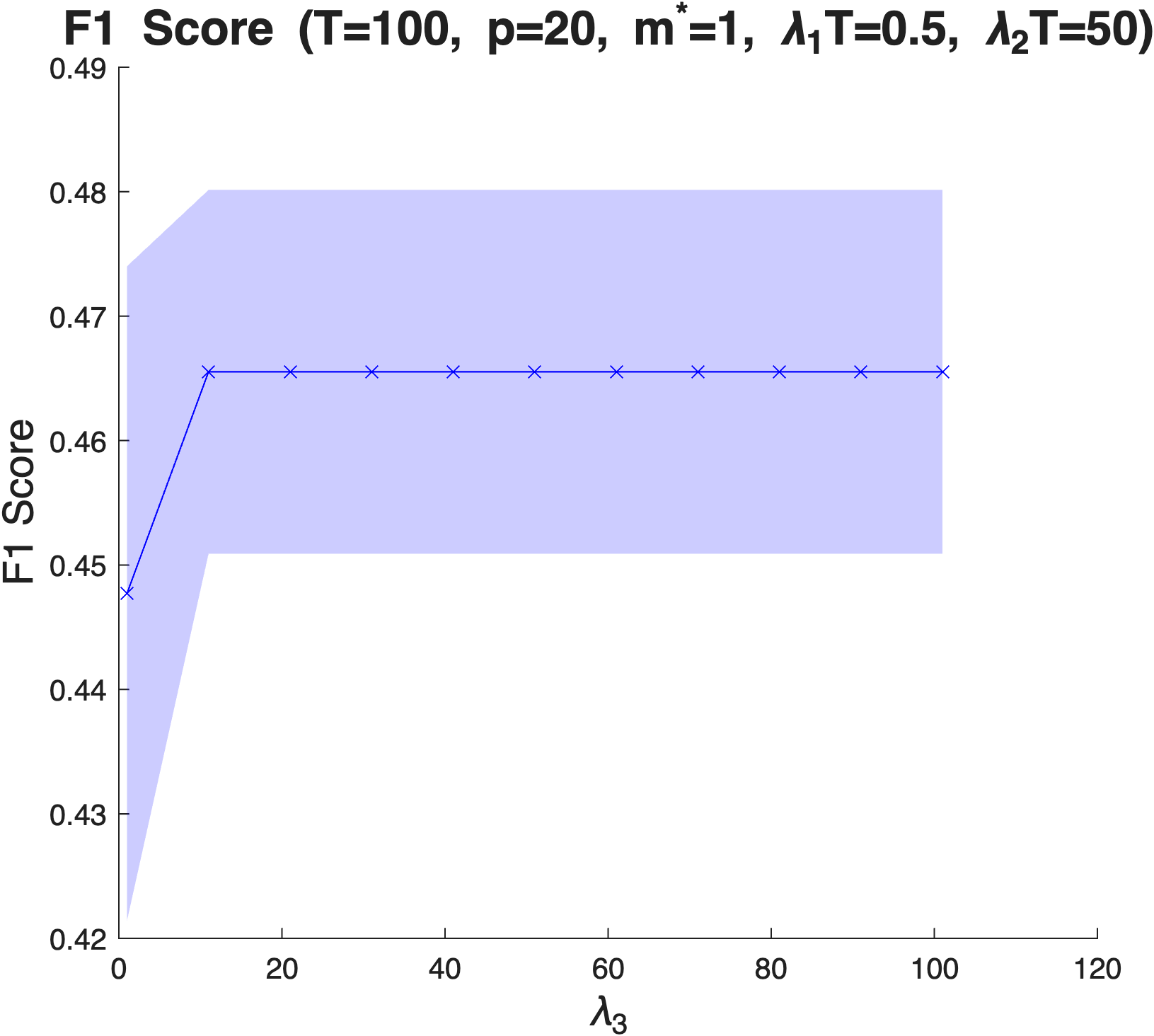}}
    \subfloat{\includegraphics[width=0.24\textwidth]{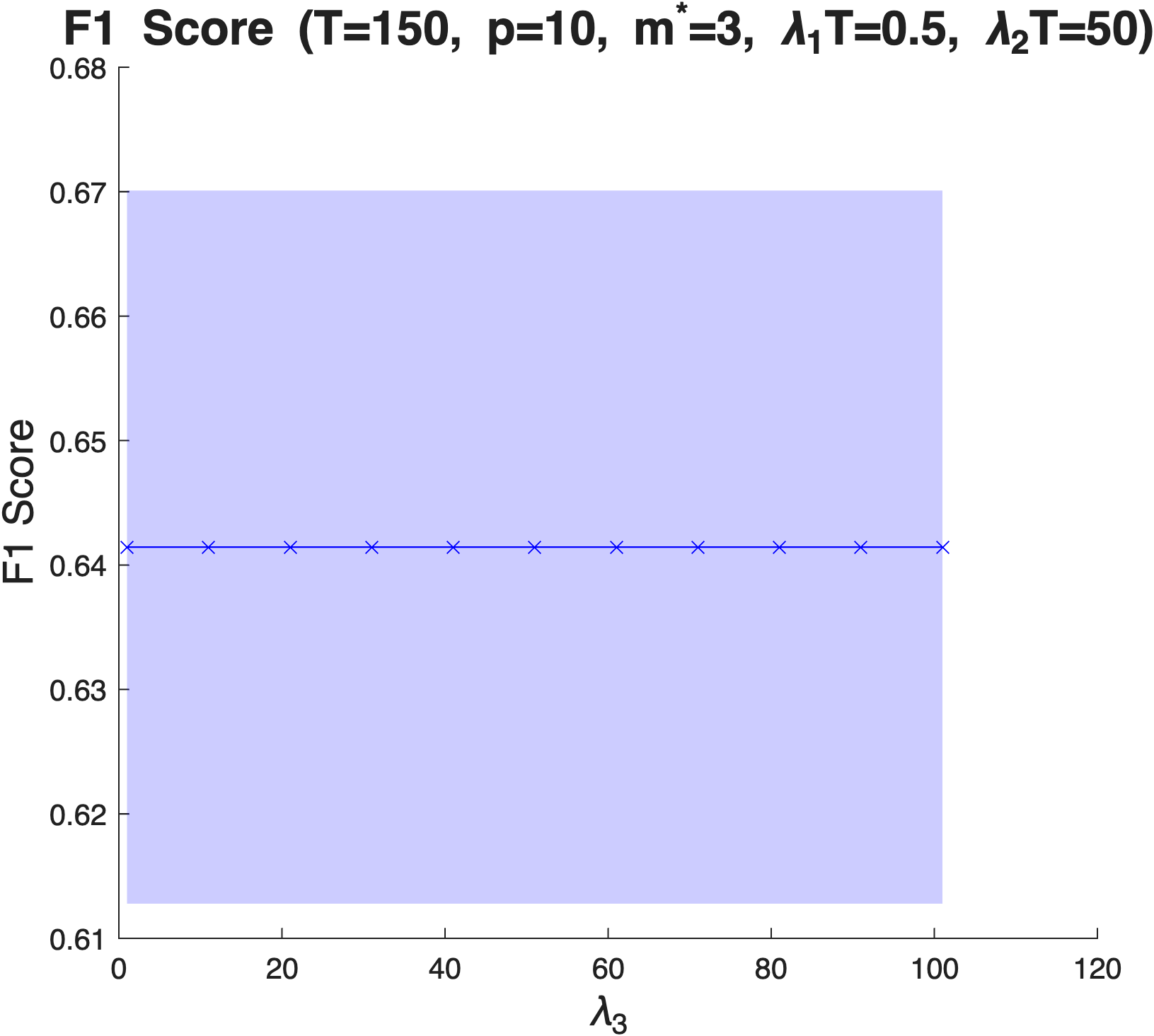}}\\
    \subfloat{\includegraphics[width=0.24\textwidth]{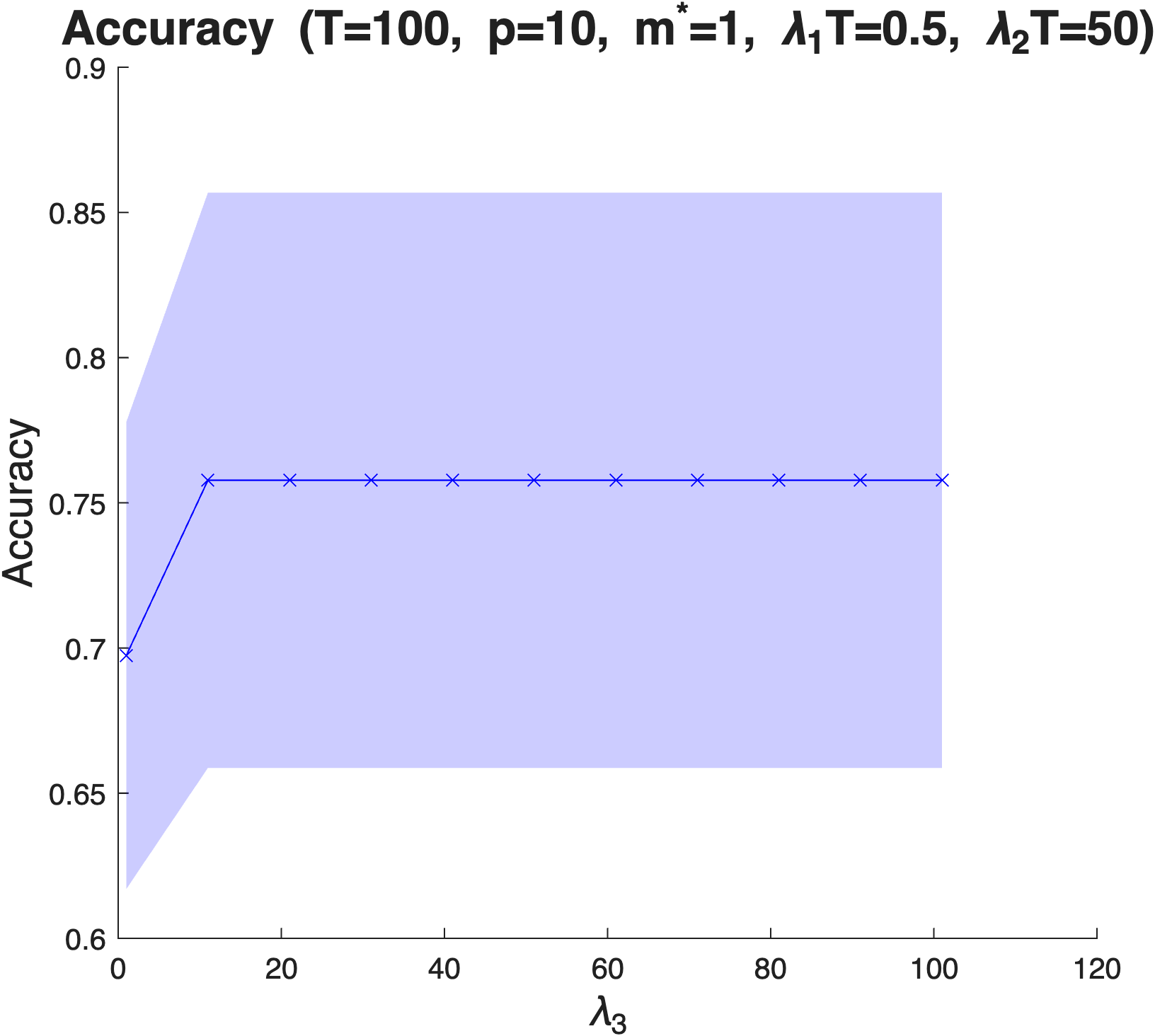}}
    \subfloat{\includegraphics[width=0.24\textwidth]{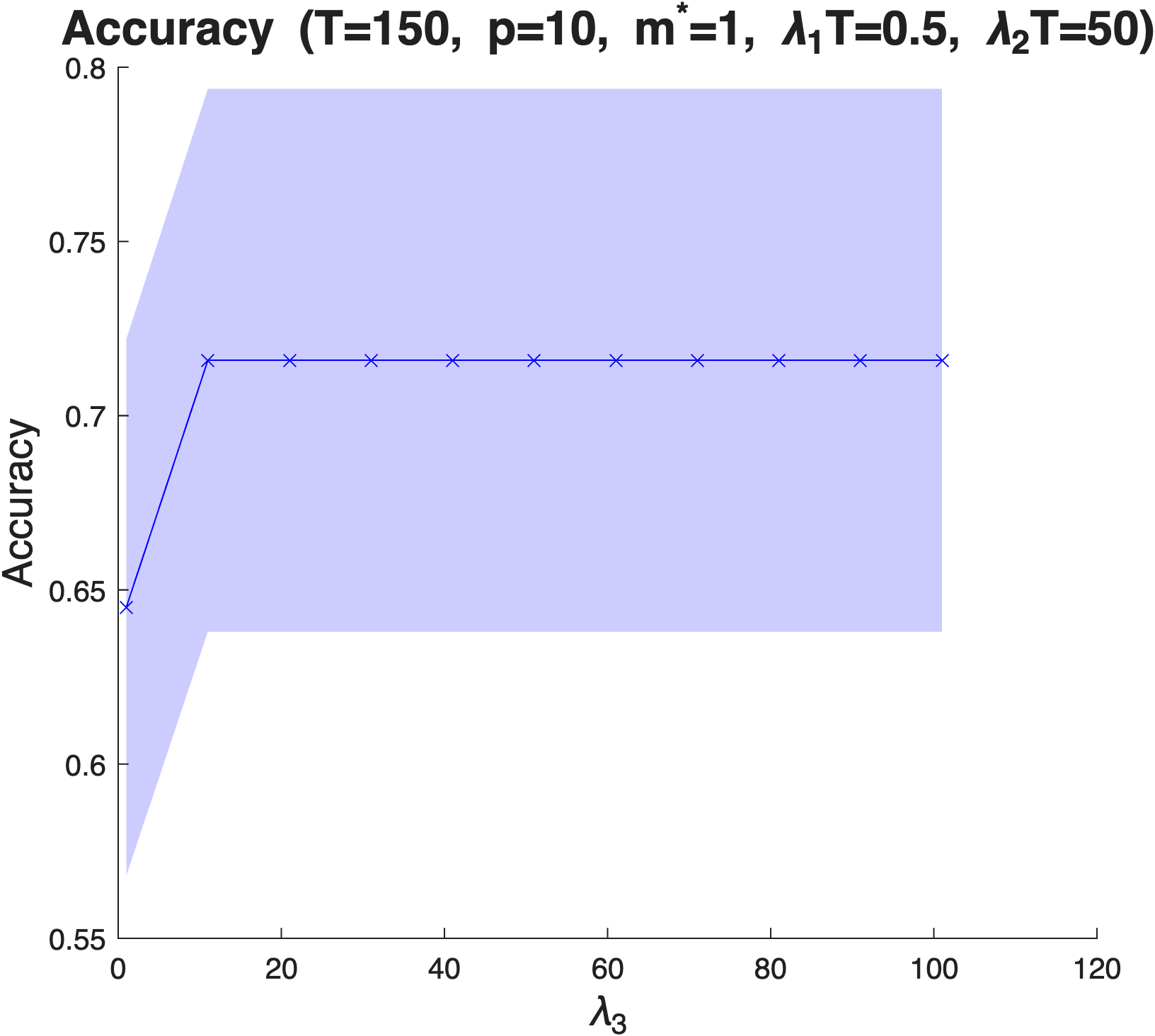}}
    \subfloat{\includegraphics[width=0.24\textwidth]{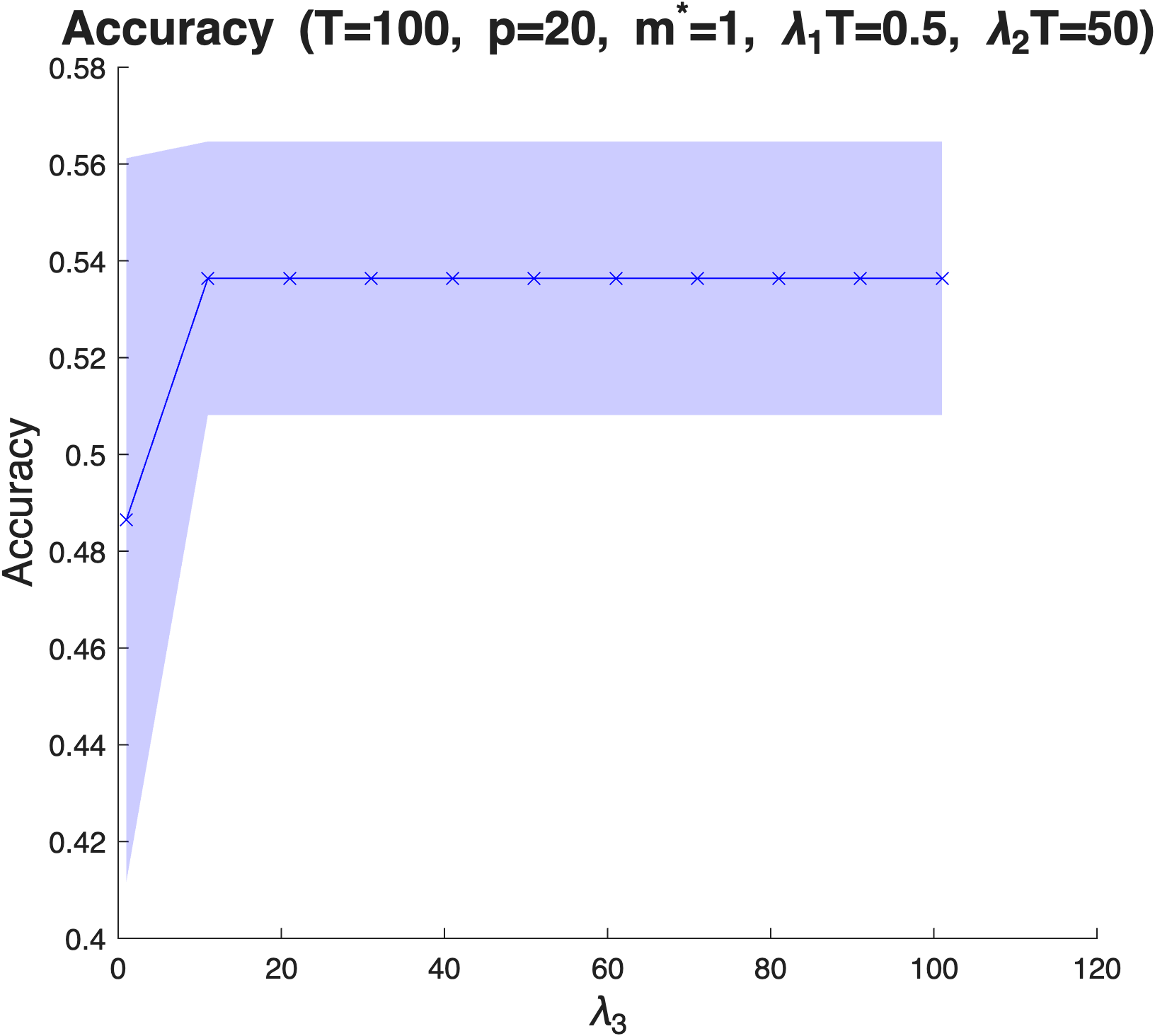}}
    \subfloat{\includegraphics[width=0.24\textwidth]{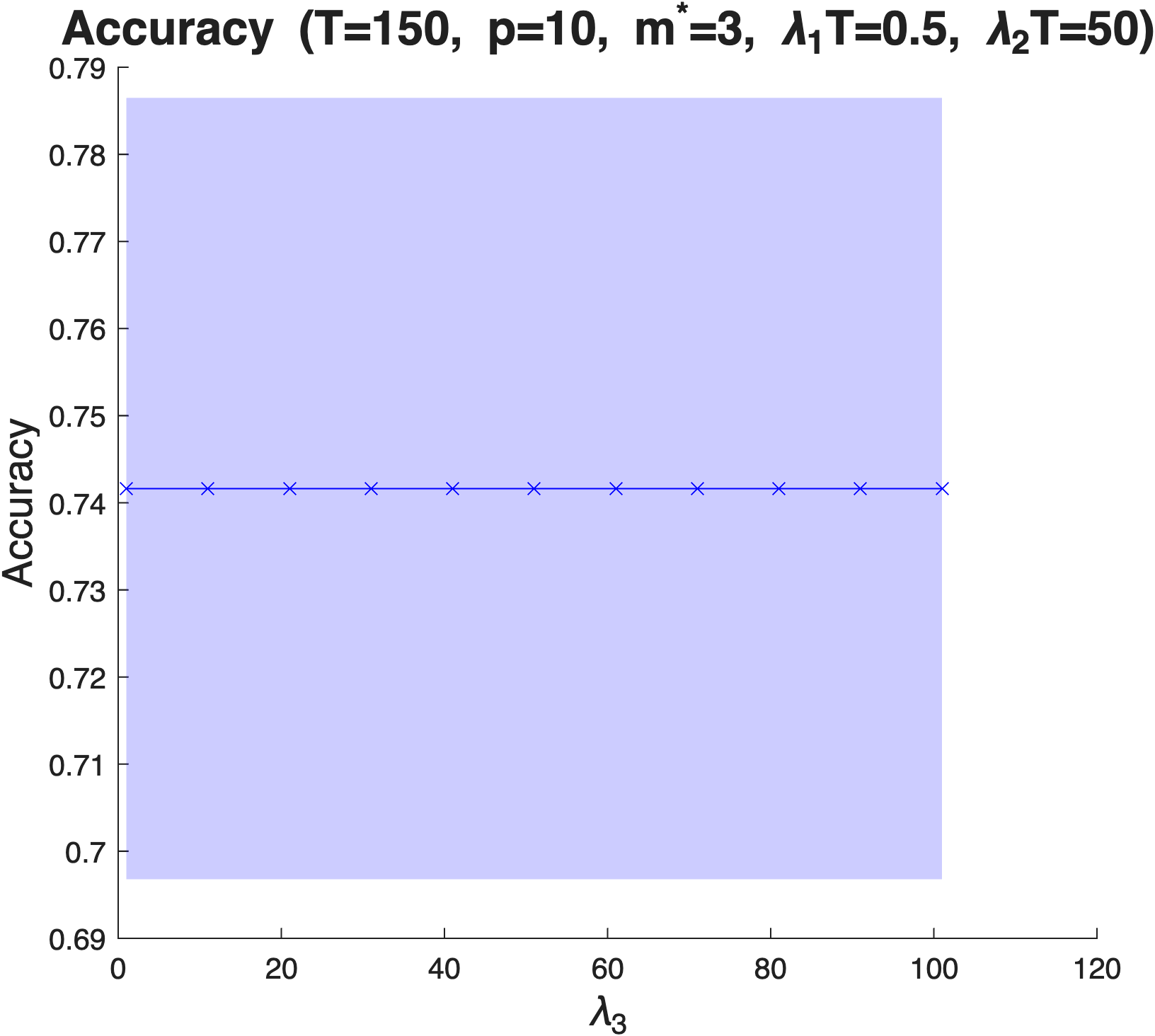}}\\
    \subfloat{\includegraphics[width=0.24\textwidth]{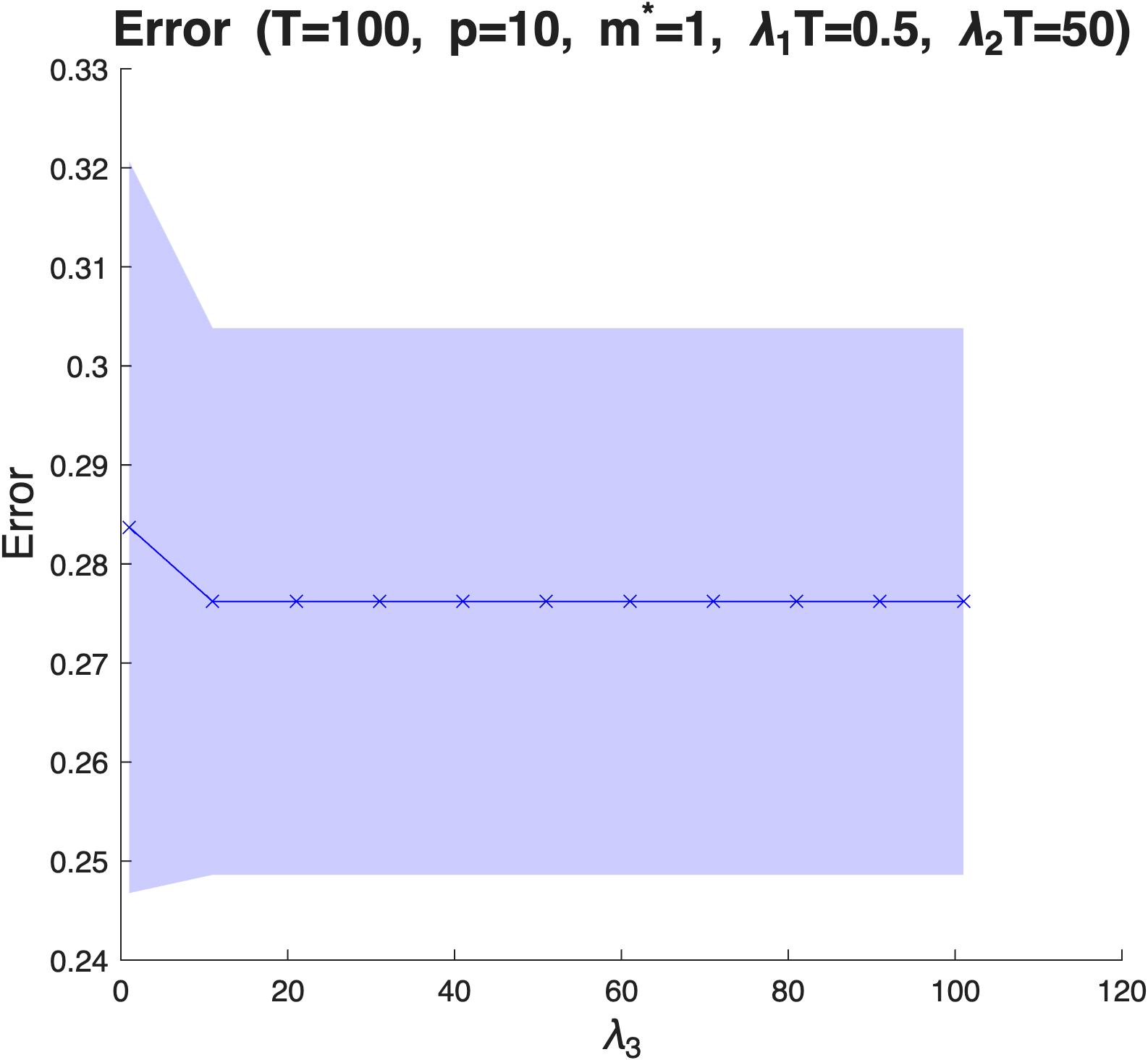}}
    \subfloat{\includegraphics[width=0.24\textwidth]{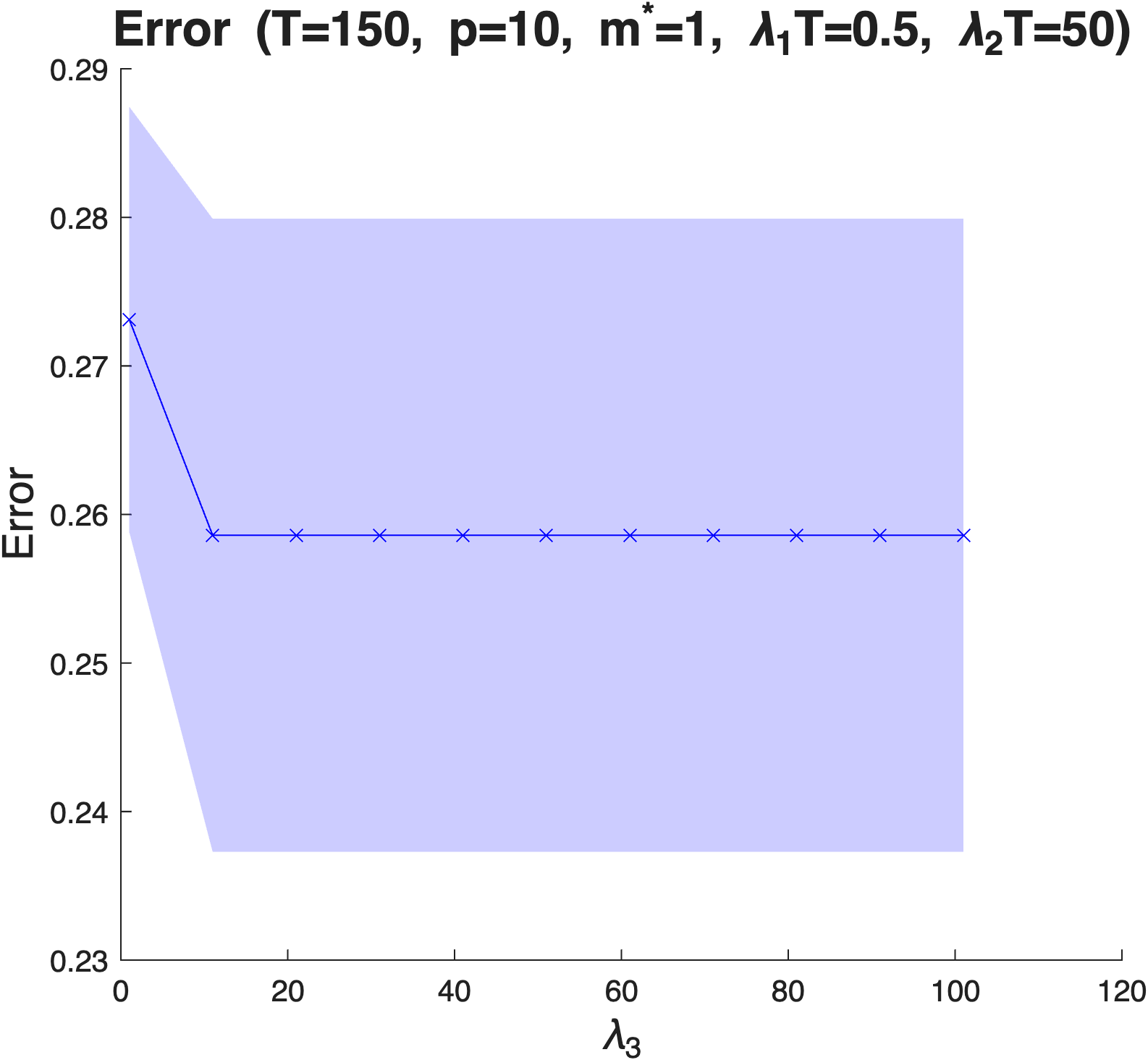}}
    \subfloat{\includegraphics[width=0.24\textwidth]{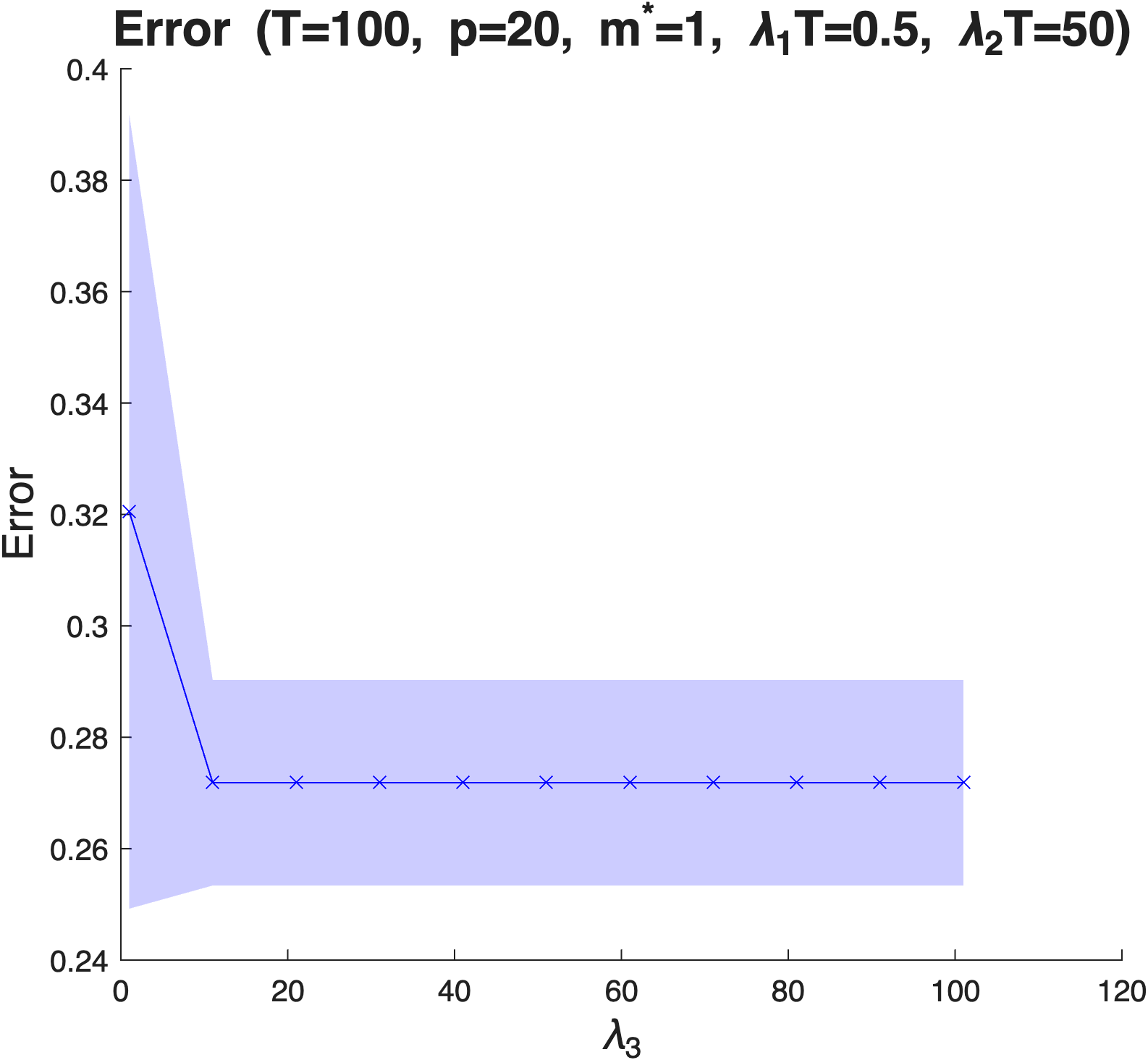}}
    \subfloat{\includegraphics[width=0.24\textwidth]{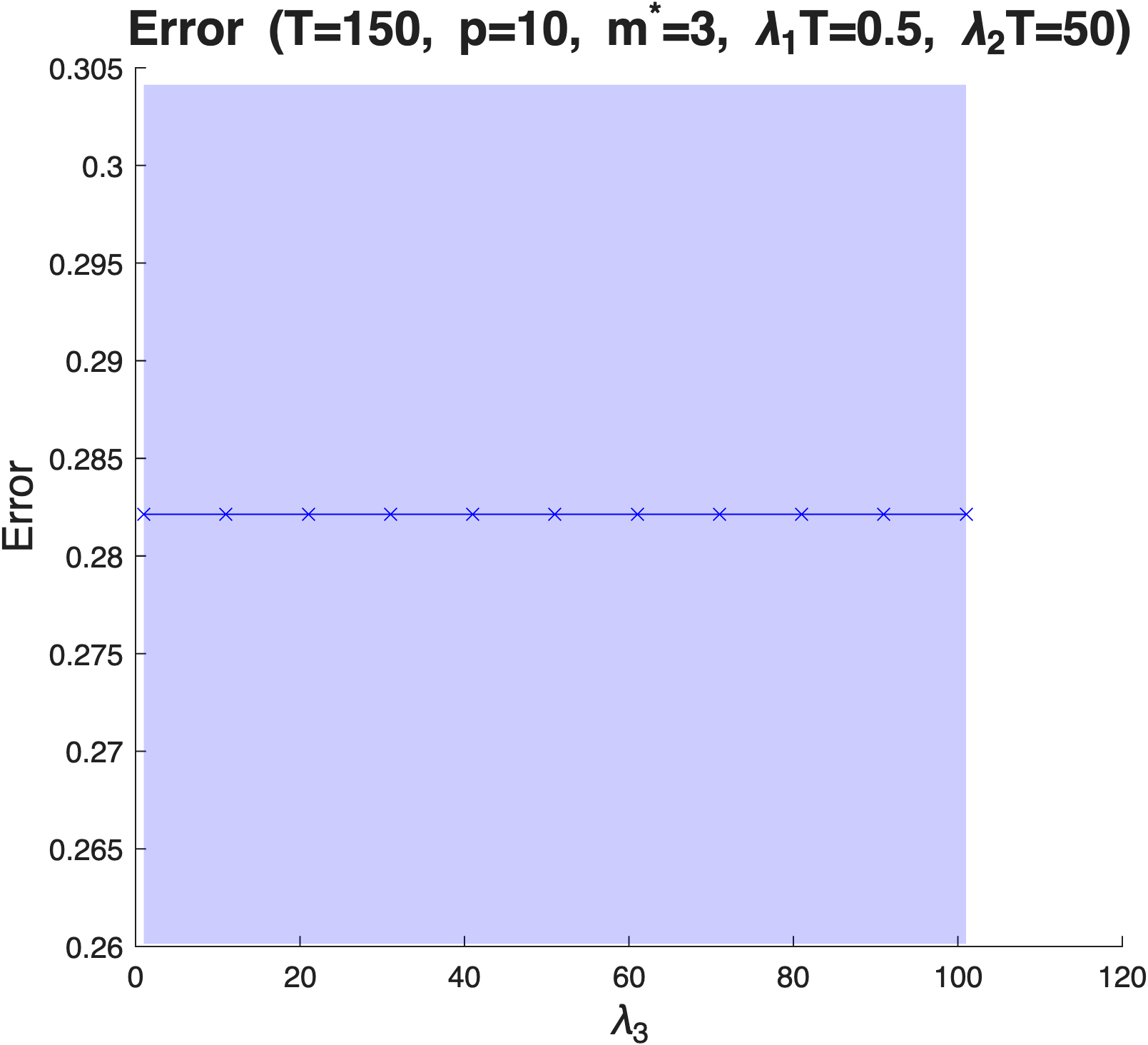}}
    \label{fig:lamb3-sensitivity}
\end{figure}

\subsection{Sensitivity Analysis with Respect to $\epsilon_{\mathrm{tol}}$}

\rev{We further investigate the sensitivity of the algorithm's performance to the choice of $\epsilon_{\mathrm{tol}}$, which is the threshold parameter used in the change point detection procedure (see Section \ref{sec:implementation}).
Specifically, a change point is detected at time $t$ if $\|\widehat{\Theta}_{t+1} - \widehat{\Theta}_t\|_F \geq \epsilon_{\mathrm{tol}}$.
The experiments are conducted on datasets simulated according to \textbf{Setting (ii)} in Section~5 of the main text.
We fix $\lambda_1T = 0.5$, $\lambda_2T = 50$, $\lambda_3 = 10$, and $\epsilon = 0.01$, and vary $\epsilon_{\mathrm{tol}}$ from $10^{-7}$ to $10^{-4}$ with a step size of $5 \times 10^{-7}$.
The same four experimental configurations as in the previous subsection are considered.
For each configuration, we perform 5 independent experiments.
Note that for a given dataset, the estimated precision matrices $\{\widehat{\Theta}_t\}_{t=1}^T$ remain the same across different values of $\epsilon_{\mathrm{tol}}$; only the change point detection results vary.
Therefore, we focus on the Hausdorff distance as the primary performance metric.}

\rev{The results are displayed in Figure \ref{fig:eps_tol-HD}.
We observe that the Hausdorff distance remains stable across all tested $\epsilon_{\mathrm{tol}}$ values.
This indicates that the choice of $\epsilon_{\mathrm{tol}}$ does not have a significant influence on the change point detection performance.
In our implementation, we use $\epsilon_{\mathrm{tol}} = 10^{-6}$ as the default value.}

\begin{figure}[htbp]
    \centering
    \caption{\rev{Sensitivity analysis of $\epsilon_{\mathrm{tol}}$: Hausdorff distance across different configurations.}}
    \subfloat{\includegraphics[width=0.25\textwidth]{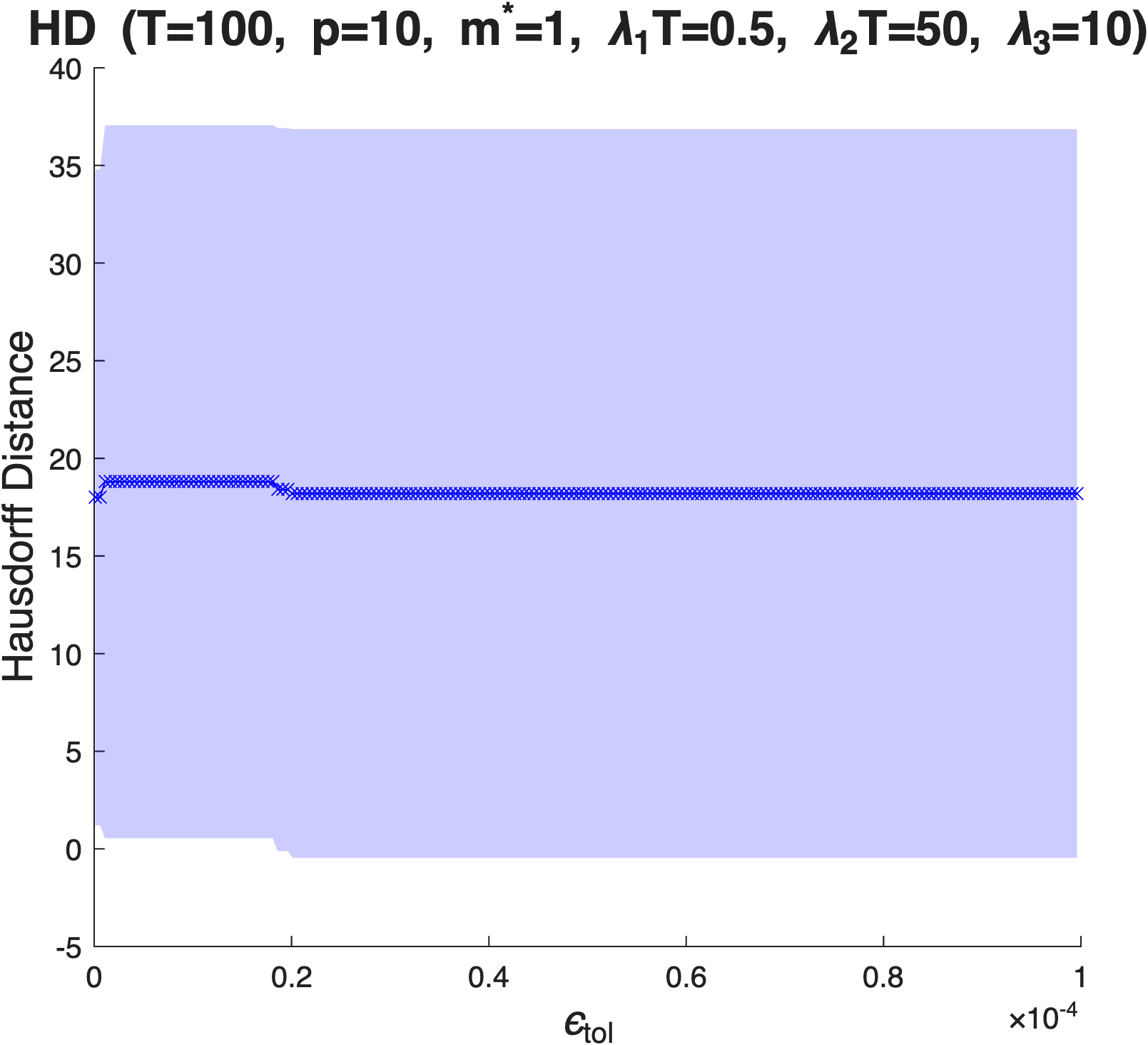}}
    \subfloat{\includegraphics[width=0.25\textwidth]{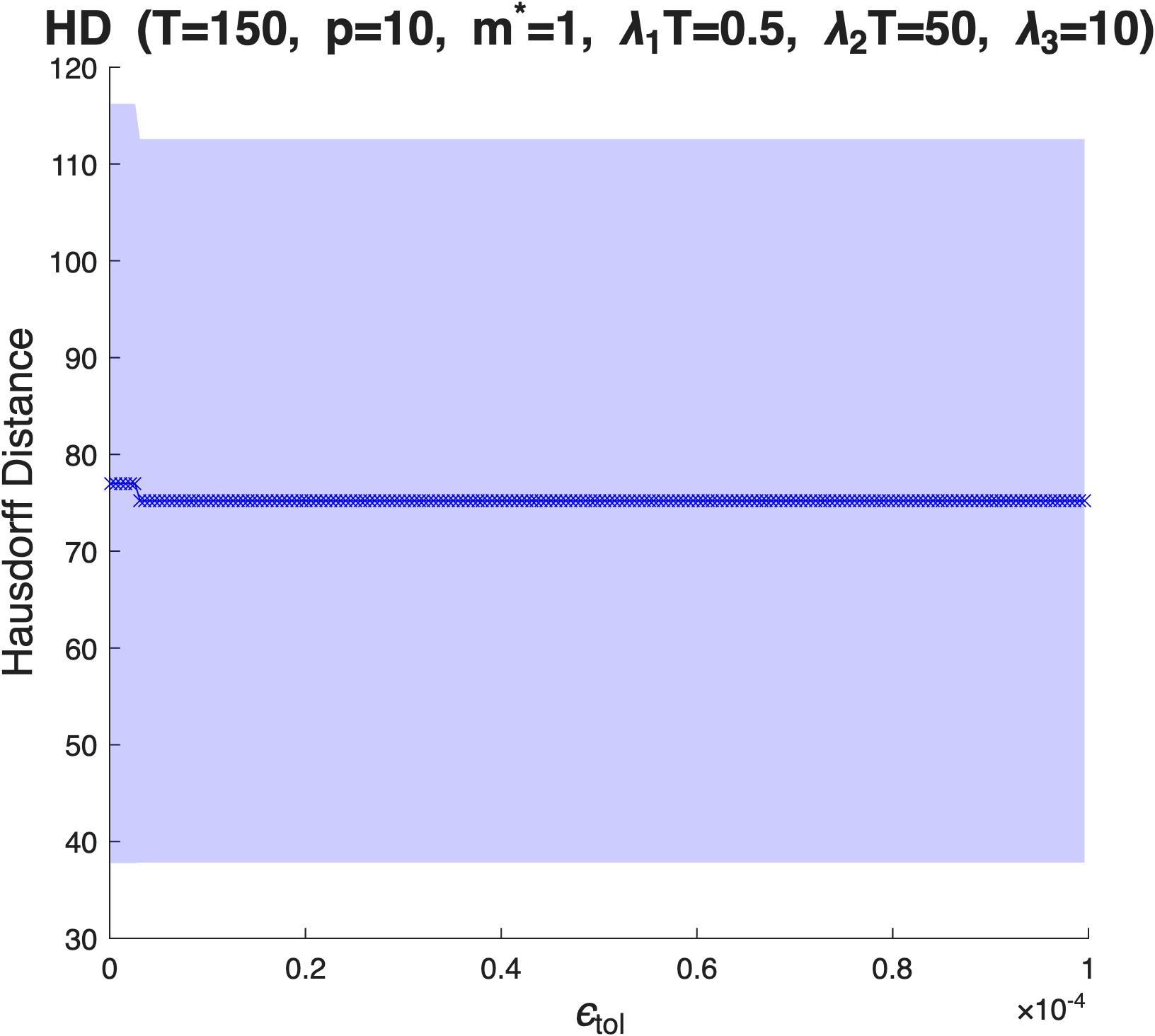}}
    \subfloat{\includegraphics[width=0.25\textwidth]{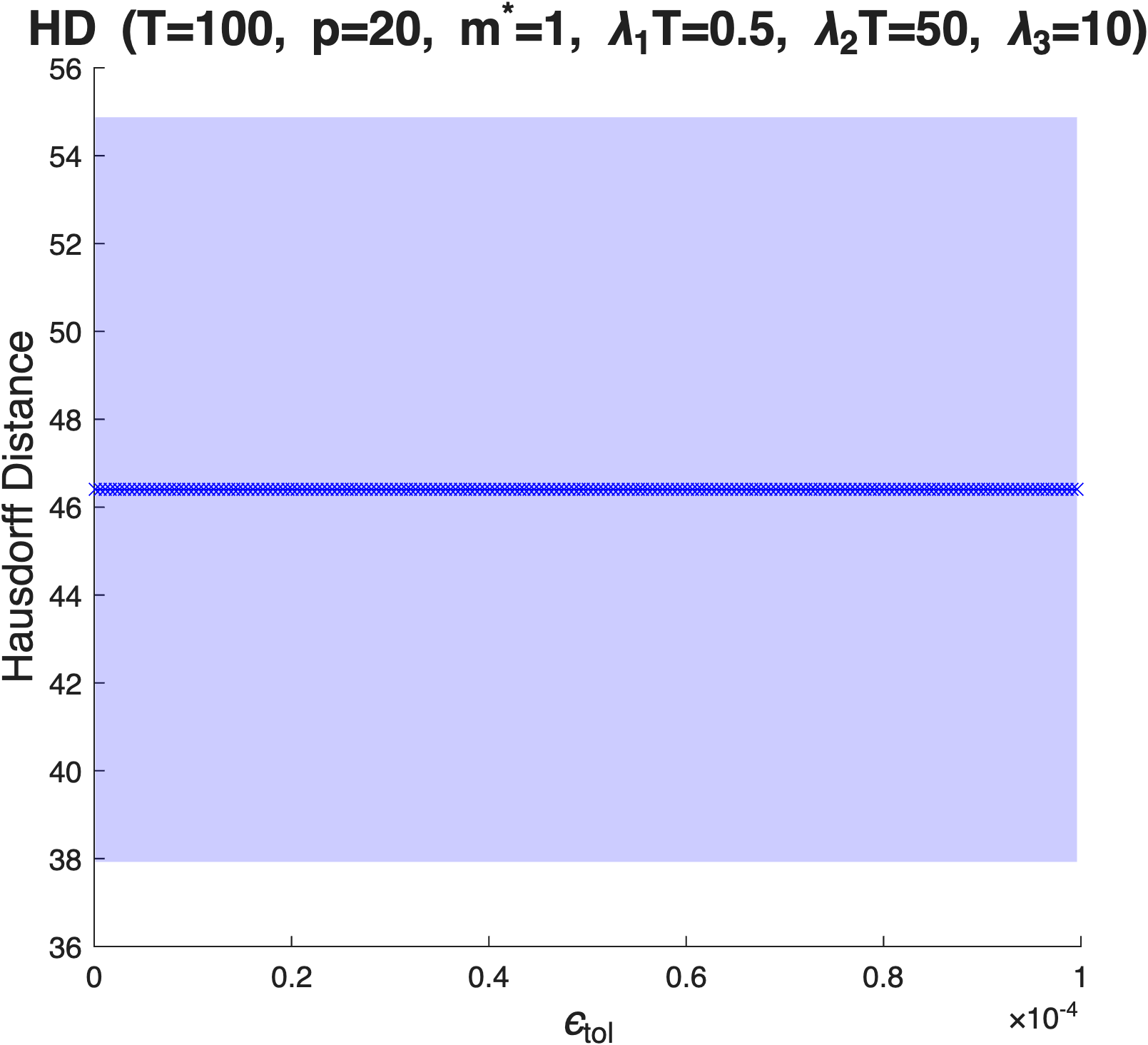}}
    \subfloat{\includegraphics[width=0.25\textwidth]{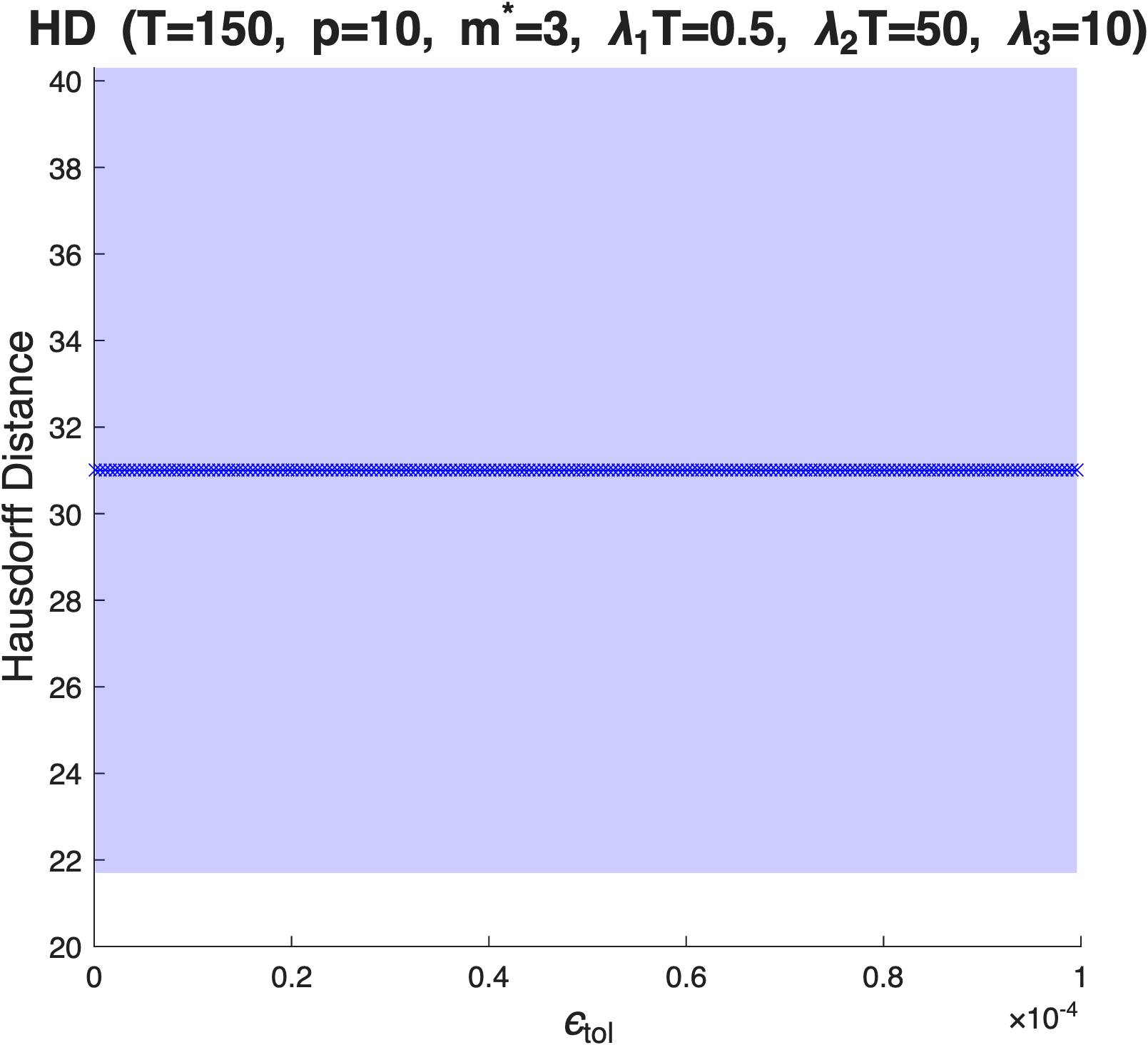}}
    \label{fig:eps_tol-HD}
\end{figure}

\section{Empirical Computational Complexity Analysis}\label{sec:time}

The computational complexity of Algorithm~1 is influenced by several factors, including the sample size \( T \), the dimension \( p \), the number of \rev{change points} \( m^{\ast} \), \( \lambda_3 \), and the pair \( (\lambda_1, \lambda_2) \).
To empirically analyze this complexity, we conduct a series of experiments based on \textbf{Setting (i)} in Section~5 varying each factor individually.
Specifically, for each factor, we perform 5 experiments with other factors held constant, and plot the averaged computation time to visualize the impact of each factor on the algorithm's complexity.
Here, we use $ \beta = 21 $.
These experiments are conducted on a desktop with Intel(R) Core(TM) i9-10900@2.8GHz (10 cores and 20 threads) and 64GB of RAM.

As displayed in Figure \ref{fig:com-time}, the computation time is approximately linear in \( T \), quadratic in \( p \), and not significantly affected by \( m^{\ast} \) and \( \lambda_3 \).
The impact of \( (\lambda_1, \lambda_2) \) is presented in Table \ref{tab:com-time-l1-l2}, where one can observe that the computation times for \( (0.1, 10) \) and \( (0.2, 10) \) are notably shorter.
This is because the algorithm terminates early as \eqref{eq:stop-cri-2} is satisfied within the first few iterations.
Additionally, the computation times for \( (0.1, 20) \), \( (0.2, 20) \), \( (0.3, 10) \), \( (0.4, 10) \), and \( (0.5, 10) \) are significantly higher, as these are marginal cases that are typically more challenging to solve.
Apart from these marginal cases, for large \( \lambda_1 \) and \( \lambda_2 \), Algorithm~1 requires nearly the same amount of time to converge.

\begin{figure}[ht]
    \centering
    \caption{Empirical computational complexity analysis.}
    \subfloat{\includegraphics[width=0.35\textwidth]{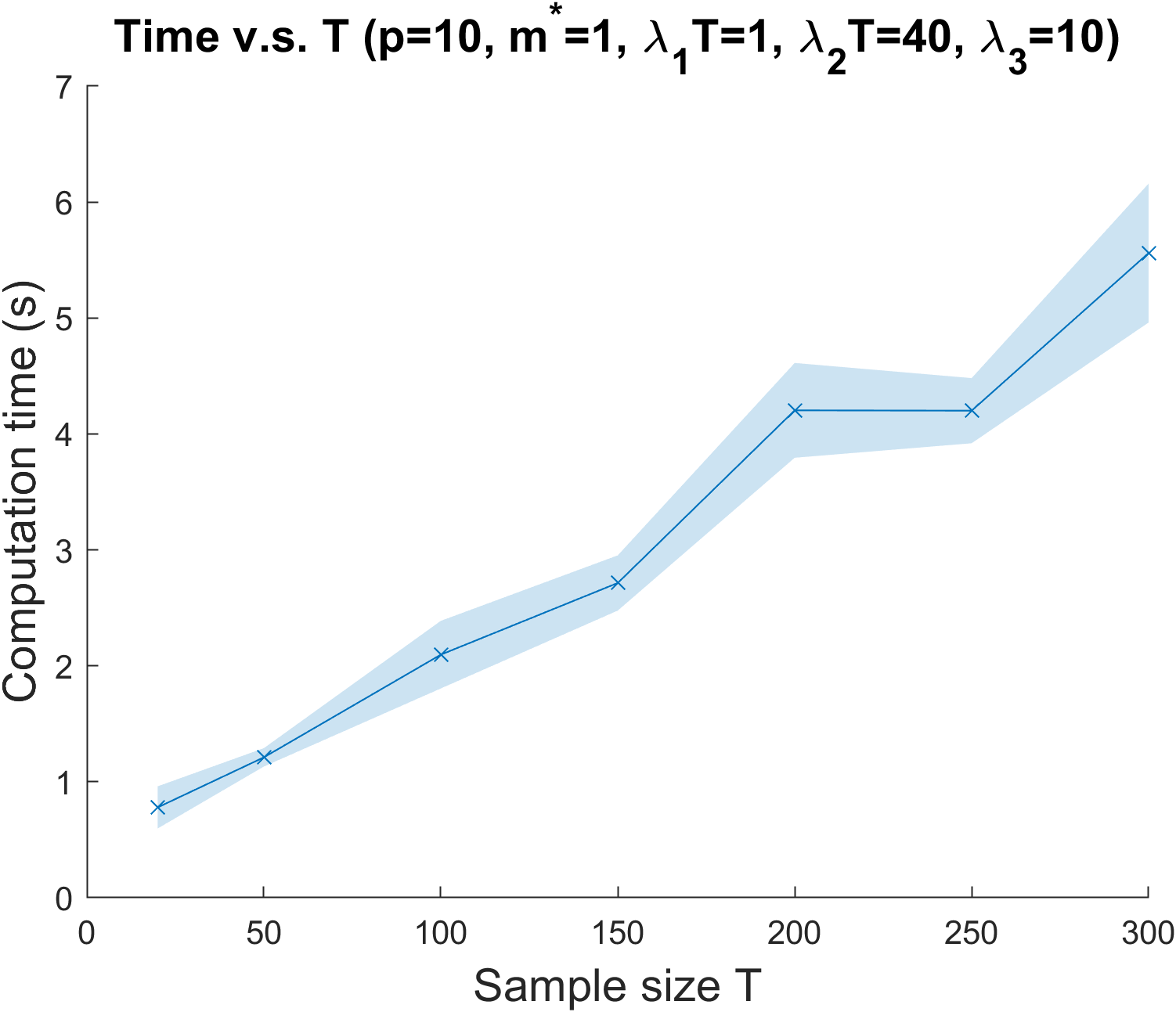}}\qquad\quad
    \subfloat{\includegraphics[width=0.35\textwidth]{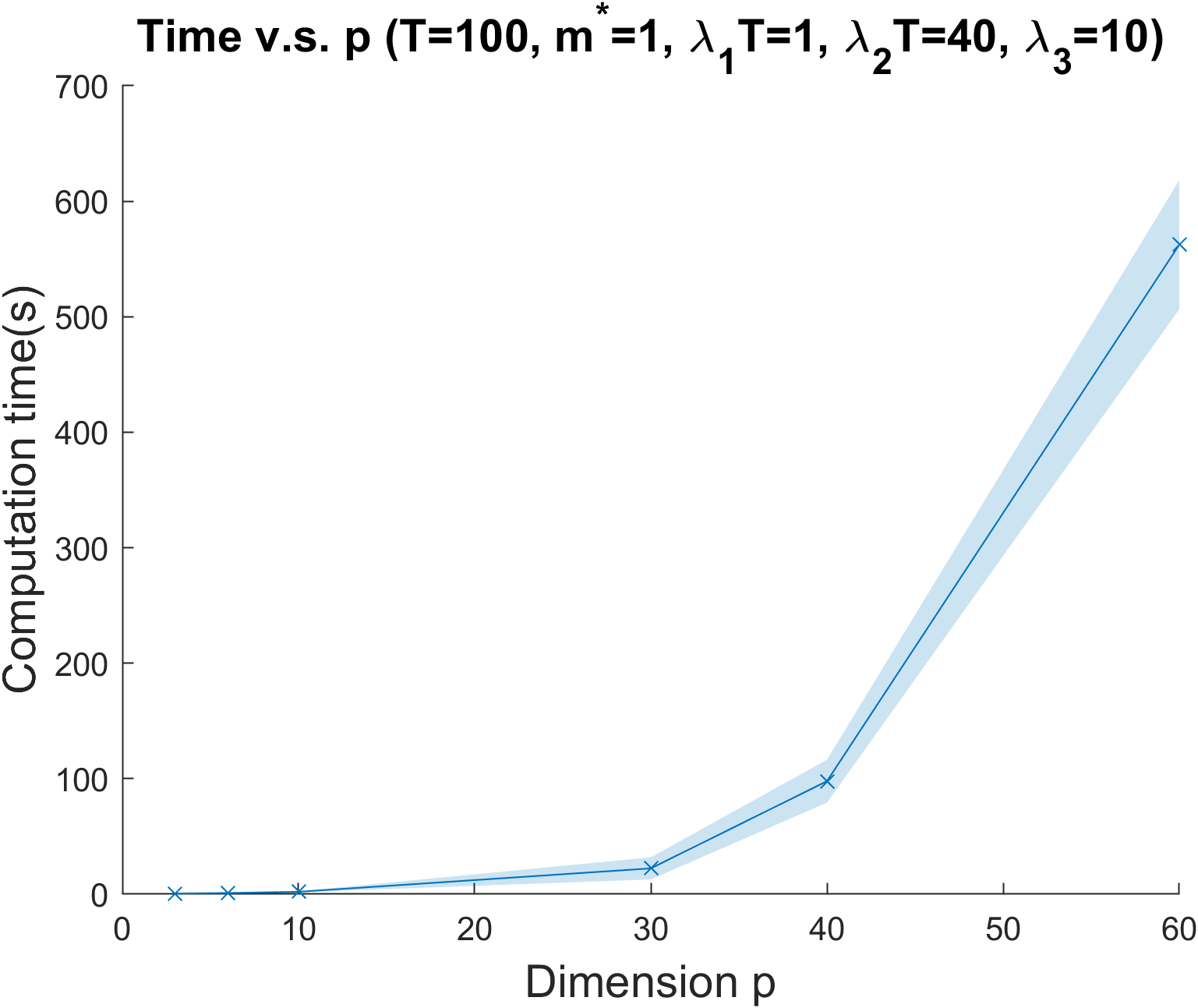}}\\

    \subfloat{\includegraphics[width=0.35\textwidth]{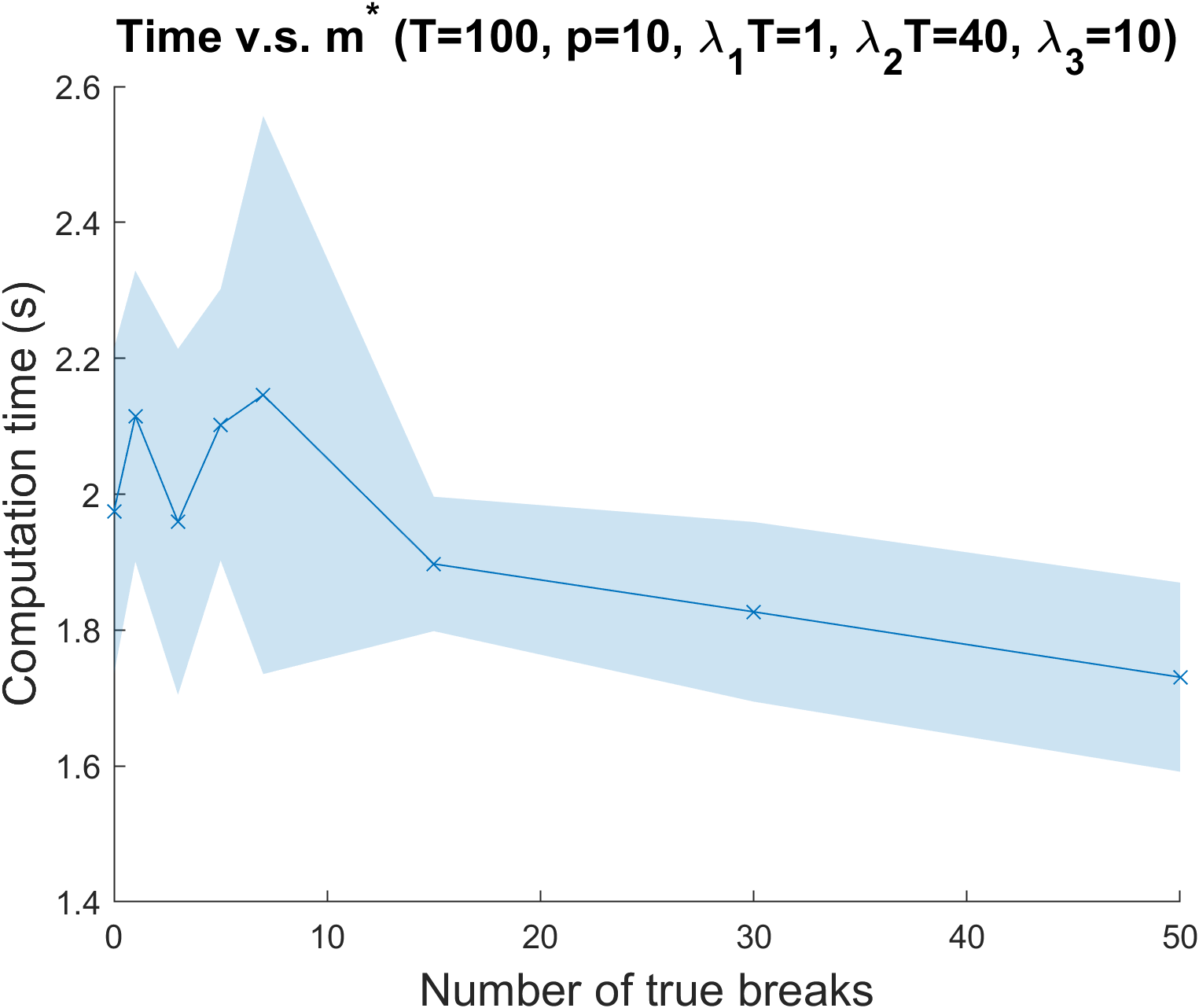}}\qquad\quad
    \subfloat{\includegraphics[width=0.35\textwidth]{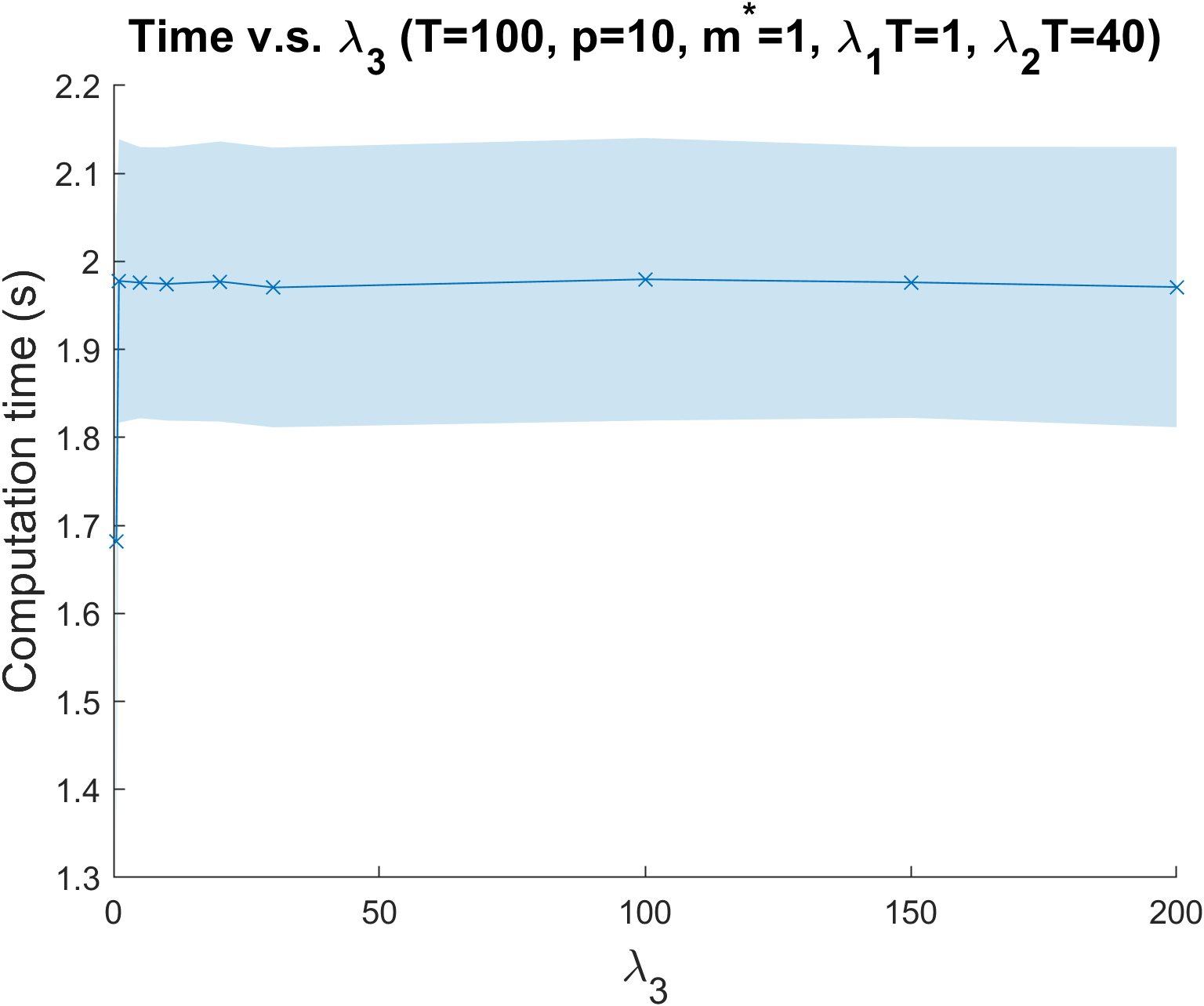}}
    \label{fig:com-time}
\end{figure}
\begin{table}[htb]
    \centering
    \caption{Computation time (s) v.s. $\lambda_1$ and $\lambda_2$ ($T=100, p=10, m^{\ast}=1, \lambda_3=10$)}
    \resizebox{\textwidth}{!}{%
    \begin{tabular}{c|cccccccccccccccccccc}
      \toprule
      $\lambda_1T \backslash \lambda_2T$ & 10    & 20    & 30    & 40   & 50   & 60   & 70   & 80   & 90   & 100  & 110  & 120  & 130  & 140  & 150  & 160  & 170  & 180  & 190  & 200  \\
      \midrule
      0.1                              & 3.52  & 33.71 & 12.47 & 6.71 & 5.17 & 4.17 & 5.58 & 5.58 & 5.83 & 5.73 & 5.58 & 5.51 & 5.52 & 5.51 & 5.53 & 5.44 & 5.52 & 5.50 & 5.57 & 5.47 \\
      0.2                              & 6.60  & 14.00 & 6.15  & 4.28 & 3.68 & 3.98 & 4.13 & 4.19 & 4.13 & 4.15 & 4.16 & 4.15 & 4.14 & 4.14 & 4.15 & 4.17 & 4.15 & 4.15 & 4.15 & 4.15 \\
      0.3                              & 27.25 & 7.13  & 3.69  & 2.98 & 2.67 & 2.91 & 2.90 & 2.91 & 2.91 & 2.91 & 2.91 & 2.91 & 2.91 & 2.92 & 2.91 & 2.91 & 2.92 & 2.91 & 2.93 & 2.91 \\
      0.4                              & 36.05 & 5.11  & 2.75  & 2.40 & 2.16 & 2.16 & 2.16 & 2.16 & 2.16 & 2.16 & 2.16 & 2.22 & 2.19 & 2.21 & 2.18 & 2.20 & 2.21 & 2.21 & 2.21 & 2.23 \\
      0.5                              & 23.22 & 4.65  & 2.68  & 2.28 & 2.11 & 2.12 & 2.12 & 2.11 & 2.12 & 2.12 & 2.18 & 2.17 & 2.17 & 2.21 & 2.17 & 2.18 & 2.18 & 2.17 & 2.17 & 2.17 \\
      0.6                              & 14.14 & 4.64  & 2.81  & 2.30 & 2.30 & 2.22 & 2.23 & 2.17 & 2.20 & 2.21 & 2.24 & 2.20 & 2.24 & 2.15 & 2.20 & 2.14 & 2.17 & 2.17 & 2.19 & 2.18 \\
      0.7                              & 10.38 & 4.78  & 2.87  & 2.19 & 2.26 & 2.30 & 2.15 & 2.18 & 2.16 & 2.23 & 2.17 & 2.19 & 2.21 & 2.21 & 2.21 & 2.15 & 2.18 & 2.18 & 2.19 & 2.19 \\
      0.8                              & 9.17  & 4.72  & 2.88  & 2.46 & 2.22 & 2.41 & 2.59 & 2.48 & 2.20 & 2.15 & 2.14 & 2.15 & 2.18 & 2.23 & 2.24 & 2.43 & 2.27 & 2.35 & 2.20 & 2.26 \\
      0.9                              & 8.73  & 4.80  & 3.03  & 2.45 & 2.18 & 2.18 & 2.21 & 2.15 & 2.15 & 2.13 & 2.16 & 2.21 & 2.21 & 2.14 & 2.15 & 2.13 & 2.20 & 2.17 & 2.13 & 2.13 \\
      1.0                              & 8.06  & 4.60  & 2.84  & 2.13 & 2.14 & 2.13 & 2.15 & 2.14 & 2.13 & 2.13 & 2.13 & 2.14 & 2.14 & 2.14 & 2.15 & 2.16 & 2.14 & 2.14 & 2.13 & 2.14 \\
      \bottomrule
    \end{tabular}%
    }
    \label{tab:com-time-l1-l2}
\end{table}

\section{Real Data Experiment}\label{sec:real_data}

In this section, the relevance of our method is compared with GFGL \rev{and TBFL} through a portfolio allocation experiment based on real financial data. The same computer was employed as in Section~5. We consider hereafter the stochastic process $(X_t)$ in $\Rb^p$ of log-stock returns, where $X_{t,j} = 100\times\log(P_{t,j}/P_{t-1,j}), 1 \leq j \leq p$ with $P_{t,j}$ the stock price of the $j$-th index at time $t$. The portfolio allocation will be performed with $20$ stocks data from the S\&P 500, which are representative of different economic sectors\footnote{The data can be found on \url{https://finance.yahoo.com} or \url{https://macrobond.com}. They are also available at \url{https://github.com/linyopt/GFDtL}.}: Alphabet, Amazon, American Airlines, Apple, Berkshire Hathaway, Boeing, Chevron, Equity Residential, ExxonMobil, Ford, General Electric, Goldman Sachs, Jacobs Engineering Group, JPMorgan, Lockheed Martin, Pfizer, Procter \& Gamble, United Parcel Service, Verizon, Walmart. The sample period is November 11, 2019 – March 27, 2020, corresponding to $T=100$ observations.

We analyse the economic performances obtained by GFDtL\rev{,} GFGL\rev{, and TBFL} through the GMVP investment problem. Following \cite{engle2006}, the latter problem at time $t$, in the absence of short-sales constraints, is defined as
\begin{equation*}
\min_{w_t} \; w^{\top}_t \; H_t \; w_t, \;\;\text{s.t.} \;\;\mathbf{1}^{\top}_{p \times 1} w_t = 1,
\end{equation*}
where $w_t$ is the vector of portfolio weights for time $t$ chosen at time $t-1$, $H_t$ is the $p \times p$ conditional (on the past) covariance matrix of $X_t$. The explicit solution is given by $\omega_t = H^{-1}_t \mathbf{1}_{p \times 1}/\mathbf{1}^{\top}_{p \times 1}H^{-1}_t\mathbf{1}_{p \times 1}$. Now it is natural to replace $H_t$ by an estimator $\widehat{H}_t$, yielding $\widehat{\omega}_t = \widehat{H}^{-1}_t \mathbf{1}_{p \times 1}/\mathbf{1}^{\top}_{p \times 1}\widehat{H}^{-1}_t\mathbf{1}_{p \times 1}$. As a function depending on $H_t$ only, the GMVP performance essentially depends on the precise measurement of the latter or, equivalently, the precision matrix. In our setting, we set $\widehat{H}^{-1}_t = \widehat{\Theta}_{t-1}$, estimated by the GFDtL\rev{,} GFGL\rev{, and TBFL} procedures, where $(\lambda^\ast_1,\lambda^\ast_2)$ are selected by Method c described in Section \ref{sec:tuning-para} \rev{for GFDtL and GFGL, and default parameters are used for TBFL}. We also consider the equally weighted portfolio, which will be denoted by $1/p$. The following performance metrics (annualized) will be reported: \textbf{AVG} as the average of portfolio returns computed as $\widehat{\omega}^\top_t X_t$, multiplied by $252$; \textbf{SD} as the standard deviation of portfolio returns, multiplied by $\sqrt{252}$; \textbf{IR} as the information ratio computed as $\textbf{AVG}/\textbf{SD}$. \\
The key performance measure is \textbf{SD}. The GMVP problem essentially aims to minimize the variance rather than to maximize the expected return. Hence, as emphasized in \cite{engle2019}, Section 6.2, high \textbf{AVG} and \textbf{IR} are desirable but should be considered of secondary importance compared with the quality of the measure of a covariance matrix estimator. We also report the number of \rev{change points} $nb$ detected by the procedure.

Both GFGL and GFDtL estimate the following \rev{change points}: February 25, 2020; March 9, 2020; March 17, 2020. These \rev{change points} are in line with the aftershock of the covid outbreak: the S\&P 500 index entered a downward trend period from February 20, 2020, and the S\&P 500 index reached a minimum value on March 23, 2020, which precedes the rally. \rev{TBFL detects a single \rev{change point} on January 3, 2020, which precedes the covid outbreak period and does not capture the market volatility changes during the crisis.} Despite the presence of the covid shock, our proposed GFDtL procedure provides the lowest \textbf{SD} and clearly outperforms both GFGL \rev{and TBFL}. The BIC-based selection results in relevant estimations of the \rev{change points}.

\begin{table}[h]
\centering
\caption{Annualized GMVP performance metrics.}\label{Table_metrics_econ_performance}
\scalebox{0.9}{\begin{tabular}{c|cccc}\hline\hline
& \multicolumn{4}{c}{S\&P 500 data}\\
  & \textbf{AVG} & \textbf{SD} & \textbf{IR} & $nb$\\
\hline

GFDtL & -50.29 & \textbf{35.76} &  -1.41 & 3 \\

GFGL &  -101.44 & 50.00 &  -2.03 & 3 \\

\rev{TBFL} & \rev{-52.03} & \rev{38.74} & \rev{-1.34} & \rev{1} \\

$1/p$ &  -77.77 & 45.69 &  -1.70  & - \\

\hline\hline
\end{tabular}}
{\flushleft Note: The lowest \textbf{SD} figure is in bold face.}
\end{table}

\section{Concluding Remarks}

\rev{We proposed a filtering method for change point detection in the precision matrix. We showed that the estimated change points are sufficiently close to the true change points and established the consistency of the estimated precision matrix in each regime. We also proposed an ADMM procedure to solve the optimization problem with change points. \\
Alternative filtering techniques could be considered, such as non-convex regularization with fusion, which would require different implementation strategies. Moreover, it would be worthwhile to integrate jumps occurring in both the mean and the second-order moments. We leave these topics for future research.}

\newpage

\noindent\textbf{Acknowledgements}

\mds

Benjamin Poignard was supported by JSPS KAKENHI (25K16617) and RIKEN-AIP.
Ting Kei Pong was supported partly by the Hong Kong Research Grants Council PolyU 15300221.
Akiko Takeda was supported by JSPS KAKENHI (23H03351).

\begin{appendices}

\makeatletter
\newcommand{\section@cntformat}{Appendix \thesection\ }
\makeatother

\section{An Example of Unsolvable (2)}\label{sec:opt-examples}

We present a simple example to illustrate that for some specific dataset $ \mathcal{X}_T $, even with $ \sum_{t=1}^T X_t X_t ^{\top} \succ 0 $, there may still exist some $ \lambda_1 $ and $ \lambda_2 $ such that (3) of the main text is infeasible, meaning that its optimal value is $-\infty$. Since (2) and (3)  of the main text have the same optimal value, this means (2) does not have solutions in this case.
\begin{example}\label{eg:opt-example}
Consider $ \mathcal{X}_2 = (X_1, X_2) $ where $ X_1 = (1, 0)^{\top} $ and $ X_2 = (0, 1)^{\top} $. Then
$$
X_1 X_1^{\top} =
\begin{bmatrix}
  1 & 0 \\
  0 & 0
\end{bmatrix}, \quad X_2 X_2^{\top} =
\begin{bmatrix}
  0 & 0 \\
  0 & 1
\end{bmatrix},
$$
and $ X_1X_1^{\top} \!+\! X_2X_2^{\top} \!=\! I_2 \!\succ\! 0 $.
The positive semi-definite constraint in (3) can be written as
$$
\begin{aligned}
  & Z_1 - I_2 +
    \begin{bmatrix}
      W_{11, 1} & W_{12, 1}/2 \\
      W_{12, 1}/2 & 0
    \end{bmatrix} -
    \begin{bmatrix}
      0 & Y_{12, 1} \\
      Y_{12, 1} & 0
    \end{bmatrix} \succeq 0, \\
  & -Z_1 - I_2 +
    \begin{bmatrix}
      0 & W_{21, 2}/2 \\
      W_{21, 2}/2 & W_{22, 2}
    \end{bmatrix} -
    \begin{bmatrix}
      0 & Y_{12, 2} \\
      Y_{12, 2} & 0
    \end{bmatrix} \succeq 0.
\end{aligned}
$$
After some simple calculations, we have
$$
\begin{aligned}
  & \begin{bmatrix}
    Z_{11, 1} + W_{11, 1} - 1 & Z_{12, 1} + W_{12, 1}/2 - Y_{12, 1} \\
    Z_{21, 1} + W_{12, 1}/2 - Y_{12, 1} & Z_{22, 1} - 1
  \end{bmatrix} \succeq 0, \\
  & \begin{bmatrix}
    Z_{11, 1} + 1 & Z_{12, 1} - W_{21, 2}/2 + Y_{12, 2} \\
    Z_{21, 1} - W_{21, 2}/2 + Y_{12, 2} & Z_{22, 1} - W_{22, 2} + 1
    \end{bmatrix} \preceq 0.
\end{aligned}
$$
Recall that the diagonal elements of a positive semi-definite matrix must be non-negative, then we can observe that the above condition requires that $ Z_{11, 1} \leq -1 $ and $ Z_{22, 1} \geq 1 $.
These two conditions imply that for any $ \lambda_1 > 0 $ and any $ \lambda_2 < 1/\sqrt{2} $, the dual problem (3) is infeasible and hence the primal problem (2) does not have solutions.
\end{example}

\section{Preliminary Results}\label{supp:preliminary}

\begin{lemma}\label{emp_var_cov_bound}
Under Assumption~1, Assumption~2 and Assumption~3-(i), if $p^2\log(pT)=o(T\delta_T)$ with $\delta_T \rightarrow 0$ as $T \rightarrow \infty$, then:
\begin{itemize}
    \item[(i)] $\underset{r-s \geq T\delta_T}{\underset{1 \leq s<r\leq T+1}{\sup}} \lambda_{\max}\Big(\frac{1}{r-s}\overset{r-1}{\underset{t=s}{\sum}}X_tX^\top_t\Big) \leq \overline{\mu}+ o_p(1)$.
    \item[(ii)] $\underline{\mu} +o_p(1) \leq \underset{r-s \geq T\delta_T}{\underset{1 \leq s<r\leq T+1}{\inf}} \lambda_{\min}\Big(\frac{1}{r-s}\overset{r-1}{\underset{t=s}{\sum}}X_tX^\top_t\Big)$.
\end{itemize}
\end{lemma}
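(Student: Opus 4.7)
The plan is to decompose the empirical covariance into its conditional expectation and a mean-zero fluctuation, show that the expectation has eigenvalues trapped in $[\underline{\mu},\overline{\mu}]$, and then establish that the fluctuation is uniformly small in spectral norm by combining a Bernstein-type inequality for $\alpha$-mixing sub-exponential variables with a union bound over entries and over $(s,r)$.

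First, for any admissible $(s,r)$ with $r-s \ge T\delta_T$, write
\[
\frac{1}{r-s}\sum_{t=s}^{r-1} X_t X_t^\top \;=\; \Pi_{s,r} + \Delta_{s,r}, \quad \Pi_{s,r} \coloneqq \frac{1}{r-s}\sum_{t=s}^{r-1} \Sigma^\ast_{j(t)}, \quad \Delta_{s,r} \coloneqq \frac{1}{r-s}\sum_{t=s}^{r-1}\bigl(X_tX_t^\top - \Sigma^\ast_{j(t)}\bigr),
\]
where $j(t)$ is the unique index such that $t\in\mathcal{B}^\ast_{j(t)}$. Since $\Pi_{s,r}$ is a convex combination of the $\Sigma^\ast_j$'s, Assumption \ref{assumption_regularity} yields the deterministic bound $\underline{\mu} \le \lambda_{\min}(\Pi_{s,r}) \le \lambda_{\max}(\Pi_{s,r}) \le \overline{\mu}$ uniformly in $(s,r)$. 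By Weyl's inequality, it therefore suffices to show that
\[
\sup_{\substack{1\le s<r\le T+1 \\ r-s\ge T\delta_T}} \|\Delta_{s,r}\|_s \;=\; o_p(1).
\]

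For the fluctuation, the plan is to control $\Delta_{s,r}$ entrywise and then convert to the spectral norm via $\|\Delta_{s,r}\|_s \le p\,\|\Delta_{s,r}\|_{\max}$. For a fixed pair of indices $(k,l)$ and a fixed window $(s,r)$, the summands $Y_t \coloneqq X_{k,t}X_{l,t} - \mathbb{E}[X_{k,t}X_{l,t}]$ form a centered $\alpha$-mixing process (with the same geometric mixing rate as $(X_t)$) whose marginals are sub-exponential with parameter $\gamma$ by Assumption \ref{assumption_dgp}-(ii). Applying the Bernstein-type deviation inequality of Merlev\`ede--Peligrad--Rio for geometrically $\alpha$-mixing sequences with sub-exponential tails, there exist constants $C_1,\ldots,C_5>0$ such that for every $n = r-s \ge T\delta_T$ and every $\eta>0$,
\[
\Pb\Bigl(\bigl|\tfrac{1}{n}\textstyle\sum_{t=s}^{r-1} Y_t\bigr| > \eta\Bigr) \;\le\; C_1\,n\exp\!\bigl(-C_2 (n\eta)^{\gamma/(2+\gamma)}\bigr) + C_3\exp\!\bigl(-C_4\, n\eta^2\bigr).
\]
Setting $\eta = \eta_T \coloneqq K\sqrt{\log(pT)/(T\delta_T)}$ for some sufficiently large constant $K$, and using Assumption \ref{assumption_rates}-(i), namely $T\delta_T \ge c_v \log(pT)^{(2+\gamma)/\gamma}$, the sub-Gaussian term above dominates and both tails become of order $(pT)^{-c}$ for arbitrarily large $c$ (upon enlarging $K$). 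A union bound over the $O(p^2)$ entries and the $O(T^2)$ choices of $(s,r)$ then gives
\[
\Pb\!\left(\sup_{(s,r),\,(k,l)} |\Delta_{s,r,kl}| > K\sqrt{\log(pT)/(T\delta_T)}\right) \;\longrightarrow\; 0,
\]
hence $\sup_{(s,r)} \|\Delta_{s,r}\|_s \le p\sup_{(s,r)}\|\Delta_{s,r}\|_{\max} = O_p\!\bigl(p\sqrt{\log(pT)/(T\delta_T)}\bigr) = o_p(1)$ under the hypothesis $p^2\log(pT) = o(T\delta_T)$. Combining this with Weyl's inequality and the bounds on $\Pi_{s,r}$ yields both (i) and (ii).

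The main obstacle is the uniform control over the two-parameter family of windows $(s,r)$ via a concentration inequality that is strong enough to survive a $\log(T^2 p^2)$ union bound while only assuming sub-exponential (not bounded) tails under $\alpha$-mixing. This is precisely where Assumption \ref{assumption_rates}-(i) enters: the lower bound $T\delta_T\gtrsim \log(pT)^{(2+\gamma)/\gamma}$ is calibrated so that the slower-than-Gaussian mixing Bernstein tail becomes sub-Gaussian on the relevant scale $\eta_T$, making the union bound harmless. A secondary technical point, easily handled, is that $Y_t$ has sub-exponential tails only \emph{before} centering; subtracting the uniformly bounded mean $\mathbb{E}[X_{k,t}X_{l,t}]$ preserves the sub-exponential property with possibly adjusted constants.
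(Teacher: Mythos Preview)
Your proposal is correct and follows essentially the same route as the paper: decompose into the population covariance piece plus a centered fluctuation, bound the former via Assumption~\ref{assumption_regularity}, convert the spectral norm of the fluctuation to the entrywise maximum at the cost of a factor $p$, and control the latter uniformly over $(s,r)$ and $(k,l)$ by combining the Merlev\`ede--Peligrad--Rio Bernstein inequality for geometrically $\alpha$-mixing sub-exponential sequences with a union bound, using Assumption~\ref{assumption_rates}-(i) to tame the non-Gaussian tail. The only cosmetic differences are that the paper invokes the triangle inequality directly on the spectral norm whereas you phrase the same step via Weyl's inequality, and you make the convex-combination argument for $\Pi_{s,r}$ explicit.
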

\begin{proof} Let us prove Point (i). Recall that $\Sigma^\ast_{j}$ is the true variance-covariance of $X_t$, with $t \in \Bc^\ast_j$. Now take any $s \leq t \leq r-1$, with $s,r \in \{1,\ldots,T\}$ such that $r-s\geq T\delta_T$. Then, by the triangle inequality, under Assumption~2:
  $$
  \begin{aligned}
    \|\frac{1}{r-s}\overset{r-1}{\underset{t=s}{\sum}}X_tX^\top_t\|_s \leq & \:\: \|\frac{1}{r-s}\overset{r-1}{\underset{t=s}{\sum}}\text{Var}(X_t)\|_s + \|\frac{1}{r-s}\overset{r-1}{\underset{t=s}{\sum}}\big(X_tX^\top_t-\text{Var}(X_t)\big)\|_s \\
    \leq & \:\: \overline{\mu}+p\|\frac{1}{r-s}\overset{r-1}{\underset{t=s}{\sum}}\big(X_tX^\top_t-\text{Var}(X_t)\big)\|_{\max}.
  \end{aligned}
  $$
We show $\max_{\underset{r-s \geq T\delta_T}{1 \leq s<r\leq T+1}}\|\frac{1}{r-s}\overset{r-1}{\underset{t=s}{\sum}}\big(X_tX^\top_t-\text{Var}(X_t)\big)\|_s = o_p(1)$. By Assumption~1, \rev{we can apply Lemma A.2 of \cite{Qian2016} on $\zeta_{kl,t}\coloneqq  X_{k,t}X_{l,t}$. Note that the latter result follows from Theorem 1 of \cite{Merlevede2011}, which} states the mixing condition $\alpha(t) \leq \exp(-c_1t^{\gamma_1})$ for some $c_1,\gamma_1>0$; then for $c_{\rev{\alpha}}=1,\gamma_1=1$, we may take $\rho = \exp(-2c_1)$, which allows us to apply \rev{Lemma A.2 of \cite{Qian2016}}.
Thus, $\forall 1 \leq k,l \leq p$, there exist constants $C_1,C_2$ such that, \rev{for any $C>0$ large enough, with $r>s$:}
\rev{\begin{eqnarray*}
\lefteqn{\Pb\Big(\!\big|\frac{1}{\rev{\sqrt{r\!-\!s}}}\overset{r\!-\!1}{\underset{t=s}{\sum}}\big(X_{k,t}X^\top_{l,t}\!-\!\text{Var}(X_t)_{kl}\big)\big|\!\geq\!C\sqrt{\log(pT)}\!\Big)}\\
&\leq & (T\!+\!1) \exp\!\Big(\!-\,\frac{(\rev{C(r\!-\!s)\log(pT)})^{\frac{\gamma}{\rev{2}(1+\gamma)}}}{C_1}\!\Big) \!+\! \exp\!\Big(\rev{\!\!-\,\frac{C(r\!-\!s)\log(pT)}{C_2}}\!\Big).
\end{eqnarray*}}
Applying the previous inequality, by Bonferroni's inequality:
$$
\begin{aligned}
  & \Pb\Big(\underset{r-s \geq T\delta_T}{\underset{1 \leq s<r\leq T+1}{\sup}}\|\frac{1}{\sqrt{r-s}}\overset{r-1}{\underset{t=s}{\sum}}\big(X_tX^\top_t-\text{Var}(X_t)\big)\|_{\max}\geq C\sqrt{\log(pT)}\Big) \\
  \leq & \:\: T^2 \underset{r-s \geq T\delta_T}{\underset{1 \leq s<r\leq T+1}{\sup}} \Pb\Big(\underset{1 \leq k,l \leq p}{\max}\,|\frac{1}{\sqrt{r-s}}\overset{r-1}{\underset{t=s}{\sum}}\big(X_tX^\top_t-\text{Var}(X_t)\big)_{kl}|\geq C\sqrt{\log(pT)}\Big) \\
  \leq & \:\: T^2 \underset{r-s \geq T\delta_T}{\underset{1 \leq s<r\leq T+1}{\sup}} p^2 \Big\{ (T+1) \, \exp\Big(-\frac{(C(r-s)\log(pT)^{\frac{\gamma}{2(1+\gamma)}}}{C_1}\Big) \\
  & \:\:\quad + \exp\Big(-\frac{C(r-s)\log(pT)}{C_2}\Big)\Big\} \\
  \leq & \:\: \exp\Big(-\frac{(CT\delta_T\log(pT)^{\frac{\gamma}{2(1+\gamma)}}}{C_1}+4\log(pT)\Big) + \exp\Big(-\frac{CT\delta_T\log(pT)}{C_2}+2\log(pT)\Big),
\end{aligned}
$$
which goes to $0$ as $T \rightarrow \infty$ by Assumption~3-(i) implying $(T\delta_T\log(pT)^{(\gamma/(2(1+\gamma))} \propto \log(pT)$. Since $\|A\|_s \leq p \|A\|_{\max}$ for $A \in \Rb^{p\times p}$,
\begin{equation*}
p\max_{\underset{r-s \geq T\delta_T}{1 \leq s<r\leq T+1}}\|\frac{1}{r-s}\overset{r-1}{\underset{t=s}{\sum}}\big(X_tX^\top_t-\text{Var}(X_t)\big)\|_{\max} = O_p(p\sqrt{\frac{\log(pT)}{T\delta_T}}) = o_p(1),
\end{equation*}
under $(T\delta_T)^{-1}p^2\log(pT) \rightarrow 0$. The proof of Point (ii) is similar and thus omitted. The $\underline{\mu}$ term follows from Assumption~2.
\end{proof}
The next Lemma will be useful to bound the first order derivative w.r.t. $\Theta$ of the non-penalized D-trace loss function.
\begin{lemma}\label{bound_gradient}
Suppose Assumption~1, Assumption~2 and Assumption~3-(i) are satisfied. For a sequence $\delta_T \rightarrow 0$ as $T\rightarrow\infty$, then
\begin{equation*}
\normalfont\underset{r-s \geq T\delta_T}{\underset{1 \leq s<r\leq T+1}{\sup}} \|\frac{1}{\sqrt{r-s}}\overset{r-1}{\underset{t=s}{\sum}}\big(X_tX^\top_t-\text{Var}(X_t)\big)\|_{\max} = O_p(\sqrt{\log(pT)}).
\end{equation*}
\end{lemma}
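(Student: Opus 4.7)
The plan is essentially to extract and formalize the concentration argument that already appears inside the proof of Lemma~\ref{emp_var_cov_bound}, since that proof implicitly establishes this bound en route to its conclusion. The key tool is the Bernstein-type exponential inequality for $\alpha$-mixing processes with sub-exponential tails from Theorem~1 of \cite{Merlevede2011}.

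First, I would fix indices $1\le k,l\le p$ and consider the scalar process $\zeta_{kl,t}\coloneqq X_{k,t}X_{l,t}$. Assumption~\ref{assumption_dgp}-(i) guarantees that $(\zeta_{kl,t})$ inherits geometric $\alpha$-mixing from $(X_t)$, and Assumption~\ref{assumption_dgp}-(ii) gives the sub-exponential tail $\sup_t\Pb(|\zeta_{kl,t}|>s)\le \exp(1-(s/b)^\gamma)$. These are exactly the hypotheses required to invoke Theorem~1 of \cite{Merlevede2011}, which yields constants $C_1,C_2>0$ (independent of $k,l,s,r$) such that, writing $n=r-s$,
\begin{equation*}
\Pb\Big(\big|\frac{1}{n}\overset{r-1}{\underset{t=s}{\sum}}\big(\zeta_{kl,t}-\Eb\zeta_{kl,t}\big)\big|\ge \eps\Big)\le (T+1)\exp\Big(-\frac{(n\eps)^{\gamma/(1+\gamma)}}{C_1}\Big)+\exp\Big(-\frac{n\eps^2}{C_2}\Big).
\end{equation*}

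Second, I would take $\eps = C\sqrt{\log(pT)/n}$ for a constant $C>0$ to be chosen sufficiently large, and apply Bonferroni's inequality over the at most $T^2$ index pairs $(s,r)$ with $r-s\ge T\delta_T$ and the $p^2$ pairs $(k,l)$. The union bound gives an overall probability at most
\begin{equation*}
T^2p^2\Big\{(T+1)\exp\Big(-\frac{(CT\delta_T\log(pT))^{\gamma/(2(1+\gamma))}}{C_1}\Big)+\exp\Big(-\frac{C^2\log(pT)}{C_2}\Big)\Big\}.
\end{equation*}
The second term is controlled by enlarging $C$ so that $C^2/C_2$ exceeds the exponent $4$ coming from the $T^2p^2$ prefactor (bounded by $(pT)^4$). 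For the first term, Assumption~\ref{assumption_rates}-(i) says $T\delta_T\ge c_v\log(pT)^{(2+\gamma)/\gamma}$, so $(T\delta_T\log(pT))^{\gamma/(2(1+\gamma))}$ is at least a constant multiple of $\log(pT)$, which dominates the $4\log(pT)+\log(T+1)$ coming from the prefactor once $C$ is large enough. Both terms therefore vanish as $T\to\infty$, delivering the $O_p(\sqrt{\log(pT)})$ conclusion.

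The main obstacle, if any, is purely bookkeeping: one has to verify carefully that the rate condition in Assumption~\ref{assumption_rates}-(i), namely $T\delta_T\ge c_v\log(pT)^{(2+\gamma)/\gamma}$, is calibrated exactly so that the slower sub-exponential tail term (the one with exponent $\gamma/(1+\gamma)<1$) still dominates the $\log(p^2T^2)$ penalty coming from the double union bound; the algebra $((2+\gamma)/\gamma+1)\cdot\gamma/(2(1+\gamma)) = 1$ is the precise reason the assumption is stated in that form. Since the same computation has already been executed in the proof of Lemma~\ref{emp_var_cov_bound}, the argument can be written by simply isolating the intermediate uniform bound proved there.
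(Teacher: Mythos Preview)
Your proposal is correct and follows exactly the paper's approach: the paper's proof of Lemma~\ref{bound_gradient} consists of the single sentence ``The result follows the same steps as in the proof of Lemma~\ref{emp_var_cov_bound},'' and what you have written is precisely the extraction of that uniform concentration bound (Merlev\`ede et al.'s exponential inequality plus a union bound over $(s,r)$ and $(k,l)$, with Assumption~\ref{assumption_rates}-(i) calibrated so that the sub-exponential tail term matches the $\log(pT)$ penalty). Your bookkeeping, including the verification that $((2+\gamma)/\gamma+1)\cdot\gamma/(2(1+\gamma))=1$, is in fact more explicit than what the paper records.
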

\begin{proof}
The result follows the same steps as in the proof of Lemma \ref{emp_var_cov_bound}.
\end{proof}

\begin{lemma}\label{optimality_cond}
Consider problem (1). Define $\Gamma_t = \Theta_{t}-\Theta_{t-1}$, $ t \geq 2$ and $\Gamma_1 = \Theta_1$. The GFDtL estimator $\{\widehat\Theta_t\}^T_{t=1}$ satisfies the conditions
\begin{equation*}
\forall t \in \{1,\ldots,T\}, \; \frac{1}{T}\overset{T}{\underset{r=t}{\sum}}\Big(\frac{1}{2}\widehat{\Theta}_rX_rX^\top_r + \frac{1}{2}X_rX^\top_r \widehat{\Theta}_r-I_p\Big) + \lambda_1\overset{T}{\underset{r=t}{\sum}} \widehat{E}_{1r} + \lambda_2\widehat{E}_{2t}=\mathbf{0}_{p \times p},
\end{equation*}
where $\widehat{E}_{1t}, \widehat{E}_{2t}$ are the sub-gradient matrices defined by
\begin{equation*}
\forall u\neq v, \; \widehat{E}_{uv,1t} = \begin{cases}
\normalfont\text{sgn}(\overset{t}{\underset{s=1}{\sum}}\widehat{\Gamma}_{uv,s})& \text{if} \; \overset{t}{\underset{s=1}{\sum}}\widehat{\Gamma}_{uv,s} \neq 0,\\
\in [-1,1] & \text{otherwise},
\end{cases}
\end{equation*}
and $\widehat{E}_{21}=\mathbf{0}_{p \times p}$ and for $t=2,\ldots,T$, $\widehat{E}_{2t}$ satisfies
\begin{equation*}
 \widehat{E}_{2t} =\frac{\widehat{\Gamma}_{t}}{\|\widehat{\Gamma}_{t}\|_F}\;\; \text{if} \;\; \widehat{\Gamma}_{t} \neq \mathbf{0}_{p \times p},\;\; \text{and} \;\;
\|\widehat{E}_{2t}\|_F \leq 1 \;\; \text{if} \;\; \|\widehat{\Gamma}_{t}\|_F=0.
\end{equation*}
\end{lemma}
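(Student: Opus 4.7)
The plan is to reparametrize problem \eqref{stat_crit} via the invertible linear change of variables $(\Theta_1,\dots,\Theta_T)\mapsto(\Gamma_1,\dots,\Gamma_T)$ with $\Gamma_1=\Theta_1$ and $\Gamma_t=\Theta_t-\Theta_{t-1}$ for $t\geq 2$, so that $\Theta_r=\sum_{s=1}^{r}\Gamma_s$. Because this reparametrization is a bijection between $T$-tuples of symmetric matrices, a minimizer of \eqref{stat_crit} in the $\Theta$-coordinates corresponds to a minimizer (in the $\Gamma$-coordinates) of the convex function
\[
F(\Gamma_1,\dots,\Gamma_T)=\Lb\bigl(\{\textstyle\sum_{s\leq r}\Gamma_s\}_{r=1}^{T},\mathcal{X}_T\bigr)+\lambda_1\sum_{r=1}^{T}\bigl\|\textstyle\sum_{s\leq r}\Gamma_s\bigr\|_{1,\mathrm{off}}+\lambda_2\sum_{t=2}^{T}\|\Gamma_t\|_F.
\]
Since $F$ is convex and, with high probability under the assumptions, the minimizer lies in the interior of the PSD cone (so that the constraint $\Theta_t\succeq 0$ is inactive and contributes no normal-cone term), Fermat's rule reduces the optimality condition to $\mathbf{0}_{p\times p}\in\partial_{\Gamma_t}F$ for every $t\in\{1,\dots,T\}$. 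The main work is to compute this subdifferential term by term.

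For the smooth loss, I would use the matrix identity $\partial_{\Theta}\bigl[\tfrac{1}{2}\mathrm{tr}(\Theta^{2}A)\bigr]=\tfrac{1}{2}(\Theta A+A\Theta)$ valid for symmetric $A$, $\Theta$, together with the chain rule $\partial/\partial\Gamma_t=\sum_{r\geq t}\partial/\partial\Theta_r$ induced by $\Theta_r=\sum_{s\leq r}\Gamma_s$, to get
\[
\nabla_{\Gamma_t}\Lb=\frac{1}{T}\sum_{r=t}^{T}\Bigl[\tfrac{1}{2}\bigl(\widehat{\Theta}_r X_rX_r^{\top}+X_rX_r^{\top}\widehat{\Theta}_r\bigr)-I_p\Bigr].
\]
For the LASSO term, the standard off-diagonal $\ell_1$ subdifferential at $\widehat{\Theta}_r$ is precisely the set of matrices $\widehat{E}_{1r}$ with zero diagonal and off-diagonal entries given by $\mathrm{sgn}(\widehat{\Theta}_{uv,r})$ when nonzero and any value in $[-1,1]$ otherwise; the chain rule again produces $\lambda_1\sum_{r=t}^{T}\widehat{E}_{1r}$. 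For the Group-Fused term, only the summand indexed by $t$ depends on $\Gamma_t$, and the Frobenius-norm subdifferential at $\widehat{\Gamma}_t$ gives $\lambda_2\widehat{E}_{2t}$ with $\widehat{E}_{2t}=\widehat{\Gamma}_t/\|\widehat{\Gamma}_t\|_F$ if $\widehat{\Gamma}_t\neq\mathbf{0}$ and $\|\widehat{E}_{2t}\|_F\leq 1$ otherwise. For $t=1$ there is no contribution, consistent with the convention $\widehat{E}_{21}=\mathbf{0}$ in the statement. Summing the three subdifferential components and equating to $\mathbf{0}_{p\times p}$ yields exactly the stated identity. Additivity of the subdifferential applies here because the Group-Fused and LASSO terms are proper, convex and continuous, and hence satisfy the Moreau--Rockafellar conditions.

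The calculations are essentially routine subgradient calculus; the only mild subtlety, which I would flag explicitly, is the justification that the PSD constraint does not contribute a normal-cone term to the optimality condition. This follows because, in the regime covered by Assumptions~\ref{assumption_dgp}--\ref{assumption_rates}, the true precision matrices $\Omega^{\ast}_j$ are uniformly positive definite (Assumption~\ref{assumption_regularity}), and combined with the rates in Theorem~\ref{consistency} this forces $\widehat{\Theta}_t\succ 0$ with probability tending to one; accordingly, $N_{\{\cdot\succeq0\}}(\widehat{\Theta}_t)=\{\mathbf{0}\}$ and Fermat's rule reduces to the unconstrained first-order equality displayed in the lemma. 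The bookkeeping of the chain-rule sums $\sum_{r\geq t}$, which converts derivatives/subgradients with respect to the $\Theta_r$'s into those with respect to $\Gamma_t$, is the main place where care is required to get the summation range right.
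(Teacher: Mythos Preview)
Your approach is essentially identical to the paper's: reparametrize via $\Gamma_t$, rewrite the objective, and apply Fermat's rule together with standard subdifferential calculus for the $\ell_1$ and Frobenius-norm terms, using the chain rule $\partial/\partial\Gamma_t=\sum_{r\ge t}\partial/\partial\Theta_r$. The paper's proof does exactly this and in fact says nothing about the PSD constraint, simply treating the problem as unconstrained.

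Your attempt to justify dropping the PSD constraint is, however, circular: you invoke Theorem~\ref{consistency} to argue that $\widehat\Theta_t\succ 0$ with probability tending to one, but the proof of Theorem~\ref{consistency} relies on Lemma~\ref{optimality_cond} itself. If you want to address the constraint rigorously, you would need an argument that does not depend on results downstream of this lemma (or, as the paper implicitly does, simply state the KKT conditions with the understanding that the constraint is inactive at the solution and leave it at that). Apart from this circularity in the side remark, the computation is correct and matches the paper.
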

\begin{proof}
Defining $\Gamma_t = \Theta_{t}-\Theta_{t-1}$, $ t \geq 2$ and $\Gamma_1 = \Theta_1$, the problem stated in (1) can be recast as a minimization of the function
\begin{equation*}
\Gb_{\lambda_1,\lambda_2}(\{\Theta_t\}_{t=1}^T,\!\mathcal{X}_T)\!=\!\frac{1}{T}\overset{T}{\underset{t=1}{\sum}}\text{tr}(\frac{1}{2}\Big(\overset{t}{\underset{s=1}{\sum}}\Gamma_s\Big)^2 X_tX^\top_t) - \text{tr}(\overset{t}{\underset{s=1}{\sum}}\Gamma_s) + \lambda_1\overset{T}{\underset{t=1}{\sum}}\underset{u\neq v}{\sum}|\overset{t}{\underset{s=1}{\sum}}\Gamma_{uv,s}|+\lambda_2\overset{T}{\underset{t=2}{\sum}} \|\Gamma_t\|_F.
\end{equation*}
Invoking subdifferential calculus, a necessary and sufficient condition for $(\widehat\Gamma_t)_{1\leq t \leq T}$ to minimize $\Gb_{\lambda_1,\lambda_2}(\cdot,\mathcal{X}_T)$ is that for all $t = 1,\ldots,T$, $\mathbf{0}_{p \times p} \in \Rb^{p \times p}$ belongs to the subdifferential of $\Gb_{\lambda_1,\lambda_2}(\cdot,\mathcal{X}_T)$ with respect to $(\Gamma_t)_{1\leq t \leq T}$ at $(\widehat\Gamma_t)_{1\leq t \leq T}$, that is
\begin{equation*}
\frac{1}{T}\overset{T}{\underset{r=t}{\sum}}\Big(\frac{1}{2}\Big(\overset{r}{\underset{s=1}{\sum}}\widehat\Gamma_s\Big)X_rX^\top_r +\frac{1}{2} X_rX^\top_r \Big(\overset{r}{\underset{s=1}{\sum}}\widehat\Gamma_s\Big)-I_p\Big) + \lambda_1\overset{T}{\underset{r=t}{\sum}} \widehat{E}_{1r} + \lambda_2 \widehat{E}_{2t}=\mathbf{0}_{p \times p},
\end{equation*}
with the subgradient matrices defined as: $\widehat{E}_{21}=\mathbf{0}_{p \times p}$ and
\begin{equation*}
 \widehat{E}_{2t} =\frac{\widehat{\Gamma}_{t}}{\|\widehat{\Gamma}_{t}\|_F}\; \text{if} \; \widehat{\Gamma}_{t} \neq \mathbf{0}_{p \times p},\;\; \text{and} \;\;
\|\widehat{E}_{2t}\|_F \leq 1 \; \text{if} \; \|\widehat{\Gamma}_{t}\|_F=0,\;\; \text{and} \
\end{equation*}
\begin{equation*}
\forall u\neq v, \; \widehat{E}_{uv,1t} = \begin{cases}
\normalfont\text{sgn}(\overset{t}{\underset{s=1}{\sum}}\widehat{\Gamma}_{uv,s})& \text{if} \; \overset{t}{\underset{s=1}{\sum}}\widehat{\Gamma}_{uv,s} \neq 0,\\
\in [-1,1] & \text{otherwise}.
\end{cases}
\end{equation*}
Now if $t=\widehat{T}_j$ for $j \in \{1,\ldots,\widehat{m}\}$ is one of the estimated \rev{change dates}, then $\widehat{\Gamma}_t \neq \mathbf{0}_{p \times p}$, and
\begin{equation*}
\frac{1}{T}\overset{T}{\underset{r=\widehat{T}_j}{\sum}}\Big(\frac{1}{2}\widehat{\Theta}_rX_rX^\top_r + \frac{1}{2}X_rX^\top_r \widehat{\Theta}_r-I_p\Big) + \lambda_1\overset{T}{\underset{r=\widehat{T}_j}{\sum}} \widehat{E}_{1r} + \lambda_2\frac{\widehat{\Gamma}_{\widehat{T}_j}}{\|\widehat{\Gamma}_{\widehat{T}_j}\|_F}=\mathbf{0}_{p \times p},
\end{equation*}
since the \rev{change points} cannot occur at $t=1$ and $\overset{r}{\underset{s=1}{\sum}}\Gamma_s= \widehat{\Theta}_r$. When $t=1$, then the first order condition with respect to $\Gamma_t$ yields
\begin{equation*}
\frac{1}{T}\overset{T}{\underset{r=1}{\sum}}\Big(\frac{1}{2}\Big(\overset{r}{\underset{s=1}{\sum}}\widehat\Gamma_s\Big)X_rX^\top_r + \frac{1}{2}X_rX^\top_r \Big(\overset{r}{\underset{s=1}{\sum}}\widehat\Gamma_s\Big)-I_p\Big) + \lambda_1\overset{T}{\underset{r=1}{\sum}} \widehat{E}_{1r}=\mathbf{0}_{p \times p}.
\end{equation*}
\end{proof}

\begin{lemma}\label{lemma:strict-feasibility}
	Let $ \mathcal{X}_T = (X_1, \dots,X_T) $ be a given set of $ p $-dimensional vectors with $ \sum_{t=1}^T X_t X_t ^{\top} \succ 0 $.
	For an arbitrary fixed $ \lambda_1 > 0 $, let $ \mathcal{C}_{\lambda_1} $ be defined as
	$$
        \resizebox{\textwidth}{!}{%
		$\begin{aligned}
          \mathcal{C}_{\lambda_1} \coloneqq  \Big\{ \left\{ \{W_t\}_{t=1}^T, \{Y_{t, \mathrm{off}}\}_{t=1}^T, \{Z_t\}_{t=1}^{T-1} \right\} \,:\, & Z_0 = Z_T = \mathbf{0}_{p \times p}; Z_t \!-\! Z_{t - 1} \!-\! I_p \!+\! \Sym\left(X_tX_t^{\top})^{\frac{1}{2}}W_t\right) \!-\! Y_{t, \mathrm{off}} \succeq 0 \,\,\,\, \forall t; \\
                                                    & | Y_{uv, t, \mathrm{off}} | \leq \lambda_1T  \,\,\,\, \forall t, u, v; W_t \in \Rb^{p\times p}, Y_{t, \mathrm{off}} \in \mathcal{S}_{\mathrm{off}}^p, Z_t \in \mathcal{S}^p \:\: \forall t \Big\},
		\end{aligned}$}
	$$
    where $ Y_{uv, t, \mathrm{off}} $ is the $ (u, v) $-th element of $ Y_{t, \mathrm{off}} $.
	Then $ \mathcal{C}_{\lambda_1} $ has a Slater point.
\end{lemma}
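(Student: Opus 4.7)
The plan is to exhibit an explicit Slater point. The scalar inequalities $|Y_{uv,t,\mathrm{off}}| \leq \lambda_1 T$ are easy to satisfy strictly: simply take $Y_{t,\mathrm{off}} = \mathbf{0}_{p\times p}$ for all $t$, so that $|Y_{uv,t,\mathrm{off}}| = 0 < \lambda_1 T$ since $\lambda_1 > 0$. The real task is to choose $\{W_t\}_{t=1}^T$ and $\{Z_t\}_{t=1}^{T-1}$ (with $Z_0 = Z_T = \mathbf{0}_{p\times p}$) so that each positive semi-definite constraint
\begin{equation*}
Z_t - Z_{t-1} - I_p + \Sym\bigl((X_tX_t^\top)^{\frac{1}{2}} W_t\bigr) \succ 0
\end{equation*}
holds strictly.

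A telescoping observation will guide the construction. Summing the constraint above over $t = 1,\dots,T$ collapses the $Z$-terms (since $Z_0 = Z_T = \mathbf{0}_{p\times p}$) and produces the necessary condition $\sum_t \Sym((X_tX_t^\top)^{1/2} W_t) \succ T I_p$. Under the hypothesis $\sum_t X_t X_t^\top \succ 0$, this can be arranged by taking each $W_t$ proportional to the (symmetric) square root $(X_tX_t^\top)^{1/2}$. Concretely, I would pick $\alpha > 0$ large enough that $\alpha \sum_{t=1}^T X_tX_t^\top \succ T I_p$ and set $W_t := \alpha (X_tX_t^\top)^{1/2}$; symmetry of $(X_tX_t^\top)^{1/2}$ then gives $\Sym((X_tX_t^\top)^{1/2}W_t) = \alpha X_tX_t^\top$.

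It remains to manufacture the $Z_t$'s. Writing $M_t := I_p - \alpha X_tX_t^\top$ and $S := \sum_{t=1}^T M_t = T I_p - \alpha \sum_{t=1}^T X_tX_t^\top$ (so $S \prec 0$ by the choice of $\alpha$), I would define
\begin{equation*}
Z_t := \sum_{s=1}^t M_s - \frac{t}{T}\, S \qquad \text{for } t = 1, \dots, T-1,
\end{equation*}
which is symmetric and consistent with $Z_0 = Z_T = \mathbf{0}_{p\times p}$ (the latter because $\sum_{s=1}^T M_s = S$). A direct computation then yields $Z_t - Z_{t-1} - M_t = -S/T \succ 0$ for every $t$, which is precisely the strict positive semi-definite constraint after substitution. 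Hence the constructed tuple lies in the interior of $\mathcal{C}_{\lambda_1}$.

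I do not foresee any serious obstacle: once the telescoping identity is noticed, the proof is essentially a one-shot construction. The hypothesis $\sum_t X_tX_t^\top \succ 0$ is used exactly to make the averaged constraint achievable by a scalar multiple of $(X_tX_t^\top)^{1/2}$, and everything else reduces to elementary linear algebra on symmetric matrices.
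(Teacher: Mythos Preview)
Your proposal is correct and follows essentially the same approach as the paper: both set $Y_{t,\mathrm{off}}=\mathbf{0}$, take $W_t$ a scalar multiple of $(X_tX_t^\top)^{1/2}$, and build the $Z_t$'s as telescoping partial sums of $I_p-\alpha X_tX_t^\top$ plus a correction term to create strict slack. The only cosmetic difference is that the paper uses a scalar correction $\gamma I_p$ (with $T\gamma I_p \prec -S$) so that the slack at $t=T$ absorbs the residual, whereas you subtract the exact fraction $\tfrac{t}{T}S$, making the slack $-S/T$ uniform across all $t$; your version is arguably a bit cleaner but the idea is identical.
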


\begin{proof}
Since $ \sum_{t=1}^T X_t X_t ^{\top} \succ 0 $, there exist a large $ c > 0 $ and a small $ \gamma > 0 $ such that
	$$
		T \gamma I_p \prec c \sum_{t=1}^T X_t X_t ^{\top} - T I_p = \sum_{t=1}^T (X_t X_t ^{\top})^{\frac{1}{2}} \overline{W}_t - \sum_{t=1}^T \overline{Y}_{t, \mathrm{off}} - T I_p,
	$$
 where $ \overline{W}_t \coloneqq  c (X_t X_t ^{\top})^{\frac{1}{2}} $ and $ \overline{Y}_{t, \mathrm{off}} \coloneqq  \mathbf{0}_{p \times p} $ for all $ t $.
	We can see from the above display that
	\begin{equation}
		\label{eq:pd-T-1}
        \begin{aligned}
              cX_T X_T ^{\top} - I_p \succ \:\: & (T - 1)I_p - c\sum_{t=1}^{T-1} X_t X_t ^{\top} + (T - 1)\gamma I_p + \gamma I_p \\
                                     \succ \:\: & (T - 1)I_p - c\sum_{t=1}^{T-1} X_t X_t ^{\top} + (T - 1)\gamma I_p.
        \end{aligned}
	\end{equation}

	To find $ \{\overline{Z}_t\}_{t=1}^{T-1} $ and $ \overline{Z}_0 = \overline{Z}_T = \mathbf{0}_{p \times p} $ such that $ \overline{Z}_t - \overline{Z}_{t - 1} - I_p + \Sym\left(X_t X_t^{\top})^{\frac{1}{2}} \overline{W}_t\right) - \overline{Y}_t \succ 0 $ for all $ t $, we need
	\begin{equation}
		\label{eq:requirements-dual-stri-feas}
		\left\{
		\begin{aligned}
			 & \overline{Z}_{T-1} \prec cX_T X_T^{\top} - I_p,                                                                   \\
			 & \overline{Z}_t \prec \overline{Z}_{t+1} + cX_{t+1} X_{t+1}^{\top} - I_p \,\,\,\, \forall t = 1, 2, \dots , T - 2, \\
			 & \overline{Z}_1 \succ  I_p - cX_1 X_1^{\top}.
		\end{aligned}
		\right.
	\end{equation}
	We claim that the following choice of \( \{\overline{Z}_t\}_{t=1}^{T - 1} \) defined recursively (starting from $\overline{Z}_0 = \mathbf{0}_{p \times p}$) satisfies \eqref{eq:requirements-dual-stri-feas}:
	\[
		\overline{Z}_t = \overline{Z}_{t-1} + I_p - cX_t X_t ^{\top}  + \gamma I_p \,\,\,\,\forall t = 1, \dots , T - 1.
	\]
	Indeed, it is routine to check that the second and third lines of \eqref{eq:requirements-dual-stri-feas} are satisfied.
	Then, using $\overline{Z}_0 = \mathbf{0}_{p \times p}$ and the above display recursively, we have
	\[ \overline{Z}_{T - 1} = (T - 1)I_p - c\sum_{t=1}^{T-1} X_t X_t ^{\top} + (T - 1)\gamma I_p. \]
	Then by \eqref{eq:pd-T-1}, \( \overline{Z}_{T - 1} \prec cX_T X_T^{\top} - I_p \). Hence $ \left\{ \{\overline{W}_t\}_{t=1}^T, \{\overline{Y}_{t, \mathrm{off}}\}_{t=}^T, \{\overline{Z}_t\}_{t=1}^{T-1} \right\} $ is a Slater point of $ \mathcal{C}_{\lambda_1} $.
\end{proof}

\section{Proofs}\label{proofs}

In this section, we report the proofs of the results stated in the main text.
\subsection{Proof of Theorem~1}

\textbf{\emph{Proof of point (i).}}\\
The proof builds upon the works of \cite{Harchaoui2010}, Proposition 3, \cite{Qian2016}, Theorem 3.1 and \cite{Gibberd2021}, Theorem 1. We define:
\begin{equation*}
    A_{T,j} = \big\{|\widehat{T}_j-T^\ast_j| \geq T\delta_T\big\}, \;\; C_T = \big\{\underset{1\leq j \leq m^\ast}{\max}|\widehat{T}_j-T^\ast_j|<\mathcal{I}_{\min}/2\big\}.
\end{equation*}
By union bound, $\Pb(\underset{1 \leq j \leq m^\ast}{\max}|\widehat{T}_j-T^\ast_j|\geq T \delta_T) \leq \sum^{m^\ast}_{j=1}\Pb(A_{T,j})$, $m^\ast<\infty$. So we aim to show:
\begin{equation*}
(a) \; \sum^{m^\ast}_{j=1}\Pb(A_{T,j} \cap C_T) \rightarrow 0, \;\; (b) \; \sum^{m^\ast}_{j=1}\Pb(A_{T,j} \cap C^c_T) \rightarrow 0,
\end{equation*}
with $C^c_T$ the complement of $C_T$.

\noindent\emph{Proof of (a).} We show:
\begin{equation*}
\sum^{m^\ast}_{j=1}\Pb(A^+_{T,j} \cap C_T) \rightarrow 0 \; \text{and} \; \sum^{m^\ast}_{j=1}\Pb(A^-_{T,j} \cap C_T) \rightarrow 0,
\end{equation*}
where $A^+_{T,j}=\{T^\ast_j-\widehat{T}_j \geq T \delta_T\}, A^-_{T,j}=\{\widehat{T}_j-T^\ast_j \geq T \delta_T\}$.
We prove $\sum^{m^\ast}_{j=1}\Pb(A^+_{T,j} \cap C_T) \rightarrow 0$ as the other case follows in the same spirit. In light of $C_T$:
\begin{equation}\label{c_T_def}
\forall j \in \{1,\ldots,m^\ast\}, \; T^\ast_{j-1} < \widehat{T}_j < T^\ast_{j+1}.
\end{equation}
By Lemma \ref{optimality_cond}, with $t = T^\ast_j$ and $t=\widehat{T}_j$, let $\Lambda(\Sigma) = \frac{1}{2}(\Sigma \otimes I_p + I_p \otimes \Sigma)$, in $\text{vec}(\cdot)$ form:
\begin{eqnarray*}
\frac{1}{T}\overset{T}{\underset{r=\widehat{T}_j}{\sum}}\Big[\Lambda(X_rX^\top_r)\text{vec}(\Theta^\ast_r+\widehat{\Theta}_r-\Theta^\ast_r)-\text{vec}(I_p)\Big] + \text{vec}\big(\lambda_1\overset{T}{\underset{r=\widehat{T}_j}{\sum}} \widehat{E}_{1r}+ \lambda_2 \frac{\widehat{\Gamma}_{\widehat{T}_j}}{\|\widehat{\Gamma}_{\widehat{T}_j}\|_F}\big)= \mathbf{0}_{p^2 \times 1},
\end{eqnarray*}
and
\begin{equation*}
\|\frac{1}{T}\!\overset{T}{\underset{r=T^\ast_j}{\sum}}\Big[\!\Lambda(X_rX^\top_r)\text{vec}(\Theta^\ast_r)\!-\!\text{vec}(I_p)\!\Big] \!+\! \frac{1}{T}\!\overset{T}{\underset{r=T^\ast_j}{\sum}} \Lambda(X_rX^\top_r)\text{vec}(\widehat{\Theta}_r\!-\!\Theta^\ast_r) \!+\! \lambda_1\!\text{vec}\big(\!\overset{T}{\underset{r=T^\ast_j}{\sum}}\!\widehat{E}_{1r}\big)\|_2\!\leq\!\lambda_2.
\end{equation*}
Therefore, under $T^\ast_j>\widehat{T}_j$, taking the differences, by the triangle inequality, we obtain:
\begin{eqnarray*}
2\lambda_2\!\geq\! \|\!\frac{1}{T}\!\overset{T^\ast_j-1}{\underset{r=\widehat{T}_j}{\sum}}\!\Big[\!\Lambda(X_rX^\top_r)\text{vec}(\Theta^\ast_r)\!-\!\text{vec}(I_p)\!\Big]  \!+\! \frac{1}{T}\overset{T^\ast_j-1}{\underset{r=\widehat{T}_j}{\sum}}\! \Lambda(X_rX^\top_r)\text{vec}(\widehat{\Theta}_r\!-\!\Theta^\ast_r)
\!+\! \text{vec}\big(\!\lambda_1\!\overset{T^\ast_j-1}{\underset{r=\widehat{T}_{j}}{\sum}} \widehat{E}_{1r}\big)\|_2.
\end{eqnarray*}
Each component of $\lambda_1\sum^{T^\ast_j-1}_{r=\widehat{T}_{j}}\widehat{E}_{1r}$ is bounded by $\pm \lambda_1(T^\ast_j-\widehat{T}_j)$. We deduce by the triangle inequality:
\begin{eqnarray}
\lefteqn{2 \lambda_2 + \lambda_1\sqrt{p(p-1)}(T^\ast_j-\widehat{T}_j) }\nonumber\\
& \geq & \|\frac{1}{T}\overset{T^\ast_j-1}{\underset{r=\widehat{T}_j}{\sum}}\Big[\Lambda(X_rX^\top_r)\text{vec}(\Theta^\ast_r)-\text{vec}(I_p)\Big] + \frac{1}{T}\overset{T^\ast_j-1}{\underset{r=\widehat{T}_j}{\sum}} \Lambda(X_rX^\top_r)\text{vec}(\widehat{\Theta}_{r}-\Theta^\ast_r)\|_2\nonumber\\
& = & \|\frac{1}{T}\overset{T^\ast_j-1}{\underset{r=\widehat{T}_j}{\sum}}\Big[\Lambda(X_rX^\top_r)\text{vec}(\Omega^\ast_j)-\text{vec}(I_p)\Big] + \frac{1}{T}\overset{T^\ast_j-1}{\underset{r=\widehat{T}_j}{\sum}} \Lambda(X_rX^\top_r)\text{vec}(\widehat{\Omega}_{j+1}-\Omega^\ast_j)\|_2\nonumber\\
& \geq & \|\frac{1}{T}\overset{T^\ast_j-1}{\underset{r=\widehat{T}_j}{\sum}} \Lambda(X_rX^\top_r)\text{vec}(\Omega^\ast_{j+1}-\Omega^\ast_j)\|_2 - \|\frac{1}{T}\overset{T^\ast_j-1}{\underset{r=\widehat{T}_j}{\sum}} \Lambda(X_rX^\top_r)\text{vec}(\widehat{\Omega}_{j+1}-\Omega^\ast_{j+1})\|_2 \nonumber\\
& & - \|\frac{1}{T}\overset{T^\ast_j-1}{\underset{r=\widehat{T}_j}{\sum}}\Big[\frac{1}{2}\Omega^\ast_j X_rX^\top_r+ \frac{1}{2}X_rX^\top_r\Omega^\ast_j-I_p\Big]\|_F\coloneqq R_{Tj,1}+R_{Tj,2}+R_{Tj,3}, \label{bound_optimality}
\end{eqnarray}
where the first equality holds since $\widehat{\Theta}_r = \widehat{\Omega}_{j+1}$ and $\Theta^\ast_r=\Omega^\ast_j$ for $r \in [\widehat{T}_j,T^\ast_j-1]$ by (\ref{c_T_def}).
Let the event:
\begin{equation*}
\overline{R}_{Tj} = \{2 \lambda_2 + \lambda_1\sqrt{p(p-1)}(T^\ast_j-\widehat{T}_j) \geq \frac{1}{3}R_{Tj,1}\}\cup \{R_{Tj,2}\geq \frac{1}{3}R_{Tj,1}\}\cup \{R_{Tj,3}\geq \frac{1}{3}R_{Tj,1}\}.
\end{equation*}
Since inequality (\ref{bound_optimality}) holds with probability one, then $\Pb(\overline{R}_{Tj})=1$. Therefore, we have:
$$
\begin{aligned}
  \Pb(A^+_{T,j}\cap C_T) \leq & \:\: \Pb(A^+_{T,j}\cap C_T\cap \{2 \lambda_2 + \lambda_1\sqrt{p(p-1)}(T^\ast_j-\widehat{T}_j) \geq \frac{1}{3}R_{Tj,1}\}) \\
                        & \:\quad + \Pb(A^+_{T,j}\cap C_T\cap \{R_{Tj,2}\geq \frac{1}{3}R_{Tj,1}\}) + \Pb(A^+_{T,j}\cap C_T\cap \{R_{Tj,3}\geq \frac{1}{3}R_{Tj,1}\}) \\
  \eqqcolon & \:\: AC_{j,1}+AC_{j,2}+AC_{j,3}.
\end{aligned}
$$
Let us first bound $\sum^{m^\ast}_{j=1}AC_{j,1}$. Using Section XIV.7 of \cite{pease1965} on the Kronecker sum, we have $\lambda_{\min}(\frac{1}{T^\ast_j-\widehat{T}_j}\sum^{T^\ast_j-1}_{r=\widehat{T}_j} \Lambda(X_rX^\top_r)= \lambda_{\min}(\frac{1}{T^\ast_j-\widehat{T}_j}\sum^{T^\ast_j-1}_{r=\widehat{T}_j}  X_rX^\top_r)$, for $1 \leq j \leq m^\ast$:
$$
	\begin{aligned}
		     & AC_{j,1} \leq \:\: \Pb(A^+_{T,j}\cap \{2 \lambda_2 + \lambda_1\sqrt{p(p-1)}(T^\ast_j-\widehat{T}_j) \geq \frac{1}{3}R_{Tj,1}\})                                                                                                                                                                                                                                              \\
		\leq & \:\: \resizebox{0.95\textwidth}{!}{$\displaystyle\Pb\Big(\|\frac{1}{T^\ast_j-\widehat{T}_j}\overset{T^\ast_j-1}{\underset{r=\widehat{T}_j}{\sum}} \Lambda(X_rX^\top_r)\text{vec}(\Omega^\ast_{j+1}-\Omega^\ast_j)\|_2 \leq \frac{3T}{T^\ast_j-\widehat{T}_j} \big[2 \lambda_2 + \lambda_1\sqrt{p(p-1)}(T^\ast_j-\widehat{T}_j)\big],T^\ast_j-\widehat{T}_j \geq T\delta_T\Big)$} \\
		\leq & \:\: \Pb\Big(\gamma^{\min}_{1,T,j}\|\Omega^\ast_{j+1}-\Omega^\ast_j\|_F \leq \frac{6T\lambda_2}{T^\ast_j-\widehat{T}_j} + 3T\lambda_1\sqrt{p(p-1)},T^\ast_j-\widehat{T}_j \geq T\delta_T\Big)                                                                                                                                                                                \\
		\leq & \:\: \Pb\Big(\gamma^{\min}_{1,T,j} \leq \frac{6\lambda_2}{\eta_{\min}\delta_T} + \frac{T\lambda_1\sqrt{p(p-1)}}{\eta_{\min}},T^\ast_j-\widehat{T}_j \geq T\delta_T\Big),
	\end{aligned}
$$
with $\gamma^{\min}_{1,T,j}=\lambda_{\min}(\frac{1}{T^\ast_j-\widehat{T}_j}\sum^{T^\ast_j-1}_{r=\widehat{T}_j} X_rX^\top_r)\geq \underline{\mu}/2>0$ with probability tending to one by Lemma \ref{emp_var_cov_bound}, and $\eta_{\min}=\underset{1 \leq j \leq m^\ast}{\min}\|\Omega^\ast_{j+1}-\Omega^\ast_j\|_F$. By $\lambda_2/(\eta_{\min}\delta_T)\rightarrow 0$, $T\lambda_1\sqrt{p(p-1)}/\eta_{\min}\rightarrow 0$ in Assumption~3-(iii), we deduce $\sum^{m^\ast}_{j=1}AC_{j,1} \rightarrow 0$. We now bound $\sum^{m^\ast}_{j=1}AC_{j,2}$. \rev{First, note that $\lambda_{\max}(\frac{1}{T^\ast_j-\widehat{T}_j}\sum^{T^\ast_j-1}_{r=\widehat{T}_j} \Lambda(X_rX^\top_r)= \lambda_{\max}(\frac{1}{T^\ast_j-\widehat{T}_j}\sum^{T^\ast_j-1}_{r=\widehat{T}_j}  X_rX^\top_r)$.}
For any $j=1,\ldots,m^\ast$:
$$
\begin{aligned}
  AC_{j, 2} = & \resizebox{0.91\textwidth}{!}{$\displaystyle\Pb\Big(\!A^+_{T,j} \!\cap\! C_T \!\cap\! \Big\{\!\|\!\frac{1}{T^\ast_j\!-\!\widehat{T}_j}\!\overset{T^\ast_j\!-\!1}{\underset{r\!=\!\widehat{T}_j}{\sum}}\! \Lambda(X_r\!X^\top_r\!)\text{vec}(\widehat{\Omega}_{j+1}\!-\!\Omega^\ast_{j+1})\!\|_2 \!\geq\! \frac{1}{3}\!\|\!\frac{1}{T^\ast_j\!-\!\widehat{T}_j}\!\overset{T^\ast_j\!-\!1}{\underset{r\!=\!\widehat{T}_j}{\sum}}\! \Lambda(X_r\!X^\top_r)\!\text{vec}(\Omega^\ast_{j+1}\!-\!\Omega^\ast_j)\!\|_2\!\Big\}\!\Big)$} \\
  \leq & \Pb\Big(A^+_{T,j}\cap C_T \cap \Big\{\gamma^{\max}_{T,j} \|\widehat{\Omega}_{j+1}-\Omega^\ast_{j+1}\|_F \geq \frac{1}{3}\gamma^{\min}_{1,T,j}\|\Omega^\ast_{j+1}-\Omega^\ast_j\|_F\}\Big),
\end{aligned}
$$
with $\gamma^{\max}_{T,j}=\lambda_{\max}(\frac{1}{T^\ast_j-\widehat{T}_j}\sum^{T^\ast_j-1}_{r=\widehat{T}_j} X_rX^\top_r) \leq 2 \overline{\mu}$ with probability tending to one by Lemma \ref{emp_var_cov_bound}. We now need to evaluate the bound for $\|\widehat{\Omega}_{j+1}-\Omega^\ast_{j+1}\|_F$. To do so, we rely on the KKT conditions of Lemma \ref{optimality_cond}. Note that with probability tending to one, $\lambda_{\max}(\frac{1}{T^\ast_j-\widehat{T}_j}\sum^{T^\ast_j-1}_{r=\widehat{T}_j} X_rX^\top_r) \leq 2 \overline{\mu}$. We have $\widehat{\Theta}_t = \widehat{\Omega}_{j+1}$ when $t \in [T^\ast_j,(T^\ast_j+T^\ast_{j+1})/2-1]$ as $\widehat{T}_j<T^\ast_j$ given $A^+_{T,j}$ and $\widehat{T}_{j+1}>(T^\ast_j+T^\ast_{j+1})/2$ given $C_T$. Therefore, by Lemma \ref{optimality_cond} with $l = (T^\ast_j+T^\ast_{j+1})/2$ and $l=T^\ast_j$, following the steps to obtain inequality (\ref{bound_optimality}), we get
\begin{eqnarray*}
\lefteqn{2 \lambda_2 + \lambda_1\sqrt{p(p-1)}[(T^\ast_j+T^\ast_{j+1})/2-T^\ast_j]}\\
& \geq & \resizebox{0.91\textwidth}{!}{$\displaystyle\|\frac{1}{T}\overset{(T^\ast_j+T^\ast_{j+1})/2-1}{\underset{r=T^\ast_j}{\sum}} \Lambda(X_rX^\top_r)\text{vec}(\widehat{\Omega}_{j+1}-\Omega^\ast_{j+1})\|_2 - \|\frac{1}{T}\sum^{(T^\ast_j+T^\ast_{j+1})/2-1}_{r=T^\ast_j}\Big[\frac{1}{2}\Omega^\ast_{j} X_rX^\top_r+ \frac{1}{2}X_rX^\top_r\Omega^\ast_{j}-I_p\Big]\|_F$}.
\end{eqnarray*}
Therefore, denoting by $\gamma^{\min}_{2,T,j} = \lambda_{\min}\Big(\frac{1}{T^\ast_{j+1}-T^\ast_j}\sum^{(T^\ast_j+T^\ast_{j+1})/2-1}_{r=T^\ast_j}X_rX^\top_r\Big)$, conditional on $C_T$:
$$
	\begin{aligned}
		\|\widehat{\Omega}_{j+1}-\Omega^\ast_{j+1}\|_F \leq & \:\: (\gamma^{\min}_{2,T,j})^{-1}\Big(\frac{2 T\lambda_2 + T\lambda_1\sqrt{p(p-1)}[(T^\ast_j+T^\ast_{j+1})/2-T^\ast_j]}{T^\ast_{j+1}-T^\ast_{j}}                                                     \\
		                                                    & \:\: \quad + \|\frac{2}{T^\ast_{j+1}-T^\ast_{j}}\sum^{(T^\ast_j+T^\ast_{j+1})/2-1}_{r=T^\ast_j}\Big[\frac{1}{2}\Omega^\ast_{j} X_rX^\top_r+ \frac{1}{2}X_rX^\top_r\Omega^\ast_{j}-I_p\Big]\|_F\Big).
	\end{aligned}
$$
By Lemma \ref{emp_var_cov_bound}, $\gamma^{\min}_{2,T,j}\geq \underline{\mu}/2>0$ with probability tending to one. We deduce
\begin{eqnarray}
\lefteqn{\sum^{m^\ast}_{j=1} \Pb\Big(\Big\{\|\widehat{\Omega}_{j+1}-\Omega^\ast_{j+1}\|_F \geq (\gamma^{\max}_{T,j})^{-1}\gamma^{\min}_{1,T,j}\|\Omega^\ast_{j+1}-\Omega^\ast_j\|_F/3\Big\}\cap C_T\Big)}\nonumber\\
& \leq & \resizebox{0.92\linewidth}{!}{$\displaystyle\sum^{m^\ast}_{j=1} \Pb\Big(\frac{2 T\lambda_2 + T\lambda_1\sqrt{p(p-1)}[(T^\ast_j+T^\ast_{j+1})/2-T^\ast_j]}{T^\ast_{j+1}-T^\ast_{j}} \geq (\gamma^{\max}_{T,j})^{-1}\gamma^{\min}_{1,T,j}\gamma^{\min}_{2,T,j}\|\Omega^\ast_{j+1}-\Omega^\ast_j\|_F/6\Big)$}\nonumber\\
& & \resizebox{0.92\linewidth}{!}{$\displaystyle+  \sum^{m^\ast}_{j=1} \Pb\Big(\|\frac{2}{T^\ast_{j+1}-T^\ast_{j}}\sum^{(T^\ast_j+T^\ast_{j+1})/2-1}_{r=T^\ast_j}\Big[\frac{1}{2}\Omega^\ast_j X_rX^\top_r+ \frac{1}{2}X_rX^\top_r\Omega^\ast_j-I_p\Big]\|_F \geq  (\gamma^{\max}_{T,j})^{-1}\gamma^{\min}_{1,T,j}\gamma^{\min}_{2,T,j}\|\Omega^\ast_{j+1}-\Omega^\ast_j\|_F/6\Big)$}\nonumber\\
& \leq &  \sum^{m^\ast}_{j=1} \Pb\Big(\frac{2 T\lambda_2}{\mathcal{I}_{\min}\eta_{\min}} + \frac{ T\lambda_1\sqrt{p(p-1)}}{\eta_{\min}} \geq (\gamma^{\max}_{T,j})^{-1}\gamma^{\min}_{1,T,j}\gamma^{\min}_{2,T,j}/6\Big)\label{control_sum}\\
& & \resizebox{0.92\linewidth}{!}{$\displaystyle+ \sum^{m^\ast}_{j=1} \Pb\Big(\|\frac{1}{(T^\ast_{j+1}-T^\ast_{j})/2}\sum^{(T^\ast_j+T^\ast_{j+1})/2-1}_{r=T^\ast_j}\Big[\frac{1}{2}\Omega^\ast_{j} X_rX^\top_r+ \frac{1}{2}X_rX^\top_r\Omega^\ast_{j}-I_p\Big]\|_F \geq  \overline{\mu}^{-1}\underline{\mu}^2\eta_{\min}/48\Big)$}. \nonumber
\end{eqnarray}
The first term tends to zero since $T\lambda_2/(\mathcal{I}_{\min}\eta_{\min}) \rightarrow 0$ and $T\lambda_1p/\eta_{\min}\rightarrow 0$ by Assumption~3-(ii) and (iii). As for the second term, using $\|AB\|_F \leq \|A\|_s\|B\|_F$, note that
\begin{eqnarray*}
\lefteqn{\|\frac{1}{(T^\ast_{j+1}-T^\ast_{j})/2}\sum^{(T^\ast_j+T^\ast_{j+1})/2-1}_{r=T^\ast_j}\Big[\frac{1}{2}\Omega^\ast_{j} X_rX^\top_r+ \frac{1}{2}X_rX^\top_r\Omega^\ast_{j}-I_p\Big]\|_F}\\
& = & \|\frac{1}{(T^\ast_{j+1}-T^\ast_{j})/2}\sum^{(T^\ast_j+T^\ast_{j+1})/2-1}_{r=T^\ast_j}\Big[\frac{1}{2}\Omega^\ast_{j}\Big(X_rX^\top_r-\Sigma^\ast_{j}\Big)+\frac{1}{2}\Big(X_rX^\top_r-\Sigma^\ast_{j}\Big)\Omega^\ast_{j}\Big]\|_F\\
& \leq & s^\ast_{\max}\,p\,\|\frac{1}{(T^\ast_{j+1}-T^\ast_{j})/2}\sum^{(T^\ast_j+T^\ast_{j+1})/2-1}_{r=T^\ast_j} \big(X_rX^\top_r-\Sigma^\ast_{j}\big)\|_{\max}.
\end{eqnarray*}
Therefore, for $C>0$ finite, applying Lemma \ref{bound_gradient}, we deduce that for any $j$:
\begin{equation*}
\Pb\Big(\|\frac{1}{(T^\ast_{j+1}-T^\ast_{j})/2}\sum^{(T^\ast_j+T^\ast_{j+1})/2-1}_{r=T^\ast_j}\Big[\frac{1}{2}\Omega^\ast_{j} X_rX^\top_r+ \frac{1}{2}X_rX^\top_r\Omega^\ast_{j}-I_p\Big]\|_F \geq  \overline{\mu}^{-1}\underline{\mu}^2\eta_{\min}/48\Big) \rightarrow 0,
\end{equation*}
since $(\eta_{\min}\mathcal{I}^{1/2}_{\min})^{-1}s^\ast_{\max}\,p\sqrt{\log(pT)}\rightarrow 0$. Hence, $\sum^{m^\ast}_{j=1}AC_{j,2} \rightarrow 0$. Let us now consider $\sum^{m^\ast}_{j=1}AC_{j,3}$. Applying the same reasoning to show the convergence of the second summation on the right-hand side of (\ref{control_sum}), we get
\begin{equation*}
\|\frac{1}{T^\ast_{j}-\widehat{T}_{j}}\sum^{T^\ast_j-1}_{r=\widehat{T}_j}\Big[\frac{1}{2}\Omega^\ast_{j} X_rX^\top_r+ \frac{1}{2}X_rX^\top_r\Omega^\ast_{j}-I_p\Big]\|_F = O_p(p\,s^\ast_{\max}\sqrt{\frac{\log(pT)}{T\delta_T}}) = o_p(\eta_{\min}),
\end{equation*}
when $T^\ast_j-\widehat{T}_j\geq T\delta_T$,
and
\begin{eqnarray*}
\lefteqn{\sum^{m^\ast}_{j=1}AC_{j,3} \leq \Pb(A^+_{T,j}\cap \{R_{Tj,3}\geq \frac{1}{3}R_{Tj,1}\})} \\
& \leq & \resizebox{0.92\linewidth}{!}{$\displaystyle \sum^{m^\ast}_{j=1}\!\Pb\Big(A^+_{T,j} \!\cap\! \Big\{\|\frac{1}{T^\ast_{j}\!-\!\widehat{T}_{j}}\sum^{T^\ast_j-1}_{r=\widehat{T}_j}\!\Big[\frac{1}{2}\Omega^\ast_{j} X_rX^\top_r\!+\! \frac{1}{2}X_rX^\top_r\Omega^\ast_{j}\!-\!I_p\!\Big]\|_F\!\geq\! \frac{1}{3}\|\frac{1}{T^\ast_j-\widehat{T}_j}\overset{T^\ast_j-1}{\underset{r=\widehat{T}_j}{\sum}}\! \Lambda(X_rX^\top_r)\text{vec}(\Omega^\ast_{j+1}\!-\!\Omega^\ast_j)\|_2\Big\}\Big)$}\\
& \leq &  \sum^{m^\ast}_{j=1} \Pb\Big(A^+_{T,j} \cap \Big\{\|\frac{1}{T^\ast_{j}-\widehat{T}_{j}}\sum^{T^\ast_j-1}_{r=\widehat{T}_j} \Big[\frac{1}{2}\Omega^\ast_{j} X_rX^\top_r+ \frac{1}{2}X_rX^\top_r\Omega^\ast_{j}-I_p\Big]\|_F\geq \frac{1}{3}\eta_{\min}\gamma^{\min}_{1,T,j}\Big),
\end{eqnarray*}
since $\gamma^{\min}_{1,T,j} \geq \underline{\mu}/2>0$ with probability tending to one, and $T\delta_T \leq T^\ast_j-\widehat{T}_j$, then under $(\sqrt{T\delta_T}\eta_{\min})^{-1}s^\ast_{\max}\,p\sqrt{\log(pT)} \rightarrow 0$, we deduce $\sum^{m^\ast}_{j=1}AC_{j,3} \rightarrow 0$. Consequently, we proved $\sum^{m^\ast}_{j=1}\Pb(A_{T,j} \cap C_T) \rightarrow 0$.

\noindent\emph{Proof of (b).} We prove (b) by showing $\sum^{m^\ast}_{j=1}\Pb(A^+_{T,j} \cap C^c_T) \rightarrow 0$ and $\sum^{m^\ast}_{j=1}\Pb(A^-_{T,j} \cap C^c_T) \rightarrow 0$. As in the proof of (a), we simply show $\sum^{m^\ast}_{j=1}\Pb(A^+_{T,j} \cap C^c_T) \rightarrow 0$. To do so, we define:
\begin{equation*}
\begin{array}{llll}
D^{(l)}_T\coloneqq \big\{\exists j \in \{1,\ldots,m^\ast\}, \widehat{T}_j \leq T^\ast_{j-1}\big\} \cap C^c_T,&&\\
D^{(m)}_T\coloneqq \big\{\forall j \in \{1,\ldots,m^\ast\}, T^\ast_{j-1}<\widehat{T}_j < T^\ast_{j+1}\big\} \cap C^c_T,&&\\
D^{(r)}_T\coloneqq \big\{\exists j \in \{1,\ldots,m^\ast\}, \widehat{T}_j \geq T^\ast_{j+1}\big\} \cap C^c_T,&&
\end{array}
\end{equation*}
where $C^c_T = \{\underset{1 \leq j \leq m^\ast}{\max}|\widehat{T}_j-T^\ast_j|\geq \mathcal{I}_{\min}/2\}$. Then, we have:
\begin{equation*}
\sum^{m^\ast}_{j=1}\Pb(A^+_{T,j} \cap C^c_T) = \sum^{m^\ast}_{j=1}\Big[\Pb(A^+_{T,j} \cap D^{(l)}_T)+\Pb(A^+_{T,j} \cap D^{(m)}_T) +\Pb(A^+_{T,j} \cap D^{(r)}_T)\Big].
\end{equation*}
We first bound $\sum^{m^\ast}_{j=1}\Pb(A^+_{T,j} \cap D^{(m)}_T)$. For any $j$:
\begin{eqnarray*}
\lefteqn{\Pb(A^+_{T,j} \cap D^{(m)}_T)}\\
& \leq & \Pb(A^+_{t,j} \cap \big\{\widehat{T}_{j+1}-T^\ast_j \geq \frac{1}{2}\mathcal{I}_{\min}\big\}\cap D^{(m)}_T) + \Pb(A^+_{t,j} \cap \big\{\widehat{T}_{j+1}-T^\ast_j < \frac{1}{2}\mathcal{I}_{\min}\big\}\cap D^{(m)}_T)\\
& \leq & \Pb(A^+_{t,j} \cap \big\{\widehat{T}_{j+1}-T^\ast_j \geq \frac{1}{2}\mathcal{I}_{\min}\big\}\cap D^{(m)}_T) + \Pb(A^+_{t,j} \cap \big\{T^\ast_{j+1}-\widehat{T}_{j+1} \geq \frac{1}{2}\mathcal{I}_{\min}\big\}\cap D^{(m)}_T),
\end{eqnarray*}
since $0 \leq \widehat{T}_{j+1}-T^\ast_j \leq \mathcal{I}_{\min}/2$ implies $T^\ast_{j+1}-\widehat{T}_{j+1}=(T^\ast_{j+1}-T^\ast_j)-(\widehat{T}_{j+1}-T^\ast_j)\geq \mathcal{I}_{\min}-\mathcal{I}_{\min}/2 = \mathcal{I}_{\min}/2$. Moreover, since
\begin{equation*}
\resizebox{\linewidth}{!}{$\displaystyle\Big\{A^+_{t,j} \cap \big\{T^\ast_{j+1}-\widehat{T}_{j+1} \geq \frac{1}{2}\mathcal{I}_{\min}\big\}\cap D^{(m)}_T\Big\} \subset \overset{m^\ast-1}{\underset{k=j+1}{\cup}}\Big[ \big\{T^\ast_k-\widehat{T}_k\geq \mathcal{I}_{\min}/2\big\} \cap \big\{\widehat{T}_{k+1}-T^\ast_k \geq \mathcal{I}_{\min}/2\big\}\cap D^{(m)}_T\Big],$}
\end{equation*}
we deduce:
\begin{eqnarray}\label{control_Dm}
\lefteqn{\sum^{m^\ast}_{j=1}\Pb(A^+_{T,j} \cap D^{(m)}_T)  \leq \sum^{m^\ast}_{j=1} \Pb(A^+_{t,j} \cap \big\{\widehat{T}_{j+1}-T^\ast_j \geq \frac{1}{2}\mathcal{I}_{\min}\big\}\cap D^{(m)}_T)  } \notag\\
&  & + \sum^{m^\ast}_{j=1} \sum^{m^\ast-1}_{k=j+1}  \Pb\Big(\big\{T^\ast_k-\widehat{T}_k\geq \mathcal{I}_{\min}/2\big\} \cap \big\{\widehat{T}_{k+1}-T^\ast_k \geq \mathcal{I}_{\min}/2\big\}\cap D^{(m)}_T\Big).
\end{eqnarray}
Let us treat the first term. By Lemma \ref{optimality_cond} with $t=\widehat{T}_j$ and $t=T^\ast_j$, we obtain:
\begin{equation*}
\frac{1}{T}\overset{T}{\underset{r=\widehat{T}_j}{\sum}}\Big[\Lambda(X_rX^\top_r)\text{vec}(\Theta^\ast_r+\widehat{\Theta}_r-\Theta^\ast_r)-\text{vec}(I_p)\Big] + \lambda_1\text{vec}\big(\overset{T}{\underset{r=\widehat{T}_j}{\sum}} \widehat{E}_{1r}\big)= \lambda_2 \text{vec}\big(\frac{\widehat{\Gamma}_{\widehat{T}_j}}{\|\widehat{\Gamma}_{\widehat{T}_j}\|_F}\big),
\end{equation*}
and
\begin{equation*}
\|\frac{1}{T}\overset{T}{\underset{r=T^\ast_j}{\sum}}\Big[\Lambda(X_rX^\top_r)\text{vec}(\Theta^\ast_r+\widehat{\Theta}_r-\Theta^\ast_r)-\text{vec}(I_p)\Big] + \lambda_1\text{vec}\big(\overset{T}{\underset{r=\widehat{T}_j}{\sum}} \widehat{E}_{1r}\big)\|_2 \leq 2\lambda_2.
\end{equation*}
We deduce
\begin{eqnarray*}
\lefteqn{\gamma^{\min}_{1,T,j}\|\widehat{\Omega}_{j+1}-\Omega^\ast_j\|_F - \|\frac{1}{T^\ast_j-\widehat{T}_j}\overset{T^\ast_j-1}{\underset{r=\widehat{T}_j}{\sum}}\Big[\frac{1}{2}\Omega^\ast_{j} X_rX^\top_r+ \frac{1}{2}X_rX^\top_r\Omega^\ast_{j}-I_p\Big]\|_F }\\
& \leq & \frac{1}{T^\ast_j-\widehat{T}_j} \|\overset{T^\ast_j-1}{\underset{r=\widehat{T}_j}{\sum}}\Lambda(X_rX^\top_r)\text{vec}(\widehat{\Omega}_{j+1}-\Omega^\ast_{j})+\overset{T^\ast_j-1}{\underset{r=\widehat{T}_j}{\sum}}\Big[\Lambda(X_rX^\top_r)\text{vec}(\Omega^\ast_j)-\text{vec}(I_p)\Big]\|_2\\
& \leq & \frac{2\lambda_2T}{T^\ast_j-\widehat{T}_j}+\lambda_1T\sqrt{p(p-1)}.
\end{eqnarray*}
As a consequence:
\begin{equation}\label{bound_param_1}
\resizebox{\linewidth}{!}{$\displaystyle\|\widehat{\Omega}_{j+1}-\Omega^\ast_j\|_F \leq (\gamma^{\min}_{1,T,j})^{-1}\Big[\frac{2\lambda_2T}{T^\ast_j-\widehat{T}_j}+\lambda_1T\sqrt{p(p-1)} + \|\frac{1}{T^\ast_j-\widehat{T}_j}\overset{T^\ast_j-1}{\underset{r=\widehat{T}_j}{\sum}}\Big[\frac{1}{2}\Omega^\ast_{j} X_rX^\top_r+ \frac{1}{2}X_rX^\top_r\Omega^\ast_{j}-I_p\Big]\|_F\Big].$}
\end{equation}
In the same vein, applying Lemma \ref{optimality_cond} with $t = \widehat{T}_{j+1}$ and $t=T^\ast_j$, we obtain:
\begin{eqnarray}
\lefteqn{\|\widehat{\Omega}_{j+1}-\Omega^\ast_{j+1}\|_F}\label{bound_param_2}\\
  &\leq & \resizebox{0.92\linewidth}{!}{$\displaystyle(\gamma^{\min}_{3,T,j})^{-1}\Big[\frac{2\lambda_2T}{\widehat{T}_{j+1}-T^\ast_j}+\lambda_1T\sqrt{p(p-1)} + \|\frac{1}{\widehat{T}_{j+1}-T^\ast_j}\overset{\widehat{T}_{j+1}-1}{\underset{r=T^\ast_j}{\sum}}\Big[\frac{1}{2}\Omega^\ast_{j} X_rX^\top_r+ \frac{1}{2}X_rX^\top_r\Omega^\ast_{j}-I_p\Big]\|_F\Big],$}\nonumber
\end{eqnarray}
where $\gamma^{\min}_{3,T,j}=\lambda_{\min}(\frac{1}{\widehat{T}_{j+1}-T^\ast_j}\overset{\widehat{T}_{j+1}-1}{\underset{r=T^\ast_j}{\sum}}X_rX^\top_r) \geq \underline{\mu}/2$ with probability one.  Let the event:
\begin{eqnarray}
\lefteqn{E_{T,j} \coloneqq \Big\{\|\Omega^\ast_{j+1}-\Omega^\ast_j\|_F \leq (\gamma^{\min}_{1,T,j})^{-1}\Big[\frac{2\lambda_2T}{T^\ast_j-\widehat{T}_j}+\lambda_1T\sqrt{p(p-1)}\Big]}\label{E_event}\\
& & \resizebox{0.92\linewidth}{!}{$\displaystyle+(\gamma^{\min}_{3,T,j})^{-1}\Big[\frac{2\lambda_2T}{\widehat{T}_{j+1}-T^\ast_j}+\lambda_1T\sqrt{p(p-1)}\Big] + (\gamma^{\min}_{1,T,j})^{-1}\|\frac{1}{T^\ast_j-\widehat{T}_j}\overset{T^\ast_j-1}{\underset{r=\widehat{T}_j}{\sum}}\Big[\frac{1}{2}\Omega^\ast_{j} X_rX^\top_r+ \frac{1}{2}X_rX^\top_r\Omega^\ast_{j}-I_p\Big]\|_F$} \nonumber\\
& & + (\gamma^{\min}_{3,T,j})^{-1}\|\frac{1}{\widehat{T}_{j+1}-T^\ast_j}\overset{\widehat{T}_{j+1}-1}{\underset{r=T^\ast_j}{\sum}}\Big[\frac{1}{2}\Omega^\ast_{j} X_rX^\top_r+ \frac{1}{2}X_rX^\top_r\Omega^\ast_{j}-I_p\Big]\|_F\Big\}. \nonumber
\end{eqnarray}
Therefore, by the triangle inequality, (\ref{bound_param_1}) and (\ref{bound_param_2}) imply that the event $E_{T,j}$ holds with probability one. Hence:
\begin{eqnarray}
\lefteqn{\sum^{m^\ast}_{j=1} \Pb(A^+_{t,j} \cap \big\{\widehat{T}_{j+1}-T^\ast_j \geq \frac{1}{2}\mathcal{I}_{\min}\big\}\cap D^{(m)}_T)}\nonumber\\
& = & \sum^{m^\ast}_{j=1} \Pb(E_{T,j}\cap A^+_{t,j} \cap \big\{\widehat{T}_{j+1}-T^\ast_j \geq \frac{1}{2}\mathcal{I}_{\min}\big\}\cap D^{(m)}_T)\nonumber\\
& \leq & \sum^{m^\ast}_{j=1} \Pb(E_{T,j}\cap \big\{T^\ast_j - \widehat{T}_j> T \delta_T\big\} \cap  \big\{\widehat{T}_{j+1}-T^\ast_j \geq \frac{1}{2}\mathcal{I}_{\min}\big\}) \nonumber\\
& \leq & \resizebox{0.92\linewidth}{!}{$\displaystyle\sum^{m^\ast}_{j=1} \Pb(\gamma^{\min}_{1,T,j})^{-1}\Big[\frac{2\lambda_2}{\delta_T}+\lambda_1T\sqrt{p(p-1)}\Big] + (\gamma^{\min}_{3,T,j})^{-1}\Big[\frac{4\lambda_2T}{\mathcal{I}_{\min}}+\lambda_1T\sqrt{p(p-1)}\Big] \geq \|\Omega^\ast_{j+1}-\Omega^\ast_j\|_F/3)$} \nonumber\\
& & \resizebox{0.92\linewidth}{!}{$\displaystyle+ \sum^{m^\ast}_{j=1} \Pb(\Big\{(\gamma^{\min}_{1,T,j})^{-1}\|\frac{1}{T^\ast_j-\widehat{T}_j}\overset{T^\ast_j-1}{\underset{r=\widehat{T}_j}{\sum}}\Big[\frac{1}{2}\Omega^\ast_{j} X_rX^\top_r+ \frac{1}{2}X_rX^\top_r\Omega^\ast_{j}-I_p\Big]\|_F \geq \|\Omega^\ast_{j+1}-\Omega^\ast_j\|_F/3\Big\}\cap \big\{T^\ast_j-\widehat{T}_j > T \delta_T\big\})$} \nonumber\\
& & \resizebox{0.92\linewidth}{!}{$\displaystyle+ \sum^{m^\ast}_{j=1} \Pb(\Big\{(\gamma^{\min}_{3,T,j})^{-1}\|\frac{1}{\widehat{T}_{j+1}-T^\ast_j}\overset{\widehat{T}_{j+1}-1}{\underset{r=T^\ast_j}{\sum}}\Big[\frac{1}{2}\Omega^\ast_{j} X_rX^\top_r+ \frac{1}{2}X_rX^\top_r\Omega^\ast_{j}-I_p\Big]\|_F \geq \|\Omega^\ast_{j+1}-\Omega^\ast_j\|_F/3\Big\}$} \nonumber\\
& & \qquad \quad \:\: \cap \big\{\widehat{T}_{j+1}-T^\ast_j \geq \mathcal{I}_{\min}/2\big\}). \label{bound_A+}
\end{eqnarray}
The first term in (\ref{bound_A+}) tends to zero under $\lambda_2/(\eta_{\min}\delta_T)\rightarrow 0$, $\lambda_2T/(\mathcal{I}_{\min}\eta_{\min})\rightarrow 0$, and $\lambda_1Tp/\eta_{\min}\rightarrow 0$. Moreover, note that
\begin{equation*}
\|\frac{1}{T^\ast_j-\widehat{T}_j}\overset{T^\ast_j-1}{\underset{r=\widehat{T}_j}{\sum}}\Big[\frac{1}{2}\Omega^\ast_{j} X_rX^\top_r+ \frac{1}{2}X_rX^\top_r\Omega^\ast_{j}-I_p\Big]\|_F = O_p(s^\ast_{\max}\,p\sqrt{\frac{\log(pT)}{T\delta_T}}) = o_p(\eta_{\min}),
\end{equation*}
and
\begin{equation*}
\|\frac{1}{\widehat{T}_{j+1}-T^\ast_j}\overset{\widehat{T}_{j+1}-1}{\underset{r=T^\ast_j}{\sum}}\Big[\frac{1}{2}\Omega^\ast_{j} X_rX^\top_r+ \frac{1}{2}X_rX^\top_r\Omega^\ast_{j}-I_p\Big]\|_F = O_p(s^\ast_{\max}\,p\sqrt{\frac{\log(pT)}{\mathcal{I}_{\min}}}) = o_p(\eta_{\min}),
\end{equation*}
under Assumption~3-(ii)-(iii). In the same manner, we can show that the second term in (\ref{control_Dm}) tends to zero.

We now consider $\sum^{m^\ast}_{j=1}\Pb(A^+_{T,j} \cap D^{(l)}_T)$. The probability of the event $A^+_{T,j} \cap D^{(l)}_T$ is upper bounded by:
\begin{equation*}
\Pb(D^{(l)}_T)\leq \sum^{m^\ast}_{j=1}2^{j-1}\Pb(\max(l \in \{1,\ldots,m^\ast\}:\widehat{T}_l \leq T^\ast_{l-1})=j).
\end{equation*}
Now $\max(l \in \{1,\ldots,m^\ast\}:\widehat{T}_l \leq T^\ast_{l-1})=j$ implies $\widehat{T}_j\leq T^\ast_{j-1}$ and $\widehat{T}_{l+1}>T^\ast_l$ for any $j \leq l \leq m^\ast$ and:
\begin{equation*}
\big\{\max(l \in \{1,\ldots,m^\ast\}:\widehat{T}_l \leq T^\ast_{l-1})=j\big\}\subset \overset{m^\ast-1}{\underset{k=j}{\cup}} \Big(\big\{T^\ast_k-\widehat{T}_k\geq \mathcal{I}_{\min}/2\big\}\cap\big\{\widehat{T}_{k+1}-T^\ast_k\geq \mathcal{I}_{\min}/2\big\}\Big).
\end{equation*}
Therefore:
\begin{eqnarray}
\lefteqn{\sum^{m^\ast}_{j=1}\Pb(A^+_{T,j} \cap D^{(l)}_T)}\label{control_Dl}\\
  & \leq & \resizebox{0.92\linewidth}{!}{$\displaystyle m^\ast \overset{m^\ast-1}{\underset{j=1}{\sum}}2^{j-1}\overset{m^\ast-1}{\underset{k=j}{\sum}} \Pb\Big(\big\{T^\ast_k-\widehat{T}_k\geq \mathcal{I}_{\min}/2\big\}\cap\big\{\widehat{T}_{k+1}-T^\ast_k\geq \mathcal{I}_{\min}/2\big\}\Big) + m^\ast2^{m^\ast-1}\Pb(T^\ast_{m^\ast}-\widehat{T}_{m^\ast}\geq \mathcal{I}_{\min}/2).$}\nonumber
\end{eqnarray}
First, we consider the second term of the right-hand side of (\ref{control_Dl}). Let $j=m^\ast$ in (\ref{E_event}), then $E_{T,m^\ast}$ holds with probability one. Therefore:
\begin{eqnarray*}
\lefteqn{m^\ast2^{m^\ast-1}\Pb(T^\ast_{m^\ast}-\widehat{T}_{m^\ast}\geq \mathcal{I}_{\min}/2)=m^\ast2^{m^\ast-1}\Pb(E_{T,m^\ast}\cap\big\{T^\ast_{m^\ast}-\widehat{T}_{m^\ast}\geq \mathcal{I}_{\min}/2\big\})}\\
  & \leq & \resizebox{0.92\linewidth}{!}{$\displaystyle m^\ast2^{m^\ast-1}\Pb(\gamma^{\min}_{1,T,m^\ast})^{-1}\Big[\frac{2\lambda_2}{\delta_T}+\lambda_1T\sqrt{p(p-1)}\Big] + (\gamma^{\min}_{3,T,m^\ast})^{-1}\Big[\frac{4\lambda_2T}{\mathcal{I}_{\min}}+\lambda_1T\sqrt{p(p-1)}\Big]\geq \|\Omega^\ast_{m^\ast+1}-\Omega^\ast_{m^\ast}\|_F/3)$} \\
  & & \resizebox{0.92\linewidth}{!}{$\displaystyle+ m^\ast2^{m^\ast-1} \Pb(\gamma^{\min}_{1,T,m^\ast})^{-1}\|\frac{1}{T^\ast_{m^\ast}\!-\!\widehat{T}_{m^\ast}}\overset{T^\ast_{m^\ast}-1}{\underset{r=\widehat{T}_{m^\ast}}{\sum}}\!\Big[\frac{1}{2}\Omega^\ast_{m^\ast} X_rX^\top_r\!+\! \frac{1}{2}X_rX^\top_r\Omega^\ast_{m^\ast}\!-\!I_p\Big]\|_F \!\geq\! \|\Omega^\ast_{m^\ast+1}\!-\!\Omega^\ast_{m^\ast}\|_F/3,T^\ast_{m^\ast}\!-\!\widehat{T}_{m^\ast} \!\geq\! \mathcal{I}_{\min}/2)$} \\
  & & \resizebox{0.92\linewidth}{!}{$\displaystyle+ m^\ast2^{m^\ast-1}\Pb(\gamma^{\min}_{3,T,m^\ast})^{-1}\|\frac{1}{T-T^\ast_{m^\ast}}\overset{T}{\underset{r=T^\ast_{m^\ast}}{\sum}}\Big[\frac{1}{2}\Omega^\ast_{m^\ast} X_rX^\top_r+ \frac{1}{2}X_rX^\top_r\Omega^\ast_{m^\ast}-I_p\Big]\|_F \geq \|\Omega^\ast_{m^\ast+1}-\Omega^\ast_{m^\ast}\|_F/3).$}
\end{eqnarray*}
Since $m^\ast2^{m^\ast-1} = O(T\log(T)$, then $\log(m^\ast2^{m^\ast-1}) = O(\log(T^{1+\eps/2})$. So under the conditions $(\sqrt{T\delta_T}\eta_{\min})^{-1}s^\ast_{\max}\,p\sqrt{\log(pT)} \rightarrow 0, (\mathcal{I}^{1/2}_{\min}\eta_{\min})^{-1}s^\ast_{\max}\,p\sqrt{\log(pT)} \rightarrow 0$, the right-hand side of the previous inequality converges to zero. As for the first term of (\ref{control_Dl}), applying $j=k$ in (\ref{E_event}):
\begin{eqnarray*}
\lefteqn{m^\ast \overset{m^\ast-1}{\underset{j=1}{\sum}}2^{j-1}\overset{m^\ast-1}{\underset{k=j}{\sum}} \Pb\Big(\big\{T^\ast_k-\widehat{T}_k\geq \mathcal{I}_{\min}/2\big\}\cap\big\{\widehat{T}_{k+1}-T^\ast_k\geq \mathcal{I}_{\min}/2\big\}\Big)}\\
& \leq & m^\ast 2^{m^\ast-1}\overset{m^\ast-1}{\underset{k=1}{\sum}} \Pb\Big(E_{T,k}\cap\big\{T^\ast_k-\widehat{T}_k\geq \mathcal{I}_{\min}/2\big\}\cap\big\{\widehat{T}_{k+1}-T^\ast_k\geq \mathcal{I}_{\min}/2\big\}\Big)\\
& \leq & \resizebox{0.92\linewidth}{!}{$\displaystyle m^\ast 2^{m^\ast-1}\overset{m^\ast-1}{\underset{k=1}{\sum}} \Big\{\Pb(\gamma^{\min}_{1,T,k})^{-1}\Big[\frac{2\lambda_2}{\delta_T}+\lambda_1T\sqrt{p(p-1)}\Big] + (\gamma^{\min}_{3,T,k})^{-1}\Big[\frac{4\lambda_2T}{\mathcal{I}_{\min}}+\lambda_1T\sqrt{p(p-1)}\Big]\geq \|\Omega^\ast_{k+1}-\Omega^\ast_{k}\|_F/3)$} \\
& & \resizebox{0.92\linewidth}{!}{$\displaystyle+ \Pb(\gamma^{\min}_{1,T,k})^{-1}\|\frac{1}{T^\ast_{k}-\widehat{T}_{k}}\overset{T^\ast_{k}-1}{\underset{r=\widehat{T}_{k}}{\sum}}\Big[\frac{1}{2}\Omega^\ast_{k} X_rX^\top_r+ \frac{1}{2}X_rX^\top_r\Omega^\ast_{k}-I_p\Big]\|_F \geq \|\Omega^\ast_{k+1}-\Omega^\ast_{k}\|_F/3,T^\ast_{k}-\widehat{T}_{k} \geq \mathcal{I}_{\min}/2)$} \\
&& \resizebox{0.92\linewidth}{!}{$\displaystyle+ \Pb(\gamma^{\min}_{3,T,k})^{-1}\|\frac{1}{\widehat{T}_{k+1}-T^\ast_{k}}\overset{\widehat{T}_{k+1}-1}{\underset{r=T^\ast_{k}}{\sum}}\Big[\frac{1}{2}\Omega^\ast_{k} X_rX^\top_r+ \frac{1}{2}X_rX^\top_r\Omega^\ast_{k}-I_p\Big]\|_F \geq \|\Omega^\ast_{k+1}-\Omega^\ast_{k}\|_F/3,\widehat{T}_{k+1}-T^\ast_{k}\geq \mathcal{I}_{\min}/2)\Big\}.$}
\end{eqnarray*}
The right-hand side of the last inequality converges to zero under the same conditions. Finally, we can prove that $\sum^{m^\ast}_{j=1}\Pb(A^+_{T,j} \cap D^{(r)}_T) \rightarrow 0$.

\vspace*{0.3cm}

\noindent\textbf{\emph{Proof of point (ii).}}\\
\noindent By point (i) and under Assumption~3-(ii), for any $j=1,\ldots,m^\ast$, $|\widehat{T}_j-T^\ast_j|=O_p(T\delta_T)$, which is $|\widehat{T}_j-T^\ast_j| = o_p(\mathcal{I}_{\min})$ under Assumption~3-(ii). Hence, $(T^\ast_{j-1}+T^\ast_j)/2 < \widehat{T}_j < T^\ast_j$ or $T^\ast_j \leq \widehat{T}_j < (T^\ast_j + T^\ast_{j+1})/2$ is satisfied for any $j$. Set $l = 1,\ldots,m^\ast$ and assume $(T^\ast_{l-1}+T^\ast_l)/2 < \widehat{T}_l < T^\ast_l$ and consider two cases: (ii-a) $(T^\ast_{l}+T^\ast_{l+1})/2 < \widehat{T}_{l+1} < T^\ast_{l+1}$ and (ii-b) $T^\ast_{l+1} \leq \widehat{T}_{l+1}$. In case (ii-a), by Lemma \ref{optimality_cond} with change-points $t = \widehat{T}_l$ and $t = \widehat{T}_{l+1}$:
\begin{eqnarray*}
\lefteqn{2\lambda_2\geq \|\frac{1}{T}\overset{T}{\underset{r = \widehat{T}_{l}}{\sum}} \Big[\Lambda(X_rX^\top_r)\text{vec}(\widehat{\Theta}_r)-\text{vec}(I_p)\Big]}\\
& & -\frac{1}{T}\overset{T}{\underset{r = \widehat{T}_{l+1}}{\sum}} \Big[\Lambda(X_rX^\top_r)\text{vec}(\widehat{\Theta}_r)-\text{vec}(I_p)\Big] + \text{vec}\big(\lambda_1\overset{T}{\underset{r=\widehat{T}_l}{\sum}} \widehat{E}_{1r}-\lambda_1\overset{T}{\underset{r=\widehat{T}_{l+1}}{\sum}} \widehat{E}_{1r}\big)\|_2\\
& = & \|\frac{1}{T}\overset{\widehat{T}_{l+1}-1}{\underset{r = \widehat{T}_l}{\sum}} \Big[\Lambda(X_rX^\top_r)\text{vec}(\widehat{\Theta}_r)-\text{vec}(I_p)\Big] + \text{vec}\big(\lambda_1\overset{\widehat{T}_{l+1}-1}{\underset{r=\widehat{T}_l}{\sum}} \widehat{E}_{1r}\big)\|_2.
\end{eqnarray*}
Therefore, we deduce
{\small{\begin{eqnarray*}
\lefteqn{2\lambda_2+\lambda_1\sqrt{p(p-1)}(\widehat{T}_{l+1}-\widehat{T}_{l})}\\
&\geq & \|\frac{1}{T}\overset{T^\ast_{l}-1}{\underset{r = \widehat{T}_{l}}{\sum}} \Big[\Lambda(X_rX^\top_r)\text{vec}(\widehat{\Theta}_r)-\text{vec}(I_p)\Big] + \frac{1}{T}\overset{\widehat{T}_{l+1}-1}{\underset{r = T^\ast_{l}}{\sum}} \Big[\Lambda(X_rX^\top_r)\text{vec}(\widehat{\Theta}_r)-\text{vec}(I_p)\Big]\Big]\|_2\\
& = & \resizebox{0.92\linewidth}{!}{$\displaystyle\|\frac{1}{T}\overset{T^\ast_{l}-1}{\underset{r = \widehat{T}_{l}}{\sum}} \Big[\Lambda(X_rX^\top_r)\text{vec}(\widehat{\Omega}_{l+1}-\Omega^\ast_l+\Omega^\ast_l)-\text{vec}(I_p)\Big] + \frac{1}{T}\overset{\widehat{T}_{l+1}-1}{\underset{r = T^\ast_{l}}{\sum}} \Big[\Lambda(X_rX^\top_r)\text{vec}(\widehat{\Omega}_{l+1}-\Omega^\ast_{l+1}+\Omega^\ast_{l+1})-\text{vec}(I_p)\Big]\Big]\|_2$} \\
& \geq & \resizebox{0.92\linewidth}{!}{$\displaystyle\|\frac{1}{T}\overset{\widehat{T}_{l+1}-1}{\underset{r = T^\ast_{l}}{\sum}} \Big[\Lambda(X_rX^\top_r)\text{vec}(\widehat{\Omega}_{l+1}-\Omega^\ast_{l+1}+\Omega^\ast_{l+1})-\text{vec}(I_p)\Big]\Big]\|_2 -  \|\frac{1}{T}\overset{T^\ast_{l}-1}{\underset{r = \widehat{T}_{l}}{\sum}} \Big[\Lambda(X_rX^\top_r)\text{vec}(\widehat{\Omega}_{l+1}-\Omega^\ast_l+\Omega^\ast_l)-\text{vec}(I_p)\Big]\|_2$} \\
& \geq & \frac{\widehat{T}_{l+1}-T^\ast_l}{T}\Big\{\|\frac{1}{\widehat{T}_{l+1}-T^\ast_l}\overset{\widehat{T}_{l+1}-1}{\underset{r = T^\ast_{l}}{\sum}} \Lambda(X_rX^\top_r)\text{vec}(\widehat{\Omega}_{l+1}-\Omega^\ast_{l+1})\|_2 \\
& & - \|\frac{1}{\widehat{T}_{l+1}-T^\ast_l}\overset{\widehat{T}_{l+1}-1}{\underset{r = T^\ast_{l}}{\sum}}\Big[\frac{1}{2}\Omega^\ast_{l+1}X_rX^\top_r+\frac{1}{2}X_rX^\top_r\Omega^\ast_{l+1}-I_p\Big]\|_F\Big\} \\
& & - \frac{T^\ast_l-\widehat{T}_l}{T}\|\frac{1}{T^\ast_l-\widehat{T}_l}\overset{T^\ast_{l}-1}{\underset{r = \widehat{T}_{l}}{\sum}} \Big[\Lambda(X_rX^\top_r)\text{vec}(\widehat{\Omega}_{l+1}-\Omega^\ast_l+\Omega^\ast_l)-\text{vec}(I_p)\Big]\|_2.
\end{eqnarray*}}}
Therefore, since $\|Ax\|_2 \geq \lambda_{\min}(A)\|x\|_2$ for $A\succeq 0$, using part (i) of Theorem~1 and the bound on $\|\widehat{\Omega}_{l+1}-\Omega^\ast_l\|_F$, we obtain
\begin{eqnarray*}
\lefteqn{2\lambda_2+\lambda_1\sqrt{p(p-1)}(\widehat{T}_{l+1}-\widehat{T}_{l})}\\
  & \geq & \resizebox{0.92\linewidth}{!}{$\displaystyle\frac{\widehat{T}_{l+1}-T^\ast_l}{T} \Big\{\gamma^{\min}_{T,l}\|\widehat{\Omega}_{l+1}-\Omega^\ast_{l+1}\|_F-\|\frac{1}{\widehat{T}_{l+1}-T^\ast_l}\overset{\widehat{T}_{l+1}-1}{\underset{r = T^\ast_{l}}{\sum}}\Big[\frac{1}{2}\Omega^\ast_{l+1}X_rX^\top_r+\frac{1}{2}X_rX^\top_r\Omega^\ast_{l+1}-I_p\Big]\|_F\Big\}$} \\
& & - O_p(\frac{T^\ast_l-\widehat{T}_l}{T}),
\end{eqnarray*}
where $\gamma^{\min}_{T,l} = \lambda_{\min}(\frac{1}{\widehat{T}_{l+1}-T^\ast_l}\overset{\widehat{T}_{l+1}-1}{\underset{r=T^\ast_l}{\sum}}X_rX^\top_r) \geq \underline{\mu}/2$ with probability tending to one. We deduce
\begin{eqnarray*}
\lefteqn{2\lambda_2+\lambda_1\sqrt{p(p-1)}(\widehat{T}_{l+1}-\widehat{T}_{l})}\\
&\geq & \frac{\widehat{T}_{l+1}-T^\ast_l}{T} \Big\{\gamma^{\min}_{T,l}\|\widehat{\Omega}_{l+1}-\Omega^\ast_{l+1}\|_F-O_p( s^\ast_{\max}\,p\sqrt{\frac{\log(pT)}{I^\ast_{l+1}}})\Big\}- O_p(\frac{T^\ast_l-\widehat{T}_l}{T}).
\end{eqnarray*}
As a consequence, it can be deduced that
\begin{equation}\label{bound_prob_regime}
\|\widehat{\Omega}_{l+1}-\Omega^\ast_{l+1}\|_F=O_p(\frac{\lambda_2T}{I^\ast_{l+1}}+\lambda_1Tp(1+\frac{T\delta_T}{I^\ast_{l+1}})+\frac{T\delta_T}{I^\ast_{l+1}} + s^\ast_{\max}\,p\sqrt{\frac{\log(pT)}{I^\ast_{l+1}}}).
\end{equation}
In case (ii-b), by Lemma \ref{optimality_cond}, with change-points $t = \widehat{T}_l$ and $t = \widehat{T}_{l+1}$, we have
\begin{eqnarray*}
\lefteqn{2\lambda_2 + \lambda_1\sqrt{p(p-1)}(\widehat{T}_{l+1}-\widehat{T}_l) \geq \|\frac{1}{T}\overset{\widehat{T}_{l+1}-1}{\underset{r = \widehat{T}_l}{\sum}} \Big[\Lambda(X_rX^\top_r)\text{vec}(\widehat{\Theta}_r)-\text{vec}(I_p)\Big]\|_2}\\
& = & \|\frac{1}{T}\overset{T^\ast_{l}-1}{\underset{r = \widehat{T}_l}{\sum}} \Big[\Lambda(X_rX^\top_r)\text{vec}(\widehat{\Theta}_r)-\text{vec}(I_p)\Big] + \frac{1}{T}\overset{T^\ast_{l+1}-1}{\underset{r = T^\ast_l}{\sum}} \Big[\Lambda(X_rX^\top_r)\text{vec}(\widehat{\Theta}_r)-\text{vec}(I_p)\Big]\\
& & + \frac{1}{T}\overset{\widehat{T}_{l+1}-1}{\underset{r = T^\ast_{l+1}}{\sum}} \Big[\Lambda(X_rX^\top_r)\text{vec}(\widehat{\Theta}_r)-\text{vec}(I_p)\Big]\|_2 \\
& = & \resizebox{0.92\linewidth}{!}{$\displaystyle\|\frac{1}{T}\overset{T^\ast_{l}-1}{\underset{r = \widehat{T}_l}{\sum}} \Big[\Lambda(X_rX^\top_r)\text{vec}(\widehat{\Omega}_{l+1}-\Omega^\ast_l+\Omega^\ast_l)-\text{vec}(I_p)\Big] + \frac{1}{T}\overset{T^\ast_{l+1}-1}{\underset{r = T^\ast_l}{\sum}} \Big[\Lambda(X_rX^\top_r)\text{vec}(\widehat{\Omega}_{l+1}-\Omega^\ast_{l+1}+\Omega^\ast_{l+1})-\text{vec}(I_p)\Big]$} \\
& & + \frac{1}{T}\overset{\widehat{T}_{l+1}-1}{\underset{r = T^\ast_{l+1}}{\sum}} \Big[\Lambda(X_rX^\top_r)\text{vec}(\widehat{\Omega}_{l+1}-\Omega^\ast_{l+2}+\Omega^\ast_{l+2})-\text{vec}(I_p)\Big]\|_2\\
& \geq & \resizebox{0.92\linewidth}{!}{$\displaystyle\|\frac{1}{T}\overset{T^\ast_{l+1}-1}{\underset{r = T^\ast_l}{\sum}} \Big[\Lambda(X_rX^\top_r)\text{vec}(\widehat{\Omega}_{l+1}-\Omega^\ast_{l+1}+\Omega^\ast_{l+1})-\text{vec}(I_p)\Big]\|_2 - \|\frac{1}{T}\overset{T^\ast_{l}-1}{\underset{r = \widehat{T}_l}{\sum}} \Big[\Lambda(X_rX^\top_r)\text{vec}(\widehat{\Omega}_{l+1}-\Omega^\ast_l+\Omega^\ast_l)-\text{vec}(I_p)\Big]\|_2$} \\
& & - \|\frac{1}{T}\overset{\widehat{T}_{l+1}-1}{\underset{r = T^\ast_{l+1}}{\sum}} \Big[\Lambda(X_rX^\top_r)\text{vec}(\widehat{\Omega}_{l+1}-\Omega^\ast_{l+2}+\Omega^\ast_{l+2})-\text{vec}(I_p)\Big]\|_2.
\end{eqnarray*}
With $\overline{\gamma}^{\min}_{T,l} = \lambda_{\min}(\frac{1}{I_{\min}}\overset{T^\ast_{l+1}-1}{\underset{r=T^\ast_l}{\sum}}X_rX^\top_r) \geq \underline{\mu}/2$ with probability tending to one, we deduce
\begin{eqnarray*}
\lefteqn{2\lambda_2+\lambda_1\sqrt{p(p-1)}(\widehat{T}_{l+1}-\widehat{T}_{l})}\\
  &\geq & \resizebox{0.92\linewidth}{!}{$\displaystyle\frac{I^\ast_{l+1}}{T} \Big\{\overline{\gamma}^{\min}_{T,l}\|\widehat{\Omega}_{l+1}-\Omega^\ast_{l+1}\|_F-O_p( s^\ast_{\max}\,p\sqrt{\frac{\log(pT)}{I^\ast_{l+1}}})\Big\} - O_p(\frac{T^\ast_l-\widehat{T}_l}{T})-O_p(\frac{\widehat{T}_{l+1}-T^\ast_{l+1}}{T}).$}
\end{eqnarray*}
Hence, (\ref{bound_prob_regime}) holds. Using similar arguments, we can show that the latter is satisfied when $T^\ast_l \leq \widehat{T}_l < (T^\ast_l+T^\ast_{l+1})/2$.

\subsection{Proof of Theorem~3}

Using the result of Theorem~1,  we aim to show that:
\begin{equation}\label{proba_bound}
\Pb\big(\{h(\widehat{\Tc}_{\widehat{m}},\Tc^\ast_{m^\ast})> T\delta_T\}\cap \{m^\ast < \widehat{m} \leq m_{\max}\}\big) \rightarrow 0 \;\; \text{as} \;\; T \rightarrow \infty.
\end{equation}
To so, we define:
\begin{equation*}
\begin{array}{llll}
L_{m,k,1} = \big\{\forall 1 \leq l \leq m, |\widehat{T}_l - T^\ast_k|>T\delta_T \; \text{and} \; \widehat{T}_l < T^\ast_k\big\},&&\\
L_{m,k,2} = \big\{\forall 1 \leq l \leq m, |\widehat{T}_l - T^\ast_k|>T\delta_T \; \text{and} \; \widehat{T}_l > T^\ast_k\big\},&&\\
L_{m,k,3} = \big\{\exists 1 \leq l \leq m-1, |\widehat{T}_l - T^\ast_k|>T\delta_T, |\widehat{T}_{l+1} - T^\ast_k|>T\delta_T \; \text{and} \; \widehat{T}_l < T^\ast_k < \widehat{T}_{l+1}\big\}.&&
\end{array}
\end{equation*}
The probability (\ref{proba_bound}) can be bounded as:
\begin{eqnarray*}
\lefteqn{\Pb\big(\{h(\widehat{\Tc}_{\widehat{m}},\Tc^\ast_{m^\ast})> T\delta_T\}\cap \{m^\ast < \widehat{m} \leq m_{\max}\}\big) \leq \overset{m_{\max}}{\underset{m=m^\ast+1}{\sum}} \Pb\big(h(\widehat{\Tc}_{\widehat{m}},\Tc^\ast_{m^\ast})> T\delta_T\big)}\\
  & \leq & \resizebox{0.92\linewidth}{!}{$\displaystyle\overset{m_{\max}}{\underset{m=m^\ast+1}{\sum}} \overset{m^\ast}{\underset{k=1}{\sum}}\Pb\big(\forall l \in \{1,\ldots,m\}, |\widehat{T}_l-T^\ast_k|>T\delta_T\big) = \overset{m_{\max}}{\underset{m=m^\ast+1}{\sum}} \overset{m^\ast}{\underset{k=1}{\sum}} \Big[\Pb\big(L_{m,k,1}\big)+\Pb\big(L_{m,k,2}\big)+\Pb\big(L_{m,k,3}\big)\Big].$}
\end{eqnarray*}
We first focus on $\sum^{m_{\max}}_{m=m^\ast+1} \sum^{m^\ast}_{k=1}\Pb\big(L_{m,k,1}\big)$, which can be expressed as:
\begin{equation*}
\Pb\big(L_{m,k,1}\big) = \Pb\big(L_{m,k,1}\cap\{\widehat{T}_m>T^\ast_{k-1}\}\big)+\Pb\big(L_{m,k,1}\cap\{\widehat{T}_m\leq T^\ast_{k-1}\}\big).
\end{equation*}
By Lemma \ref{optimality_cond} with change-points $t = \widehat{T}_m$ and $t = T^\ast_k$, given the case $T^\ast_k\geq \widehat{T}_m > T^\ast_{k-1}$:
\begin{eqnarray*}
\frac{1}{T}\overset{T}{\underset{r=\widehat{T}_m}{\sum}}\Big[\Lambda(X_rX^\top_r)\text{vec}(\widehat{\Theta}_r)-\text{vec}(I_p)\Big] + \text{vec}\big(\lambda_1\overset{T}{\underset{r=\widehat{T}_m}{\sum}} \widehat{E}_{1r}+ \lambda_2 \frac{\widehat{\Gamma}_{\widehat{T}_m}}{\|\widehat{\Gamma}_{\widehat{T}_m}\|_F}\big)= \mathbf{0}_{p^2 \times 1},
\end{eqnarray*}
and
\begin{eqnarray*}
\|\frac{1}{T}\overset{T}{\underset{r=T^\ast_k}{\sum}}\Big[\Lambda(X_rX^\top_r)\text{vec}(\widehat{\Theta}_r)-\text{vec}(I_p)\Big] + \text{vec}\big(\lambda_1\overset{T}{\underset{r=T^\ast_k}{\sum}} \widehat{E}_{1r}\big) \|_2 \leq  \lambda_2.
\end{eqnarray*}
Therefore, taking the differences, we get:
\begin{eqnarray*}
\lefteqn{2 \lambda_2 + \lambda_1\sqrt{p(p-1)}(T^\ast_k - \widehat{T}_m) \geq \|\frac{1}{T}\overset{T^\ast_k-1}{\underset{r=\widehat{T}_m}{\sum}}\Big[\Lambda(X_rX^\top_r)\text{vec}(\widehat{\Theta}_r)-\text{vec}(I_p)\Big] \|_2}\\
& \geq & \|\frac{1}{T}\overset{T^\ast_k-1}{\underset{r=\widehat{T}_m}{\sum}}\Lambda(X_rX^\top_r)\text{vec}(\widehat{\Omega}_{m+1} - \Omega^\ast_{k+1}) + \frac{1}{T}\overset{T^\ast_k-1}{\underset{r=\widehat{T}_m}{\sum}}\Lambda(X_rX^\top_r)\text{vec}(\Omega^\ast_{k+1} - \Omega^\ast_k) \\
& & + \frac{1}{T}\overset{T^\ast_k-1}{\underset{r=\widehat{T}_m}{\sum}}\Big[\Lambda(X_rX^\top_r)\text{vec}(\Omega^\ast_k)-\text{vec}(I_p)\Big] \|_2.
\end{eqnarray*}
Therefore, the event $\Bc_T$ defined as
\begin{eqnarray*}
\lefteqn{\Bc_T \coloneqq  \Big\{\|\Omega^\ast_{k+1}-\Omega^\ast_k\|_F \leq (\gamma^{\min}_{4,T,m,k})^{-1}\Big[\frac{2\lambda_2T}{T^\ast_k-\widehat{T}_m}+\lambda_1T\sqrt{p(p-1)}}\\
  && \resizebox{0.92\linewidth}{!}{$\displaystyle+\|\frac{1}{T^\ast_k-\widehat{T}_m}\overset{T^\ast_k-1}{\underset{r=\widehat{T}_m}{\sum}}\Lambda(X_rX^\top_r)\text{vec}(\widehat{\Omega}_{m+1} - \Omega^\ast_{k+1})\|_2+\|\frac{1}{T^\ast_k-\widehat{T}_m}\overset{T^\ast_k-1}{\underset{r=\widehat{T}_m}{\sum}}\Big[\Lambda(X_rX^\top_r)\text{vec}(\Omega^\ast_k)-\text{vec}(I_p)\Big] \|_2\Big] \Big\},$}
\end{eqnarray*}
where $\gamma^{\min}_{4,T,m,k}=\lambda_{\min}(\frac{1}{T^\ast_k-\widehat{T}_{m}}\sum^{T^\ast_k-1}_{r=\widehat{T}_{m}}X_rX^\top_r) \geq \underline{\mu}/2$ with probability tending to one, holds with probability one. Hence, we deduce
\begin{eqnarray*}
\resizebox{\linewidth}{!}{$\displaystyle\overset{m_{\max}}{\underset{m=m^\ast+1}{\sum}} \overset{m^\ast}{\underset{k=1}{\sum}} \Pb\big(L_{m,k,1} \cap \{\widehat{T}_m>T^\ast_{k-1}\}\big) = \overset{m_{\max}}{\underset{m=m^\ast+1}{\sum}} \overset{m^\ast}{\underset{k=1}{\sum}} \Pb\big(\Bc_T \cap L_{m,k,1} \cap \{\widehat{T}_m>T^\ast_{k-1}\}\big) \leq M_{1,1}+M_{1,2}+M_{1,3},$}
\end{eqnarray*}
with
\begin{eqnarray*}
M_{1,1} \coloneqq \overset{m_{\max}}{\underset{m=m^\ast+1}{\sum}} \overset{m^\ast}{\underset{k=1}{\sum}}\Pb\big(\|\Omega^\ast_{k+1}-\Omega^\ast_k\|_F/3\leq(\gamma^{\min}_{4,T,m,k})^{-1}\Big[2\lambda_2\delta^{-1}_T+\lambda_1T\sqrt{p(p-1)}\Big]\big),
\end{eqnarray*}
$$
\resizebox{\linewidth}{!}{$\displaystyle M_{1,2} \!\coloneqq\! \overset{m_{\max}}{\underset{m=m^\ast+1}{\sum}} \overset{m^\ast}{\underset{k=1}{\sum}} \Pb\big(T^\ast_k\!-\!\widehat{T}_m\!>\!T\delta_T,\|\Omega^\ast_{k+1}\!-\!\Omega^\ast_k\|_F/3 \!\leq\! (\gamma^{\min}_{4,T,m,k})^{-1} \|\frac{1}{T^\ast_k-\widehat{T}_m}\overset{T^\ast_k-1}{\underset{r=\widehat{T}_m}{\sum}}\!\Lambda(X_rX^\top_r)\text{vec}(\widehat{\Omega}_{m+1} \!-\! \Omega^\ast_{k+1})\|_2\big),$}
$$
$$
\resizebox{\linewidth}{!}{$\displaystyle M_{1,3} \!\coloneqq\! \overset{m_{\max}}{\underset{m=m^\ast+1}{\sum}} \overset{m^\ast}{\underset{k=1}{\sum}} \Pb\big(T^\ast_k\!-\!\widehat{T}_m\!>\!T\delta_T,\|\Omega^\ast_{k+1}\!-\!\Omega^\ast_k\|_F/3 \!\leq\! (\gamma^{\min}_{4,T,m,k})^{-1} \|\frac{1}{T^\ast_k-\widehat{T}_m}\overset{T^\ast_k-1}{\underset{r=\widehat{T}_m}{\sum}}\!\Big[\Lambda(X_rX^\top_r)\text{vec}(\Omega^\ast_k)\!-\!\text{vec}(I_p)\Big] \|_2\big).$}
$$
In the same vein as in the analysis of (\ref{bound_A+}), we can show that $M_{1,1}, M_{1,3} \rightarrow 0$ as $T \rightarrow \infty$. $M_{1,2}$ requires more arguments. By Lemma (\ref{optimality_cond}), with change-points $t = T^\ast_k$ and $t = T^\ast_{k+1}$:
\begin{eqnarray*}
\|\frac{1}{T}\overset{T}{\underset{r=T^\ast_k}{\sum}}\Big[\Lambda(X_rX^\top_r)\text{vec}(\widehat{\Omega}_{m+1})-\text{vec}(I_p)\Big] + \text{vec}\big(\lambda_1\overset{T}{\underset{r=T^\ast_k}{\sum}} \widehat{E}_{1r}\big) \|_2 \leq  \lambda_2,
\end{eqnarray*}
and
\begin{eqnarray*}
\|\frac{1}{T}\overset{T}{\underset{r=T^\ast_{k+1}}{\sum}}\Big[\Lambda(X_rX^\top_r)\text{vec}(\widehat{\Omega}_{m+1})-\text{vec}(I_p)\Big] + \text{vec}\big(\lambda_1\overset{T}{\underset{r=T^\ast_{k+1}}{\sum}} \widehat{E}_{1r}\big) \|_2 \leq  \lambda_2.
\end{eqnarray*}
Therefore
\begin{eqnarray*}
\lefteqn{2\lambda_2 + \lambda_1\sqrt{p(p-1)}(T^\ast_{k+1}-T^\ast_k) }\\
& \geq & \|\frac{1}{T}\overset{T^\ast_{k+1}-1}{\underset{r=T^\ast_{k}}{\sum}}\Lambda(X_rX^\top_r)\text{vec}(\widehat{\Omega}_{m+1} - \Omega^\ast_{k+1}) +\frac{1}{T}\overset{T^\ast_{k+1}-1}{\underset{r=T^\ast_k}{\sum}}\Big[\Lambda(X_rX^\top_r)\text{vec}(\Omega^\ast_{k+1})-\text{vec}(I_p)\Big]\|_2,
\end{eqnarray*}
which implies
$$
\resizebox{\linewidth}{!}{$\displaystyle\|\widehat{\Omega}_{m+1}-\Omega^\ast_{k+1}\|_F \leq (\gamma^{\min}_{5,T,k})^{-1} \Big[\frac{2\lambda_2T}{T^\ast_{k+1}-T^\ast_{k}}+\lambda_1T\sqrt{p(p-1)}+\|\frac{1}{T^\ast_{k+1}-T^\ast_{k}}\overset{T^\ast_{k+1}-1}{\underset{r=T^\ast_k}{\sum}}\Big[\Lambda(X_rX^\top_r)\text{vec}(\Omega^\ast_{k+1})-\text{vec}(I_p)\Big]\|_2\Big],$}
$$
with $\gamma^{\min}_{5,T,k}=\lambda_{\min}(\frac{1}{T^\ast_{k+1}-T^\ast_{k}}\sum^{T^\ast_{k+1}-1}_{r=T^\ast_{k}}X_rX^\top_r) \geq \underline{\mu}/2$ with probability tending to one. We deduce
\begin{eqnarray}
\lefteqn{M_{1,2} \leq \overset{m_{\max}}{\underset{m=m^\ast+1}{\sum}} \overset{m^\ast}{\underset{k=1}{\sum}} \Pb\big(\|\Omega^\ast_{k+1}-\Omega^\ast_k\|_F/3 \leq (\gamma^{\min}_{4,T,m,k})^{-1} \gamma^{\max}_{2,T,m,k}\|\widehat{\Omega}_{m+1}-\Omega^\ast_{k+1}\|_F\big) } \label{proba_M2}\\
  & \leq & \resizebox{0.92\linewidth}{!}{$\displaystyle\overset{m_{\max}}{\underset{m=m^\ast+1}{\sum}} \overset{m^\ast}{\underset{k=1}{\sum}}\Big[ \Pb\big( \gamma^{\min}_{4,T,m,k} (\gamma^{\max}_{2,T,m,k})^{-1}\|\Omega^\ast_{k+1}-\Omega^\ast_k\|_F/6 \leq (\gamma^{\min}_{5,T,k})^{-1}\Big[\frac{2\lambda_2T}{\mathcal{I}_{\min}}+\lambda_1T\sqrt{p(p-1)}\Big]\big)$} \nonumber\\
& & \resizebox{0.92\linewidth}{!}{$\displaystyle+ \Pb\big( \gamma^{\min}_{4,T,m,k} (\gamma^{\max}_{2,T,m,k})^{-1}\|\Omega^\ast_{k+1}\!-\!\Omega^\ast_k\|_F/6 \!\leq\! (\gamma^{\min}_{5,T,k})^{-1} \|\frac{1}{T^\ast_{k+1}\!-\!T^\ast_{k}}\overset{T^\ast_{k+1}-1}{\underset{r=T^\ast_k}{\sum}}\!\Big[\Lambda(X_rX^\top_r)\text{vec}(\Omega^\ast_{k+1})\!-\!\text{vec}(I_p)\Big]\|_2 \big)\Big],$} \nonumber
\end{eqnarray}
where $\gamma^{\max}_{2,T,m,k} = \lambda_{\max}(\frac{1}{T^\ast_{k}-\widehat{T}_{m}}\sum^{T^\ast_{k}-1}_{r=\widehat{T}_{m}}X_rX^\top_r) \leq 2 \overline{\mu}$ with probability tending to one. The first term in the second inequality of (\ref{proba_M2}) tends to zero under the conditions $\lambda_2T/(\mathcal{I}_{\min}\eta_{\min}) \rightarrow 0$ and $\lambda_1Tp / \eta_{\min} \rightarrow 0$. And under $(\eta_{\min}\mathcal{I}^{1/2}_{\min})^{-1}s^\ast_{\max}\,p\sqrt{\log(pT)}\rightarrow 0$, the second term tends to zero. Therefore, we conclude $\sum^{m_{\max}}_{m=m^\ast+1} \sum^{m^\ast}_{k=1} \Pb\big(L_{m,k,1} \cap \{\widehat{T}_m>T^\ast_{k-1}\}\big) \rightarrow 0$ as $T \rightarrow \infty$. Based on similar arguments, we can show $\sum^{m_{\max}}_{m=m^\ast+1} \sum^{m^\ast}_{k=1} \Pb\big(L_{m,k,1} \cap \{\widehat{T}_m \leq T^\ast_{k-1}\}\big) \rightarrow 0$ as $T \rightarrow \infty$. Therefore, $\sum^{m_{\max}}_{m=m^\ast+1} \sum^{m^\ast}_{k=1}\Pb\big(L_{m,k,1}\big) \rightarrow 0$ as $T \rightarrow \infty$. Similarly, it can be proved that $\sum^{m_{\max}}_{m=m^\ast+1} \sum^{m^\ast}_{k=1}\Pb\big(L_{m,k,2}\big) \rightarrow 0$ as $T \rightarrow \infty$. \\
We now consider $\sum^{m_{\max}}_{m=m^\ast+1} \sum^{m^\ast}_{k=1}\Pb\big(L_{m,k,3}\big)$. Define
\begin{equation*}
\begin{array}{llll}
\resizebox{\linewidth}{!}{$\displaystyle L^{(1)}_{m,k,3} \coloneqq L_{m,k,3}\cap \{T^\ast_{k-1}<\widehat{T}_l < \widehat{T}_{l+1}<T^\ast_{k+1}\}, L^{(2)}_{m,k,3} \coloneqq L_{m,k,3}\cap \{T^\ast_{k-1}<\widehat{T}_l <T^\ast_{k+1}, \widehat{T}_{l+1}\geq T^\ast_{k+1}\},$} &&\\
\resizebox{\linewidth}{!}{$\displaystyle L^{(3)}_{m,k,3} \coloneqq L_{m,k,3}\cap \{\widehat{T}_l \leq T^\ast_{k-1},T^\ast_{k-1} <\widehat{T}_{l+1}<T^\ast_{k+1}\}, L^{(4)}_{m,k,3} \coloneqq L_{m,k,3}\cap \{\widehat{T}_l \leq T^\ast_{k-1},T^\ast_{k+1} <\widehat{T}_{l+1}\}.$} &&
\end{array}
\end{equation*}
First, we consider $L^{(1)}_{m,k,3}$. By Lemma (\ref{optimality_cond}), for the change-points $t = T^\ast_k$ and $t = \widehat{T}_l$, we obtain
\begin{eqnarray}
\lefteqn{2\lambda_2 + \lambda_1\sqrt{p(p-1)}(T^\ast_k-\widehat{T}_l)}\label{changepoint_1}\\
& \geq & \|\frac{1}{T}\overset{T^\ast_{k}-1}{\underset{r=\widehat{T}_{l}}{\sum}}\Lambda(X_rX^\top_r)\text{vec}(\widehat{\Omega}_{l+1} - \Omega^\ast_{k}) +\frac{1}{T}\overset{T^\ast_{k}-1}{\underset{r=\widehat{T}_{l}}{\sum}}\Big[\Lambda(X_rX^\top_r)\text{vec}(\Omega^\ast_{k})-\text{vec}(I_p)\Big]\|_2,\nonumber
\end{eqnarray}
and for the change-points $t = T^\ast_k$ and $t = \widehat{T}_{l+1}$, we get
\begin{eqnarray}
\lefteqn{2\lambda_2 + \lambda_1\sqrt{p(p-1)}(\widehat{T}_{l+1}-T^\ast_k)}\label{changepoint_2}\\
& \geq & \|\frac{1}{T}\overset{\widehat{T}_{l+1}-1}{\underset{r=T^\ast_k}{\sum}}\Lambda(X_rX^\top_r)\text{vec}(\widehat{\Omega}_{l+1} - \Omega^\ast_{k+1}) +\frac{1}{T}\overset{\widehat{T}_{l+1}-1}{\underset{r=T^\ast_{k}}{\sum}}\Big[\Lambda(X_rX^\top_r)\text{vec}(\Omega^\ast_{k+1})-\text{vec}(I_p)\Big]\|_2.\nonumber
\end{eqnarray}
Moreover, by the triangle inequality, we have
\begin{eqnarray*}
\lefteqn{\|\Omega^\ast_{k+1}-\Omega^\ast_k\|_F \leq \|\widehat{\Omega}_{l+1}-\Omega^\ast_k\|_F + \|\widehat{\Omega}_{l+1}-\Omega^\ast_{k+1}\|_F}\\
  & \leq & \resizebox{0.92\linewidth}{!}{$\displaystyle(\gamma^{\min}_{6,T,k,l})^{-1}\Big[ \frac{2\lambda_2T}{T^\ast_k-\widehat{T}_l} + \lambda_1T\sqrt{p(p-1)} + \|\frac{1}{T^\ast_k-\widehat{T}_l}\overset{T^\ast_{k}-1}{\underset{r=\widehat{T}_{l}}{\sum}}\Big[\Lambda(X_rX^\top_r)\text{vec}(\Omega^\ast_{k})-\text{vec}(I_p)\Big]\|_2 \Big]$} \\
& & \resizebox{0.92\linewidth}{!}{$\displaystyle+ (\gamma^{\min}_{7,T,k,l})^{-1}\Big[ \frac{2\lambda_2T}{\widehat{T}_{l+1}-T^\ast_k} + \lambda_1T\sqrt{p(p-1)} + \|\frac{1}{\widehat{T}_{l+1}-T^\ast_k}\overset{\widehat{T}_{l+1}-1}{\underset{r=T^\ast_k}{\sum}}\Big[\Lambda(X_rX^\top_r)\text{vec}(\Omega^\ast_{k})-\text{vec}(I_p)\Big]\|_2 \Big],$}
\end{eqnarray*}
with $\gamma^{\min}_{6,T,k,l}=\lambda_{\min}(\frac{1}{T^\ast_k-\widehat{T}_l}\sum^{T^\ast_k-1}_{r=\widehat{T}_l}X_rX^\top_r) \geq \underline{\mu}/2,\gamma^{\min}_{7,T,k,l}=\lambda_{\min}(\frac{1}{\widehat{T}_{l+1}-T^\ast_k}\sum^{\widehat{T}_{l+1}-1}_{r=T^\ast_k}X_rX^\top_r) \geq \underline{\mu}/2$ with probability tending to one. So $\sum^{m_{\max}}_{m=m^\ast+1} \sum^{m^\ast}_{k=1}\Pb\big(L^{(1)}_{m,k,3}\big)$ is upper bounded as follows
\begin{eqnarray*}
\lefteqn{\sum^{m_{\max}}_{m=m^\ast+1} \sum^{m^\ast}_{k=1}\Pb\big(L^{(1)}_{m,k,3}\big)}\\
  & \leq & \resizebox{0.92\linewidth}{!}{$\displaystyle\sum^{m_{\max}}_{m=m^\ast+1} \sum^{m^\ast}_{k=1}\Pb\big(\|\Omega^\ast_{k+1}-\Omega^\ast_{k}\|_F/3 \leq \big(\gamma^{\min}_{6,T,k,l})^{-1}+(\gamma^{\min}_{7,T,k,l})^{-1}\big)\big[2\lambda_2\delta^{-1}_T+\lambda_1T\sqrt{p(p-1)}\big]\big)$}
  \\
& & \resizebox{0.92\linewidth}{!}{$\displaystyle+ \sum^{m_{\max}}_{m=m^\ast+1} \sum^{m^\ast}_{k=1}\Pb\big(\|\Omega^\ast_{k+1}-\Omega^\ast_{k}\|_F/3 \leq (\gamma^{\min}_{6,T,k,l})^{-1} \|\frac{1}{T^\ast_k-\widehat{T}_l}\overset{T^\ast_{k}-1}{\underset{r=\widehat{T}_{l}}{\sum}}\Big[\Lambda(X_rX^\top_r)\text{vec}(\Omega^\ast_{k})-\text{vec}(I_p)\Big]\|_2,T^\ast_{k}-\widehat{T}_l\geq T \delta_T\big)$} \\
& & \resizebox{0.92\linewidth}{!}{$\displaystyle+ \sum^{m_{\max}}_{m=m^\ast+1} \sum^{m^\ast}_{k=1}\Pb\big(\|\Omega^\ast_{k+1}-\Omega^\ast_{k}\|_F/3 \leq (\gamma^{\min}_{7,T,k,l})^{-1}\|\frac{1}{\widehat{T}_{l+1}-T^\ast_k}\overset{\widehat{T}_{l+1}-1}{\underset{r=T^\ast_k}{\sum}}\Big[\Lambda(X_rX^\top_r)\text{vec}(\Omega^\ast_{k})-\text{vec}(I_p)\Big]\|_2, \widehat{T}_{l+1}-T^\ast_{k}\geq T \delta_T\big),$}
\end{eqnarray*}
which tends to zero in the spirit as in (\ref{bound_A+}). For $L^{(2)}_{m,k,3}$, by Lemma \ref{optimality_cond} with change-points $t = T^\ast_k$ and $t = \widehat{T}_l$ to obtain (\ref{changepoint_1}) and with change-points $t=T^\ast_{k}$, $t = T^\ast_{k+1}$, we get
\begin{eqnarray}
\lefteqn{2\lambda_2 + \lambda_1\sqrt{p(p-1)}(T^\ast_{k+1}-T^\ast_{k})}\label{changepoint_3}\\
& \geq & \|\frac{1}{T}\overset{T^\ast_{k+1}-1}{\underset{r=T^\ast_{k}}{\sum}}\Lambda(X_rX^\top_r)\text{vec}(\widehat{\Omega}_{l+1} - \Omega^\ast_{k+1}) +\frac{1}{T}\overset{T^\ast_{k+1}-1}{\underset{r=T^\ast_{k}}{\sum}}\Big[\Lambda(X_rX^\top_r)\text{vec}(\Omega^\ast_{k+1})-\text{vec}(I_p)\Big]\|_2.\nonumber
\end{eqnarray}
By the triangle inequality, we have
\begin{eqnarray*}
\lefteqn{\|\Omega^\ast_{k+1}-\Omega^\ast_{k}\|_F\leq \|\widehat{\Omega}_{l+1}-\Omega^\ast_{k}\|_F+\|\widehat{\Omega}_{l+1}-\Omega^\ast_{k+1}\|_F}\\
  & \leq & \resizebox{0.92\linewidth}{!}{$\displaystyle(\gamma^{\min}_{6,T,k,l})^{-1}\Big[ \frac{2\lambda_2T}{T^\ast_k-\widehat{T}_l} + \lambda_1T\sqrt{p(p-1)} + \|\frac{1}{T^\ast_k-\widehat{T}_l}\overset{T^\ast_{k}-1}{\underset{r=\widehat{T}_{l}}{\sum}}\Big[\Lambda(X_rX^\top_r)\text{vec}(\Omega^\ast_{k})-\text{vec}(I_p)\Big]\|_2 \Big]$} \\
  & & \resizebox{0.92\linewidth}{!}{$\displaystyle+ (\gamma^{\min}_{8,T,k})^{-1}\Big[ \frac{2\lambda_2T}{T^\ast_{k+1}-T^\ast_k} + \lambda_1T\sqrt{p(p-1)} + \|\frac{1}{T^\ast_{k+1}-T^\ast_k}\overset{T^\ast_{k+1}-1}{\underset{r=T^\ast_k}{\sum}}\Big[\Lambda(X_rX^\top_r)\text{vec}(\Omega^\ast_{k+1})-\text{vec}(I_p)\Big]\|_2 \Big],$}
\end{eqnarray*}
with $\gamma^{\min}_{8,T,k}=\lambda_{\min}(\frac{1}{T^\ast_{k+1}-T^\ast_k}\sum^{T^\ast_{k+1}-1}_{r=T^\ast_k}X_rX^\top_r) \geq \underline{\mu}/2$ with probability tending to one. Therefore, we obtain
\begin{eqnarray*}
\lefteqn{\sum^{m_{\max}}_{m=m^\ast+1} \sum^{m^\ast}_{k=1}\Pb\big(L^{(2)}_{m,k,3}\big)}\\
  & \leq & \resizebox{0.92\linewidth}{!}{$\displaystyle\sum^{m_{\max}}_{m=m^\ast+1} \sum^{m^\ast}_{k=1}\Pb\big(\|\Omega^\ast_{k+1}-\Omega^\ast_{k}\|_F/3 \leq (\gamma^{\min}_{6,T,k,l})^{-1}\big[2\lambda_2\delta^{-1}_T+\lambda_1T\sqrt{p(p-1)}\big]+(\gamma^{\min}_{8,T,k,l})^{-1} \big[\frac{2\lambda_2T}{\mathcal{I}_{\min}}+\lambda_1T\sqrt{p(p-1)}\big]\big)$}
  \\
  & & \resizebox{0.92\linewidth}{!}{$\displaystyle+ \sum^{m_{\max}}_{m=m^\ast+1} \sum^{m^\ast}_{k=1}\Pb\big(\|\Omega^\ast_{k+1}-\Omega^\ast_{k}\|_F/3 \leq (\gamma^{\min}_{6,T,k,l})^{-1} \|\frac{1}{T^\ast_k-\widehat{T}_l}\overset{T^\ast_{k}-1}{\underset{r=\widehat{T}_{l}}{\sum}}\Big[\Lambda(X_rX^\top_r)\text{vec}(\Omega^\ast_{k})-\text{vec}(I_p)\Big]\|_2,T^\ast_{k}-\widehat{T}_l\geq T \delta_T\big)$} \\
  & & \resizebox{0.92\linewidth}{!}{$\displaystyle+ \sum^{m_{\max}}_{m=m^\ast+1} \sum^{m^\ast}_{k=1}\Pb\big(\|\Omega^\ast_{k+1}-\Omega^\ast_{k}\|_F/3 \leq (\gamma^{\min}_{8,T,k,l})^{-1}\|\frac{1}{T^\ast_{k+1}-T^\ast_k}\overset{T^\ast_{k+1}-1}{\underset{r=T^\ast_k}{\sum}}\Big[\Lambda(X_rX^\top_r)\text{vec}(\Omega^\ast_{k})-\text{vec}(I_p)\Big]\|_2\big),$}
\end{eqnarray*}
which will tend to zero based on similar arguments as in the convergence of (\ref{bound_A+}). For $L^{(3)}_{m,k,3}$, by Lemma \ref{optimality_cond} with change-points $t = T^\ast_{k-1}$ and $t = T^\ast_k$, we have
\begin{eqnarray}
\lefteqn{2\lambda_2 + \lambda_1\sqrt{p(p-1)}(T^\ast_k-T^\ast_{k-1})}\label{changepoint_4}\\
& \geq & \|\frac{1}{T}\overset{T^\ast_{k}-1}{\underset{r=T^\ast_{k-1}}{\sum}}\Lambda(X_rX^\top_r)\text{vec}(\widehat{\Omega}_{l+1} - \Omega^\ast_{k}) +\frac{1}{T}\overset{T^\ast_{k}-1}{\underset{r=T^\ast_{k-1}}{\sum}}\Big[\Lambda(X_rX^\top_r)\text{vec}(\Omega^\ast_{k})-\text{vec}(I_p)\Big]\|_2,\nonumber
\end{eqnarray}
and with change-points $t=T^\ast_{k}$, $t = \widehat{T}_{l+1}$, we get
\begin{eqnarray}
\lefteqn{2\lambda_2 + \lambda_1\sqrt{p(p-1)}(\widehat{T}_{l+1}-T^\ast_k)}\label{changepoint_5}\\
& \geq & \|\frac{1}{T}\overset{\widehat{T}_{l+1}-1}{\underset{r=T^\ast_{k}}{\sum}}\Lambda(X_rX^\top_r)\text{vec}(\widehat{\Omega}_{l+1} - \Omega^\ast_{k+1}) +\frac{1}{T}\overset{\widehat{T}_{l+1}-1}{\underset{r=T^\ast_{k}}{\sum}}\Big[\Lambda(X_rX^\top_r)\text{vec}(\Omega^\ast_{k+1})-\text{vec}(I_p)\Big]\|_2.\nonumber
\end{eqnarray}
By the triangle inequality, we deduce
\begin{eqnarray*}
\lefteqn{\|\Omega^\ast_{k+1}-\Omega^\ast_{k}\|_F\leq \|\widehat{\Omega}_{l+1}-\Omega^\ast_{k}\|_F+\|\widehat{\Omega}_{l+1}-\Omega^\ast_{k+1}\|_F}\\
  & \leq & \resizebox{0.92\linewidth}{!}{$\displaystyle(\gamma^{\min}_{9,T,k})^{-1}\Big[ \frac{2\lambda_2T}{T^\ast_k-T^\ast_{k-1}} + \lambda_1T\sqrt{p(p-1)} + \|\frac{1}{T^\ast_k-T^\ast_{k-1}}\overset{T^\ast_{k}-1}{\underset{r=T^\ast_{k-1}}{\sum}}\Big[\Lambda(X_rX^\top_r)\text{vec}(\Omega^\ast_{k})-\text{vec}(I_p)\Big]\|_2 \Big]$} \\
  & & \resizebox{0.92\linewidth}{!}{$\displaystyle+ (\gamma^{\min}_{10,T,k})^{-1}\Big[ \frac{2\lambda_2T}{\widehat{T}_{l+1}-T^\ast_k} + \lambda_1T\sqrt{p(p-1)} + \|\frac{1}{\widehat{T}_{l+1}-T^\ast_k}\overset{\widehat{T}_{l+1}-1}{\underset{r=T^\ast_k}{\sum}}\Big[\Lambda(X_rX^\top_r)\text{vec}(\Omega^\ast_{k+1})-\text{vec}(I_p)\Big]\|_2 \Big],$}
\end{eqnarray*}
with $\gamma^{\min}_{9,T,k}=\lambda_{\min}(\frac{1}{T^\ast_{k}-T^\ast_{k-1}}\sum^{T^\ast_{k}-1}_{r=T^\ast_{k-1}}X_rX^\top_r) \geq \underline{\mu}/2$, $\gamma^{\min}_{10,T,k,l}=\lambda_{\min}(\frac{1}{\widehat{T}_{l+1}-T^\ast_k}\sum^{\widehat{T}_{l+1}-1}_{r=T^\ast_{k}}X_rX^\top_r) \geq \underline{\mu}/2$  with probability tending to one. We deduce
\begin{eqnarray*}
\lefteqn{\sum^{m_{\max}}_{m=m^\ast+1} \sum^{m^\ast}_{k=1}\Pb\big(L^{(3)}_{m,k,3}\big)}\\
  & \leq & \resizebox{0.92\linewidth}{!}{$\displaystyle\sum^{m_{\max}}_{m=m^\ast+1} \sum^{m^\ast}_{k=1}\Pb\big(\|\Omega^\ast_{k+1}-\Omega^\ast_{k}\|_F/3 \leq (\gamma^{\min}_{9,T,k})^{-1}\big[\frac{2\lambda_2T}{\mathcal{I}_{\min}}+\lambda_1T\sqrt{p(p-1)}\big]+(\gamma^{\min}_{10,T,k,l})^{-1} \big[2\lambda_2\delta^{-1}_T+\lambda_1T\sqrt{p(p-1)}\big]\big)$}
  \\
  & & \resizebox{0.92\linewidth}{!}{$\displaystyle+ \sum^{m_{\max}}_{m=m^\ast+1} \sum^{m^\ast}_{k=1}\Pb\big(\|\Omega^\ast_{k+1}-\Omega^\ast_{k}\|_F/3 \leq (\gamma^{\min}_{9,T,k})^{-1} \|\frac{1}{T^\ast_k-T^\ast_{k-1}}\overset{T^\ast_{k}-1}{\underset{r=T^\ast_{k-1}}{\sum}}\Big[\Lambda(X_rX^\top_r)\text{vec}(\Omega^\ast_{k})-\text{vec}(I_p)\Big]\|_2\big)$} \\
  & & \resizebox{0.92\linewidth}{!}{$\displaystyle+ \sum^{m_{\max}}_{m=m^\ast+1} \sum^{m^\ast}_{k=1}\Pb\big(\|\Omega^\ast_{k+1}-\Omega^\ast_{k}\|_F/3 \leq (\gamma^{\min}_{10,T,k,l})^{-1}\|\frac{1}{\widehat{T}_{l+1}-T^\ast_{k+1}}\overset{\widehat{T}_{l+1}-1}{\underset{r=T^\ast_k}{\sum}}\Big[\Lambda(X_rX^\top_r)\text{vec}(\Omega^\ast_{k+1})-\text{vec}(I_p)\Big]\|_2,$} \\
& & \qquad \qquad \qquad \widehat{T}_{l+1}-T^\ast_k\geq T\delta_T\big),
\end{eqnarray*}
which tends to zero based on the same arguments as in the convergence of (\ref{bound_A+}). Finally, to analyze $L^{(4)}_{m,k,3}$, applying Lemma \ref{optimality_cond} with $t = T^\ast_{k-1},t = T^\ast_k$ to obtain (\ref{changepoint_4}) and with $t = T^\ast_{k},t = T^\ast_{k+1}$ to obtain (\ref{changepoint_3}). By the triangle inequality, we have
\begin{eqnarray*}
\lefteqn{\|\Omega^\ast_{k+1}-\Omega^\ast_{k}\|_F\leq \|\widehat{\Omega}_{l+1}-\Omega^\ast_{k}\|_F+\|\widehat{\Omega}_{l+1}-\Omega^\ast_{k+1}\|_F}\\
  & \leq & \resizebox{0.92\linewidth}{!}{$\displaystyle(\gamma^{\min}_{9,T,k})^{-1}\Big[ \frac{2\lambda_2T}{T^\ast_k-T^\ast_{k-1}} + \lambda_1T\sqrt{p(p-1)} + \|\frac{1}{T^\ast_k-T^\ast_{k-1}}\overset{T^\ast_{k}-1}{\underset{r=T^\ast_{k-1}}{\sum}}\Big[\Lambda(X_rX^\top_r)\text{vec}(\Omega^\ast_{k})-\text{vec}(I_p)\Big]\|_2 \Big]$} \\
  & & \resizebox{0.92\linewidth}{!}{$\displaystyle+ (\gamma^{\min}_{8,T,k})^{-1}\Big[ \frac{2\lambda_2T}{T^\ast_{k+1}-T^\ast_k} + \lambda_1T\sqrt{p(p-1)} + \|\frac{1}{T^\ast_{k+1}-T^\ast_k}\overset{T^\ast_{k+1}-1}{\underset{r=T^\ast_k}{\sum}}\Big[\Lambda(X_rX^\top_r)\text{vec}(\Omega^\ast_{k+1})-\text{vec}(I_p)\Big]\|_2 \Big],$}
\end{eqnarray*}
we deduce
\begin{eqnarray*}
\lefteqn{\sum^{m_{\max}}_{m=m^\ast+1} \sum^{m^\ast}_{k=1}\Pb\big(L^{(4)}_{m,k,3}\big)}\\
& \leq & \sum^{m_{\max}}_{m=m^\ast+1} \sum^{m^\ast}_{k=1}\Pb\big(\|\Omega^\ast_{k+1}-\Omega^\ast_{k}\|_F/3 \leq \big(\gamma^{\min}_{9,T,k})^{-1}+(\gamma^{\min}_{8,T,k})^{-1}\big)\big[\frac{2\lambda_2T}{\mathcal{I}_{\min}}+\lambda_1T\sqrt{p(p-1)}\big]\big)\\
& & \resizebox{0.92\linewidth}{!}{$\displaystyle+ \sum^{m_{\max}}_{m=m^\ast+1} \sum^{m^\ast}_{k=1}\Pb\big(\|\Omega^\ast_{k+1}-\Omega^\ast_{k}\|_F/3 \leq (\gamma^{\min}_{9,T,k})^{-1} \|\frac{1}{T^\ast_k-T^\ast_{k-1}}\overset{T^\ast_{k}-1}{\underset{r=T^\ast_{k-1}}{\sum}}\Big[\Lambda(X_rX^\top_r)\text{vec}(\Omega^\ast_{k})-\text{vec}(I_p)\Big]\|_2\big)$} \\
& & \resizebox{0.92\linewidth}{!}{$\displaystyle+ \sum^{m_{\max}}_{m=m^\ast+1} \sum^{m^\ast}_{k=1}\Pb\big(\|\Omega^\ast_{k+1}-\Omega^\ast_{k}\|_F/3 \leq (\gamma^{\min}_{8,T,k,l})^{-1}\|\frac{1}{T^\ast_{k+1}-T^\ast_k}\overset{T^\ast_{k+1}-1}{\underset{r=T^\ast_k}{\sum}}\Big[\Lambda(X_rX^\top_r)\text{vec}(\Omega^\ast_{k})-\text{vec}(I_p)\Big]\|_2\big),$}
\end{eqnarray*}
which also tends to zero, as in the proof of  the convergence to zero of (\ref{bound_A+}). We conclude that $\Pb\big(\{h(\widehat{\Tc}_{\widehat{m}},\Tc^\ast_{m^\ast})> T\delta_T\}\cap \{m^\ast < \widehat{m} \leq m_{\max}\}\big) \rightarrow 0$ as $T \rightarrow \infty$.

\subsection{Proof of Proposition~4}
\noindent\textbf{\emph{Proof of point~(i).}}\\
\noindent We can rewrite (2) as the following constrained optimization problem:
\begin{equation}
	\everymath{\displaystyle}
	\label{eq:ori-D-tr-con-prob}
	\begin{aligned}
		\min_{\mathbf{X}} \quad & \sum_{t=1}^T \left[ \frac{1}{2}\tr(U_t^{\top}U_t) - \tr(\Theta_t) + \delta_{\cdot \succeq \epsilon I_p}(\Theta_t) \right] + \lambda_1T\sum_{t=1}^T\| \Upsilon_{t, \mathrm{off}} \|_{1, \text{off}} + \lambda_2T\sum_{t=1}^{T-1}\| D_t \|_F \\
		\text{s.t.} \quad       & U_t = (X_t X_t ^{\top})^{\frac{1}{2}} \Theta_t, \, \Upsilon_{t, \mathrm{off}} = \Theta_{t, \mathrm{off}}\,\,\,\, \forall t = 1, \dots , T;                                                                                                                     \\
		                        & D_t = \Theta_{t + 1} - \Theta_t \,\,\,\, \forall t = 1, \dots , T-1,
	\end{aligned}
\end{equation}
where we write \( \mathbf{X} = \left\{ \{\Theta_t\}_{t=1}^T, \{U_t\}_{t=1}^T, \{\Upsilon_{t, \mathrm{off}}\}_{t=1}^T, \{D_t\}_{t=1}^{T-1} \right\} \) for short; $\delta_{\cdot \succeq \epsilon I_p}(\cdot)$ is the indicator function of the set $\{S\,:\, S\succeq \epsilon I_p\}$; $ \Upsilon_{t, \mathrm{off}} $ is a matrix whose diagonal elements are 0; $ \Theta_{t, \mathrm{off}} $ is the copy of $ \Theta_t $ with the diagonal elements set to 0.

Denote the dual variables by $ \mathbf{Y} = \{\{W_t\}_{t=1}^T, \{Y_{t, \mathrm{off}}\}_{t=1}^T, \{Z_t\}_{t=1}^{T-1}\} $ for simplicity, where $ W_t \in \Rb^{p\times p} $, $ Y_{t, \mathrm{off}} \in \mathcal{S}_{\mathrm{off}}^p $, and $ Z_t \in \mathcal{S}^p $ for all $ t $.
The Lagrangian function of \eqref{eq:ori-D-tr-con-prob} is
\begin{align*}
  L(\mathbf{X}; \mathbf{Y}) = & \sum_{t=1}^T \left[ \frac{1}{2}\tr(U_t^{\top}U_t) - \tr(\Theta_t) + \delta_{\cdot \succeq \epsilon I_p}(\Theta_t) \right] + \lambda_1T\sum_{t=1}^T \| \Upsilon_{t, \mathrm{off}} \|_{1, \text{off}}                                                                    \\
                              & + \lambda_2T\sum_{t=1}^{T-1} \| D_t \|_F - \sum_{t=1}^T \left\langle W_t, U_t - (X_t X_t^{\top}) ^{\frac{1}{2}} \Theta_t \right\rangle                                                                                                                                 \\
                              & - \sum_{t=1}^T \left\langle Y_{t, \mathrm{off}}, \Theta_{t, \mathrm{off}} - \Upsilon_{t, \mathrm{off}} \right\rangle - \sum_{t=1}^{T-1} \left\langle Z_t, \Theta_{t + 1} - \Theta_t - D_t \right\rangle                                                                \\
  =                           & \sum_{t=1}^T \left[ \frac{1}{2}\tr(U_t^{\top}U_t) - \left\langle W_t, U_t \right\rangle \right] + \sum_{t=1}^T \left[ \lambda_1T \| \Upsilon_{t, \mathrm{off}} \|_{1, \text{off}} + \left\langle Y_{t, \mathrm{off}}, \Upsilon_{t, \mathrm{off}} \right\rangle \right] \\
                              & + \sum_{t=1}^{T - 1}\left[  \lambda_2T \| D_t \|_F + \left\langle Z_t, D_t \right\rangle \right]                                                                                                                                                                       \\
                              & + \sum_{t=1}^T\left[ \left\langle Z_t - Z_{t-1} -I_p + \Sym\left((X_t X_t ^{\top})^{\frac{1}{2}}W_t\right) - Y_{t, \mathrm{off}}, \Theta_t \right\rangle + \delta_{\cdot \succeq \epsilon I_p}(\Theta_t) \right],
\end{align*}
where for convenience, we set \( Z_0 = Z_T = \mathbf{0}_{p \times p} \); we further note that $ \left\langle Y_{t, \mathrm{off}}, \Theta_{t, \mathrm{off}} \right\rangle = \left\langle Y_{t, \mathrm{off}}, \Theta_t \right\rangle $.
Now, let \( \zeta_t = Z_t - Z_{t-1} -I_p + \Sym\left((X_t X_t ^{\top})^{\frac{1}{2}}W_t\right) - Y_{t, \mathrm{off}} \), we have
\begin{align*}
  \min_{U_t}\,\, \frac{1}{2}\tr(U_t ^{\top}U_t) - \left\langle W_t, U_t \right\rangle                                            = & -\frac{1}{2} \tr(W_t^{\top}W_t); \\
  \min_{\Upsilon_{t, \mathrm{off}}}\,\, \lambda_1T\| \Upsilon_{t, \mathrm{off}} \|_{1, \text{off}} + \left\langle Y_{t, \mathrm{off}}, \Upsilon_{t, \mathrm{off}} \right\rangle =               &
  \begin{cases}
    0       & \text{if } | Y_{t,uv} | \leq \lambda_1T \,\,\,\, \forall u \neq v, \\
    -\infty & \text{otherwise};
  \end{cases}                                                                                     \\
  \min_{D_t} \,\, \lambda_2T \| D_t \|_F + \left\langle Z_t, D_t \right\rangle =                                                &
  \begin{cases}
    0       & \text{if } \| Z_t \|_F \leq \lambda_2T, \\
    -\infty & \text{otherwise};
  \end{cases}                                                                                                                \\
  \min_{\Theta_t} \,\, \left\langle \zeta_t, \Theta_t \right\rangle + \delta_{\cdot \succeq \epsilon I_p}(\Theta_t) =                &
  \begin{cases}
    \epsilon \tr(\zeta_t) & \text{if } \zeta_t \succeq 0, \\
    -\infty               & \text{otherwise}.
  \end{cases}
\end{align*}
Therefore, we have the dual problem as in (3). Finally, the equality of the optimal values follow from \cite[Theorem 31.1]{R70} upon noting that there exists $\mathbf{X}$ with $\Theta_t \succ \epsilon I_p$ for all $t$ satisfying the equality constraints in \eqref{eq:ori-D-tr-con-prob}.

\noindent\textbf{\emph{Proof of point~(ii).}}\\
\noindent Since $ \sum_{t=1}^T X_t X_t ^{\top} \succ 0 $, by Lemma~\ref{lemma:strict-feasibility}, the set $\mathcal{C}_{\lambda_1}$ has a Slater point $ \left\{ \{\overline{W}_t\}_{t=1}^T, \{\overline{Y}_{t, \mathrm{off}}\}_{t=1}^T, \{\overline{Z}_t\}_{t=1}^{T-1} \right\} $.
Then the result follows directly with $ \overline{\lambda}_2 = 1 + \max \{\|\overline{Z}_t\|_F\} / T $.
The existence of solutions to the primal problem comes from the strong duality thanks to the strict feasibility of the dual problem; see, for example \cite[Theorem 31.1]{R70}.

\subsection{Proof of Proposition~6}
\noindent\textbf{\emph{Proof of point~(i).}}\\
\noindent We first rewrite (5) as the following constrained optimization problem:
\begin{equation}
	\everymath{\displaystyle}
	\begin{split}
		\min_{\mathbf{X}} \,\, & \sum_{t=1}^T \! \left[\! \frac{1}{2}\tr(U_t^{\top}U_t) \!-\! \tr(\Theta_t) \!+\! \delta_{\cdot \succeq \epsilon I_p}(\Theta_t) \! \right] \!+\! \lambda_1T \!\sum_{t=1}^T\| \Upsilon_{t, \mathrm{off}} \|_{1, \text{off}} \!+\! \lambda_2T\sum_{t=1}^{T-1}\mathcal{R}(\| D_t \|_F; \lambda_3) \\
		\text{s.t.} \,\,       & U_t = (X_t X_t ^{\top})^{\frac{1}{2}} \Theta_t, \, \Upsilon_{t, \mathrm{off}} = \Theta_{t, \mathrm{off}}, \,\,\,\, \forall t = 1, \dots , T;                                                                                                                                           \\
		                        & D_t = \Theta_{t + 1} - \Theta_t \,\,\,\, \forall t = 1, \dots , T - 1,
	\end{split}
	\raisetag{25pt}\label{eq:D-tr-con-prob}
\end{equation}
where we write \( \mathbf{X} = \left\{ \{\Theta_t\}_{t=1}^T, \{U_t\}_{t=1}^T, \{\Upsilon_{t, \mathrm{off}}\}_{t=1}^T, \{D_t\}_{t=1}^{T-1} \right\} \) for short; $ \Upsilon_{t, \mathrm{off}} $ is a matrix whose diagonal elements are 0; $ \Theta_{t, \mathrm{off}} $ is the copy of $ \Theta_t $ with the diagonal elements set to 0.

Denote the dual variables by $ \mathbf{Y} = \{\{W_t\}_{t=1}^T, \{Y_{t, \mathrm{off}}\}_{t=1}^T, \{Z_t\}_{t=1}^{T-1}\} $ for simplicity, where $ W_t \in \Rb^{p\times p} $, $ Y_{t, \mathrm{off}} \in \mathcal{S}_{\mathrm{off}}^p $, and $ Z_t \in \mathcal{S}^p $ for all $ t $.
The Lagrangian function of \eqref{eq:D-tr-con-prob} is
\begin{align*}
  L(\mathbf{X}; \mathbf{Y}) = & \sum_{t=1}^T \left[ \frac{1}{2}\tr(U_t^{\top}U_t) - \tr(\Theta_t) + \delta_{\cdot \succeq \epsilon I_p}(\Theta_t) \right] + \lambda_1T\sum_{t=1}^T \| \Upsilon_{t, \mathrm{off}} \|_{1, \text{off}}                                                                      \\
                              & + \lambda_2T\sum_{t=1}^{T-1} \mathcal{R}(\| D_t \|_F; \lambda_3) - \sum_{t=1}^T \left\langle W_t, U_t - (X_t X_t^{\top}) ^{\frac{1}{2}} \Theta_t \right\rangle                                                                                                        \\
                              & - \sum_{t=1}^T \left\langle Y_{t, \mathrm{off}}, \Theta_{t, \mathrm{off}} - \Upsilon_{t, \mathrm{off}} \right\rangle - \sum_{t=1}^{T-1} \left\langle Z_t, \Theta_{t + 1} - \Theta_t - D_t \right\rangle                                                                   \\
  =                           & \sum_{t=1}^T \left[ \frac{1}{2}\tr(U_t^{\top}U_t) - \left\langle W_t, U_t \right\rangle \right] + \sum_{t=1}^T \left[ \lambda_1T \| \Upsilon_{t, \mathrm{off}} \|_{1, \text{off}} + \left\langle Y_{t, \mathrm{off}}, \Upsilon_{t, \mathrm{off}} \right\rangle \right] \\
                              & + \sum_{t=1}^{T - 1}\left[  \lambda_2T \mathcal{R}(\| D_t \|_F; \lambda_3) + \left\langle Z_t, D_t \right\rangle \right]                                                                                                                                               \\
                              & + \sum_{t=1}^T\left[ \left\langle Z_t - Z_{t-1} -I_p + \Sym\left((X_t X_t ^{\top})^{\frac{1}{2}}W_t\right) - Y_{t, \mathrm{off}}, \Theta_t \right\rangle + \delta_{\cdot \succeq \epsilon I_p}(\Theta_t) \right],
\end{align*}
where for convenience, we set \( Z_0 = Z_T = \mathbf{0}_{p \times p} \); we also note that $ \left\langle Y_{t, \mathrm{off}}, \Theta_{t, \mathrm{off}} \right\rangle = \left\langle Y_{t, \mathrm{off}}, \Theta_t \right\rangle $.
Now, let \( \zeta_t = Z_t - Z_{t-1} -I_p + \Sym\left((X_t X_t ^{\top})^{\frac{1}{2}}W_t\right) - Y_{t, \mathrm{off}} \), we have
\begin{equation}
	\label{eq:Lagrangian-min-modified-I}
	\begin{aligned}
		\min_{U_t}\,\, \frac{1}{2}\tr(U_t ^{\top}U_t) - \left\langle W_t, U_t \right\rangle                                            = & -\frac{1}{2} \tr(W_t^{\top}W_t); \\
		\min_{\Upsilon_{t, \mathrm{off}}}\,\, \lambda_1T\| \Upsilon_{t, \mathrm{off}} \|_{1, \text{off}} + \left\langle Y_{t, \mathrm{off}}, \Upsilon_{t, \mathrm{off}} \right\rangle =               &
		\begin{cases}
			0       & \text{if } | Y_{t,uv} | \leq \lambda_1T \,\,\,\, \forall u \neq v, \\
			-\infty & \text{otherwise};
		\end{cases}                                                                                     \\
		\min_{\Theta_t} \,\, \left\langle \zeta_t, \Theta_t \right\rangle + \delta_{\cdot \succeq \epsilon I_p}(\Theta_t) =                &
		\begin{cases}
			\epsilon \tr(\zeta_t) & \text{if } \zeta_t \succeq 0, \\
			-\infty               & \text{otherwise}.
		\end{cases}
	\end{aligned}
\end{equation}

For the problem $ \min_{D_t} \lambda_2T \mathcal{R}(\| D_t \|_F; \lambda_3) + \left\langle Z_t, D_t \right\rangle $ for each $ t $, it holds that
\[
  \begin{aligned}
    \quad & \min_{D_t} \,\, \lambda_2T \mathcal{R}(\| D_t \|_F; \lambda_3) + \left\langle Z_t, D_t \right\rangle \\
    = \quad & \min \Big\{
    \underbrace{\min_{\| D_t \|_F \leq \lambda_3} \lambda_2 T \| D_t \|_F + \left\langle Z_t, D_t \right\rangle}_{\circnum{1}}, \underbrace{\min_{\| D_t \|_F \ge \lambda_3} \lambda_2 T (\| D_t \|_F^2 - \lambda_3^2 + \lambda_3) + \left\langle Z_t, D_t \right\rangle}_{\circnum{2}}
          \Big\}.
  \end{aligned}
\]

For \circnum{1}, we have
\begin{equation}
	\label{eq:formula-Dt<=lamb3}
	\begin{aligned}
		  & \min_{\| D_t \|_F \leq \lambda_3} \lambda_2 T \| D_t \|_F + \left\langle Z_t, D_t \right\rangle = \min_{\substack{\| D_t \|_F \leq \lambda_3; \\ D_t = \alpha Z_t, \alpha \ge 0}} \lambda_2 T \| D_t \|_F - \| Z_t \|_F \| D_t \|_F \\
		= & \min_{0 \leq r \leq \lambda_3} (\lambda_2 T - \| Z_t \|_F) r = -\left( \| Z_t \|_F - \lambda_2T \right)_+ \lambda_3,
	\end{aligned}
\end{equation}
where the first equality follows from the (equality case in) Cauchy-Schwarz inequality; \( (\cdot)_+ = \max \{\cdot , 0\} \).

For \circnum{2}, one can see that
\begin{align}
	\label{eq:formula-Dt>lamb3}
		                        & \min_{\| D_t \|_F \ge \lambda_3} \lambda_2T(\| D_t \|_F^2 - \lambda_3^2 + \lambda_3) + \left\langle Z_t, D_t \right\rangle                                                  \nonumber\\
		\overset{(\text{a})}{=} & \min_{r \ge \lambda_3 } \lambda_2T ( r^2 - \lambda_3^2 + \lambda_3 ) - \| Z_t \|_F r                                                                                        \nonumber\\
		=                       & \min_{r \ge \lambda_3} \lambda_2T \left( \left( r - \frac{\| Z_t \|_F}{2\lambda_2T} \right)^2 - \frac{\| Z_t \|_F^2}{4\lambda_2^2T^2} - \lambda_3^2 + \lambda_3 \right) \nonumber\\
		\overset{(\text{b})}{=} & \lambda_2T \left( \left(\lambda_3 - \frac{\| Z_t \|_F}{2\lambda_2T}\right)_+^2 - \frac{\| Z_t \|_F^2}{4\lambda_2^2T^2} - \lambda_3^2 + \lambda_3 \right),
\end{align}
where (a) comes from the (equality case in) Cauchy-Schwarz inequality, and (b) is true since the minimum is attained at
$r = \max \left\{ \frac{\| Z_t \|_F}{2\lambda_2T} , \lambda_3 \right\}$. One can see this by first locating the vertex of the quadratic objective.

Using \eqref{eq:formula-Dt<=lamb3} and \eqref{eq:formula-Dt>lamb3}, we have
\[
	\begin{aligned}
		  & \min_{D_t} \,\, \lambda_2T \mathcal{R}(\| D_t \|_F; \lambda_3) + \left\langle Z_t, D_t \right\rangle                                                                                                                                            \\
		= & \min \left\{ -\left( \| Z_t \|_F - \lambda_2T \right)_+ \lambda_3, \lambda_2T \left( \left( \lambda_3 - \frac{\| Z_t \|_F}{2\lambda_2T}\right)_+ ^2 - \frac{\| Z_t \|_F^2}{4\lambda_2^2T^2} - \lambda_3^2 + \lambda_3 \right) \right\}.
	\end{aligned}
\]
The above display is exactly the definition of \( \mathcal{G}(\| Z_t \|_F; \lambda_3) \).
Using this and \eqref{eq:Lagrangian-min-modified-I}, we can conclude that the dual problem of (5) is (6). Finally, the equality of optimal values follows from \cite[Theorem 31.1]{R70} upon noting that there exists $\mathbf{X}$ with $\Theta_t \succ \epsilon I_p$ for all $t$ satisfying the equality constraints in (5).

\noindent\textbf{\emph{Proof of point~(ii).}}\\
\noindent Since $ \sum_{t=1}^T X_t X_t ^{\top} \succ 0 $, by Lemma \ref{lemma:strict-feasibility}, the set $\mathcal{C}_{\lambda_1}$ and hence  the dual problem has a Slater point $ \left\{ \{\overline{W}_t\}_{t=1}^T, \{\overline{Y}_{t, \mathrm{off}}\}_{t=1}^T, \{\overline{Z}_t\}_{t=1}^{T-1} \right\} $.
The existence of solutions to the primal problem comes from strong duality thanks to the strict feasibility of the dual problem; see, for example \cite[Theorem 31.1]{R70}.

\subsection{Proof of Proposition~7}
\noindent For simplicity, for a given pair of fixed $\lambda_1$ and $\lambda_2$, we denote the objective functions of (2) and (5) with $ \lambda_3 $ by $ F $ and $ G_{\lambda_3} $, respectively.
From the definition of $ \mathcal{G}(\cdot; \lambda_3) $ in (4), we know that
\begin{equation}
  \label{eq:F<=G}
  F\left( \{\Theta_t\}_{t=1}^T \right) \leq G_{\lambda_3}\left(  \{\Theta_t\}_{t=1}^T  \right) \text{ for any } \{\Theta_t\}_{t=1}^T.
\end{equation}

\noindent\textbf{\emph{Proof of point~(i).}}\\
\noindent Suppose that $ \lambda_1, \lambda_2 $ are such that (2) has solutions.
Let $ \{\Theta_t^{*}\}_{t=1}^T $ be an arbitrary solution to (2) and define
$$
	\overline{\lambda}_3 = \max \left\{\max_{t=1, \dots ,T-1} \{\| \Theta_{t+1}^{*} - \Theta_t^{*} \|_F\}, 0.5 \right\}.
$$
Then it holds that
\begin{equation}
  \label{eq:F=G}
  F\left(\{\Theta_t^{*}\}_{t=1}^T\right) = G_{\lambda_3}\left(\{\Theta_t^{*}\}_{t=1}^T\right) \text{ for any } \lambda_3 \geq \overline{\lambda}_3.
\end{equation}

Fix any $ \lambda_3 \geq \overline{\lambda}_3 $. Suppose that $ \{\widehat{\Theta}_t\}_{t=1}^T $ is a solution to (5) with this $ \lambda_3$, then we have
$$
F\left( \{\widehat{\Theta}_t\}_{t=1}^T \right) \overset{(\text{a})}{\leq} G_{\lambda_3}\left( \{\widehat{\Theta}_t\}_{t=1}^T \right) \overset{(\text{b})}{\leq} G_{\lambda_3}\left( \{\Theta_t^{*}\}_{t=1}^T \right) \overset{(\text{c})}{=} F\left( \{\Theta_t^{*}\}_{t=1}^T \right),
$$
where (a) comes from \eqref{eq:F<=G}; (b) holds thanks to the assumption that $ \{\widehat{\Theta}_t\}_{t=1}^T $ is a solution to (5) with $ \lambda_3 $; (c) is true because of \eqref{eq:F=G}.
Therefore, $ \{\widehat{\Theta}_t\}_{t=1}^T $ is also a solution to (2).

\noindent\textbf{\emph{Proof of point~(ii).}}\\
\noindent
It suffices to show that, for arbitrary fixed $\lambda_1, \lambda_2$, if there exists $\lambda_3 \geq 0.5$ such that there exists a solution $\{\Theta_t^{*}\}_{t=1}^T$ to (5) with $\lambda_3$ that satisfies
\begin{equation}
  \label{eq:cond-diff<l3}
  \max_{t=1, \dots, T-1} \| \Theta_{t+1}^{*} - \Theta_t^{*} \|_F < \lambda_3,
\end{equation}
then (2) has solutions. To this end, we notice from \eqref{eq:cond-diff<l3} and the definition of $\mathcal{R}$ in (4) that
\begin{equation}
  \label{eq:same-subdiff}
  \partial F(\{\Theta_t^{*}\}_{t=1}^T) = \partial G_{\lambda_3}(\{\Theta_t^{*}\}_{t=1}^T),
\end{equation}
where $\partial F(\{\Theta_t^{*}\}_{t=1}^T)$ and $\partial G_{\lambda_3}(\{\Theta_t^{*}\}_{t=1}^T)$ are the subdifferentials of $F$ and $G_{\lambda_3}$ at $\{\Theta_t^{*}\}_{t=1}^T$, respectively.

Given that $\{\Theta_t^{*}\}_{t=1}^T$ is a solution to (5), the optimality condition implies
$$
0 \in \partial G_{\lambda_3}(\{\Theta_t^{*}\}_{t=1}^T).
$$
This, along with \eqref{eq:same-subdiff}, shows that $\{\Theta_t^{*}\}_{t=1}^T$ is also a solution to (2).

\end{appendices}
\vskip 0.2in

\bibliography{biblio}

\end{document}